\newtheorem{mainthm}{Theorem}
\newtheorem*{conjecture}{Conjecture}
\newtheorem{proposition}{Proposition}[section]
\newtheorem{lemma}[proposition]{Lemma}
\newtheorem{theorem}[proposition]{Theorem}
\newtheorem{remark}[proposition]{Remark}
\newtheorem{corollary}[proposition]{Corollary}
\newtheorem{notation}[proposition]{Notation}
\newcommand{\GL}{\operatorname{GL}\nolimits}
\newcommand{\GU}{\operatorname{GU}\nolimits}
\newcommand{\PSL}{\operatorname{PSL}\nolimits}
\newcommand{\CSp}{\operatorname{CSp}\nolimits}
\newcommand{\Sp}{\operatorname{Sp}\nolimits}
\newcommand{\PSp}{\operatorname{PSp}\nolimits}
\newcommand{\GO}{\operatorname{GO}\nolimits}
\newcommand{\SO}{\operatorname{SO}\nolimits}
\newcommand{\D}{\operatorname{D}\nolimits}
\newcommand{\Ker}{\operatorname{Ker}\nolimits}
\newcommand{\bl}{\operatorname{bl}\nolimits}
\newcommand{\Irr}{\operatorname{Irr}\nolimits}
\newcommand{\IBr}{\operatorname{IBr}\nolimits}
\newcommand{\Ind}{\operatorname{Ind}\nolimits}
\newcommand{\Res}{\operatorname{Res}\nolimits}
\newcommand{\Bl}{\operatorname{Bl}\nolimits}
\newcommand{\Aut}{\operatorname{Aut}\nolimits}
\newcommand{\Out}{\operatorname{Out}\nolimits}
\newcommand{\diag}{\operatorname{diag}\nolimits}
\newcommand{\dz}{\operatorname{dz}\nolimits}
\newcommand{\St}{\operatorname{St}\nolimits}
\newcommand{\rO}{\operatorname{O}\nolimits}
\newcommand{\I}{\operatorname{I}\nolimits}
\newcommand{\J}{\operatorname{J}\nolimits}
\newcommand{\rk}{\operatorname{rk}\nolimits}
\newcommand{\Jor}{\operatorname{Jor}\nolimits}
\newcommand{\zero}{\mathbf{0}}
\newcommand{\F}{\mathbb{F}}
\newcommand{\barF}{\overline{\mathbb{F}}}
\newcommand{\Z}{\mathbb{Z}}
\newcommand{\cF}{\mathcal{F}}
\newcommand{\hF}{\hat{F}}
\newcommand{\bV}{\mathbf{V}}
\newcommand{\bW}{\mathbf{W}}
\newcommand{\bG}{\mathbf{G}}
\newcommand{\bL}{\mathbf{L}}
\newcommand{\tG}{\widetilde{G}}
\newcommand{\tbG}{\widetilde{\mathbf{G}}}
\newcommand{\tbL}{\widetilde{\mathbf{L}}}
\newcommand{\tC}{\widetilde{C}}
\newcommand{\tN}{\widetilde{N}}
\newcommand{\hN}{\hat{N}}
\newcommand{\tB}{\widetilde{B}}
\newcommand{\tR}{\widetilde{R}}
\newcommand{\fS}{\mathfrak{S}}
\newcommand{\tQ}{\widetilde{Q}}
\newcommand{\cW}{\mathcal{W}}
\newcommand{\cE}{\mathcal{E}}
\newcommand{\cB}{\mathcal{B}}
\newcommand{\bc}{\mathbf{c}}
\newcommand{\cR}{\mathcal{R}}
\newcommand{\sC}{\mathscr{C}}
\newcommand{\cZ}{\mathcal{Z}}
\newcommand{\cJ}{\mathcal{J}}
\newcommand{\cK}{\mathcal{K}}
\newcommand{\hz}{\hat{z}}
\newcommand{\ts}{\tilde{s}}
\newcommand{\tb}{\tilde{b}}
\newcommand{\htheta}{\hat{\theta}}
\newcommand{\ttheta}{\tilde{\theta}}
\newcommand{\tsigma}{\tilde{\sigma}}
\newcommand{\tDelta}{\tilde{\Delta}}
\newcommand{\tchi}{\tilde{\chi}}
\newcommand{\homega}{\hat{\omega}}
\newcommand{\udots}{\mathinner{\mskip1mu\raise1pt\vbox{\kern7pt\hbox{.}}  
\mskip2mu\raise4pt\hbox{.}\mskip2mu\raise7pt\hbox{.}\mskip1mu}} 
\begin{document}

\title{An equivariant bijection between irreducible Brauer characters and weights for $\Sp(2n,q)$
\footnote{The author gratefully acknowledges financial support by NSFC (No. 11631001).
\newline \textbf{2010 Mathematics Subject Classification:} 20C20, 20C33.
\newline \textbf{Keyword:} Alperin weight conjecture, inductive condition, equivariant bijections, symplectic groups.}}

\author{Conghui Li\\
Department of Mathematics, Southwest Jiaotong University,\\
Chengdu 611756, China\\
Email:liconghui@swjtu.edu.cn}

\maketitle

\begin{abstract}
The longstanding Alperin weight conjecture and its blockwise version have been reduced to simple groups recently by Navarro, Tiep, Sp\"ath and Koshitani.
Thus, to prove this conjecture, it suffices to verify the corresponding inductive condition for all finite simple groups.
The first is to establish an equivariant bijection between irreducible Brauer characters and weights for the universal covering groups of simple groups.
Assume $q$ is a power of some odd prime $p$.
We first prove the blockwise Alperin weight conjecture for $\Sp_{2n}(q)$ and odd non-defining characteristics.
If the decomposition matrix of $\Sp_{2n}(q)$ is unitriangular with respect to an $\Aut(\Sp_{2n}(q))$-stable basic set (this assumption holds for linear primes), we can establish an equivariant bijection between the irreducible Brauer characters and weights.
\end{abstract}


\section{Introduction}\label{sect:intro}

Let $\ell$ be a prime.
For an $\ell$-subgroup $R$ of a finite group $G$, we denote by $\dz(N_G(R)/R)$ the set of all characters of $N_G(R)$ containing $R$ in the kernels and of defect zero as characters of $N_G(R)/R$.
For any $\ell$-block $B$ of $G$, the subset of $\dz(N_G(R)/R)$ consisting of those characters satisfying $\bl(\varphi)^G=B$ is denoted by $\dz(N_G(R)/R,B)$.
Here, $\bl(\varphi)$ is the block of $N_G(R)$ containing $\varphi$.
A weight of $G$ means a pair $(R,\varphi)$ with $\varphi\in\dz(N_G(R)/R)$.
In this case, $R$ is necessarily a radical subgroup of $G$, \emph{i.e.} $R=\rO_\ell(N_G(R))$, and $\varphi$ is called a weight character.
If furthermore $\varphi\in\dz(N_G(R)/R,B)$, then $(R,\varphi)$ is called a $B$-weight.
Denote the set of all $G$-conjugacy classes of $B$-weights by $\cW(B)$.
In the sequel, the term ``weight'' will mean a single weight or a conjugacy class of weights depending on the context.
J. L. Alperin gives the following conjecture in \cite{Al87} called blockwise Alperin weight (BAW) conjecture.

\begin{conjecture}[Alperin]
Let $G$ be a finite group, $\ell$ a prime and $B$ an $\ell$-block of $G$, then $|\cW(B)|=|\IBr(B)|$.
\end{conjecture}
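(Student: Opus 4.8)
The plan is to follow the reduction strategy announced in the abstract, arguing by induction on $|G|$. By the reduction theorem of Navarro, Tiep, Sp\"ath and Koshitani, the blockwise Alperin weight conjecture holds for every finite group $G$, every prime $\ell$, and every $\ell$-block $B$ of $G$ as soon as every finite simple group satisfies the \emph{inductive blockwise Alperin weight condition} (iBAW) at the prime $\ell$. So the conjecture is reduced to verifying iBAW for all finite simple groups, and the remaining work is entirely about that verification.

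For a fixed simple group $S$ with universal covering group $X$, the iBAW condition packages two layers of information. The ``global'' layer asks for a bijection $\cW(b)\longrightarrow\IBr(b)$, for each $\ell$-block $b$ of $X$, that is equivariant under $\Aut(X)$ and preserves central characters. The ``local'' layer is a Clifford-theoretic requirement: for each radical $\ell$-subgroup $R$ of $X$, the way the weight characters extend from the block normalizer $N_X(R)$ to $N_X(R)\rtimes A$ (for $A$ a suitable stabilizer in $\Aut(X)$) must match the way the corresponding irreducible Brauer characters extend from $X$ to $X\rtimes\Aut(X)$, in the sense of the appropriate cohomology / central-extension condition. One first constructs the equivariant bijection, then checks the local condition normalizer by normalizer.

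One then discharges iBAW family by family over the classification of finite simple groups: cyclic groups of prime order are immediate; the alternating groups, the sporadic groups, and the finitely many simple groups with exceptional Schur multipliers are treated directly, in part by computer; and the groups of Lie type are split into the defining-characteristic case --- where $\IBr$ consists of restrictions of ordinary characters and the radical subgroups are the unipotent radicals of parabolics by Borel--Tits --- and the cross-characteristic case, handled via Jordan decomposition of characters, $d$-Harish-Chandra theory, and explicit lists of $\ell$-radical subgroups. The present paper contributes the cross-characteristic case for $G=\Sp_{2n}(q)$ with $q$ odd: it proves the blockwise Alperin weight conjecture for $\Sp_{2n}(q)$ unconditionally, and establishes the full iBAW condition --- hence in particular the required $\Aut(\Sp_{2n}(q))$-equivariant bijection between $\IBr$ and weights --- under the hypothesis that the decomposition matrix is unitriangular with respect to an $\Aut(\Sp_{2n}(q))$-stable basic set, a hypothesis known to hold for linear primes. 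The main obstacle throughout is the ``local'' half of iBAW for groups of Lie type: one must control how diagonal and field automorphisms act on the weight characters and on the projective representations of the groups $N_X(R)/R$, and match this with their action on $\IBr(b)$; it is precisely here that the unitriangular-basic-set hypothesis is used for $\Sp_{2n}(q)$, since it lets one transport the known automorphism behaviour of ordinary characters --- coming from Jordan decomposition and Clifford theory --- to the modular setting. A secondary difficulty, settled separately and largely computationally, is the defining-characteristic case together with the simple groups having exceptional covering groups.
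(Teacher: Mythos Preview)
The statement you are attempting to prove is stated in the paper as a \emph{conjecture}, not a theorem; the paper offers no proof of it in general and does not claim to. Your proposal is therefore not a proof but an outline of the reduction program, and it has a genuine gap: the reduction theorem of Sp\"ath (and Koshitani--Sp\"ath) does reduce the conjecture to verifying the iBAW condition for every finite simple group, but that verification is \emph{not} complete. The list of cases you describe as ``discharged'' is the current state of partial progress, not a finished case-by-case proof; in particular, for groups of Lie type in cross characteristic the iBAW condition is known only in scattered cases (some type $A$, some small rank, abelian Sylow, cyclic defect, etc.), and is wide open in general. So the induction never closes.

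You also overstate what this paper accomplishes. It does \emph{not} establish the full iBAW condition for $\PSp_{2n}(q)$. The iBAW condition has two parts: an $\Aut(G)$-equivariant bijection $\IBr(B)\leftrightarrow\cW(B)$, and the ``normally embedded'' (extension/cohomology) conditions on both the Brauer characters and the weight characters. The paper proves the blockwise Alperin weight conjecture itself for $\Sp_{2n}(q)$ at odd $\ell\neq p$ (Theorem~\ref{theorem:main1}, via Proposition~\ref{prop:bijection}), and, assuming unitriangularity of the decomposition matrix with respect to the basic set $\cE(\Sp_{2n}(q),\ell')$, it establishes the equivariant bijection (Theorem~\ref{theorem:main2}). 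The second half of iBAW --- the extension conditions in the local normalizers --- is explicitly left open here; the twisted basic subgroups of \S\ref{subsect:twistedbasicsubgp} are introduced partly in anticipation of that problem, but it is not solved. Hence even restricted to type $C$ your proposal does not go through.
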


This conjecture has been reduced to the simple groups, which means that the BAW conjecture holds for a finite group $G$ if all non-abelian simple groups involved in $G$ satisfy the inductive blockwise Alperin weight (iBAW) condition.
See \cite{Sp13} for a definition of the iBAW condition and see \cite{KS16} for another version.
Basically speaking, the iBAW condition consists of two parts: the first one requires to establish an equivariant bijection between irreducible Brauer characters and weights; the second one (normally embedded conditions) requires to consider the extension of irreducible Brauer characters and weight characters.
This inductive condition has been verified for several cases, such as: simple alternate groups, many sporadic simple groups, simple groups of Lie type and the defining characteristic, some few cases of simple groups of Lie type and primes different from the defining characteristic; see \cite{Breuer}, \cite{CS13}, \cite{Feng18}, \cite{KS16}, \cite{LZ18}, \cite{Ma14}, \cite{Sch15}, \cite{Sch16}, \cite{Sp13}, etc.

The purpose of this paper is to start the consideration of the iBAW condition for $\PSp_{2n}(q)$ with $q$ a power of an odd prime $p$ and an odd prime $\ell\neq p$.
To do this, we first need to establish a blockwise bijection between the irreducible Brauer characters and weights of the universal covering group $\Sp_{2n}(q)$ of $\PSp_{2n}(q)$.

\begin{mainthm}\label{theorem:main1}
Let $p$ be an odd prime, $q=p^f$ and $\ell$ an odd prime different from $p$, then the blockwise Alperin weight conjecture holds for $\Sp_{2n}(q)$ for $\ell$.
\end{mainthm}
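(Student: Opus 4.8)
The plan is to verify the blockwise Alperin weight conjecture for $G = \Sp_{2n}(q)$ by a Jordan-decomposition-type reduction on both sides of the equality $|\cW(B)| = |\IBr(B)|$, matching $\ell$-blocks of $G$ with $\ell$-blocks of certain local subgroups via semisimple labels. First I would recall the parametrization of $\ell$-blocks of $G$: by the work of Brou\'e--Michel and Cabanes--Enguehard, for $\ell$ odd and $\ell \neq p$, the $\ell$-blocks of $G$ are in bijection with pairs $(s, \kappa)$ where $s$ is a semisimple $\ell'$-element of the dual group $\tbG^F = \CSp_{2n}(q)$ (or rather $\SO_{2n+1}(q)$, the dual of $\Sp_{2n}$) up to conjugacy, and $\kappa$ is an $\ell$-block of the relative Weyl group / of $C_{\bG^*}(s)^F$-type data; the irreducible Brauer characters lying in such a block are counted by the Brauer characters in the corresponding unipotent block of $C_{\bG^*}(s)^F$ through the $\ell$-decomposition of Lusztig series and the existence of a basic set of unipotent-type characters (Geck--Hiss). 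So the first step reduces the count of $|\IBr(B)|$ to a product, over the components of the centralizer $C := C_{\bG^*}(s)^F$, of numbers of unipotent Brauer characters of classical groups of smaller rank (symplectic, odd-dimensional orthogonal, general linear and unitary groups), times a contribution from the ambient relative Weyl group of the $\ell$-part.

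Next I would carry out the parallel analysis for weights. Radical $\ell$-subgroups $R$ of $G$ and their normalizers $N_G(R)$ have been described via the theory of basic subgroups and the ``$q$-analogue'' of wreath-product structures (An, and more recently An--Himstedt--Huang for the groups at hand); the key point is that $N_G(R)/R$ decomposes, up to a central torus factor and a product of symmetric/wreath factors, into a product of groups of the same classical types as the components of $C_{\bG^*}(s)^F$. Using the compatibility of Brauer's first main theorem with Jordan decomposition of blocks — i.e. that a weight $(R,\varphi)$ with $\bl(\varphi)^G = B$ forces $\varphi$ to lie in a block of $N_G(R)$ with the ``same'' semisimple label $s$ — I would match $\cW(B)$ with a product of weight-sets $\cW(b_i)$ for unipotent blocks $b_i$ of the smaller classical groups, again times the relative-Weyl-group contribution. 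The combinatorial core is then the statement that, for each classical family and each unipotent $\ell$-block, the number of weights equals the number of irreducible Brauer characters; for the defining-type pieces this is already known, and for the unipotent symplectic and orthogonal blocks this is where one invokes the explicit parametrizations — by $e$-cores and $e$-quotients of symbols, where $e$ is the order of $q$ modulo $\ell$ — on both the Brauer-character side (via Fong--Srinivasan and Geck--Hiss) and the weight side (via An's classification of radical subgroups and their defect-zero characters). The bookkeeping must also correctly handle the two cases $\ell \mid (q-1)$ versus $\ell \mid (q+1)$ versus $\ell$ odd with $e > 1$, and the contribution of the abelian defect/cyclic situations, but in each case the counting reduces to the same symbol-combinatorics.

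The main obstacle I expect is precisely the verification, block by block, that the weight count matches the Brauer-character count for the \emph{unipotent} $\ell$-blocks of $\Sp_{2m}(q)$ and $\SO_{2m+1}(q)$ (and that the bijection can be taken compatibly with the action of field and diagonal automorphisms, since we ultimately want an \emph{equivariant} version for Theorem~\ref{theorem:main1}'s companion result). On the Brauer side one knows $|\IBr|$ for a unipotent block equals the number of unipotent characters in it (Geck--Hiss basic set), which by Fong--Srinivasan is a product of numbers of symbols with prescribed $e$-core; on the weight side An's description gives weights as towers of ``basic subgroups'' indexed by the same $e$-core/$e$-quotient data, so the equality should follow from a careful matching of these two combinatorial descriptions — but making this precise requires assembling several results from the literature and handling the orthogonal groups $\SO_{2m+1}(q)$ (which appear as centralizer components) in parallel with the symplectic ones. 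A secondary technical point is controlling the relative Weyl group $W(s) := N_{\bG^*}(s)^F / C_{\bG^*}(s)^F$ and its $\ell$-blocks, and checking that its contribution to $|\IBr(B)|$ (via the $\ell$-modular representation theory of this finite group, which is essentially a product of symmetric and wreath groups) equals its contribution to $|\cW(B)|$ (via weights of the same finite group); this is again a combinatorial identity about cores and quotients, but it is the glue that makes the Jordan-decomposition reduction actually yield the theorem. Once all these local equalities are in place, multiplying them together over the components gives $|\cW(B)| = |\IBr(B)|$ for every $\ell$-block $B$ of $\Sp_{2n}(q)$, which is the assertion of Theorem~\ref{theorem:main1}.
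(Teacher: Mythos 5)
Your overall route is the one the paper takes: parametrize blocks and Brauer characters through the Fong--Srinivasan labels together with Geck's basic set $\cE(G,\ell')$, quote An's classification of weights, and match the two sides by $e_\Gamma$-core/$e_\Gamma$-quotient combinatorics. However, there is a genuine gap at the very first step of your reduction. You assert that the $\ell$-blocks of $G=\Sp_{2n}(q)$ are in bijection with pairs $(s,\kappa)$ and that $|\IBr(B)|$ is then a clean product over the components of $C_{\bG^*}(s)^F$ of unipotent counts. This is false as stated, because $Z(\bG)$ is disconnected: when $-1$ is an eigenvalue of $s$ the centralizer $C_{\bG^*}(s)$ in $\bG^*=\SO_{2n+1}$ is disconnected, a Lusztig series or block of $\tG=\CSp_{2n}(q)$ may cover \emph{two} series or blocks of $G$, and whether it does is governed by whether the Lusztig symbol attached to the elementary divisor $X+1$ is degenerate. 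So blocks of $G$ are really labelled by triples $(s,\kappa,i)$ with $i\in\Z/2\Z$, identified exactly when $\kappa_{X+1}$ is degenerate, and the Brauer characters in a fixed block are likewise labelled $(s,\lambda,j)$ with the analogous identification; this is the content of the paper's descent from $\CSp_{2n}(q)$ to $\Sp_{2n}(q)$ via Lusztig's disconnected-centre result and block covering (Theorems \ref{theorem:characterSp}, \ref{theorem:blockSp}, \ref{theorem:BrauerSp}). Your plan never engages with this splitting, and your ``bookkeeping'' remark only addresses linear versus unitary primes, which is a different (and for the counting, harmless) dichotomy.

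The same phenomenon must then be matched on the weight side: in An's parametrization (Theorem \ref{theorem:paraweights}) the tuple attached to $\Gamma\in\cF_0$ has $2e$ entries rather than $e$, and a pair $(s,\kappa)$ with $\kappa_{X+1}$ non degenerate carries two weight classes for each quotient datum while a degenerate $\kappa_{X+1}$ carries one, mirroring the block splitting. A naive component-by-component product therefore does not yield $|\cW(B)|=|\IBr(B)|$ block by block; one can easily get the right total over the pair of blocks sharing $(s,\kappa)$ but the wrong count for each, or miscount when $\lambda_{X+1}$ or the relevant quotient is degenerate. The paper resolves this with the ``ordered quotient/ordered symbol'' formalism and the case analysis in Proposition \ref{prop:bijection}, which is precisely the missing ingredient in your argument. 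A secondary, more cosmetic point: there is no need to prove anything about weights of $\SO_{2m+1}(q)$ or about a relative Weyl group contribution — An's theorem already gives the weights of each block of $\Sp_{2n}(q)$ in closed combinatorial form, so the only work left is the (degenerate-aware) core--quotient matching just described.
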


In \cite{FS89}, the classification of blocks and the partition of irreducible ordinary characters into blocks for $\CSp_{2n}(q)$ are obtained.
Using the results of Fong and Srinivasan in \cite{FS89} and the results of Lusztig in \cite{Lu88}, we can classify the blocks of $\Sp_{2n}(q)$ and partition irreducible ordinary characters into blocks.
To give a parametrization of irreducible Brauer characters, we use the basic set $\cE(\Sp_{2n}(q),\ell')$ of $\Sp_{2n}(q)$ given in \cite{Ge93}.
Compared with the parametrization of weights for $\Sp_{2n}(q)$ by J. An in \cite{An94}, the above theorem follows.
In fact, we not only prove the conjecture, but also give an explicit bijection.

\paragraph{}
Then we prove that the above bijecition is equivariant under the action of automorphisms of $\Sp_{2n}(q)$.
For the actions of automorphisms on irreducible characters, we first consider the ordinary characters.
Assuming the unitriagularity of the decomposition matrix with respect to the $\Aut(\Sp_{2n}(q))$-stable basic set $\cE(\Sp_{2n}(q),\ell')$,
the results can be transferred to Brauer characters.
Unfortunately, the unitriangularity of the decomposition matrix has only been proved for linear primes; see \cite{GH97}.
Note that the actions of automorphisms on the irreducible ordinary characters of $\Sp_{2n}(q)$ are also considered by Taylor in \cite{Tay18} using a different method.
We state our next main result as follows.

\begin{mainthm}\label{theorem:main2}
Let $p$ be an odd prime, $q=p^f$ and $\ell$ an odd prime different from $p$.
If the sub-matrix of the decomposition matrix of $\Sp_{2n}(q)$ with respect to the basic set $\cE(\Sp_{2n}(q),\ell')$ is unitriangular, then there is an equivariant bijection between irreducible Brauer characters and weights for $\Sp_{2n}(q)$.
In particular, such an equivariant bijection exists for any linear prime $\ell$.
\end{mainthm}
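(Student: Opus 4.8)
The plan is to deduce Theorem~\ref{theorem:main2} from Theorem~\ref{theorem:main1} together with the unitriangularity hypothesis. By (the proof of) Theorem~\ref{theorem:main1} one has an explicit blockwise bijection $\Omega\colon\cE(\Sp_{2n}(q),\ell')\to\cW$ between the chosen $\ell'$-basic set and the set of conjugacy classes of weights of $\Sp_{2n}(q)$; since $\cE(\Sp_{2n}(q),\ell')$ is a basic set this already gives $|\IBr(B)|=|\cW(B)|$ block by block. To promote this to an \emph{equivariant} bijection $\IBr(\Sp_{2n}(q))\to\cW$ the idea is to factor it as
\[
\IBr(\Sp_{2n}(q))\ \xrightarrow{\ \Psi\ }\ \cE(\Sp_{2n}(q),\ell')\ \xrightarrow{\ \Omega\ }\ \cW ,
\]
where $\Psi$ is the ``leading Brauer character'' bijection furnished by unitriangularity, and then to show that each of the two factors is $\Aut(\Sp_{2n}(q))$-equivariant; the composite is then blockwise and equivariant, which is what is wanted.

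For the factor $\Psi$ I would use the standard transfer of equivariance along an $\Aut$-stable unitriangular basic set. The $\ell$-modular decomposition map $d$ commutes with $\Aut(\Sp_{2n}(q))$ because reduction modulo $\ell$ is functorial, and $\cE(\Sp_{2n}(q),\ell')$ is $\Aut(\Sp_{2n}(q))$-stable by its construction via Jordan decomposition. Choosing the natural partial order $\preceq$ on $\cE(\Sp_{2n}(q),\ell')$ coming from the order on the underlying Jordan/symbol labels --- which is checked to be preserved by field and diagonal automorphisms --- unitriangularity lets one set $\Psi^{-1}(\chi)$ to be the unique multiplicity-one constituent of $d(\chi)$ that does not occur in $d(\chi')$ for any $\chi'\prec\chi$. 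Then $d(\sigma\chi)=\sigma\,d(\chi)$, $\sigma$ preserves the multiplicities of constituents, and $\sigma$ maps $\{\chi':\chi'\prec\chi\}$ onto $\{\chi':\chi'\prec\sigma\chi\}$, so $\sigma\,\Psi^{-1}(\chi)=\Psi^{-1}(\sigma\chi)$; hence $\Psi$ is $\Aut(\Sp_{2n}(q))$-equivariant.

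The substantial part is to describe, and then to match, the $\Aut(\Sp_{2n}(q))$-action on the two sides of $\Omega$. For $q$ odd, $\Out(\Sp_{2n}(q))$ is generated by field automorphisms and a single diagonal automorphism induced by $\CSp_{2n}(q)$. On $\cE(\Sp_{2n}(q),\ell')$ --- a union of Lusztig series $\cE(\Sp_{2n}(q),s)$ with $s$ semisimple of order prime to $\ell$ in the dual group $\SO_{2n+1}(q)$, parametrized by Jordan decomposition --- a field automorphism acts through its action on semisimple classes of $\SO_{2n+1}(q)$ and on the unipotent characters of the relevant centralizers, while the diagonal automorphism acts within each rational series, its action being governed by the component group of $C_{\SO_{2n+1}}(s)$; this is essentially the analysis carried out by Taylor in \cite{Tay18}. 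On $\cW$, I would use An's explicit parametrization in \cite{An94} of the radical $\ell$-subgroups $R$ of $\Sp_{2n}(q)$ and of the sets $\dz(N_{\Sp_{2n}(q)}(R)/R)$: a field automorphism acts by the corresponding Frobenius twist on the defining data of $R$ and correspondingly on weight characters, whereas the diagonal automorphism is realised by conjugation inside $\CSp_{2n}(q)$, its effect on a weight $(R,\varphi)$ being computed by Clifford theory for the normal subgroup $\Sp_{2n}(q)$ of $\CSp_{2n}(q)$, applied inside $N_{\CSp_{2n}(q)}(R)$.

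The final step is to verify that $\Omega$ intertwines the two actions. Since $\Omega$ is defined purely combinatorially --- matching the Jordan/symbol label of $\chi\in\cE(\Sp_{2n}(q),\ell')$ with An's label of the corresponding weight, compatibly with the Fong--Srinivasan block decomposition --- this reduces to comparing two combinatorial descriptions of the same automorphism. For field automorphisms it is essentially bookkeeping, both sides being governed by the same Frobenius twist on the defining data. \emph{The main obstacle is the diagonal automorphism}: on the character side its action is controlled by the component group of a centralizer in $\SO_{2n+1}(q)$, on the weight side by Clifford theory relative to $\CSp_{2n}(q)$, and reconciling these two mechanisms --- keeping track of how the central involution of $\Sp_{2n}(q)$ and the quotient $\CSp_{2n}(q)/(\Sp_{2n}(q)Z(\CSp_{2n}(q)))$ enter --- requires a careful case analysis organized by the Fong--Srinivasan block invariants. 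Granting this compatibility, $\Omega$, and hence $\Omega\circ\Psi$, is $\Aut(\Sp_{2n}(q))$-equivariant, giving the first assertion. Finally, for a linear prime $\ell$ the relevant submatrix of the decomposition matrix is unitriangular by Gruber--Hiss \cite{GH97}, so the hypothesis holds automatically and the equivariant bijection exists unconditionally, which is the last assertion.
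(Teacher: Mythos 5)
Your overall strategy coincides with the paper's: use the $\Aut(\Sp_{2n}(q))$-stable basic set $\cE(\Sp_{2n}(q),\ell')$ and the unitriangularity hypothesis to transfer the automorphism action from ordinary to Brauer characters (this is exactly \cite[Lemma~7.5]{CS13}, invoked in Proposition~\ref{prop:actiononBrauer}), compute the actions on both parametrizations, and check that the explicit bijection of Proposition~\ref{prop:bijection} intertwines them. The gap is that the crux of the theorem is precisely the step you ``grant'': the action of the diagonal automorphism $\tau$ on weights is not a formality that follows from Clifford theory inside $N_{\CSp_{2n}(q)}(R)$ plus bookkeeping. One must actually compute how $\tau$ acts on the defect-zero characters $\dz(N_G(R)/R\mid\theta)$ attached to An's labels: for $\Gamma\neq X+1$ it acts trivially (Lemmas~\ref{lemma:diagonextensiontheta} and \ref{lemma:diagondz}), but for $\Gamma=X+1$ the paper shows by an explicit analysis of the $2e$ extensions of $\theta_{X+1,\gamma}$ (split into cases according to $\ell$ linear or unitary and $4\mid(q\mp1)$, Lemma~\ref{lemma:diagonextensiontheta0}, using the twisted-group setup) that $\tau$ swaps these extensions in pairs, which is what forces the label of $(R,\varphi)^\tau$ to be $(s,\kappa,i+1,Q')$ with the ordered quotient at $X+1$ flipped (Proposition~\ref{prop:diagonweight}). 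Nothing in your sketch produces this computation, and without it you cannot even state what the diagonal action on An's labels is, let alone match it with the character side.

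Moreover, equivariance of your $\Omega$ is not automatic from ``matching the Jordan/symbol label with An's label'': the two actions only agree because the bijection was designed to make them agree. On the Brauer side $\tau$ sends $\varphi_{s,\lambda,i}$ to $\varphi_{s,\lambda,i+1}$, while on the weight side it replaces the ordered quotient $(\lambda_{X+1}^{(e)},k)$ by $(\lambda_{X+1}^{(e)},k+1)$; these are reconciled only through the convention $(\kappa_{X+1},i)*(Q,k)=(\kappa_{X+1},i+1)*(Q,k+1)$ built into part (1)(iii) of Proposition~\ref{prop:bijection} (the remark following that proposition states that (iii) was chosen in this form precisely to achieve equivariance), together with the degenerate/non-degenerate case analysis carried out in Theorem~\ref{theorem:equivariance}. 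An equally natural label-matching bijection with a fixed ordering at $X+1$ gives the correct blockwise counts of Theorem~\ref{theorem:main1} but fails to be $\tau$-equivariant, so the compatibility you assume is a genuine assertion requiring proof, not bookkeeping. A smaller but real omission of the same kind occurs on the field-automorphism side: the claim that weight characters are simply ``Frobenius-twisted'' needs the fact that a field automorphism fixing $\Gamma$ acts trivially on $\dz(N/R\mid\theta_\Gamma)$, which the paper proves via extendibility arguments in the twisted groups (Lemmas~\ref{lemma:fieldonextensiontheta} and \ref{lemma:fieldondz}). Your reduction of the linear-prime case to \cite{GH97} is fine.
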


To consider the actions of automorphisms on weights, we introduced the twisted version of radical subgroups to make the arguments more clear.
We also believe that the twisted version of radical subgroups will be useful  when dealing the extension problem of weight characters, which is required by the ``normally embedded conditions'' in the iBAW condition; an application of twisted version of radical subgroups of type A is in \cite{LZ19}.

In \cite{FLZ19}, the case for type C and linear primes are considered using a different method; they also consider some other cases for classical groups.

\paragraph{}
The paper is organized as follows.
We first fix some notations in section \S\ref{sect:notations}.
Then in section \S\ref{sect:block}, we consider the irreducible characters and blocks of $\Sp_{2n}(q)$.
In section \S\ref{sect:BAWconjecture}, by comparing the parametrization of irreducible Brauer characters with the classification of weights given by J. An in \cite{An94}, we prove the blockwise Alperin weight conjecture for $\Sp_{2n}(q)$(Theorem~\ref{theorem:main1}).
Next, in sections \S\ref{sect:AutCharacter} and \S\ref{sect:AutWeight}, we consider the actions of automorphisms on irreducible (ordinary and Brauer) characters and weights of $\Sp_{2n}(q)$.
As a byproduct, we establish in \S\ref{subsect:equiJordan} for $\Sp_{2n}(q)$ an equivariant Jordan decomposition, which is conjectured by M. Cabanes and B. Sp\"ath for any finite groups of Lie type and verified for type A in \cite[\S8]{CS17A}.
Finally, in section \S\ref{sect:equivariance}, we prove Theorem~\ref{theorem:main2}.

\section{Notations}\label{sect:notations}

In this section, we fix some notations which will be used in this paper; more will be given in the sequel when needed.


\subsection{General notations}
We denote by $C_n$ the cyclic group of order $n$.
Let $G$ be a finite group.
For $H\leqslant G$, the restrictions and inductions of modules or characters are denoted by $\Res^G_H$ and $\Ind_H^G$ respectively.
Let $\ell$ be a prime dividing $G$.
For any $a\in\Aut(G)$ and $\varphi$ an ordinary or Brauer character of $G$, $\varphi^a(g)=\varphi({^ag})$ for any $g\in G$ or $g\in G_{\ell'}$.
The set of all irreducible ordinary or $\ell$-Brauer characters of a finite group $G$ is denoted by $\Irr(G)$ or $\IBr(G)$ respectively.
We also denote by $G^\wedge$ the set of all linear characters of $G$, especially when $G$ is abelian.
Denote by $\Bl(G)$ the set of $\ell$-blocks of $G$.
For any $B\in\Bl(G)$, the set of irreducible or Brauer characters is denoted by $\Irr(B)$ or $\IBr(B)$ respectively.
See, for example, \cite{NT89}, for more notations in the representations of finite groups.


\subsection{Reductive groups with non connected center}\label{subsect:notationsBon06}
We will need results about reductive groups with non connected centers in \cite{Lu88}.
But for notations, we recall those in \cite{Bon06}.

Let $i:\bG\to\tbG$ be a regular embedding of reductive groups in the sense of \cite{Lu88}.
Then $Z(\tbG)$ is connected, $\tbG=Z(\tbG)\bG$, $Z(\bG)=Z(\tbG)\cap\bG$ and $[\bG,\bG]=[\tbG,\tbG]$.
Let $i^*:\tbG^*\to\bG^*$ be the dual map of $i$, where $\bG^*,\tbG^*$ are the duals of $\bG,\tbG$ respectively.
Then $i^*$ is surjective and $\Ker i^*$ is connected.
Denote by $F$ both the Frobenius maps over $\tbG$ or $\tbG^*$ associated with an $\F_q$-structure whenever no confusion is caused.
Set $G=\bG^F$, $\tG=\tbG^F$, $G^*=\bG^{*F}$ and $\tG^*=\tbG^{*F}$.
The tori $\Ker i^*$ and $\tbG/\bG$ are dual and this dual is compatible with $F$.
Thus there is an isomorphism $(\Ker i^*)^F\cong\Irr(\tG/G)$; for any $z\in(\Ker i^*)^F$, we denote by $\hz$ the irreducible character of $\tG/G$ corresponding to $z$.

For any semisimple element $s$ of $G^*$, set $A_{\bG^*}(s)=C_{\bG^*}(s)/C^\circ_{\bG^*}(s)$.
Let $\Ker' i^*=\Ker i^*\cap[\tbG^*,\tbG^*]$, then there is an injective homomorphism $\varphi_s: A_{\bG^*}(s) \to \Ker' i^*$.
This injection is compatible with the action of $F$, thus induces an injection from $A_{\bG^*}(s)^F$ to $(\Ker' i^*)^F$.
For more details, see \cite[\S8]{Bon06}.

Let $\cZ(\bG)=Z(\bG)/Z^\circ(\bG)$ and $H^1(F,\cZ(\bG))$ be the set of all $F$-conjugacy classes of $\cZ(\bG)$ as in \cite[\S1]{Bon06}.
Since $\cZ(\bG)$ is abelian, we have $H^1(F,\cZ(\bG))=\cZ(\bG)/(F-1)\cZ(\bG)=Z(\bG)/(F-1)Z(\bG)$.
Thus $(\cZ(\bG)^{\wedge})^F\cong H^1(F,\cZ(\bG))^{\wedge}$.
By \cite[\S4]{Bon06}, there is an isomorphism $\omega: \Ker' i^* \to \cZ(\bG)^{\wedge}$ 
which commutes with $F$, thus induces an isomorphism $\omega^0: (\Ker' i^*)^F \to H^1(F,\cZ(\bG))^\wedge$.
The injective homomorphisms obtained by composing $\omega$ and $\omega^0$ with $\varphi_s$ are denoted by $\omega_s$ and $\omega^0_s$ respectively.
For other related notations, see \cite[\S4,\S8]{Bon06}.

On the other hand, for any Levi subgroup $\bL$ of $\bG$, $\tbL=Z(\tbG)\bL$ is a Levi subgroup of $\tbG$ and there is an isomorphism $\sigma_{\bL}^{\bG}: H^1(F,\cZ(\bG)) \to \tbL^F/\bL^FZ(\tbG)^F$.
In particular, it is easy to see that $H^1(F,\cZ(\bG))$ is isomorphic to a subgroup of $\Out(G)$ by setting $
\bL=\bG$.
For details, see \cite[\S6]{Bon06}.


\subsection{Classical groups}
\emph{In this paper, we assume $q=p^f$ is a power of an odd prime and $\ell$ is an odd prime different from $p$.}
Let $\F_q$ be the finite field of $q$ elements and $\barF_q$ the algebraic closure of $\F_q$.

For a symplectic or orthogonal space $V$ over the field $k=\F_q$ or $\barF_q$, the groups $\I(V)$, $\I_0(V)$, $\J(V)$ and $\J_0(V)$ are defined as in \cite[\S1]{FS89}.
Thus, if $V$ is symplectic, $\Sp(V)=\I(V)=\I_0(V)$ ($\CSp(V)=\J(V)=\J_0(V)$) is the (conformal) symplectic group of $V$;  if $V$ is orthogonal, $\GO(V)=\I(V)$ ($\SO(V)=\I_0(V)$) is the general (special) orthogonal group of $V$.
For an orthogonal space $V$, $\D_0(V)$ denotes the special Clifford group of $V$ as in \cite[\S2]{FS89}.

For any monic polynomial $\Gamma$ in $k[X]$, let $\Gamma^*$ be the monic polynomial whose roots are exactly the inverses of roots of $\Gamma$.
Denote by $\Irr(k[X])$ the set of monic irreducible polynomials.
As in \cite[(1.7)]{FS89}, we set
\begin{align*}
\cF_0 &= \{X-1,X+1\},\\
\cF_1 &= \{ \Gamma \mid \Gamma\in\Irr(k[X]),\Gamma\neq X, \Gamma\neq X\pm1,\Gamma=\Gamma^*\},\\
\cF_2 &= \{ \Gamma\Gamma^* \mid \Gamma\in\Irr(k[X]),\Gamma\neq X, \Gamma\neq X\pm1,\Gamma\neq\Gamma^*\}.
\end{align*}
The polynomials in $\cF=\cF_0\cup\cF_1\cup\cF_1$ serve as the ``elementary divisors'' for semisimple elements in symplectic and orthogonal groups.
For any $\Gamma\in\cF$, the reduced degree $\delta_\Gamma$ and the sign $\varepsilon_\Gamma$ are defined as in \cite[(1.8), (1.9)]{FS89}.

For any semisimple element $s$ of $\I(V)$, we have orthogonal decompositions
$$V=\sum\limits_\Gamma V_\Gamma(s),\ s=\prod\limits_\Gamma s_\Gamma,$$
as in \cite[(1.10)]{FS89}.
Let $m_\Gamma(s)$ be the multiplicity of the elementary divisor $\Gamma$ in $s$.
If $V$ is orthogonal, let $\eta_\Gamma(s)$ be the type of the orthogonal subspace $V_\Gamma(s)$.
Then the multiplicity function $m_\Gamma(s)$ and the type function $\eta_\Gamma(s)$ determines the conjugacy class of $s$ in $\I(V)$.

\paragraph{}
From now on in this paper, $\bV$ ($\bV^*$) will be the symplectic space of dimension $2n$ (the orthogonal space of dimension $2n+1$) over $\barF_q$, and we always set $\bG=\Sp(\bV)$, $\tbG=\CSp(\bV)$, $\bG^*=\SO(\bV)$ and $\tbG^*=\D_0(\bV^*)$ unless otherwise stated.
Then $(\bG,\bG^*)$ and $(\tbG,\tbG^*)$ are pairs of dual reductive groups.
Since $\bV^*$ is of odd dimensional, $\Ker i^*=Z(\tbG^*)$.
Denote by $F$ (or $F_q$) both the standard Frobenius map on $\tbG$ and the dual on $\tbG^*$ defining an $\F_q$-structure on them and by $G,\tG,G^*,\tG^*$ the corresponding groups of fixed points.
These groups of fixed points can be viewed as symplectic or orthogonal or special Clifford groups of a symplectic or orthogonal space $V$ over the finite field $\F_q$; see \cite{FS89}.
Let $D=\langle F_p\rangle$ be the set of field automorphisms,
then $\tG\rtimes{D}$ affords all automorphisms of the simple group $S=G/Z(G)$ of Lie type.

Let $s$ be semisimple element of $G^*=\I_0(V^*)$.
Since $\dim V^*$ is odd, $1$ must be an eigenvalue of $s$ and the conjugacy class $s^{\I_0(V^*)}$ is the same as the conjugacy class $s^{\I(V^*)}$.
For $\Gamma\notin\cF_0$, the type $\eta_\Gamma(s)$ is determined by $m_\Gamma(s)$ (see \cite[(1.12)]{FS89}), thus the conjugacy class of $s$ is determined by the following data: $m_\Gamma(s)$, $\forall \Gamma\in\cF$; $\eta_{X+1}(s)$, $\eta_{X-1}(s)$ satisfying \cite[(1.12)]{FS89}.
In particular, when both $1$ and $-1$ are eigenvalues of $s$, the data $m_\Gamma(s),\Gamma\in\cF$ can not determine the conjugacy class of $s$. 

Let $s$ be a semisimple element of $G^*$.
Then $|A_{\bG^*}(s)|=1$ or $2$ and $A_{\bG^*}(s)^F=A_{\bG^*}(s)$.
In this case, $s$ is isolated if and only if $-1$ is eigenvalues of $s$ (recall that $1$ is always an eigenvalue), and also if and only if $C_{\bG^*}(s)$ is not connected, in which case, $|A_{\bG^*}(s)|=2$.


\subsection{Cores and quotients}\label{subsect:corequotient}
We recall some concepts and notations concerning partitions, Lusztig symbols and their cores and quotients; see, for example, \cite[Chapter I]{Ol93} and \cite[\S13.8]{Car85}.
Let $e$ be a positive integer (not necessarily prime).

Given a partition $\lambda$ of some natural number $n$, the $e$-core $\lambda_{(e)}$ and $e$-quotient $\lambda^{(e)}$ of $\lambda$ are uniquely determined.
Here, $\lambda^{(e)}=(\lambda_0,\lambda_1,\dots,\lambda_{e-1})$ is an $e$-tuple (ordered sequence) of partitions.
Conversely, given an $e$-core $\kappa$ and an $e$-quotient $(\lambda_0,\lambda_1,\dots,\lambda_{e-1})$, there is a unique partition $\lambda$ with $\lambda_{(e)}=\kappa$ and $\lambda^{(e)}=(\lambda_0,\lambda_1,\dots,\lambda_{e-1})$.
For details, see \cite[\S3]{Ol93}.

The definitions of Lusztig symbols in \cite[\S5]{Ol93} and in \cite[\S13.8]{Car85} are equivalent.
But we will use the one in \cite{Ol93}, where a Lusztig symbol is defined as an unordered pair $\lambda=[X,Y]$ of $\beta$-sets and the pairs $[X,Y]$ and $[X^{+t},Y^{+t}]$ are considered as the same Lusztig symbol; see \cite[\S1]{Ol93} for the definition of $\beta$-sets and the operator $^{+t}$.
The Lusztig symbols of the form $[X,X]$ are called degenerate.
For convenience, \emph{we will view an empty Lusztig symbol $\lambda=\emptyset$ as degenerate}.
Thus when we say $\lambda$ is non degenerate, we have \emph{a fortiori} that $\lambda\neq\emptyset$.

Given a Lusztig symbol $\lambda$, the $e$-core $\lambda_{(e)}$ and $e$-quotient $\lambda^{(e)}$ of $\lambda$ are also uniquely determined.
In \cite{Ol93}, the $e$-core and $e$-quotient is called $e$-cocore and $e$-coquotient respectively if they are obtained by removing the $e$-cohooks, but we will follow \cite[\S9]{FS89} to use the terms cores and quotients also for this case.
As in \cite{Ol93}, $\lambda_{(e)}=[X_0,Y_0]$ is again a Lusztig symbol; $\lambda^{(e)}=[\lambda_0,\dots,\lambda_{e-1};\mu_0,\dots,\mu_{e-1}]$ is an \emph{unordered} pair of two $e$-tuples of partitions, i.e. $[\lambda_0,\dots,\lambda_{e-1};\mu_0,\dots,\mu_{e-1}]$ and $[\mu_0,\dots,\mu_{e-1};\lambda_0,\dots,\lambda_{e-1}]$ are identified.
Conversely, given an $e$-core $[X_0,Y_0]$ and an $e$-quotient $[\lambda_0,\dots,\lambda_{e-1};\mu_0,\dots,\mu_{e-1}]$, there are maybe one or two Lusztig symbols with $[X_0,Y_0]$ and $[\lambda_0,\dots,\lambda_{e-1};\mu_0,\dots,\mu_{e-1}]$ as their core and quotient respectively;
see \cite[pp.39-40]{Ol93}.


\section{Characters and blocks of $\Sp_{2n}(q)$}\label{sect:block}

In this section, we first give a parametrization of irreducible ordinary characters of $\Sp_{2n}(q)$ from that of $\CSp_{2n}(q)$ using a result of Lusztig about the representations of finite groups of Lie type with disconnected center in \cite{Lu88}.
Then blocks and the partition of $\Irr(G)$ into blocks are obtained from those of $\tG$ by Fong and Srinivasan in \cite{FS89} by considering the block covering.
Finally, a parametrization of irreducible Brauer characters in blocks follows from the basic set $\cE(\Sp_{2n}(q),\ell')$ given in \cite{Ge93}.
Although the results in this section should be well-known for specialists, we state explicitly the results in order to fix the notations in a way which is convenient to our purpose.
The proof about blocks given here uses the constructions in \cite{FS89}; this can probably be obtained using $e$-Jordan-cuspidal pairs in \cite{CE99} and \cite{KM15}.
Note that, the constructions in \cite{FS89} are also used in Proposition \ref{prop:labelweights}.


\subsection{Irreducible ordinary characters of $\Sp_{2n}(q)$}\label{subsect:irrSp}
We first recall the Jordan decomposition of irreducible ordinary characters of $
\tG=\CSp_{2n}(q)$ and fix some notations.
Let $\ts\in\tG^*$, then we have the following bijections between rational Lusztig series:
$$\cJ: \quad \cE(\tG,\ts) \longleftrightarrow \cE(C_{\tG^*}(\ts),1) \longleftrightarrow \cE(C^\circ_{\bG^*}(s)^F,1),$$
where $s=i^*(\ts)$.
Let $s$ and $V^*$ be orthogonally decomposed as follows:
$$V^*=\sum_\Gamma V^*_\Gamma(s), \quad s=\prod_\Gamma s_\Gamma.$$
Then $C^\circ_{\bG^*}(s)^F\cong \prod\limits_\Gamma C_\Gamma^\circ(s)$ with 
$C_\Gamma^\circ(s):=C_{\I_0(V^*_\Gamma(s))}(s_\Gamma)$ and
$$C_\Gamma^\circ(s)\cong 
\left\{\begin{array}{ll}
\I_0(V^*_\Gamma(s)) & \textrm{if}~ \Gamma\in\cF_0,\\
\GL_{m_\Gamma(s)}(\varepsilon_\Gamma q^{\delta_\Gamma}) & \textrm{if}~ \Gamma\in\cF_1\cup\cF_2
\end{array}\right.$$
(compare with \cite[(1.13)]{FS89}).
Then the unipotent characters $\cE(C^\circ_{\bG^*}(s)^F,1)$ can be parametrized by $\lambda=\prod_\Gamma\lambda_\Gamma$,
where $\lambda_\Gamma$ is a partition of $m_\Gamma(s)$ for $\Gamma\in\cF_1\cup\cF_2$
while $\lambda_\Gamma$ is a Lusztig symbol of rank $[\frac{m_\Gamma(s)}{2}]$ for $\Gamma\in\cF_0$; see \cite[\S13.8]{Car85}.
Recall that each degenerate symbol is counted twice; see \cite[p.132]{FS89} for the definition of the opertator $'$.
The character in $\cE(\tG,\ts)$ corresponding to $\lambda$ is denoted by $\chi_{\ts,\lambda}$.
The label $(\ts,\lambda)$ is determined up to conjugacy in $\tG^*$.

\paragraph{}
When restricting the Jordan decomposition of the previous subsection to $G$, we apply the follow result of Lusztig.
\begin{theorem}[{\cite[Proposition~5.1]{Lu88}}]\label{theorem:JDnonconnectedcenter}
There is a bijection between the orbits of actions of $H^1(F,\cZ(\bG))$ on $\cE(G,s)$ and the orbits of $A_{G^*}(s)^F$ on $\cE(C_{\bG^*}^\circ(s)^F,1)$:
$$\bar{\cJ}: \quad \cE(G,s)/H^1(F,\cZ(\bG)) \longleftrightarrow \cE(C_{\bG^*}^\circ(s)^F,1)/A_{G^*}(s)^F.$$
Furthermore, if $[\varphi]=\bar{\cJ}([\chi])$, then $|[\chi]|=|A_{G^*}(s)^F_\varphi|$.\end{theorem}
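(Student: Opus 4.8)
The plan is to deduce Theorem~\ref{theorem:JDnonconnectedcenter} from \cite[Proposition~5.1]{Lu88}, the only real work being to match the objects appearing there with the notation fixed above. First I would recall Lusztig's mechanism. Since $i\colon\bG\to\tbG$ is a regular embedding, $\tbG/\bG$ is a torus, $G\trianglelefteq\tG$ with abelian (in fact cyclic) quotient, and restriction of characters from $\tG$ to $G$ is multiplicity-free; for $\tchi\in\cE(\tG,\ts)$ the constituents of $\Res^{\tG}_G\tchi$ form a single $\tG$-orbit contained in $\cE(G,s)$, where $s=i^*(\ts)$. Because $Z(\tbG)^F$ consists of scalars it acts trivially on $G$ by conjugation, so $GZ(\tbG)^F$ is contained in the stabilizer $\tG_\chi$ of every $\chi\in\Irr(G)$; hence the $\tG$-orbits on $\cE(G,s)$ are exactly the orbits of $\tG/GZ(\tbG)^F$, which is identified with $H^1(F,\cZ(\bG))$ via $\sigma_{\bG}^{\bG}$ (the case $\bL=\bG$ of \S\ref{subsect:notationsBon06}) and which acts on $\cE(G,s)$ through diagonal automorphisms. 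This gives the left-hand side of $\bar{\cJ}$.

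For the right-hand side I would pass to the dual picture. Fixing $\ts\in\tG^*$ with $i^*(\ts)=s$, connectedness of $\Ker i^*$ forces $C_{\tbG^*}(\ts)$ to be connected and makes $i^*$ restrict to an $F$-equivariant surjection $C_{\tbG^*}(\ts)\to C^\circ_{\bG^*}(s)$ with central torus kernel; this is the source of the bijection $\cE(C_{\tG^*}(\ts),1)\leftrightarrow\cE(C^\circ_{\bG^*}(s)^F,1)$ already recorded in \S\ref{subsect:irrSp}. The ambiguity of $\ts$ inside its $\Ker i^*$-coset is measured exactly by $A_{\bG^*}(s)$ through the homomorphism $\varphi_s$ of \S\ref{subsect:notationsBon06}, and $\tG^*$-conjugation transports this to an action of $A_{G^*}(s)^F$ on $\cE(C^\circ_{\bG^*}(s)^F,1)$. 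With these identifications in place, Lusztig's multiplicity formula is precisely the assertion that $\bar{\cJ}$ is a well-defined bijection between the two orbit sets together with the refinement $|[\chi]|=|A_{G^*}(s)^F_\varphi|$ whenever $[\varphi]=\bar{\cJ}([\chi])$; I would simply transcribe it.

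The one point that genuinely requires attention, and the step I expect to be the main obstacle, is the compatibility of $\bar{\cJ}$ with the Jordan decomposition $\cJ$ of $\tG$ from \S\ref{subsect:irrSp}: one must know that if $\chi\in\cE(G,s)$ lies under $\chi_{\ts,\lambda}$, then $\bar{\cJ}([\chi])$ is the $A_{G^*}(s)^F$-orbit of the unipotent character of $C^\circ_{\bG^*}(s)^F$ labelled by $\lambda$. This amounts to checking that $\cJ$ intertwines the conjugation action of $\tG/GZ(\tbG)^F$ on $\cE(\tG,\ts)$ with the action of $A_{G^*}(s)^F$ on $\cE(C_{\tG^*}(\ts),1)$, which is the heart of \cite[\S5]{Lu88} and which I would cite rather than reprove. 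In the symplectic case the remaining bookkeeping is light: $|A_{\bG^*}(s)|\leqslant 2$ with $A_{\bG^*}(s)^F=A_{\bG^*}(s)$, while $Z(\bG)=\{\pm1\}$ with $F$ acting trivially gives $H^1(F,\cZ(\bG))\cong C_2$, so both orbit sets carry the action of a group of order at most $2$; one then only has to separate the non-isolated case $|A_{\bG^*}(s)|=1$, where $\bar{\cJ}$ is a plain bijection and each $\Res^{\tG}_G\tchi$ is irreducible, from the isolated case $|A_{\bG^*}(s)|=2$ (that is, $-1$ an eigenvalue of $s$), where the orbit merging or splitting is governed by the degeneracy of the Lusztig symbols $\lambda_{X-1},\lambda_{X+1}$, the degenerate ones being the fixed points of the $A_{\bG^*}(s)$-action.
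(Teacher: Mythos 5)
Your proposal is essentially the paper's approach: the paper states this result as a direct citation of \cite[Proposition~5.1]{Lu88} and offers no independent proof, and your argument likewise reduces to matching the notation of \S\ref{subsect:notationsBon06} (identifying $\tG/GZ(\tbG)^F$ with $H^1(F,\cZ(\bG))$ via $\sigma_{\bG}^{\bG}$, and transporting the $\Ker i^*$-ambiguity of $\ts$ to the $A_{\bG^*}(s)$-action via $\varphi_s$) and then quoting Lusztig for the multiplicity statement. That core is correct and is exactly what the paper does.

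One caveat about your final paragraph, which is extra bookkeeping beyond the statement itself but contains an inversion you should fix before using it (it is what feeds into Theorem~\ref{theorem:characterSp}): in the even orthogonal factor $\I_0(V^*_{X+1}(s))$, the unipotent characters labelled by \emph{non-degenerate} symbols are the ones fixed by the nontrivial element of $A_{\bG^*}(s)$ (acting as the graph automorphism), whereas each \emph{degenerate} symbol labels two unipotent characters which are swapped by that action — so degenerate symbols correspond to free orbits, not fixed points; this is why $\Res^{\tG}_G\chi_{\ts,\lambda}$ stays irreducible precisely when $\lambda_{X+1}$ is degenerate and splits into two constituents when it is non-degenerate. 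Also, $\lambda_{X-1}$ is attached to the odd orthogonal factor $\I_0(V^*_{X-1}(s))$; since $m_{X-1}(s)$ is odd the relevant symbols have odd defect and are never degenerate, and conjugation by an element of $\I(V^*_{X-1}(s))\setminus\I_0(V^*_{X-1}(s))$ is inner up to the center, so $\lambda_{X-1}$ plays no role in the splitting. None of this affects the validity of citing \cite[Proposition~5.1]{Lu88} for the theorem as stated.
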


The above Jordan decomposition in our case can be made even more explicitly using the partitions and Lusztig symbols.
For a semisimple element $s$ of $G^*$, $\lambda=\prod_\Gamma\lambda_\Gamma$ is as above.
Let $i\Irr(G)$ be the set of $G^*$-conjugacy classes of triples $(s,\lambda,i)$ with $i\in\Z/2\Z$ and \emph{the degenerate symbols are counted only once}.

\begin{theorem}\label{theorem:characterSp}
There is a surjective map:
$$i\Irr(G) \longrightarrow \Irr(G),\quad
(s,\lambda,i)^{G^*} \mapsto \chi_{s,\lambda,i}\in\cE(G,s)$$
satisfying
\begin{compactenum}[(1)]
\item if $(s_1,\lambda_1)$ and $(s_2,\lambda_2)$ are not conjugate under $G^*$, $\chi_{s_1,\lambda_1,i}\neq\chi_{s_2,\lambda_2,j}$;
\item $\chi_{s,\lambda,i}$'s are all the components of $\Res^{\tG}_G\chi_{\ts,\lambda}$;
\item $\chi_{s,\lambda,0}=\chi_{s,\lambda,1}$ (denoted also as $\chi_{s,\lambda}$) if and only if $\lambda_{X+1}$ is degenerate,
	in which case, $\chi_{s,\lambda}=\Res^{\tG}_G\chi_{\ts,\lambda}=\Res^{\tG}_G\chi_{\ts,\lambda'}$.
\end{compactenum}
\end{theorem}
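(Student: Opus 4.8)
\textbf{Proof proposal for Theorem~\ref{theorem:characterSp}.}

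The plan is to build the map from the Jordan decomposition for $\tG=\CSp_{2n}(q)$ described in the previous subsection, combined with Lusztig's result (Theorem~\ref{theorem:JDnonconnectedcenter}) applied to $\bG=\Sp(\bV)$. First I would recall that since $\tbG=Z(\tbG)\bG$, the center $Z(\tbG)$ acts trivially on $\bG$ by conjugation, so every $\chi\in\Irr(\tG)$ restricts to $G$ as a sum of $\tG/G$-conjugate irreducible constituents, all occurring with multiplicity one (this is the standard Clifford theory for $\tG/G$ cyclic, here $\tG/G\cong\F_q^\times/(\F_q^\times)^2\cong C_2$ since $\bV^*$ is odd-dimensional). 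Hence for each $\ts\in\tG^*$ and each $\lambda$, the character $\chi_{\ts,\lambda}\in\cE(\tG,\ts)$ restricts to either one or two irreducible characters of $G$, lying in $\cE(G,s)$ for $s=i^*(\ts)$. I would index these constituents by $i\in\Z/2\Z$, defining $\chi_{s,\lambda,i}$ to be the constituents of $\Res^{\tG}_G\chi_{\ts,\lambda}$, which immediately gives property~(2).

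For property~(3): by Theorem~\ref{theorem:JDnonconnectedcenter}, the number of constituents of $\Res^{\tG}_G\chi_{\ts,\lambda}$ equals $|A_{G^*}(s)^F_\varphi|$, where $\varphi\in\cE(C^\circ_{\bG^*}(s)^F,1)$ is the unipotent character matched to $\lambda$ under $\bar{\cJ}$ (more precisely, $|H^1(F,\cZ(\bG))|/|[\chi]|$ stabilizer bookkeeping, but since $H^1(F,\cZ(\bG))\cong C_2$ here, $|[\chi]|\in\{1,2\}$). So the restriction is reducible exactly when $A_{G^*}(s)^F$ — which has order $2$ precisely when $s$ is isolated, i.e.\ when $-1$ is an eigenvalue of $s$ — fixes $\varphi$. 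The key computation is to identify the action of the nontrivial element of $A_{\bG^*}(s)^F$ on $\cE(C^\circ_{\bG^*}(s)^F,1)$ in terms of the parametrizing data $\lambda=\prod_\Gamma\lambda_\Gamma$: it acts trivially on all $\lambda_\Gamma$ with $\Gamma\in\cF_1\cup\cF_2$ (these come from $\GL$-factors untouched by the outer automorphism), acts trivially on $\lambda_{X-1}$ (the $C_{X-1}^\circ(s)$ factor $\I_0(V^*_{X-1}(s))$ is an odd-dimensional orthogonal group whose symbol parametrization is preserved), and on the $C_{X+1}^\circ(s)$-factor $\SO^\pm$ it is induced by the order-$2$ outer automorphism $\GO/\SO$, which on Lusztig symbols is exactly the operator $'$ swapping the two labels of a non-degenerate symbol and fixing degenerate ones. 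Hence $A_{G^*}(s)^F$ fixes $\varphi$ iff $\lambda_{X+1}$ is degenerate, giving the reducibility criterion in~(3); the final clause $\chi_{s,\lambda}=\Res^{\tG}_G\chi_{\ts,\lambda}=\Res^{\tG}_G\chi_{\ts,\lambda'}$ then follows because $\chi_{\ts,\lambda}$ and $\chi_{\ts,\lambda'}$ are the two (equal, when $\lambda_{X+1}$ degenerate) $\tG$-characters whose common restriction is the single $\chi_{s,\lambda}$.

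For property~(1) and well-definedness of the map $i\Irr(G)\to\Irr(G)$: distinct $G^*$-classes of pairs $(s,\lambda)$ give characters in different Lusztig series $\cE(G,s)$ (if the $s$'s are non-conjugate) or, within the same series, correspond via $\bar{\cJ}$ to $A_{G^*}(s)^F$-orbits of distinct $\varphi$'s, hence are distinct; this uses that the Jordan decomposition $\cJ$ for $\tG$ is a genuine bijection and that $i^*$ induces a bijection between $\tG^*$-classes of $\ts$ and $G^*$-classes of $s$ on the semisimple side relevant here (the fibers of $i^*$ being handled by tensoring with $\tG/G$-characters, which permutes the $\chi_{\ts,\lambda}$ compatibly). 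Surjectivity is clear since every irreducible character of $G$ is a constituent of the restriction of some $\chi_{\ts,\lambda}$. The index $i$ is then just a choice, within each reducible restriction, of which of the two constituents we call $\chi_{s,\lambda,0}$ and which $\chi_{s,\lambda,1}$ — any fixed such choice works, and the count matches because degenerate symbols were counted twice in the $\tG$-parametrization but $i\Irr(G)$ counts them once with the extra $\Z/2\Z$ label. I expect the main obstacle to be pinning down precisely the action of $A_{\bG^*}(s)^F$ on the unipotent characters of the disconnected-center factor and matching it to the symbol operator $'$; this is essentially a careful unwinding of \cite[\S13.8]{Car85}, \cite[p.132]{FS89} and \cite{Lu88}, and making the $F$-rationality bookkeeping (orbits versus stabilizers in Theorem~\ref{theorem:JDnonconnectedcenter}) fully explicit for our groups.
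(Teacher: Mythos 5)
Your overall route --- Clifford theory for $\tG/G\cong C_2$ combined with Theorem~\ref{theorem:JDnonconnectedcenter}, reducing everything to the action of $A_{G^*}(s)^F$ on $\cE(C^\circ_{\bG^*}(s)^F,1)$ --- is the same as the paper's, but your key computation of that action is stated backwards, and this reverses part (3). For the factor $\I_0(V^*_{X+1}(s))$, the outer involution induced by $\I(V^*_{X+1}(s))$ fixes every unipotent character labelled by a \emph{non-degenerate} symbol and interchanges the two unipotent characters attached to each \emph{degenerate} symbol (that pair is exactly what the operator $'$ of \cite[p.132]{FS89} records); it does not ``swap the two labels of a non-degenerate symbol and fix degenerate ones''. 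Hence $A_{G^*}(s)^F$ fixes $\varphi_\lambda$ if and only if $\lambda_{X+1}$ is non-degenerate, so by your own (correct) criterion ``$\Res^{\tG}_G\chi_{\ts,\lambda}$ is reducible iff $A_{G^*}(s)^F$ fixes $\varphi_\lambda$'', the restriction splits precisely when $\lambda_{X+1}$ is non-degenerate and is irreducible when $\lambda_{X+1}$ is degenerate --- which is what (3) asserts. As written, your argument derives the negation of (3), and it is internally inconsistent: in the next clause you treat $\Res^{\tG}_G\chi_{\ts,\lambda}$ as the single irreducible character $\chi_{s,\lambda}$ exactly in the degenerate case. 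Note also that $\chi_{\ts,\lambda}$ and $\chi_{\ts,\lambda'}$ are \emph{distinct} characters of $\tG$ (the Jordan decomposition of the connected-centre group $\tG$ is a bijection on each series $\cE(\tG,\ts)$), so your parenthetical ``equal, when $\lambda_{X+1}$ degenerate'' is false; what is true, and what the degenerate case of (3) claims, is only that their restrictions to $G$ coincide, i.e.\ $\varphi_\lambda,\varphi_{\lambda'}$ form one $A_{G^*}(s)^F$-orbit with trivial stabilizers.

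A secondary gap: the well-definedness of the label $\chi_{s,\lambda,i}$ over the fibre of $i^*$ is waved through with ``tensoring with $\tG/G$-characters \dots\ permutes the $\chi_{\ts,\lambda}$ compatibly''. The needed compatibility, $\Res^{\tG}_G\chi_{\ts,\lambda}=\Res^{\tG}_G\chi_{z\ts,\lambda}$ for all $z\in(\Ker i^*)^F$, is not automatic for an arbitrary choice of Jordan decomposition of $\tG$; the paper secures it by choosing the labels of $\Irr(\tG)$ as in \cite[11.6]{Bon06}, and you must invoke such a statement (or make such a choice explicitly) for the map $(s,\lambda,i)^{G^*}\mapsto\chi_{s,\lambda,i}$ to be well defined.
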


\begin{proof}
The characters in $\cE(G,s)$ are just the components of the restrictions of those in $\cE(\tG,\ts)$.
By the proof of \cite{Lu88}, the bijection $\bar{\cJ}$ is obtained by considering the restriction from $\tG$ to $G$.
Note that $C_{\bG^*}(s)^F\cong C_{X\pm1}(s) \prod\limits_{\Gamma\in\cF_1\cup\cF_2} C_\Gamma^\circ(s)$ with $C_{X\pm1}(s)=\{ g_{X-1}g_{X+1}\in\I_0(V^*_{X-1}(s)\bot V^*_{X+1}(s)) \mid g_{X-\epsilon}\in\I(V^*_{X-\epsilon}(s)),\epsilon=\pm1 \}$.
The actions of $A_{G^*}(s)^F$ follows from \cite[(4B)]{FS89}.
Then the existence of the required bijection follows from Theorem \ref{theorem:JDnonconnectedcenter}.
In particular, if $\lambda_{X+1}\neq\emptyset$ is degenerate, $|A_{G^*}(s)^F_\varphi|=1$ for $\varphi=\chi_{1,\lambda}$ or $\varphi=\chi_{1,\lambda'}$; $\chi_{1,\lambda}$ and $\chi_{1,\lambda'}$ constitute an $A_{G^*}(s)^F$-orbit.
Thus $\Res^{\tG}_G\chi_{\ts,\lambda}=\Res^{\tG}_G\chi_{\ts,\lambda'}$ is an irreducible character of $G$ by Theorem \ref{theorem:JDnonconnectedcenter}.
By \cite[11.6]{Bon06}, the labels of irreducible characters of $\tG$ can be chosen such that $\Res^{\tG}_G \chi_{\ts,\lambda} = \Res^{\tG}_G \chi_{z\ts,\lambda}$ for any $z\in (\Ker i^*)^F$, thus the labels for characters as above are well defined.
\end{proof}

See \cite[TABLE A1]{Wh90} for the Jordan decomposition of characters of $\Sp_4(q)$.

\begin{remark}
Of course, we can adjust the definition of $i\Irr(G)$ slightly to make the above map to be a bijection.
But we choose not to do so, because the above notations are convenient in some cases.
The same remarks apply for parametrization of blocks and irreducible Brauer characters below.
\end{remark}


\subsection{Blocks of $\Sp_{2n}(q)$}\label{subsect:blockSp}
Let $\tB$ be a blocks of $\tG$ and $B$ is a blocks of $G$ covered by $\tB$.
Assume $\tR$ and $R=G\cap\tR$ be defect groups of $\tB$ and $B$ respectively, thus they are groups of defect type as in \cite[\S5]{FS89}.
We use the notations and results in \cite{FS89}.
In particular, $R=G\cap\tR, \tR=Z(\tG)_\ell R$ and
\begin{align*}
R = R_0 \times R_+ &= R_0 \times R_1 \times\cdots\times R_t, \\
V = V_0 ~\bot~ V_+ &= V_0 ~\bot ~V_1~ \bot\cdots\bot ~V_t,
\end{align*}
where $R_0$ is the trivial group on $V_0$ and $R_i=R_{m_i,\alpha_i}\wr X_{\beta_i}$ is as in \cite[\S5]{FS89} for $i>0$.
Let $C=C_G(R)$ and $\tC=C_{\tG}(R)=C_{\tG}(\tR)$, then
$$C=C_0\times C_+,\quad \tC=\tC_0C_+,$$
where $C_0=\I_0(V_0)$, $\tC_0=\langle\tau,C_0Z(C_+)\rangle$, $C_+=C_1 \times\cdots\times C_t$ with $C_i=C_{\I_0(V_i)}(R_i)=C_{m_i,\alpha_i,\beta_i}=C_{m_i,\alpha_i}\otimes I_{\beta_i}$ and $[\tau,R_+C_+]=1$ with $\tau$ being as in  \cite[\S5]{FS89}.

Let $(R,b),(\tR,\tb)$ be two maximal Brauer pairs of $B,\tB$ respectively and $\theta,\ttheta$ be the canonical characters of $b,\tb$ respectively.
As in \cite{FS89}, these maximal Brauer pairs are denoted as $(R,\theta),(\tR,\ttheta)$.
Then $\theta,\ttheta$ are characters of canonical type as in \cite[\S7]{FS89}.
In particular, there exist decompositions
$$\theta=\theta_0\times\theta_+,\quad \ttheta=\ttheta_0\theta_+,$$
where $\theta_0,\ttheta_0,\theta_+$ are as in \cite[\S7]{FS89}.
Let $N=N_{\I_0(V)}(R)$ and $\tN=N_{\J_0(V)}(\tR)=N_{\J_0(V)}(R)$, then
$$N= N_0\times N_+, \quad \tN= \langle \tau,N \rangle,$$
where $N_0=\I_0(V_0)=C_0$.

\begin{lemma}\label{lemma:MaximalBrauerPairs}
Keep the above notations, $\tB$ covers $1$ or $2$ blocks of $G$ if and only if $\Res^{\tC_0}_{C_0}\ttheta_0$ is an irreducible or a sum of two irreducible characters of $C_0$ respectively.
\end{lemma}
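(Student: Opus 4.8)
The plan is to reduce the statement about how many blocks of $G$ the block $\tB$ covers to a statement about the restriction of a single character, via Clifford theory applied to $\tC$ over $C$. First I would recall the basic principle: by Fong--Reynolds and the theory of Brauer pairs, the blocks of $G$ covered by $\tB$ correspond bijectively to the $\tN$-orbits (equivalently the $\tG$-orbits, but these coincide here since $\tG=Z(\tG)\bG$ acts) on the set of blocks $b$ of $C$ with $(R,b)$ a Brauer pair contained in some $\tG$-conjugate of $(R,\tb)$; concretely, the number of covered blocks equals the number of $C$-blocks $b$ below the fixed $\tC$-block $\tb$, counted up to the action of the relevant inertia group. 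Since $\tC=\tC_0 C_+$ and $C=C_0\times C_+$ with $C_+\trianglelefteq \tC$ and $\tC/C\cong \tC_0/C_0$, the block $\tb$ lies over a unique block of $C_+$ (namely the one determined by $\theta_+$) and the covering behavior is governed entirely by the factor at the ``$X\pm1$-part'', i.e. by the pair $(\tC_0, C_0)$ together with the canonical character $\ttheta_0$ of the block of $\tC_0$ corresponding to $\tb$.

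Next I would make this precise. The canonical character $\ttheta = \ttheta_0\theta_+$ of $\tb$ restricts to $C = C_0\times C_+$ as $(\Res^{\tC_0}_{C_0}\ttheta_0)\times\theta_+$. By Clifford theory the blocks of $C$ covered by $\tb$ are in bijection with the orbits of $\tC$ on the constituents of $\Res^{\tC}_C\ttheta$; and since $[\tau, R_+C_+]=1$ and $\theta_+$ is $\tC$-invariant as a character of $C_+$, these orbits are the same as the orbits of $\tC_0$ on the constituents of $\Res^{\tC_0}_{C_0}\ttheta_0$. Now $\tC_0 = \langle \tau, C_0 Z(C_+)\rangle$ and $C_0 = \I_0(V_0)$ is a symplectic group on which $Z(C_+)$ acts trivially, so $\tC_0/C_0$ is generated by the image of $\tau$ and has order at most $2$ (this is where one invokes that $\I_0(V_0)=\Sp(V_0)$ has index-$2$ normalizer-type overgroup inside the conformal group, paralleling the $|A_{\bG^*}(s)|\le 2$ discussion in \S\ref{subsect:notationsBon06}). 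Hence $\Res^{\tC_0}_{C_0}\ttheta_0$ is either irreducible (when $\ttheta_0$ is $C_0$-invariant stays irreducible after restriction, i.e. $\tau$ fixes a constituent with the right invariance) or splits as a sum of exactly two $C_0$-conjugate irreducible characters; in the first case $\tb$ covers a single block of $C$, in the second case two blocks of $C$, and these are permuted transitively or fixed by $\tC_0$ accordingly.

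Finally I would transfer this from Brauer pairs back to blocks of the whole groups. Using the bijection between blocks of $\tG$ (resp. $G$) and $\tG$-classes (resp. $G$-classes) of maximal Brauer pairs, together with the fact that $\tG$ normalizes $R$ up to conjugacy and the extended first main theorem, the number of blocks of $G$ covered by $\tB$ equals the number of $\tG\!:\!G$-orbits on the blocks of $C$ below $\tb$, which by the previous paragraph is $1$ if $\Res^{\tC_0}_{C_0}\ttheta_0$ is irreducible and $2$ if it is a sum of two irreducibles — and one checks the two cases are mutually exclusive and exhaustive because $\tC_0/C_0$ has order dividing $2$. The main obstacle I expect is the bookkeeping in identifying the inertia groups correctly: one must verify that the action of $\tN/N$ (equivalently $\tau$) on the blocks of $C$ below $\tb$ is faithfully detected by its action on the constituents of $\Res^{\tC_0}_{C_0}\ttheta_0$, i.e. that no extra fusion or extra stabilizing happens at the level of $N$ versus $C$, and that the passage $\tG \leftrightarrow \tN \leftrightarrow \tC$ preserves the count. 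This is essentially a careful application of \cite[\S5,\S7]{FS89} combined with standard block-covering Clifford theory, with the key input being $|\tC_0 : C_0 Z(C_+)| \le 2$.
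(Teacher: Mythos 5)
Your reduction follows the same route as the paper --- pass to maximal Brauer pairs, use the factorizations $C=C_0\times C_+$, $\tC=\tC_0C_+$ and $\Res^{\tC}_{C}\ttheta=(\Res^{\tC_0}_{C_0}\ttheta_0)\times\theta_+$, and note that the conjugation action on $\Irr(C_0)$ factors through a quotient of order at most $2$ --- but it stops exactly where the paper's proof actually happens. Knowing that $\Res^{\tC_0}_{C_0}\ttheta_0$ has one or two constituents, i.e. that $\theta_0^\tau=\theta_0$ or $\theta_0^\tau\neq\theta_0$, does not by itself determine how many blocks of $G$ are covered: the blocks of $G$ with defect group $R$ are parametrized by $N$-conjugacy classes of canonical characters of $C$ (not by the characters themselves, and not by blocks of $C$ below $\tb$), so in the split case you must rule out that $N$ fuses $\theta_0\times\theta_+$ with $\theta_0^\tau\times\theta_+$, which would force $B^\tau=B$ and leave only one covered block of $G$ even though the restriction is reducible. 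You explicitly flag this point (``no extra fusion or extra stabilizing happens at the level of $N$ versus $C$'') as the main obstacle, but you never discharge it, and without it the biconditional is not proved.

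The missing ingredient is the one-line structural fact recorded just before the lemma: $N=N_0\times N_+$ with $N_0=\I_0(V_0)=C_0$. Hence $N$-conjugation fixes the first component of $\theta=\theta_0\times\theta_+$ (conjugation by $C_0$ is inner on $C_0$), so the $N$-class of $\theta$ is $\{\theta_0\times\theta_+^{n_+}\mid n_+\in N_+\}$ and can never contain $\theta_0^\tau\times\theta_+$ when $\theta_0^\tau\neq\theta_0$. Combined with the fact that the covered blocks form a single orbit $\{B,B^\tau\}$ (since $\tG=GZ(\tG)\langle\tau\rangle$, and $\tau\in\tC$ centralizes $R$, so $B^\tau$ has maximal pair $(R,\theta^\tau)=(R,\theta_0^\tau\times\theta_+)$), this gives exactly the dichotomy of the lemma; this is precisely the content of the paper's short proof. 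A smaller slip: $\tC_0/C_0$ need not be generated by the image of $\tau$ nor have order at most $2$ (it contains the image of $Z(C_+)$); what is true, and what you in effect use at the end, is that $Z(C_+)$ acts trivially on $\Irr(C_0)$ and $|\tC_0:C_0Z(C_+)|\leqslant 2$.
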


\begin{proof}
The blocks of $G$ with $R$ as a defect group are in bijection with the $N$-conjugacy classes of the canonical characters of $C$.
Since $N_0=C_0$, the $N$-conjugacy class of $\theta$ is $\{ \theta_0\times\theta_+^{n_+} \mid n_+\in N_+\}$.
Then the lemma follows.
\end{proof}

\begin{remark}
\begin{compactenum}[(1)]
\item The first components of $\tN$-conjugates of $\ttheta$ may differ from $\ttheta_0$.
In fact, this is why the second component of the label for a block of $\tG$ given in \cite{FS89} may be a set of cores instead of just a core.
\item The $N$-conjugacy class of $\theta$ contains exactly one normalized canonical 
character $\theta$ (see \cite[p.153]{FS89}).
\end{compactenum}
\end{remark}

Fong and Srinivasan label blocks of $\tG$ by considering some Brauer pairs defined in \cite[\S8]{FS89}.
Let $R'$ be the subgroup obtained from $R$ by replacing each $R_i$ for $i>0$ by the base group $(R_{m_i,\alpha_i})^{r^{\beta_i}}$ of $R_{m_i,\alpha_i}\wr X_{\beta_i}$.
Then
\begin{align*}
R' = R'_0 \times R'_+ &= R'_0 \times R'_1 \times\cdots\times R'_t, \\
V' = V'_0 ~\bot~ V'_+ &= V'_0 ~\bot ~V'_1~ \bot\cdots\bot ~V'_{t'},
\end{align*}
where $V'_0=V_0$, $R'_0=R_0$ and $i'\in i$ denotes $R'_{i'}$ is a component of the base subgroup of $R_i$ as in \cite[\S8]{FS89}.
Let $\tR'=R'Z(\tG)_\ell$ and all the related notations such as $C'$, $\tC'$, $\theta'$, $\ttheta'$ are defined similarly as in \cite[\S8]{FS89}.
In particular,
$$C'=C'_0\times C'_+,\quad \tC'=\tC'_0C'_+,$$
where $C'_0=C_0=\I_0(V_0)$, $\tC'_0=\langle\tau,C_0Z(C'_+)\rangle$, $C'_+=C'_1 \times\cdots\times C'_{t'}$ with $C'_{i'}=C_{m_i,\alpha_i}$ if $i'\in i$.
Hence $Z(C_+)\leqslant Z(C'_+)$.
$\theta'$ and $\ttheta'$ can be decomposed as
$$\theta'=\theta'_0\times\theta'_+, \quad \ttheta'= \ttheta'_0\theta'_+,$$
where $\theta'_0=\theta_0$ and $\ttheta'_0$ is an extension of $\ttheta_0$.
Thus by Lemma \ref{lemma:MaximalBrauerPairs}, the following is immediate.

\begin{lemma}\label{lemma:blockcover}
Keep the above notations, $\tB$ covers $1$ or $2$ blocks of $G$ if and only if $\Res^{\tC'_0}_{C'_0}\ttheta'_0$ is an irreducible or a sum of two 
irreducible characters of $C_0$ respectively.
\end{lemma}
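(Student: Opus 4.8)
The plan is to obtain the statement directly from Lemma~\ref{lemma:MaximalBrauerPairs} by showing that the restriction $\Res^{\tC'_0}_{C'_0}\ttheta'_0$ coincides, as a character of $C_0$, with $\Res^{\tC_0}_{C_0}\ttheta_0$. First I would record the relevant containments among the centralizers. Since $V'_0=V_0$ and $R'_0=R_0$, we have $C'_0=C_0=\I_0(V_0)$. Moreover the same element $\tau$ occurs in $\tC_0=\langle\tau,C_0Z(C_+)\rangle$ and $\tC'_0=\langle\tau,C_0Z(C'_+)\rangle$, and $Z(C_+)\leqslant Z(C'_+)$ because $C'_+$ is assembled from the base subgroups $C_{m_i,\alpha_i}$ whereas $C_+$ involves the additional $I_{\beta_i}$ factors; hence $\tC_0\leqslant\tC'_0$. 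Altogether $C_0=C'_0\leqslant\tC_0\leqslant\tC'_0$.

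Next I would invoke the fact, built into the construction of the auxiliary Brauer pair $(R',\theta')$ following \cite[\S8]{FS89}, that in the decompositions $\theta'=\theta'_0\times\theta'_+$ and $\ttheta'=\ttheta'_0\theta'_+$ one has $\theta'_0=\theta_0$ and $\ttheta'_0$ is an extension of $\ttheta_0$, i.e. $\Res^{\tC'_0}_{\tC_0}\ttheta'_0=\ttheta_0$. Using $C'_0=C_0$ and transitivity of restriction along $C_0\leqslant\tC_0\leqslant\tC'_0$, this gives
$$\Res^{\tC'_0}_{C'_0}\ttheta'_0=\Res^{\tC_0}_{C_0}\bigl(\Res^{\tC'_0}_{\tC_0}\ttheta'_0\bigr)=\Res^{\tC_0}_{C_0}\ttheta_0$$
as characters of $C_0$. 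In particular $\Res^{\tC'_0}_{C'_0}\ttheta'_0$ is irreducible (respectively a sum of two irreducible characters of $C_0$) if and only if $\Res^{\tC_0}_{C_0}\ttheta_0$ is. Combining this equivalence with Lemma~\ref{lemma:MaximalBrauerPairs} proves the lemma.

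The only place that genuinely needs attention — and which I would treat as the sole (minor) obstacle — is the assertion that passing from $(R,\theta)$ to $(R',\theta')$ leaves the first component $\theta_0$ unchanged and replaces $\ttheta_0$ only by an extension of itself. This is precisely what the Fong--Srinivasan construction of the Brauer pairs in \cite[\S8]{FS89} delivers (the modification there only acts on the components indexed by $i>0$, while $V_0$ and $R_0$ are untouched), so I would simply cite \cite[\S8]{FS89} for it. Granting this, the argument above is immediate, which matches the text's assertion that the lemma follows at once.
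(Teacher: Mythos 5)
Your proposal is correct and follows essentially the same route as the paper: the paper records beforehand that $C'_0=C_0$, $Z(C_+)\leqslant Z(C'_+)$, $\theta'_0=\theta_0$ and $\ttheta'_0$ extends $\ttheta_0$, and then declares the lemma immediate from Lemma~\ref{lemma:MaximalBrauerPairs}, which is exactly the computation $\Res^{\tC'_0}_{C'_0}\ttheta'_0=\Res^{\tC_0}_{C_0}\ttheta_0$ that you spell out. The one point you flag (that the passage from $(R,\theta)$ to $(R',\theta')$ leaves $\theta_0$ untouched and replaces $\ttheta_0$ by an extension) is indeed part of the stated setup drawn from \cite[\S8]{FS89}, so citing it as you do is exactly what the paper intends.
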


Combined with the above lemma about block covering and the labels of blocks 
of $\CSp_{2n}(q)$ giving in \cite[\S11]{FS89}, we can classify blocks of $G=\Sp_{2n}(q)$.
Let $\ts$ be a semisimple $\ell'$-element of $\tG^*$.
The block of $\tG=\CSp_{2n}(q)$ with label $(\ts,\cK)$ (for the definition of the label, see \cite[\S11]{FS89}, noticing that the first component of the label is denoted as $s$ there) will be denoted as $B_{\ts,\cK}$.
Let $e$ be the multiplicative order of $q^2$ in $(\Z/\ell\Z)^\times$.
Set $e_\Gamma=e$ for $\Gamma\in\cF_0$,  while $e_\Gamma$ is the multiplicative order of $\varepsilon_\Gamma q^{\delta_\Gamma}$ in $(\Z/\ell\Z)^\times$ for $\Gamma\in\cF_1\cup\cF_2$.
The $e_\Gamma$ here is the same as in \cite[\S9]{FS89}.

\emph{Count the degenerate Lusztig symbols only once} and Denote by $i\Bl(G)$ the set of $G^*$-conjugacy classes of the triples $(s,\kappa,i)$ with $s$ a semisimple $\ell'$-element $G^*$, $i\in\Z/2\Z$ and $\kappa= \prod_\Gamma \kappa_\Gamma$ satisfying the condition (C) as below:
\begin{compactenum}[(C.1)]\label{kappa}
\item $\kappa_\Gamma$ the $e_\Gamma$-core of some partition of $m_\Gamma(s)$ for $\Gamma\in\cF_1\cup\cF_2$;
\item $\kappa_{X-1}$ is the $e$-core of some Lusztig symbol of rank $[\frac{m_{X-1}(s)}{2}]$ with odd defect;
\item $\kappa_{X+1}$ is the $e$-core of some Lusztig symbol of rank $\frac{m_{X+1}(s)}{2}$ with defect $\equiv0$ or $2\mod4$ according to $\eta_{X+1}(s)=1$ or $-1$.
\end{compactenum}

\begin{theorem}\label{theorem:blockSp}
There is a surjective map:
$$i\Bl(G) \longrightarrow \Bl(G),\quad (s,\kappa,i)^{G^*} \mapsto B_{s,\kappa,i}$$
satisfying
\begin{compactenum}[(1)]
\item if $(s_1,\kappa_1)$ and $(s_2,\kappa_2)$ are not conjugate under $G^*$, $B_{s_1,\kappa_1,i}\neq B_{s_2,\kappa_2,j}$;
\item $B_{s,\kappa,i}$'s are all the blocks covered by a block $B_{\ts,\cK}$ with $\kappa\in\cK$ (see \cite[pp.172-173]{FS89});
\item $B_{s,\kappa,0}=B_{s,\kappa,1}$ (denoted also as $B_{s,\kappa}$) if and only if $\kappa_{X+1}$ is degenerate,
	in which case, $B_{s,\kappa}$ is covered by the blocks $B_{\ts,\cK}$ and $B_{\ts,\cK'}$ of $\tG$ with $\kappa\in\cK$
	(it may happen that $B_{\ts,\cK}=B_{\ts,\cK'}$; see \cite[pp.172-173]{FS89}).
\end{compactenum}
\end{theorem}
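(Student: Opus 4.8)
The strategy is to deduce the classification of the blocks of $G$ from the Fong--Srinivasan classification of the blocks of $\tG=\CSp_{2n}(q)$ in \cite[\S11]{FS89} by analysing block covering, exactly parallel to the way Theorem~\ref{theorem:characterSp} was deduced from Theorem~\ref{theorem:JDnonconnectedcenter}. Since $G\trianglelefteq\tG$ and $\tG=Z(\tG)G$, every block of $G$ is covered by at least one block of $\tG$, and by the Clifford theory of blocks the blocks of $G$ covered by a fixed block of $\tG$ form a single $\tG$-orbit. Hence it suffices, for each Fong--Srinivasan block $\tB=B_{\ts,\cK}$ of $\tG$, to determine exactly the blocks of $G$ that $\tB$ covers and then to track which labels $(\ts,\cK)$ give rise to the same block of $G$; the surjectivity of the asserted map is then automatic.

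First I would recall that the blocks of $G$ with a defect group $R$ of defect type are in bijection with the $N=N_{\I_0(V)}(R)$-conjugacy classes of canonical characters $\theta=\theta_0\times\theta_+$ of $C=C_G(R)$, and that Fong and Srinivasan work with the refined data $(\tR',\ttheta'=\ttheta'_0\theta'_+)$. Fix such a block $\tB=B_{\ts,\cK}$ and the associated data. By Lemma~\ref{lemma:blockcover}, the number of blocks of $G$ covered by $\tB$ equals the number of irreducible constituents of $\Res^{\tC'_0}_{C'_0}\ttheta'_0$, which is $1$ or $2$. Since $N_0=C_0$ acts trivially on $\Irr(C_0)$ while $N_+$ merely permutes the conjugates of $\theta_+$, each constituent $\theta_0$ of $\Res^{\tC'_0}_{C'_0}\ttheta'_0$ yields, together with $\theta_+$, a block of $G$ covered by $\tB$, and the parameter $i\in\Z/2\Z$ records the (at most two) choices of $\theta_0$.

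The combinatorial core of the argument is to translate the reducibility of $\Res^{\tC'_0}_{C'_0}\ttheta'_0$ into the stated condition on $\kappa$. Here $C'_0=C_0=\I_0(V_0)=\Sp(V_0)$ and $\ttheta'_0$ is a character of $\tC'_0=\langle\tau,C_0Z(C'_+)\rangle$, so the restriction $\Res^{\tC'_0}_{C'_0}$ is of exactly the same nature as the restriction from $\CSp(V_0)$ to $\Sp(V_0)$ studied in Theorem~\ref{theorem:characterSp}. Since $R_0$ acts trivially on $V_0$, the character $\theta_0$ lies in a defect-zero block of $\Sp(V_0)$, so its Jordan-decomposition label consists of symbols and partitions that are $e$-cores; one checks, using the construction in \cite[\S7,\S8]{FS89}, that the component of this label attached to the elementary divisor $X+1$ is precisely the symbol $\kappa_{X+1}$ occurring in the Fong--Srinivasan label of $\tB$. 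By Theorem~\ref{theorem:characterSp}(3) applied to $\Sp(V_0)$, the restriction $\Res^{\tC'_0}_{C'_0}\ttheta'_0$ is therefore reducible exactly when $\kappa_{X+1}$ is non-degenerate. This yields part (3): if $\kappa_{X+1}$ is degenerate, then $\tB$ and the block $B_{\ts,\cK'}$ obtained by applying the operator $'$ of \cite[p.132]{FS89} to the $X+1$-component cover the same block $B_{s,\kappa}$ of $G$ (and it may happen that $B_{\ts,\cK}=B_{\ts,\cK'}$), whereas otherwise $i=0,1$ yield two distinct covered blocks; part (2) is just a restatement of this covering analysis. The admissibility conditions (C.1)--(C.3) on $\kappa=\prod_\Gamma\kappa_\Gamma$ are then read off from the conditions on the Fong--Srinivasan labels $\cK$ together with the constraints \cite[(1.12)]{FS89} relating $\eta_{X\pm1}(s)$ to the defects of the symbols, and counting a degenerate $\kappa_{X+1}$ only once in $i\Bl(G)$ reflects that such a symbol contributes a single block rather than two.

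Finally, for the distinctness statement (1) I would note that a $G^*$-class of pairs $(s,\kappa)$ recovers the $\tG^*$-class of $(\ts,\cK)$ up to the ambiguity $\ts\mapsto z\ts$ with $z\in(\Ker i^*)^F$, and that, as in the last paragraph of the proof of Theorem~\ref{theorem:characterSp} (using \cite[11.6]{Bon06}), the blocks $B_{z\ts,\cK}$ cover the same blocks of $G$; hence the labels $B_{s,\kappa,i}$ are well defined and their distinctness follows from that of the Fong--Srinivasan labels. I expect the main obstacle to be the translation step of the third paragraph: one must make the canonical character $\ttheta'_0$ concrete enough---essentially as a defect-zero Jordan-decomposition character of $\Sp(V_0)$, cf. \cite[\S7,\S8]{FS89}---to see that its restriction to $C'_0$ splits exactly according to the rule of Theorem~\ref{theorem:characterSp}, and to identify the degeneracy of its $X+1$-symbol with that of the combinatorial datum $\kappa_{X+1}$ in the Fong--Srinivasan parametrization.
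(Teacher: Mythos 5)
Your proposal follows essentially the same route as the paper: reduce to counting the constituents of $\Res^{\tC'_0}_{C'_0}\ttheta'_0$ via Lemma~\ref{lemma:blockcover}, identify this restriction with the restriction from $\J_0(V'_0)=\CSp(V_0)$ to $\I_0(V_0)=\Sp(V_0)$ of a Jordan-decomposition character labelled by $(\ts_0,\kappa)$, apply Theorem~\ref{theorem:characterSp}(3) to obtain the degeneracy criterion on $\kappa_{X+1}$, and settle surjectivity and well-definedness of the labels by lifting $s$ to $\ts\in\tG^*$ and invoking \cite[11.6]{Bon06}. The single step you flag as the main obstacle is resolved in the paper exactly along the lines you anticipate: by \cite[(11.2),(11.3)]{FS89} one twists $\ttheta'_0$ by the linear character $\zeta'$ of $\tC'_0$ (trivial on $\I_0(V'_0)$ and extending the character $\zeta'_+$ induced on $Z(C'_+)$), so that $\ttheta'_0\zeta'^{-1}$ descends to $\tC'_0/Z(C'_+)\cong\J_0(V'_0)$ with label $(\ts_0,\kappa)$ and $\Res^{\tC'_0}_{C'_0}\ttheta'_0\cong\Res^{\J_0(V'_0)}_{\I_0(V'_0)}\chi_{\ts_0,\kappa}$.
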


\begin{proof}
Let $\tB$ be a block of $\tG$ with label $(\ts,\cK)$ and a defect group $\tR$ as above.
Keep all the relevant notations as above.
By \cite[(11.2), (11.3)]{FS89},
\begin{align*}
\tC'_0/\I_0(V'_0) &\cong \langle\tau_+,Z(C'_+)\rangle,\\
\tC'_0/Z(C'_+) &\cong \langle\tau_0,\I_0(V'_0)\rangle = \J_0(V'_0)
\end{align*}
As in \cite[p.169]{FS89}, let $\zeta'$ be the linear character of $\tC'_0$ with $\I_0(V'_0)$ in the kernel extending the linear character $\zeta'_+$ induced by $\ttheta'_0$ on $Z(C'_+)$.
Then $\ttheta'_0\zeta'^{-1}$ can be viewed as a character of $\tC'_0/Z(C'_+) \cong \J_0(V'_0)$ which has label $(\ts_0,\kappa)$ and
\begin{equation*}
\Res^{\tC'_0}_{C'_0}\ttheta'_0= \Res^{\tC'_0}_{C'_0}\ttheta'_0\zeta'^{-1}\cong 		\Res^{\tC'_0/Z(C'_+)}_{C'_0Z(C'_+)/Z(C'_+)}\ttheta'_0\zeta'^{-1}\cong 			\Res^{\J_0(V'_0)}_{\I_0(V'_0)}\chi_{\ts_0,\kappa}.
\end{equation*}
Then the number of blocks of $G$ covered by $\tB$ follows from Theorem \ref{theorem:characterSp} and Lemma \ref{lemma:blockcover} and can be labeled as required.

Conversely,  given any triple $(s,\kappa,i)$ with $s\in G^*_{\ell',ss}, i\in \Z/2\Z$, there is an $\ell'$-semisimple element $\ts$ of $\tG^*=\D_0(V^*)$ with $s=i^*(\ts)$ since $G^*$ is a central quotient of $\tG^*$ (\cite[(2.3)]{FS89}).
Let $\tB$ be the block of $\tG$ with label $(\ts,\cK)$, where $\kappa\in\cK$.
Then by the previous paragraph, there is a block of $G$ covered by $\tB$ with label $(s,\kappa,i)$.

Finally, as in Theorem \ref{theorem:characterSp}, the labels are well defined again by 
\cite[11.6]{Bon06}.
\end{proof}


\subsection{Characters in blocks of $\Sp_{2n}(q)$}\label{subsect:irrB}

Assume $B$ is a block of $G$ with label $(s,\kappa,i)$ and a defect group $R$ 
as before.
Let $\tB$ be a block of $\tG$ covering $B$ with label $(\ts,\cK)$, where $s=i^*(\ts)$ and $\kappa\in\cK$.
Assume $R\neq1$.
We first recall some notations in \cite[\S13]{FS89}.
Let $z$ be an element in $Z(R)$ such that: (i) $z^\ell=1$; (ii) $[V,z_+]=V_+$; (iii) the restriction $z_+$ of $z$ to $V_+$ is primary as an element of $\I_0(V_+)$.
Set $Q=C_G(z)$ and $\tQ=C_{\tG}(z)$.
Then $Q=Q_0\times Q_+$ with $Q_0=\I_0(V_0)$.
Let $\tQ_0=\langle\tau_0,Q_0\rangle=\J_0(V_0)$ and $\tQ_+=\langle\tau_+,Q_+\rangle$.
Then $\tQ$ is the subdirect product of $\tQ_0$ and $\tQ_+$.
Since $z$ is a non-isolated semisimple element of $G$, $Q$ and $\tQ$ are Levi subgroups of $G$ and $\tG$ respectively.
Thus we can choose a dual $\tQ^*$ of $\tQ$ satisfying that $\tC'^*\leqslant \tQ^*\leqslant \tG^*$.
So, $\tQ^*=\tQ^*_0\tQ^*_+$ is the central product of $\tQ^*_0$ and $
\tQ^*_+$ over the kernel $\F_q^\times e$ of the surjective homomorphism $\tG^*=\D_0(V^*)\to G^*=\I_0(V^*)$, where $e$ is the identity element of $\D_0(V^*)$.
By \cite[p.179]{FS89}, $\ts$ can be decomposed as $\ts=\ts_0\ts_+$ with $\ts_0\in\tC'^*_0=\tQ^*_0$ 
and $\ts_+\in\tC'^*_+\leqslant\tQ^*_+$.
A dual defect group of $\tB$ is defined on \cite[p.179]{FS89} as any subgroup of $\tG^*$ of the form $\tR^*=I_0\tR^*_+$ with $I_0$ the identity subgroup of $\D_0(V^*_0)$ and $\tR^*_+$ a Sylow $\ell$-subgroup of $C_{\tQ^*_+}(\ts_+)$.

Now, we introduce some similar notations and definitions for the group $G$ considered in this paper.
Let $Q^*=i^*(\tQ^*)$.Then $Q^*$ is a dual of $Q$ satisfying $C'^*\leqslant Q^*\leqslant G^*$.
Thus $Q^*=Q^*_0\times Q^*_+$ with $Q^*_0=i^*(\tQ^*_0)$ and $Q^*_+=i^*(\tQ^*_+)$; $s$ can be decomposed as $s=s_0\times s_+$, with $s_0=i^*(\ts_0)\in Q^*_0, 
s_+=i^*(\ts_+)\in Q^*_+$.
Then we define a dual defect group of $B$ to be any subgroup of $G^*$ of the form $R^*=i^*(\tR^*)$, thus $R^*=I_0R^*_+$ with $I_0$ the identity subgroup of $\I_0(V^*_0)$ and $R^*_+$ a Sylow $\ell$-subgroup of $C_{Q^*_+}(s_+)$.

The dividing of irreducible characters of $G$ into blocks then follows from \cite[(13C)]{FS89}, Theorem \ref{theorem:characterSp} 
and Theorem \ref{theorem:blockSp}.

\begin{theorem}\label{theorem:irrB}
We can choose the labels of the characters of $G$ such that a character $\chi\in\Irr(G)$ is in $B_{s,\kappa,i}$ if and only if $\chi=\chi_{t,\lambda,j}$, where (i) $t_{\ell'}$ is conjugate to $s$; (ii) $t_\ell$ is in a dual defect group of $B$; (iii) $\kappa$ is the core of $\lambda$; (iv) $j=i$ if $\kappa_{X+1}$ is non degenerate.
\end{theorem}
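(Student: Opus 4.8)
The strategy is to reduce the statement to the corresponding classification of characters in blocks of $\tG=\CSp_{2n}(q)$, which is $\cite[(13C)]{FS89}$, and then push it down to $G=\Sp_{2n}(q)$ via the block-covering and character-restriction dictionaries already established in Theorem~\ref{theorem:characterSp} and Theorem~\ref{theorem:blockSp}. Concretely, first I would fix a character $\chi=\chi_{t,\lambda,j}\in\Irr(G)$ and choose a block $\tB$ of $\tG$ covering the block of $\chi$, together with an irreducible constituent $\chi_{\tilde t,\lambda}$ of $\Ind_G^{\tG}\chi$ (equivalently $\chi$ is a constituent of $\Res^{\tG}_G\chi_{\tilde t,\lambda}$). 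By Theorem~\ref{theorem:characterSp}(2) this $\chi_{\tilde t,\lambda}$ exists and its label lifts $(t,\lambda)$; by Theorem~\ref{theorem:blockSp}(2) the blocks covered by a given $B_{\tilde s,\cK}$ are exactly the $B_{s,\kappa,i}$ with $\kappa\in\cK$. So $\chi\in B_{s,\kappa,i}$ holds (for the appropriate $i$) if and only if $\chi_{\tilde t,\lambda}\in B_{\tilde s,\cK}$ for some $\cK\ni\kappa$, and we may invoke $\cite[(13C)]{FS89}$ to characterize the latter.

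Next I would translate the three numbered conditions of $\cite[(13C)]{FS89}$ — which say that $\chi_{\tilde t,\lambda}\in B_{\tilde s,\cK}$ iff the semisimple part $\tilde t_{\ell'}$ is $\tG^*$-conjugate to $\tilde s$, the $\ell$-part $\tilde t_\ell$ lies in a dual defect group of $\tB$, and the core of $\lambda$ equals (a member of) $\cK$ — into the $G^*$-language set up in \S\ref{subsect:irrB}. For the first condition one applies $i^*$ and uses that $i^*$ restricts to a surjection on semisimple classes compatible with $\ell'$- and $\ell$-parts, giving (i) $t_{\ell'}\sim_{G^*} s$. For the second, the dual defect group $R^*=i^*(\tR^*)=I_0 R^*_+$ was defined in \S\ref{subsect:irrB} precisely as the $i^*$-image of the dual defect group $\tR^*$ of $\tB$, so $\tilde t_\ell\in\tR^*$ descends to (ii) $t_\ell$ lies in a dual defect group of $B$; here one needs the compatibility of the decomposition $s=s_0\times s_+$, $t=t_0\times t_+$ with $i^*$, which is recorded in the same subsection. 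For the third, $\cite[\S9]{FS89}$ identifies the core of the symbol/partition label with the block label, and since $\kappa\in\cK$ by construction, condition (iii) $\kappa=$ core of $\lambda$ drops out. Finally, for the index $j$: when $\kappa_{X+1}$ is degenerate, Theorem~\ref{theorem:characterSp}(3) and Theorem~\ref{theorem:blockSp}(3) say the index is not needed (the character and the block are already unambiguously labeled), whereas when $\kappa_{X+1}$ is non-degenerate, the index $j\in\Z/2\Z$ attached to $\chi$ in Theorem~\ref{theorem:characterSp} matches the index $i$ attached to the block in Theorem~\ref{theorem:blockSp} — this is because both indices track the same $A_{G^*}(s)^F$-orbit datum coming from the restriction $\tG\to G$, so one checks they are assigned consistently. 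This gives condition (iv).

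For the converse direction, given $(t,\lambda,j)$ satisfying (i)--(iv) I would reverse the argument: lift $t$ to an $\ell'$-part and $\ell$-part in $\tG^*$ via a preimage $\tilde t$ under $i^*$ (using $\cite[(2.3)]{FS89}$ that $G^*$ is a central quotient of $\tG^*$), form $\chi_{\tilde t,\lambda}\in\cE(\tG,\tilde t)$, observe that conditions (i)--(iii) force $\chi_{\tilde t,\lambda}\in B_{\tilde s,\cK}$ for the appropriate $\cK\ni\kappa$ by $\cite[(13C)]{FS89}$, and then $\chi=\chi_{t,\lambda,j}$, being a constituent of $\Res^{\tG}_G\chi_{\tilde t,\lambda}$, lies in a block of $G$ covered by $B_{\tilde s,\cK}$, namely $B_{s,\kappa,j}$ by Theorem~\ref{theorem:blockSp}(2); condition (iv) guarantees $j$ is the correct index. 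The last sentence of each of Theorem~\ref{theorem:characterSp} and Theorem~\ref{theorem:blockSp} (invoking $\cite[11.6]{Bon06}$) is what allows the labels to be chosen \emph{simultaneously} and coherently, so that the two-sided statement holds on the nose rather than merely up to a relabeling.

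The main obstacle I expect is bookkeeping the index $j$ in the degenerate-versus-non-degenerate $\lambda_{X+1}$ cases and making sure the choice of labels for characters of $\tG$ (guaranteed $\Res$-invariant under $(\Ker i^*)^F$ by $\cite[11.6]{Bon06}$) is compatible with the choice of labels for blocks of $\tG$; once one verifies that the $A_{G^*}(s)^F$-action used to split characters in Theorem~\ref{theorem:characterSp} and the one used to split blocks in Theorem~\ref{theorem:blockSp} are literally the same action (both being read off from $\cite[(4B)]{FS89}$ and $\cite[(4B)]{FS89}$-type formulas via the restriction $\tC_0'\to C_0'$ appearing in the proof of Lemma~\ref{lemma:blockcover} versus the one in the proof of Theorem~\ref{theorem:characterSp}), the indices automatically agree and everything else is a routine unwinding of the Fong--Srinivasan constructions combined with our earlier theorems.
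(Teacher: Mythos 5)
Your proposal is correct and follows essentially the same route as the paper, which proves Theorem~\ref{theorem:irrB} in one line by combining \cite[(13C)]{FS89} with Theorem~\ref{theorem:characterSp} and Theorem~\ref{theorem:blockSp}, after setting up the dual defect group $R^*=i^*(\tR^*)$ and the decomposition $s=s_0\times s_+$ exactly as you use them. Your additional bookkeeping of the index $j$ versus $i$ and the appeal to \cite[11.6]{Bon06} for coherent label choices just makes explicit what the paper leaves implicit.
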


\begin{remark}
Recall that
\begin{compactenum}[(1)]
\item $\kappa=\prod\limits_\Gamma\kappa_\Gamma$ is the core of $\lambda=\prod\limits_\Gamma\lambda_\Gamma$ means $\kappa_\Gamma$ is the $e_\Gamma$-core of $\lambda_\Gamma$ for each $\Gamma$;
\item when $\kappa_{X+1}$ is non degenerate, $B_{s,\kappa,0}\neq B_{s,\kappa,1}$; if the $e_\Gamma$-core of $\lambda_\Gamma$ is $\kappa_{X+1}$, then $\lambda_{X+1}$ is also non degenerate and $\chi_{t,\lambda,0}\neq\chi_{t,\lambda,1}$, which are in the two blocks $B_{s,\kappa,0}, B_{s,\kappa,1}$ respectively;
\item when $\kappa_{X+1}$ is degenerate, $B_{s,\kappa}=B_{s,\kappa,0}=B_{s,\kappa,1}$, then the part (iv) of the above theorem is superfluous.
\end{compactenum}
\end{remark}

\begin{theorem}\label{theorem:BrauerSp}
Let $B_{s,\kappa,i}$ be a block of $G$ and set
$$i\IBr(B_{s,\kappa,i})=\{(s,\lambda,j)^{G^*} \mid \textrm{$\kappa$ is the core of $\lambda$; $j\in\Z/2\Z$; $j=i$ if $\kappa_{X+1}$ is non degenerate}\}.$$
then there is a surjective map
$$i\IBr(B_{s,\kappa,i}) \longrightarrow \IBr(B_{s,\kappa,i}),\ (s,\lambda,j)\mapsto\varphi_{s,\lambda,j}$$
satisfying that
\begin{compactenum}[(1)]
\item if $\lambda_1\neq\lambda_2$, $\varphi_{s,\lambda_1,j}\neq\varphi_{s,\lambda_2,k}$;
\item $\varphi_{s,\lambda,0}=\varphi_{s,\lambda,1}$ (denoted also as $\varphi_{s,\lambda}$) if and only if $\lambda_{X+1}$ is degenerate, in which case, $\kappa_{X+1}$ is also degenerate.
\end{compactenum}
\end{theorem}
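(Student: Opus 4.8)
The plan is to read off the statement from Geck's basic set theorem, combined with the labellings already fixed for the ordinary characters and the blocks. So the first step is to recall from \cite{Ge93} that
$$\cE(G,\ell') \;=\; \bigsqcup_{[s]}\cE(G,s),$$
the union over the $G^*$-conjugacy classes $[s]$ of semisimple $\ell'$-elements of $G^*$, is a basic set of $G=\Sp_{2n}(q)$ for the prime $\ell$ (the hypotheses in \cite{Ge93} hold since $\ell$ is odd, hence good for $G$ and coprime to $|Z(\bG)^F|$). Since the $\ell$-modular decomposition matrix of $G$ is block-diagonal, a basic set of $G$ intersects each $\ell$-block in a basic set of that block; hence $\cE(G,\ell')\cap\Irr(B)$ is a basic set of $B$, and in particular $|\IBr(B)|=|\cE(G,\ell')\cap\Irr(B)|$, for every $\ell$-block $B$ of $G$.

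The heart of the argument is to make $\cE(G,\ell')\cap\Irr(B_{s,\kappa,i})$ explicit. By Theorem~\ref{theorem:irrB}, a character $\chi_{t,\lambda,j}$ of $B_{s,\kappa,i}$ lies in $\cE(G,\ell')$ exactly when $t$ is an $\ell'$-element, and then $t=t_{\ell'}$ is $G^*$-conjugate to $s$, so the conditions of Theorem~\ref{theorem:irrB} collapse to: $\kappa$ is the core of $\lambda$, and $j=i$ when $\kappa_{X+1}$ is non degenerate. Conversely each such $\chi_{s,\lambda,j}$ belongs to $B_{s,\kappa,i}\cap\cE(G,s)$. Together with Theorem~\ref{theorem:characterSp}, this gives
$$\cE(G,\ell')\cap\Irr(B_{s,\kappa,i}) \;=\; \{\,\chi_{s,\lambda,j}\ \mid\ \kappa\text{ is the core of }\lambda,\ j=i\text{ if }\kappa_{X+1}\text{ is non degenerate}\,\},$$
and by Theorem~\ref{theorem:characterSp}(3) the characters $\chi_{s,\lambda,0}$ and $\chi_{s,\lambda,1}$ coincide precisely when $\lambda_{X+1}$ is degenerate. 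In that case $\kappa_{X+1}$ is degenerate too: this can be taken from the core-quotient combinatorics of \S\ref{subsect:corequotient}, or argued directly, since otherwise Theorem~\ref{theorem:blockSp}(3) and Theorem~\ref{theorem:irrB} would place the single character $\chi_{s,\lambda,0}=\chi_{s,\lambda,1}$ simultaneously in the two distinct blocks $B_{s,\kappa,0}$ and $B_{s,\kappa,1}$.

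Finally I would pick any bijection $\Phi$ from $\cE(G,\ell')\cap\Irr(B_{s,\kappa,i})$ onto $\IBr(B_{s,\kappa,i})$, available by the cardinality identity above, and define $\varphi_{s,\lambda,j}:=\Phi(\chi_{s,\lambda,j})$ for $(s,\lambda,j)\in i\IBr(B_{s,\kappa,i})$; this is well defined because the label of an ordinary character depends only on the $G^*$-class of its triple. The map $(s,\lambda,j)\mapsto\varphi_{s,\lambda,j}$ is then surjective, property~(1) follows from Theorem~\ref{theorem:characterSp}(1) and the injectivity of $\Phi$, and property~(2) from Theorem~\ref{theorem:characterSp}(3) and the degeneracy observation above. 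There is no deep obstacle here: the single substantive input is Geck's basic set theorem for $\Sp_{2n}(q)$, invoked as a black box, and the part most likely to cause slips is the combinatorial bookkeeping, namely tracking when the index $j$ is forced by $i$, when it is free, and when the two values of $j$ yield the same Brauer character.
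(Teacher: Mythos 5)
Your proposal is correct and follows essentially the same route as the paper: the paper's proof is exactly "by \cite{Ge93}, $\Irr(B_{s,\kappa,i})\cap\cE(G,s)$ is a basic set for $B_{s,\kappa,i}$, and the result follows from Theorem~\ref{theorem:irrB}", which is what you do, only with the intermediate steps (block-diagonality, the explicit description of the basic set via Theorems~\ref{theorem:characterSp} and~\ref{theorem:irrB}, the degeneracy of $\kappa_{X+1}$, and the transport of labels through a bijection onto $\IBr(B_{s,\kappa,i})$) written out in full.
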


\begin{proof}
By \cite{Ge93}, $\Irr(B_{s,\kappa,i})\cap\cE(G,s)$ is a basic set for $B_{s,\kappa,i}$, then the result follows from Theorem \ref{theorem:irrB}.
\end{proof}


\section{The blockwise Alperin weight conjecture for $\Sp_{2n}(q)$}\label{sect:BAWconjecture}

In this section, the blockwise Alperin weight conjecture itself will be established for $
\Sp_{2n}(q)$, using the labelling of irreducible Brauer characters in the previous section and the classification of weights in \cite{An94} for $\Sp_{2n}(q)$.

\paragraph{}We first restate the classification of weights for $\Sp_{2n}(q)$ by J. An in \cite{An94}.
Let $\beta_\Gamma=1$ or $2$ according to $\Gamma\in\cF_1\cup\cF_2$ or $\Gamma\in\cF_0$.
\begin{theorem}[{\cite[4F]{An94}}]\label{theorem:paraweights}
Let $B=B_{s,\kappa,i}$ be a block of $G=\Sp_{2n}(q)$.
Assume $s= s_0 \times s_+$ as in \S\ref{subsect:irrB}.
Then $m_\Gamma(s)-m_\Gamma(s_0)=w_\Gamma\beta_\Gamma e_\Gamma$ for some natural number $w_\Gamma$.
Set
\begin{equation}\label{equation:iW(B)withQ}
i\cW(B)=\left\{
Q=\prod_\Gamma Q_\Gamma ~\middle|~
\begin{array}{c}
Q_\Gamma=\left(Q_\Gamma^{(1)},Q_\Gamma^{(2)},\dots,Q_\Gamma^{(\beta_\Gamma e_\Gamma)}\right),\\
\textrm{$Q_\Gamma^{(j)}$'s are partitions},\sum\limits_{j=1}^{\beta_\Gamma e_\Gamma} |Q_\Gamma^{(j)}|=w_\Gamma.
\end{array}
\right\}
\addtocounter{proposition}{1}\tag{\theproposition}
\end{equation}
Here $Q_\Gamma$ is an ordered sequence of $\beta_\Gamma e_\Gamma$ partitions.
Then there is a bijection between $i\cW(B)$ and $\cW(B)$.
The conjugacy class of weights in $B_{s,\kappa,i}$ corresponding to $Q$ will be denoted as $w_{s,\kappa,i,Q}$ ($w_{s,\kappa,0,Q}=w_{s,\kappa,1,Q}$ when $\kappa_{X+1}$ is degenerate, in which case, we also denote this weight as $w_{s,\kappa,Q}$).
\end{theorem}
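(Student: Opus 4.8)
The plan is to recall An's parametrization in \cite[\S4]{An94} and translate his combinatorial labels into the block labels $(s,\kappa,i)$ fixed in \S\ref{subsect:irrB}; the statement is really a restatement of \cite[4F]{An94}, so the work is to build a dictionary rather than to produce a new argument. First I would recall that a weight $(R,\varphi)$ of $G=\Sp_{2n}(q)$ forces $R$ to be radical, and that by \cite{An94} (building on the radical subgroups described in \cite[\S4]{FS89}) every radical $\ell$-subgroup of $G$ is, up to $G$-conjugacy, of the shape $R=R_0\times R_+$ acting on $V=V_0\perp V_+$, with $R_0$ trivial on $V_0$ and $R_+=R_1\times\cdots\times R_t$ a product of basic subgroups $R_{m_i,\alpha_i}\wr X_{\beta_i}$. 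Correspondingly $N_G(R)=N_0\times N_+$ with $N_0=\I_0(V_0)$, and each factor of $N_+$ is an extension of a group of the form $C_{m_i,\alpha_i}\otimes I_{\beta_i}$ by $\GL$-type groups and by the normalizer of a Sylow $\ell$-subgroup of $X_{\beta_i}$; a defect-zero character of $N_G(R)/R$ factors accordingly as $\varphi=\varphi_0\times\varphi_+$.

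Next I would analyze the two factors separately. The factor $\varphi_0$ is an $\ell$-defect-zero (hence cuspidal) character of $\I_0(V_0)$, and by the theory of canonical characters in \cite[\S7,\S8]{FS89} its Lusztig label is a pair consisting of a semisimple $\ell'$-part and a core; Brauer's first main theorem together with the condition $\bl(\varphi)^G=B$ then forces this data to be exactly $(s_0,\kappa)$. The factor $\varphi_+$ decomposes over the elementary divisors $\Gamma$, and for each $\Gamma$ the defect-zero characters of the relevant wreath-product normalizer are enumerated, via Clifford theory over $X_{\beta_i}$ and the classical count of defect-zero characters of symmetric groups, by tuples of partitions. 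Collecting these over all basic subgroups attached to a fixed $\Gamma$ and imposing that the total $\Gamma$-contribution uses up $m_\Gamma(s)-m_\Gamma(s_0)=w_\Gamma\beta_\Gamma e_\Gamma$, one recovers precisely the data $Q_\Gamma=(Q_\Gamma^{(1)},\dots,Q_\Gamma^{(\beta_\Gamma e_\Gamma)})$ with $\sum_j|Q_\Gamma^{(j)}|=w_\Gamma$ of \eqref{equation:iW(B)withQ}.

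Then I would handle the index $i\in\Z/2\Z$ and the degeneracy phenomenon exactly as in Theorems~\ref{theorem:characterSp}--\ref{theorem:blockSp}: when $-1$ is an eigenvalue of $s$, $C_{\bG^*}(s)$ is disconnected and $A_{G^*}(s)^F$ has order $2$, so certain weights of $\CSp_{2n}(q)$ cover two weights of $\Sp_{2n}(q)$, distinguished by $i$, while for $\kappa_{X+1}$ (equivalently the relevant $X+1$-symbol) degenerate the two coincide and we drop $i$; this is the weight analogue of Lemma~\ref{lemma:blockcover}. Finally I would check that the resulting map $Q\mapsto w_{s,\kappa,i,Q}$ is well defined, i.e. independent of the auxiliary choices ($z\in Z(R)$, the dual $\tQ^*$ and dual defect group $R^*$, and the $G^*$-class representative $s$), invoking \cite[11.6]{Bon06} as in the proofs of Theorems~\ref{theorem:characterSp} and \ref{theorem:blockSp}, and that it is bijective onto $\cW(B)$ by comparing with An's count.

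The main obstacle I expect is bookkeeping rather than anything conceptual: one must reconcile An's own indexing of radical subgroups and their weight characters with the Fong--Srinivasan data $(s,\kappa)$ and with the operator conventions ($e_\Gamma$, $\beta_\Gamma$, cores versus cocores) used throughout \S\ref{sect:block}, and in particular treat the orthogonal components $V^*_{X\pm1}(s)$ carefully, since that is where both the parity and defect constraints (C.2)--(C.3) and the degeneracy/$i$-index subtleties are concentrated. Once this dictionary is in place the enumeration is a direct product over $\Gamma$ of already-known counts --- of defect-zero characters of the $\GL$- and orthogonal-type pieces and of Sylow normalizers in symmetric groups --- matching \eqref{equation:iW(B)withQ} term by term.
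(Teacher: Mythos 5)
The overall strategy is the right one---the paper itself treats this statement as An's theorem \cite[(4F)]{An94} and only supplies a dictionary between An's labels and the Fong--Srinivasan data $(s,\kappa,i)$; that dictionary is what is actually carried out in \S\ref{subsect:paraweight} (Proposition \ref{prop:labelweights})---but your sketch of how the dictionary and the count go has two genuine gaps. First, you describe the radical subgroups as $R_0\times\prod_i R_{m_i,\alpha_i}\wr X_{\beta_i}$; that is the shape of the \emph{defect groups} from \cite[\S5]{FS89}, not of general radical subgroups. An's classification, over which the weight count runs, uses the basic subgroups $R_{m,\alpha,\gamma,\bc}$ with an extraspecial factor $E_\gamma$ and an iterated wreath tower $A_{\bc}$; the weight characters attached to a fixed $\Gamma$ are enumerated over all of these (the sets $\sC_{\Gamma,\delta}$, of cardinality $\beta_\Gamma e_\Gamma\ell^\delta$ by \cite[(4A)]{An94}), and only after the counting lemma \cite[(1A)]{AF90} converts core-valued functions on $\bigcup_\delta\sC_{\Gamma,\delta}$ into $\beta_\Gamma e_\Gamma$-tuples of partitions of total size $w_\Gamma$ does one arrive at (\ref{equation:iW(B)withQ}). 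With only defect-group-shaped subgroups the enumeration is wrong, and this combinatorial conversion---the heart of the matching---is only gestured at in your outline.

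Second, your mechanism for the index $i$ and the degeneracy is not justified: you propose that weights of $\CSp_{2n}(q)$ ``cover'' one or two weights of $\Sp_{2n}(q)$, as a weight analogue of Lemma \ref{lemma:blockcover}. There is no ready-made Clifford theory for weights along $G\trianglelefteq\tG$: radical subgroups do not correspond naively ($\tR=Z(\tG)_\ell R$, and $\ell$ may divide $|\tG/G|=q-1$), and how many $G$-classes of weights lie below a given $\tG$-weight, and into which of $B_{s,\kappa,0}$, $B_{s,\kappa,1}$ they fall, is precisely what would have to be proved. In An's argument and in the paper the index $i$ enters differently: it is read off the defect-zero character $\varphi_0=\chi_{s_0,\kappa,i}$ of the anchor factor $N_0=\I_0(V_0)=\Sp(V_0)$ via Theorem \ref{theorem:characterSp} (this is where the restriction from the conformal group is used, at the level of ordinary characters of $\Sp(V_0)$, not of weights of $\Sp_{2n}(q)$), while the $X+1$-degeneracy on the positive part is a separate phenomenon (compare Lemma \ref{lemma:diagondz0}). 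A minor slip: a defect-zero character of $\I_0(V_0)$ need not be cuspidal, though nothing in the argument uses that claim.
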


Note that the decomposition of $s$ in the above theorem is in fact the same as in \cite[(4F)]{An94}; see \cite[p.179]{FS89}, \cite[p.18]{An94} and \S\ref{subsect:irrB}.

\paragraph{}The $Q_{X\pm1}$'s in Theorem~\ref{theorem:paraweights} are ordered sequences of partitions, while the quotients of $\lambda_{X\pm1}$'s in Theorem~\ref{theorem:BrauerSp} are unordered pairs of two $e$-tuples of partitions; see \S\ref{subsect:corequotient}.
To fill the gap between the parametrization of irreducible Brauer characters and that of weights in blocks of $G$, we introduce the notion of \emph{ordered quotients} and \emph{ordered symbols} for Lusztig symbols.

An $e$-quotient $Q=[\lambda_1,\dots,\lambda_e;\mu_1,\dots,\mu_e]$ is said to be degenerate if $(\lambda_1,\dots,\lambda_e) =(\mu_1,\dots,\mu_e)$; otherwise, $Q$ is said to be non degenerate.
Then depending on that $Q$ is degenerate or non degenerate, there are one or two ordered sequences of partitions
$$(\lambda_1,\dots,\lambda_e,\mu_1,\dots,\mu_e),\ (\mu_1,\dots,\mu_e,\lambda_1,\dots,\lambda_e),$$
which are called the \emph{ordered quotients} associated to $Q$ and denoted as $(Q,i),i\in\Z/2\Z$.
When $Q$ is degenerate, $(Q,0)=(Q,1)$.

Similarly, for a Lusztig symbol $\lambda=[X,Y]$, the one or two (depending on that $\lambda$ is degenerate or non degenerate) ordered pairs of $\beta$-sets $(X,Y),(Y,X)$ are called the \emph{ordered symbols} associated with $\lambda$ and denoted as $(\lambda,i),i\in\Z/2\Z$.
When $\lambda$ is degenerate, $(\lambda,0)=(\lambda,1)$.
Here, as in \S\ref{subsect:corequotient}, we identify $(X,Y)$ with $(X^{+t},Y^{+t})$.

Let $\kappa$ be an $e$-core of a Lusztig symbol.
Let $Q$ be an $e$-quotient.
There are one or two Lusztig symbol(s) determined by $\kappa$ and $Q$, which are denoted by $\kappa*(Q,i),i\in\Z/2\Z$.
Thus, when $\kappa$ or $Q$ is degenerate, the above notation reduced to $\kappa*(Q,0)=\kappa*(Q,1)$.

When $\kappa$ is non-degenerate and is an $e$-core of a Lusztig symbol of even defect, we will also introduce the following notations.
Since $\kappa$ is a Lusztig symbol, the ordered symbols $(\kappa,i)$ associated to $\kappa$ are defined.
Let $Q$ be an $e$-quotient.
$\kappa$ and $Q$ determine one or two Lusztig symbols and two or four notations: $(\kappa,i)*(Q,j)$, $i,j\in\Z/2\Z$ depending on that $Q$ is degenerate or non degenerate.
When $Q$ is degenerate, the two notations denote the unique Lusztig symbol determined by $\kappa$ and $Q$.
When $\kappa$ and $Q$ are both non degenerate, we set $(\kappa,i)*(Q,j)=(\kappa,i+1)*(Q,j+1)$, then the two Lusztig symbols determined by $\kappa$ and $Q$ can be denoted as $(\kappa,i)*(Q,j),(\kappa,i')*(Q,j')$ with $i+j\neq i'+j'\mod2$.

\paragraph{}
With the above notions and Theorem~\ref{theorem:BrauerSp}, Theorem~\ref{theorem:paraweights} , Theorem~\ref{theorem:main1} follows with an explicit bijection between the irreducible Brauer characters and weights.

\begin{proposition}\label{prop:bijection}
The blockwise Alperin weight conjecture holds for $\Sp_{2n}(q)$ ($q$ odd) for odd prime $\ell\neq p$.
Specifically, if $B=B_{s,\kappa,i}$ is a block of $G$, we have

\begin{compactenum}[(1)]
\item if $\kappa_{X+1}$ is non degenerate, there is a bijection
$$\IBr(B_{s,\kappa,i}) \longleftrightarrow \cW(B_{s,\kappa,i}),\
\varphi_{s,\lambda,i} \mapsto w_{s,\kappa,i,Q(\lambda,i)},$$
where $Q(\lambda,i)=\prod\limits_\Gamma Q(\lambda,i)_\Gamma$ satisfying
\begin{compactenum}[(i)]
\item if $\Gamma\in\cF_1\cup\cF_2$, $Q(\lambda,i)_\Gamma =\lambda_\Gamma^{(e_\Gamma)}$ is the $e_\Gamma$-quotient of $\lambda_\Gamma$;
\item $Q(\lambda,i)_{X-1}=(\lambda_{X-1}^{(e)},j)$ with $j\in\Z/2\Z$ such that $\lambda_{X-1}=\kappa_{X-1}*(\lambda_{X-1}^{(e)},j)$;
\item $Q(\lambda,i)_{X+1}=(\lambda_{X+1}^{(e)},k)$ with $k\in\Z/2\Z$ such that $\lambda_{X+1}=(\kappa_{X+1},i)*(\lambda_{X+1}^{(e)},k)$;
\end{compactenum}

\item if $\kappa_{X+1}$ is degenerate, $B=B_{s,\kappa}$ and there is a bijection
$$\IBr(B_{s,\kappa}) \longleftrightarrow \cW(B_{s,\kappa}),\
\varphi_{s,\lambda,j} \mapsto w_{s,\kappa,Q(\lambda,j)},$$
where $Q(\lambda,j)=\prod\limits_\Gamma Q(\lambda,j)_\Gamma$ satisfies
\begin{compactenum}[(i)]
\item if $\Gamma\in\cF_1\cup\cF_2$, $Q(\lambda,j)_\Gamma =\lambda_\Gamma^{(e_\Gamma)}$ is the $e_\Gamma$-quotient of $\lambda_\Gamma$;
\item $Q(\lambda,j)_{X-1}=(\lambda_{X-1}^{(e)},k)$ with $k\in\Z/2\Z$ such that $\lambda_{X-1}=\kappa_{X-1}*(\lambda_{X-1}^{(e)},k)$;
\item $Q(\lambda,j)_{X+1}=(\lambda_{X+1}^{(e)},j)$.
\end{compactenum}
\end{compactenum}
\end{proposition}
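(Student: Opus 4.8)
The plan is to combine the parametrization of $\IBr(B_{s,\kappa,i})$ by triples $(s,\lambda,j)$ from Theorem~\ref{theorem:BrauerSp} with the parametrization of $\cW(B_{s,\kappa,i})$ by the sequences $Q=\prod_\Gamma Q_\Gamma$ in $i\cW(B)$ from Theorem~\ref{theorem:paraweights}, and to check that the assignment $(s,\lambda,j)\mapsto Q(\lambda,j)$ described in the statement is a well-defined bijection between the two label sets, compatible with the equivalence relations (i.e.\ with ``degenerate symbols counted once'' on the character side and with the ordering conventions on the weight side). Both sides have the same semisimple part $s$ (an $\ell'$-element of $G^*$, determined up to $G^*$-conjugacy), and by Theorems~\ref{theorem:irrB} and~\ref{theorem:BrauerSp} the only remaining data on the Brauer side is the tuple of Lusztig symbols/partitions $\lambda=\prod_\Gamma\lambda_\Gamma$ whose $\Gamma$-core equals the prescribed $\kappa_\Gamma$ for each $\Gamma$, together with the index $j\in\Z/2\Z$ that is only relevant when $\kappa_{X+1}$ is non degenerate. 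So the heart of the argument is purely combinatorial: for each $\Gamma$, establish a bijection between \{symbol/partition with fixed $e_\Gamma$-core $\kappa_\Gamma$\} and \{the $Q_\Gamma$-component of $i\cW(B)$\}, subject to the book-keeping of the indices.

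First I would dispose of the ``linear'' factors $\Gamma\in\cF_1\cup\cF_2$: here $\lambda_\Gamma$ is an ordinary partition of $m_\Gamma(s)$, $\kappa_\Gamma$ its $e_\Gamma$-core is fixed, $\beta_\Gamma=1$, and $m_\Gamma(s)-m_\Gamma(s_0)=w_\Gamma e_\Gamma$; the classical partition core/quotient bijection (cf.\ \S\ref{subsect:corequotient}, \cite[Ch.~I]{Ol93}) gives a bijection between partitions of $m_\Gamma(s)$ with $e_\Gamma$-core $\kappa_\Gamma$ and $e_\Gamma$-tuples of partitions of total size $w_\Gamma$, which is exactly the set of allowed $Q_\Gamma$. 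Setting $Q(\lambda,\cdot)_\Gamma=\lambda_\Gamma^{(e_\Gamma)}$ realizes this. Next I would treat $\Gamma=X-1$: here $\lambda_{X-1}$ is a Lusztig symbol of rank $[m_{X-1}(s)/2]$ with $e$-core $\kappa_{X-1}$, and $\kappa_{X-1}$ has odd defect (condition (C.2)), hence is \emph{non degenerate}; so the ordered-symbol conventions of \S\ref{subsect:corequotient} apply, and choosing the index $j$ via $\lambda_{X-1}=\kappa_{X-1}*(\lambda_{X-1}^{(e)},j)$ picks out an \emph{ordered} $e$-tuple, i.e.\ a $Q_{X-1}=(Q^{(1)},\dots,Q^{(e)})$, matching $\beta_{X-1}=2$? — no, $X-1\in\cF_0$ so $\beta_{X-1}=2$ and $e_{X-1}=e$, and $w_{X-1}\cdot 2e = m_{X-1}(s)-m_{X-1}(s_0)$; the rank drop is $w_{X-1}e$, and an $e$-quotient of a symbol is an unordered pair of $e$-tuples, whose associated \emph{ordered quotient} $(Q,j)$ is a single $2e$-tuple of total size $w_{X-1}$. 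One checks this is a bijection onto the allowed $Q_{X-1}$, using that $\kappa_{X-1}$ non degenerate forces $\lambda_{X-1}$ non degenerate, so the index $j$ is genuinely determined and the count is right. The key point throughout is to verify, using \cite[pp.\,39--40]{Ol93}, that fixing $\kappa$ and an ordered quotient picks out exactly one symbol (not two), which is what makes the map well defined rather than two-valued.

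The main obstacle — and the reason the statement splits into cases (1) and (2) — is the behaviour at $\Gamma=X+1$ and the interaction of the three indices $i$ (block), $j$ (character, i.e.\ the $\Z/2\Z$ of Theorem~\ref{theorem:BrauerSp}), and the ordering indices inside $Q$. When $\kappa_{X+1}$ is non degenerate (case (1)), the block index $i$ is rigid, $\lambda_{X+1}$ is forced non degenerate, and the ordered-symbol notation $(\kappa_{X+1},i)*(\lambda_{X+1}^{(e)},k)$ with the convention $(\kappa,i)*(Q,j)=(\kappa,i{+}1)*(Q,j{+}1)$ must be shown to set up a bijection between symbols $\lambda_{X+1}$ with $e$-core $\kappa_{X+1}$ (and the fixed parity/defect condition (C.3)) and pairs $(k$-indexed ordered quotient$)$, so that altogether $(s,\lambda,i)\leftrightarrow(s,\kappa,i,Q(\lambda,i))$ is a bijection $\IBr(B_{s,\kappa,i})\to\cW(B_{s,\kappa,i})$; here one must check the defect condition ``$\equiv 0$ or $2\bmod 4$ according to $\eta_{X+1}(s)=\pm1$'' is preserved, since adding an $e$-cohook changes the defect by $2$ and hence the quotient size controls the defect class of $\lambda_{X+1}$ exactly as $w_{X+1}$ does in $i\cW(B)$. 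When $\kappa_{X+1}$ is degenerate (case (2)), $B_{s,\kappa,0}=B_{s,\kappa,1}$, the character index $j$ is free and must be \emph{carried through} to the weight side as the last ordered-quotient index $Q(\lambda,j)_{X+1}=(\lambda_{X+1}^{(e)},j)$; here the subtlety is that $\lambda_{X+1}$ may itself be degenerate (then $\varphi_{s,\lambda,0}=\varphi_{s,\lambda,1}$, $Q$ degenerate, one weight) or non degenerate (then the two values of $j$ give two distinct Brauer characters and two distinct weights), and one must confirm the counts agree in both sub-cases. Modulo these case-checks — all of which reduce to the core/quotient combinatorics of $\beta$-sets together with the defect/parity conditions (C.1)--(C.3) and the description of $i\cW(B)$ — the proposition follows; I would write the proof as a factor-by-factor verification over $\Gamma\in\cF$, concluding that $\prod_\Gamma$ of the componentwise bijections is the asserted global bijection.
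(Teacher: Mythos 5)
Your proposal is correct and follows essentially the same route as the paper's own proof: both reduce the statement to a $\Gamma$-by-$\Gamma$ comparison of the label sets coming from Theorem~\ref{theorem:BrauerSp} and Theorem~\ref{theorem:paraweights}, using the classical core/quotient bijection for $\Gamma\in\cF_1\cup\cF_2$ and the ordered-quotient/ordered-symbol conventions at $X\pm1$, with the same case analysis of degenerate versus non degenerate $\kappa_{X+1}$, $\lambda_{X+1}$ and quotients to match the $\Z/2\Z$ indices (the defect-mod-$4$ condition you worry about is automatic once the rank and $e$-core are fixed, which is why the paper does not discuss it).
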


\begin{proof}
It is clear that the $Q(\lambda,i)$ in (1) and $Q(\lambda,j)$ in (2) satisfies (\ref{equation:iW(B)withQ}), so the maps are well defined.

(1) It suffices to consider the $\Gamma$-parts $\lambda_\Gamma,(Q,i)_\Gamma$ for each $\Gamma$.
Since each partition determines its core and quotient and vice versa, it is clear for $\Gamma\in\cF_1\cup\cF_2$.
For $\Gamma=X-1$, recall that $\kappa_{X-1}$ is always non degenerate.
If $Q$ is a degenerate quotient of which the ordered quotient appears as part of the label of some weight of $B$, then $Q$ together with $\kappa_{X-1}$ determines a unique Lusztig symbol $\kappa_{X-1}*(Q,0)=\kappa_{X-1}*(Q,1)$ which can occur as part of the label of some irreducible Brauer character of $B$.
If $Q$ is a non degenerate quotient such that one of the two ordered quotients appears as part of the label of some weight of $B$, then so does the other one.
Then $Q$ together with $\kappa_{X-1}$ determines two Lusztig symbol $\kappa_{X-1}*(Q,0),\kappa_{X-1}*(Q,1)$, both occuring as parts of the labels of some irreducible Brauer characters of $B$.
The situation for $\Gamma=X+1$ is essentially the same as that of $\Gamma=X-1$, just with a different form.

(2) The situations for $\Gamma\neq X+1$ are the same as in (1).
Assume $\Gamma=X+1$.
If $Q$ is degenerate, the situation is again the same as in (1).
If $Q$ is a non degenerate quotient such that one of the two ordered quotients appears as part of the label of some weight of $B$, then so does the other one.
Since $\kappa_{X+1}$ is degenerate, the two ordered quotients $(Q,0),(Q,1)$ determines unique Lusztig symbol $\kappa_{X-1}*(Q,0)=\kappa_{X-1}*(Q,1)$, but in this case, there are two irreducible Brauer characters $\varphi_{s,\lambda,0}$, $\varphi_{s,\lambda,0}$ corresponding to the two ordered quotients $(Q,0),(Q,1)$.
\end{proof}

\begin{remark}
In part (iii) of (1) of the above thoerem, we choose a different form, the reason for this is that we want to make this bijection equivariant under the action of automorphisms; see Theorem \ref{theorem:equivariance}.
\end{remark}


\section{Actions of automorphisms on characters}\label{sect:AutCharacter}
In this section, we first calculate the actions of automorphisms on the irreducible ordinary characters of $\Sp_{2n}(q)$ using the parametrization in Theorem \ref{theorem:characterSp}.
Under the unitriangularity hypothesis of the decomposition matrix, the actions of automorphisms on irreducible Brauer characters follows.
Using a different method, Taylor considered part of our result in \cite{Tay18}, but some preparation results in \cite{Tay18} apply to the widest generalization.
We state our result in the form which is convenient to be compared with the actions of automorphisms on weights in the next section;
the proof given here consists of an application of a result of Cabanes and Sp\"ath in \cite{CS13} and some completely elementary arguments.


\subsection{Actions of automorphisms on characters}

Let $\tau\in\tG$ be an element of $\tG$ generating $\tG$ modulo $G$, then $\tau$ provides the unique diagonal outer automorphism of $G$.
The action of $\tau$ on $\Irr(G)$ immediately follows from Clifford theory and Theorem~\ref{theorem:characterSp}.

\begin{proposition}\label{prop:diagonIrr}
$\chi_{s,\lambda,i}^\tau=\chi_{s,\lambda,i+1}$.
In particular, when $\lambda_{X+1}$ is degenerate, $\tau$ fixes $\chi_{s,\lambda}=\chi_{s,\lambda,0}=\chi_{s,\lambda,1}$.
\end{proposition}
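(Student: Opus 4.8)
The plan is to deduce the action of $\tau$ on the components of $\Res^{\tG}_G\chi_{\ts,\lambda}$ directly from Clifford theory, exploiting the structure of $\tG/G$. Since $\tG/G$ is cyclic (generated by the image of $\tau$) and $\tG=Z(\tbG)^F G$, the restriction $\Res^{\tG}_G\chi_{\ts,\lambda}$ is either irreducible or a sum of exactly two $\tG$-conjugate irreducible constituents, and in the latter case these two constituents are permuted transitively by $\tG$, hence by $\tau$; this is exactly the dichotomy recorded in Theorem~\ref{theorem:characterSp}(3), governed by whether $\lambda_{X+1}$ is degenerate. So the whole statement reduces to checking that when $\Res^{\tG}_G\chi_{\ts,\lambda}$ has two constituents, $\tau$ genuinely swaps them rather than fixing both.

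First I would set up the notation: let $\chi_{s,\lambda,0},\chi_{s,\lambda,1}$ be the two constituents of $\Res^{\tG}_G\chi_{\ts,\lambda}$ (so $\lambda_{X+1}$ is non-degenerate by Theorem~\ref{theorem:characterSp}(3)). Because $\tau$ normalizes $G$ and stabilizes $\chi_{\ts,\lambda}$ (as $\chi_{\ts,\lambda}$ is a character of $\tG$), $\tau$ permutes $\{\chi_{s,\lambda,0},\chi_{s,\lambda,1}\}$. The key point is that $\tG$ acts transitively on this set: by Clifford theory, the stabilizer of $\chi_{s,\lambda,0}$ in $\tG$ is a proper subgroup of $\tG$ of index equal to the number of constituents, namely $2$; combined with $\tG=\langle G,\tau\rangle$, this forces $\tau\notin\mathrm{Stab}_{\tG}(\chi_{s,\lambda,0})$, i.e. $\chi_{s,\lambda,0}^\tau=\chi_{s,\lambda,1}$. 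Together with the labelling convention from the proof of Theorem~\ref{theorem:characterSp} (which identifies the two labels $i\in\Z/2\Z$ precisely with the two $\tG$-orbit positions, well-defined via \cite[11.6]{Bon06}), this gives $\chi_{s,\lambda,i}^\tau=\chi_{s,\lambda,i+1}$ for $i\in\Z/2\Z$. The ``in particular'' clause is then immediate: when $\lambda_{X+1}$ is degenerate, $\chi_{s,\lambda,0}=\chi_{s,\lambda,1}=\chi_{s,\lambda}=\Res^{\tG}_G\chi_{\ts,\lambda}$ is irreducible and $\tau$-invariant because it is the restriction to $G$ of a $\tau$-stable (indeed $\tG$-defined) character.

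The one thing requiring a little care — and the only place a genuine argument is needed rather than bookkeeping — is the compatibility of the index-$2$ labelling with the chosen parametrization: one must ensure that the subscripts $0,1$ attached to $\chi_{s,\lambda,\bullet}$ in Theorem~\ref{theorem:characterSp} are assigned consistently so that "add $1$ mod $2$'' really describes the $\tau$-action, rather than $\tau$ acting as the identity on the index set under some other labelling. This is handled by recalling that in the proof of Theorem~\ref{theorem:characterSp} the two labels correspond to the two characters in an $A_{G^*}(s)^F$-orbit and that $\tau$ realizes the nontrivial element of $H^1(F,\cZ(\bG))$ acting on $\cE(G,s)$ (via $\tG/G\cong H^1(F,\cZ(\bG))$); by Theorem~\ref{theorem:JDnonconnectedcenter} this action is precisely the one permuting the two constituents, so the assignment is forced and the formula $\chi_{s,\lambda,i}^\tau=\chi_{s,\lambda,i+1}$ is exactly right. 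I do not expect any serious obstacle here beyond this consistency check; the statement is essentially a formal consequence of Clifford theory for the cyclic extension $G\trianglelefteq\tG$ combined with Theorem~\ref{theorem:characterSp}.
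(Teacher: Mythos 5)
Your proof is correct and is exactly the argument the paper leaves implicit: Proposition~\ref{prop:diagonIrr} is given there with the one-line justification that it follows from Clifford theory and Theorem~\ref{theorem:characterSp}, and your index-two computation (the inertia group of $\chi_{s,\lambda,0}$ in $\tG$ contains $G$ and has index equal to the number of constituents, while $\tau$ generates $\tG$ modulo $G$, so $\tau$ must interchange the two constituents when $\lambda_{X+1}$ is non degenerate, and fixes the restriction when it is irreducible) is precisely that content.

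Two side remarks in your write-up are inaccurate, though neither is load-bearing. The equality $\tG=Z(\tbG)^FG$ is false: since $q$ is odd, the similitude factors of elements of $Z(\tbG)^FG$ are squares in $\F_q^\times$, so this subgroup has index $2$ in $\tG$; indeed, were the equality true, $\tau$ would induce an inner automorphism of $G$ and there would be no diagonal outer automorphism at all. The dichotomy ``one or two constituents'' that you want is in any case already recorded in parts (2) and (3) of Theorem~\ref{theorem:characterSp}, which you also cite, so nothing is lost. Similarly, $\tG/G\cong H^1(F,\cZ(\bG))$ should read $\tG/GZ(\tG)\cong H^1(F,\cZ(\bG))$. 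Finally, the labelling concern in your last paragraph is vacuous: once $\tau$ is known to interchange two \emph{distinct} characters, the formula $\chi_{s,\lambda,i}^\tau=\chi_{s,\lambda,i+1}$ holds for either assignment of the subscripts $0,1$, so no consistency check beyond the swap itself is needed.
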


Next, we consider the actions of field automorphisms on irreducible ordinary characters.
We start by stating a result of Cabanes and Sp\"ath to the group $\tG$.

\begin{theorem}[{\cite[Theorem~3.1]{CS13}}]\label{theorem:fieldonIrrCSp}
Let $\sigma$ be a field automorphism on $\tG$ and $\sigma^*$ is the corresponding field automorphism on $\tG^*$ via duality as in \cite[Definition~2.1]{CS13}, then
$$\chi_{\ts,\lambda}^\sigma=\chi_{\sigma^*(\ts),\sigma^*(\varphi_\lambda)},$$
where $\varphi_\lambda$ is the unipotent character of $C_{\tG^*}(\ts)$ corresponding to $\chi_{\ts,\lambda}$ under the Jordan decomposition.
\end{theorem}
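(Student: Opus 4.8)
\textbf{Proof proposal for Theorem~\ref{theorem:fieldonIrrCSp}.}

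The plan is to reduce the statement about $\tG=\CSp_{2n}(q)$ to the corresponding compatibility of Jordan decomposition with field automorphisms, which is precisely what Cabanes--Sp\"ath establish in \cite[Theorem~3.1]{CS13} for groups with connected center. Since $\tbG=\CSp(\bV)$ has connected center, the Jordan decomposition $\cJ:\cE(\tG,\ts)\to\cE(C_{\tG^*}(\ts),1)$ is an honest bijection (this is the first arrow in the chain recalled in \S\ref{subsect:irrSp}), and the theorem of Cabanes--Sp\"ath applies directly once we match notation. First I would recall from \cite[Definition~2.1]{CS13} how a field automorphism $\sigma=F_p^a$ of $\tG$ induces a dual field automorphism $\sigma^*$ on $\tG^*$ (it is again a power of the standard Frobenius, determined by the compatibility of the $\F_q$-structure on $\tbG$ and $\tbG^*$), and observe that $\sigma^*$ restricts to a bijection $\cE(C_{\tG^*}(\ts),1)\to\cE(C_{\tG^*}(\sigma^*(\ts)),1)$ on unipotent characters, since $\sigma^*$ permutes the $F$-stable maximal tori and unipotent classes of $C_{\tbG^*}(\ts)$ compatibly with $F$.

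Next I would invoke \cite[Theorem~3.1]{CS13}: for $\chi\in\cE(\tG,\ts)$ with Jordan decomposition $\cJ(\chi)=\psi\in\cE(C_{\tG^*}(\ts),1)$, one has $\cJ(\chi^\sigma)=\sigma^*(\psi)$, i.e.\ the diagram relating $\sigma$ on $\Irr(\tG)$ and $\sigma^*$ on unipotent characters of centralizers commutes. Translating through our labels: $\chi_{\ts,\lambda}$ is by definition the character with $\cJ(\chi_{\ts,\lambda})=\varphi_\lambda$, the unipotent character of $C_{\tG^*}(\ts)$ attached to $\lambda$. Applying the Cabanes--Sp\"ath equality gives $\cJ(\chi_{\ts,\lambda}^\sigma)=\sigma^*(\varphi_\lambda)$. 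On the other hand, $\chi_{\ts,\lambda}^\sigma$ lies in $\cE(\tG,\sigma^*(\ts))$ (field automorphisms permute Lusztig series compatibly with duality), and by definition $\chi_{\sigma^*(\ts),\sigma^*(\varphi_\lambda)}$ is the unique character of this series with Jordan decomposition $\sigma^*(\varphi_\lambda)$; since $\cJ$ is a bijection for $\tG$, the two characters coincide, which is the asserted formula.

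The only genuine point requiring care, and the step I expect to be the main obstacle, is verifying that \cite[Theorem~3.1]{CS13} applies verbatim to $\tG=\CSp_{2n}(q)$: one must check that the hypotheses there (essentially that the ambient group has connected center and that the chosen Jordan decomposition is the standard Lusztig one, normalized compatibly with the regular embedding and with field automorphisms) are met in our setup, and that the dual field automorphism $\sigma^*$ we use agrees with the one in \cite[Definition~2.1]{CS13} under the identification $\tbG^*=\D_0(\bV^*)$. This is a bookkeeping matter about which standard Frobenius and which dual pair are fixed, but it must be done explicitly since the whole equivariance argument in later sections rests on it; in particular one should note that $\sigma^*$ acts on the combinatorial labels $\lambda=\prod_\Gamma\lambda_\Gamma$ through its permutation of the elementary divisors $\Gamma$ (permuting the polynomials $\Gamma$ by raising roots to the $p^a$-th power) while fixing the partitions/Lusztig symbols $\lambda_\Gamma$ themselves, which is what makes the formula usable in the sequel.
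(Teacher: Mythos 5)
Your proposal is correct and takes essentially the same route as the paper: the paper offers no independent argument for Theorem~\ref{theorem:fieldonIrrCSp} but simply quotes \cite[Theorem~3.1]{CS13} applied to $\tG$, which is legitimate precisely because $\tbG=\CSp(\bV)$ has connected center and the field automorphism and its dual are matched via the fixed duality, exactly the verification you outline. Your closing remark about how $\sigma^*$ acts on the combinatorial labels $\lambda=\prod_\Gamma\lambda_\Gamma$ is not needed for this statement itself and is in fact the content of the later Lemma~\ref{lemma:fieldonsemiclass} and Proposition~\ref{prop:fieldonirr}, where it requires a genuine (if short) argument rather than mere bookkeeping.
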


We want to derive from the above theorem the actions of field automorphisms on the irreducible ordinary characters of $G$ using the elementary divisors of $s=i^*(\ts)$ and the combinatoric parameters $\lambda$.
To do this, we first consider the action of field automorphisms on the conjufacy classes of semisimple elements of $G^*$, which may be of independent interest.

For any $\Gamma\in\cF$ and a field automorphism $\sigma=F_p^i$,
let $\sigma(\Gamma)$ be the polynomial in $\cF$ whose roots are exactly the $p^i$-th power of the roots of $\Gamma$.
In particular, since $p$ is odd, $\sigma(X+1)=X+1$.

\begin{lemma}\label{lemma:fieldonsemiclass}
Assume $s$ is a semisimple element of $G^*$ and $\sigma=F_p^i$.
Then $m_{\sigma(\Gamma)}(\sigma(s))=m_\Gamma(s)$ and
$\eta_{\sigma(\Gamma)}(\sigma(s))=\eta_\Gamma(s)$.
\end{lemma}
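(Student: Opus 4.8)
The plan is to trace through the definition of the field automorphism $\sigma = F_p^i$ on the orthogonal space $V^*$ and its induced action on semisimple conjugacy classes, reducing everything to the behaviour of elementary divisors under the substitution $X \mapsto X^{p^i}$ (at the level of roots). First I would recall from \cite[Definition~2.1]{CS13} and \cite[\S1]{FS89} that $\sigma$ acts on $\I_0(V^*)$ compatibly with an $\F_q$-structure, and that for a semisimple $s$ with orthogonal decomposition $V^* = \sum_\Gamma V^*_\Gamma(s)$, $s = \prod_\Gamma s_\Gamma$, applying $\sigma$ permutes the summands: one checks directly that $\sigma\bigl(V^*_\Gamma(s)\bigr) = V^*_{\sigma(\Gamma)}(\sigma(s))$, since $v$ lies in the $\Gamma$-primary component of $s$ (i.e.\ $\Gamma(s)^{k}v = 0$ for suitable $k$) if and only if $\sigma(v)$ lies in the $\sigma(\Gamma)$-primary component of $\sigma(s)$ — this is just the statement that $\sigma$ intertwines $s$ acting on $V^*$ with $\sigma(s)$ acting on $V^*$, together with the fact that $\sigma$ sends a root $\alpha$ of $\Gamma$ to the root $\alpha^{p^i}$ of $\sigma(\Gamma)$. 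Since $\sigma$ is a bijective semilinear map, it restricts to an isomorphism $V^*_\Gamma(s) \xrightarrow{\sim} V^*_{\sigma(\Gamma)}(\sigma(s))$ of $\barF_q$-spaces.

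From this the multiplicity statement is essentially immediate: $m_{\sigma(\Gamma)}(\sigma(s))$ is determined by $\dim V^*_{\sigma(\Gamma)}(\sigma(s)) = \dim V^*_\Gamma(s)$ together with the reduced degree $\delta_{\sigma(\Gamma)} = \delta_\Gamma$ (the reduced degree is invariant under $X\mapsto X^{p^i}$ on roots, since $p$ is coprime to everything relevant — I would cite \cite[(1.8)]{FS89}), so $m_{\sigma(\Gamma)}(\sigma(s)) = m_\Gamma(s)$. For the type function, one observes that $\sigma$ restricted to $V^*_\Gamma(s)$ is an isometry up to the field automorphism between the orthogonal space $V^*_\Gamma(s)$ and $V^*_{\sigma(\Gamma)}(\sigma(s))$; since a semilinear isometry over $\barF_q$ preserves the Witt type of a quadratic form (the type $\pm$ is a combinatorial invariant of the dimension and the discriminant class, and semilinear maps respect the discriminant up to the automorphism, which fixes the relevant square-class data because $p$ is odd), we get $\eta_{\sigma(\Gamma)}(\sigma(s)) = \eta_\Gamma(s)$. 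The only genuine subtlety here is bookkeeping for $\Gamma \in \cF_0$, where the type is a real constraint: for $\Gamma = X+1$ we already noted $\sigma(X+1) = X+1$, and for $\Gamma = X-1$ likewise $\sigma(X-1) = X-1$, so the claim is that $\sigma$ preserves the types of the fixed subspaces $V^*_{X\pm 1}(s)$, which again follows from the isometry-up-to-$\sigma$ property.

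The step I expect to be the main obstacle is making precise, and citing correctly, the assertion that the induced map $\sigma\colon V^*_\Gamma(s) \to V^*_{\sigma(\Gamma)}(\sigma(s))$ preserves the orthogonal-space type — this requires being careful about how $\sigma$ interacts with the bilinear (or quadratic) form defining the orthogonal structure, i.e.\ that $\sigma$ is a $\sigma$-semilinear similitude of $V^*$, and that such maps preserve Witt type. I would handle this by invoking the explicit matrix description of field automorphisms on classical groups in \cite[\S1]{FS89} (where $\sigma$ acts entrywise on matrices with respect to a fixed $\F_q$-rational basis), so that the Gram matrix of the form is literally fixed by $\sigma$, making the type-preservation transparent. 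An alternative, cleaner route, which I would mention as a remark, is to argue indirectly: the conjugacy class of $s$ in $\I_0(V^*)$ is determined by the data $\bigl(m_\Gamma(s); \eta_{X\pm1}(s)\bigr)$ subject to \cite[(1.12)]{FS89}, and $\sigma$ being an automorphism of the abstract group $\I_0(V^*)$ must send a valid data-tuple to a valid one; combined with the already-established $m_{\sigma(\Gamma)}(\sigma(s)) = m_\Gamma(s)$ and a single computation pinning down $\eta_{X+1}(\sigma(s))$ versus $\eta_{X+1}(s)$ on one nondegenerate example, the general case follows by the rigidity of the constraint \cite[(1.12)]{FS89}. Either way, the argument is short once the semilinear-isometry picture is set up.
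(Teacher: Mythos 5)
Your proposal is correct and follows essentially the same route as the paper: fix an $\F_q$-rational basis whose Gram matrix is literally $\sigma$-fixed so that $\sigma$ acts entrywise, observe that $\sigma$ then carries the $\Gamma$-primary component of $s$ isometrically (up to the entrywise field automorphism) onto the $\sigma(\Gamma)$-primary component of $\sigma(s)$, and conclude because the field automorphism does not change the type of a quadratic form (equivalently, $\det$ changes by the even power $p^i-1$, so the discriminant square class is preserved), which is exactly the paper's congruence argument citing \cite[(1.1)--(1.4)]{FS89}. The dimension count for the multiplicities is likewise the same.
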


\begin{proof}
Choose a basis $\cB$ of $V^*$ such that the metric matrix of $V^*$ with respect to $\cB$ is $$M=\begin{bmatrix} 0 & \cdots & 1 \\ \vdots & \udots & \vdots \\ 1 & \cdots & 0\end{bmatrix}.$$
Then $G^*$ is isomorphic to $\SO(M):=\{g\in\GL_{2n+1}(q) \mid g^tMg=M,\det(g)=1\}$ and the field automorphism $\sigma$ is realized as:
$g=(g_{jk}) \mapsto (g_{jk}^{p^i})$.

The assertion for the multiplicity function is clear.
Consider next the type function.
Let $V^*$ and $s$ be decomposed orthogonally as follows:
$$V^*= V^*_{\Gamma_1}\perp\cdots\perp V^*_{\Gamma_r},\ s=s_{\Gamma_1}\times\cdots\times s_{\Gamma_r},$$
where $\Gamma_1,\dots,\Gamma_r$ are all the elementary divisors of $s$.
Choose a basis $\cB'=\cB'_1\cup\cdots\cup\cB'_r$ with respect to the above decomposition of $V^*$ and let $A$ be the matrix transferring $\cB$ to $\cB'$.
Let $M_{\Gamma_i}$ be the metric matrix of $V_{\Gamma_i}$ with respect to $\cB'_i$.
Then $A^tMA=\diag\{M_{\Gamma_1},\dots,M_{\Gamma_r}\}$
and $A^{-1}sA=\diag\{s_{\Gamma_1},\dots,s_{\Gamma_r}\}$.
Applying $\sigma$, noticing that $\sigma(M)=M$, we have
\begin{align*}
\sigma(A)^tM\sigma(A) &= \diag\{\sigma(M_{\Gamma_1}),\dots,\sigma(M_{\Gamma_r})\};\\
\sigma(A)^{-1}\sigma(s)\sigma(A) &= \diag\{\sigma(s_{\Gamma_1}),\dots,\sigma(s_{\Gamma_r})\},
\end{align*}
where $\sigma(s_{\Gamma_i})$ is a semisimple element with the unique elementary divisor $\sigma(\Gamma_i)$.
Thus, there are orthogonal decompositions
$$V^*= V^*_{\sigma(\Gamma_1)}\perp\cdots\perp V^*_{\sigma(\Gamma_r)},\ \sigma(s)=\sigma(s)_{\sigma(\Gamma_1)}\times\cdots\times \sigma(s)_{\sigma(\Gamma_r)}.$$
Let $\cB''=\cB''_1\cup\cdots\cup\cB''_r$ be the basis of $V^*$ transferred from $\cB$ by the matrix $\sigma(A)$, where $\cB''_i$ is the basis of $V^*_{\sigma(\Gamma_i)}$ with metric matrix $\sigma(M_{\Gamma_i})$.
Since $\sigma$ does not change the type of the quadratic form of the orthogonal spaces (see \cite[(1.1)$\sim$(1.4)]{FS89}), $\sigma(M_{\Gamma_i})$ is congruent to $M_{\Gamma_i}$.
Thus $\eta_{\sigma(\Gamma_i)}(\sigma(s))=\eta_{\Gamma_i}(s)$ for each $i$.
\end{proof}

\begin{remark}
For $\Gamma\in\cF_1\cup\cF_2$,
$\eta_\Gamma(s)$ is determined by $m_\Gamma(s)$, thus $\eta_{\sigma(\Gamma)}(\sigma(s))=\eta_\Gamma(s)$ can be also derived from the assertion about the multiplicity functions.
The above lemma can be also proved by considering the order of $C_{G^*}(s)$.
But we choose the above more conceptual proof.
\end{remark}

The following result is an analogue of \cite[Lemma~3.2]{LZ18}, but the semisimple element $s$ can be isolated here, so the proof here is different from that in \cite{LZ18}.
\begin{corollary}\label{corollary:fieldonsemiclass}
Assume $s$ is a semisimple element of $G^*$ and $\sigma=F_p^i$.
Then $s^{p^i}$ is conjugate to $\sigma(s)$ in $G^*$.
Consequently, $\langle s\rangle$ and $\langle\sigma(s)\rangle$ are conjugate in $G^*$.
\end{corollary}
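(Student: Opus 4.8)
The plan is to reduce the corollary to the multiplicity/type data provided by Lemma~\ref{lemma:fieldonsemiclass}, using the fact recorded just before it that a semisimple element of $G^*$ is determined up to conjugacy by the functions $m_\Gamma$ together with $\eta_{X+1}$ and $\eta_{X-1}$. First I would observe that the map $\Gamma\mapsto\sigma(\Gamma)$ is a bijection of $\cF$ onto itself (since raising roots to the $p^i$-th power permutes the roots of unity and more generally the closed points, and preserves the conditions defining $\cF_0,\cF_1,\cF_2$ as well as the property $\Gamma=\Gamma^*$ because $p$ is odd). Hence the data of $\sigma(s)$ are obtained from those of $s$ by relabelling $\Gamma\rightsquigarrow\sigma(\Gamma)$, and Lemma~\ref{lemma:fieldonsemiclass} says precisely $m_{\sigma(\Gamma)}(\sigma(s))=m_\Gamma(s)$ and $\eta_{\sigma(\Gamma)}(\sigma(s))=\eta_\Gamma(s)$.

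Next I would compute the data of $s^{p^i}$. The eigenvalues of $s^{p^i}$ are the $p^i$-th powers of those of $s$ (with multiplicity), so the elementary divisor $\Gamma$ of $s$ contributes the elementary divisor $\sigma(\Gamma)$ to $s^{p^i}$; thus $m_{\sigma(\Gamma)}(s^{p^i})=m_\Gamma(s)$. For the type function, the key point is that $V^*_{\sigma(\Gamma)}(s^{p^i})=V^*_\Gamma(s)$ as subspaces of $V^*$ (powering by $p^i$ does not move the generalized eigenspaces, only relabels which polynomial annihilates them), carrying the \emph{same} orthogonal form; hence $\eta_{\sigma(\Gamma)}(s^{p^i})=\eta_\Gamma(s)=\eta_{\sigma(\Gamma)}(\sigma(s))$, the last equality by Lemma~\ref{lemma:fieldonsemiclass}. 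Combining, $s^{p^i}$ and $\sigma(s)$ have identical multiplicity functions and identical $\eta_{X+1}$, $\eta_{X-1}$ (note $\sigma$ fixes $X\pm1$ since $p$ is odd), so by the classification of semisimple classes in $G^*=\I_0(V^*)$ recalled in \S\ref{subsect:notationsBon06} (odd-dimensional orthogonal case), they are $G^*$-conjugate.

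For the consequence about cyclic subgroups: since $\sigma$ is an automorphism of $G^*$ (realized on matrices as entrywise $p^i$-power in the model $\SO(M)$ used in the proof of Lemma~\ref{lemma:fieldonsemiclass}), it sends $\langle s\rangle$ to $\langle\sigma(s)\rangle$; and $\langle s^{p^i}\rangle=\langle s\rangle$ because $s$ is an $\ell'$-element of $p'$-order (semisimple, order prime to $p$), so $p^i$ is invertible modulo the order of $s$. Therefore $\langle s\rangle=\langle s^{p^i}\rangle$ is $G^*$-conjugate to $\langle\sigma(s)\rangle$, as claimed.

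The main obstacle is the type function for the $X+1$ part: unlike the $\cF_1\cup\cF_2$ components where $\eta_\Gamma$ is forced by $m_\Gamma$, and unlike $X-1$ where $1$ being always an eigenvalue pins things down, the value $\eta_{X+1}$ is genuine extra data, so one must be careful that both $s^{p^i}$ and $\sigma(s)$ decompose $V^*$ along \emph{the same} orthogonal subspace in degree-one-over-$(-1)$ and that the Frobenius twist does not alter the Witt type of that subspace. The proof of Lemma~\ref{lemma:fieldonsemiclass} already isolates this point (via $\sigma(M_{\Gamma_i})$ congruent to $M_{\Gamma_i}$, citing \cite[(1.1)$\sim$(1.4)]{FS89}); the corollary just needs the observation that the powering map $s\mapsto s^{p^i}$ literally fixes the generalized eigenspaces setwise, so that identifying $V^*_{\sigma(\Gamma)}(s^{p^i})$ with $V^*_\Gamma(s)$ is automatic and no further computation is needed.
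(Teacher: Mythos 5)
Your proof is correct and follows essentially the same route as the paper: the paper's proof is exactly the observation that, by Lemma~\ref{lemma:fieldonsemiclass}, $s^{p^i}$ and $\sigma(s)$ have the same multiplicity and type functions and hence are $G^*$-conjugate, with the statement about $\langle s\rangle$ following since $s$ has order prime to $p$. Your write-up merely makes explicit the intermediate step that $V^*_{\sigma(\Gamma)}(s^{p^i})=V^*_\Gamma(s)$ (so the invariants of $s^{p^i}$ are the relabelled invariants of $s$), which the paper leaves implicit; note only that the parenthetical ``$\ell'$-element'' is unnecessary and not assumed—semisimplicity alone gives $p'$-order, which is all you use.
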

\begin{proof}
By Lemma \ref{lemma:fieldonsemiclass}, the multiplicity functions and the type functions of $s^{p^i}$ and $\sigma(s)$ are the same, then the corollary follows.
\end{proof}

For a semisimple element $s$ and $\lambda=\prod\limits_\Gamma\lambda_\Gamma$ with $\lambda_\Gamma$ being a partition of $m_\Gamma(s)$ or a Lusztig symbol of rank $[\frac{m_\Gamma(s)}{2}]$ according to $\Gamma\in\cF_1\cup\cF_2$ or $\Gamma\in\cF_0$,
define $\sigma(\lambda)_{\sigma(\Gamma)}=\lambda_\Gamma$ for a field automorphism $\sigma$ on $G^*$.
By Lemma \ref{lemma:fieldonsemiclass}, this is well-defined.
The first half of the proof of the following theorem is similar to that of \cite[Proposition~3.4]{LZ18}.

\begin{proposition}\label{prop:fieldonirr}
Let $\sigma=F_p^i$ be a field automorphism.
Then we can choose the label $i$'s for irreducible characters $\chi_{s,\lambda,i}$'s of $G$ such that $\chi_{s,\lambda,i}^\sigma=\chi_{\sigma^*(s),\sigma^*(\lambda),i}$.
The choice of $i$'s can be made compatible with that in Theorem \ref{theorem:irrB}.
\end{proposition}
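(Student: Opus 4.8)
The plan is to push the question up to $\tG=\CSp_{2n}(q)$, where Theorem~\ref{theorem:fieldonIrrCSp} computes the action of $\sigma$, then translate that action on Jordan data into the combinatorial parameters $(s,\lambda)$, and finally come back down to $G$ by Clifford theory; the only delicate point will be a coherent choice of the label $i\in\Z/2\Z$ along the $\langle\sigma\rangle$-orbits. Since $\sigma$ normalises $G$ inside $\tG$, restriction from $\tG$ to $G$ commutes with $\sigma$, so it suffices to understand $\chi_{\ts,\lambda}^\sigma$. By Theorem~\ref{theorem:fieldonIrrCSp} this equals $\chi_{\sigma^*(\ts),\sigma^*(\varphi_\lambda)}$, and $i^*(\sigma^*(\ts))=\sigma^*(s)$ because the field automorphisms on $\tbG$ and $\tbG^*$ are dual. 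For the combinatorial identification --- the ``completely elementary'' part --- I would invoke Lemma~\ref{lemma:fieldonsemiclass}: $\sigma^*$ carries $V^*_\Gamma(s)$ to $V^*_{\sigma(\Gamma)}(\sigma(s))$, hence permutes the factors $C^\circ_\Gamma(s)$ of $C^\circ_{\bG^*}(s)^F$, acting on each as a field automorphism which preserves its isomorphism type ($\GL_{m_\Gamma(s)}(\varepsilon_\Gamma q^{\delta_\Gamma})$ for $\Gamma\in\cF_1\cup\cF_2$, respectively $\I_0(V^*_\Gamma(s))$ for $\Gamma\in\cF_0$, the invariants $\delta_\Gamma,\varepsilon_\Gamma$ and the orthogonal types being $\sigma$-invariant). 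As a field automorphism fixes every unipotent character of a general linear, symplectic or orthogonal group, $\sigma^*(\varphi_\lambda)$ carries the parameter $\lambda_\Gamma$ in its $\sigma(\Gamma)$-component; this is exactly the definition of $\sigma^*(\lambda)$, so $\chi_{\ts,\lambda}^\sigma=\chi_{\sigma^*(\ts),\sigma^*(\lambda)}$ and hence $(\Res^{\tG}_G\chi_{\ts,\lambda})^\sigma=\Res^{\tG}_G\chi_{\sigma^*(\ts),\sigma^*(\lambda)}$.

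I then descend to $G$. When $\lambda_{X+1}$ is degenerate, $\Res^{\tG}_G\chi_{\ts,\lambda}=\chi_{s,\lambda}$ is irreducible; as $\sigma$ fixes $X+1$ the symbol $\sigma^*(\lambda)_{X+1}=\lambda_{X+1}$ stays degenerate, and $\chi_{s,\lambda}^\sigma=\chi_{\sigma^*(s),\sigma^*(\lambda)}$, with no label to track. When $\lambda_{X+1}$ is non-degenerate, $\Res^{\tG}_G\chi_{\ts,\lambda}=\chi_{s,\lambda,0}+\chi_{s,\lambda,1}$ and $\sigma$ maps the unordered pair $\{\chi_{s,\lambda,0},\chi_{s,\lambda,1}\}$ bijectively onto $\{\chi_{\sigma^*(s),\sigma^*(\lambda),0},\chi_{\sigma^*(s),\sigma^*(\lambda),1}\}$. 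Since an equivariance for $\sigma=F_p$ yields one for every $F_p^i$, I would fix $\sigma=F_p$, choose one parameter $(s_0,\lambda_0)$ in each $\langle\sigma^*\rangle$-orbit (of $G^*$-classes) of such parameters, label its two constituents arbitrarily, and propagate by $\chi_{(\sigma^*)^k(s_0,\lambda_0),j}:=\chi_{s_0,\lambda_0,j}^{\sigma^k}$.

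The heart of the matter --- and the step I expect to be the main obstacle --- is the coherence of this propagation: one must show that if $(\sigma^*)^m$ fixes the $G^*$-class of $(s,\lambda)$ with $\lambda_{X+1}$ non-degenerate, then $\sigma^m$ does not interchange $\chi_{s,\lambda,0}$ and $\chi_{s,\lambda,1}$ (equivalently, the obstruction class in $\operatorname{Hom}(\langle(\sigma^*)^m\rangle,\Z/2\Z)$ vanishes). My plan here is a Clifford-theoretic argument comparing $G$, $\tG$ and $\tG\rtimes\langle\sigma^m\rangle$: by the previous steps $\sigma^m$ stabilises the pair $\{\chi_{s,\lambda,0},\chi_{s,\lambda,1}\}$ and sends $\chi_{\ts,\lambda}$ to a $\widehat{z_0}$-twist of itself for some $z_0\in(\Ker i^*)^F$, so $\chi_{\ts,\lambda}$ together with its twists is permuted by $\sigma^m$ and each of these restricts to $\chi_{s,\lambda,0}+\chi_{s,\lambda,1}$ multiplicity-freely; playing this against the fact that $\tau$ --- which realises the swap --- is inner in $\tG$, and if necessary invoking the compatibility of field automorphisms with Clifford theory from \cite{CS13}, should rule out the swap. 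A robust alternative, which I would fall back on if needed, is to fix the $\Z/2\Z$-labels through the block labels of Theorem~\ref{theorem:blockSp}, which via the identification $\Res^{\J_0(V'_0)}_{\I_0(V'_0)}\chi_{\ts_0,\kappa}$ in its proof reduce to the labelling of characters of proper symplectic subgroups, so that the whole statement follows by induction on $n$. Finally, compatibility with Theorem~\ref{theorem:irrB} is obtained by making the $\Z/2\Z$-labels of the blocks $B_{s,\kappa,i}$ --- themselves only fixed up to the same choice via \cite[11.6]{Bon06} --- $\sigma$-equivariant in parallel, after which Theorem~\ref{theorem:irrB} transports them to the character labels for parameters with $\kappa_{X+1}$ non-degenerate, while the remaining parameters (with $\kappa_{X+1}$ degenerate but $\lambda_{X+1}$ non-degenerate) carry their own orbit labelling, so that $\chi_{s,\lambda,i}^\sigma=\chi_{\sigma^*(s),\sigma^*(\lambda),i}$ holds together with the assignments of Theorem~\ref{theorem:irrB}.
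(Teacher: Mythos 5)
Your first half is essentially the paper's own argument: push everything up to $\tG$ via Theorem~\ref{theorem:fieldonIrrCSp}, use Lemma~\ref{lemma:fieldonsemiclass} (plus the argument of \cite[Proposition~3.4]{LZ18} on the $\cF_1\cup\cF_2$-components and the invariance of unipotent characters on the $\cF_0$-components) to get $\chi_{\ts,\lambda}^\sigma=\chi_{\sigma^*(\ts),\sigma^*(\lambda)}$, handle the degenerate case by irreducibility of the restriction, and propagate labels along $F_p$-orbits. That part is fine and matches the paper.

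The genuine gap is exactly at the step you yourself call the heart of the matter: showing that when $\lambda_{X+1}$ is non degenerate and some power $F_p^{j}$ fixes $\tchi_{s,\lambda}=\chi_{s,\lambda,0}+\chi_{s,\lambda,1}$, it cannot interchange the two constituents. Neither of your proposed routes proves this. The Clifford-theoretic sketch only uses data that is symmetric under the swap --- $F_p^{j}$ stabilises the unordered pair, permutes the $\Irr(\tG/G)$-twists of $\chi_{\ts,\lambda}$, and by \cite[11.6]{Bon06} every such twist restricts to the same multiplicity-free sum --- so nothing in it can distinguish the trivial action from the action of $\tau$; deciding between the two is equivalent to the non-formal splitting statement $(\tG\rtimes D)_{\chi_{s,\lambda,i}}=\tG_{\chi_{s,\lambda,i}}\rtimes D_{\chi_{s,\lambda,i}}$, and \cite{CS13} cannot supply it since that paper deals with $\Irr(\tG)$, where the center is connected. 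The fallback through block labels also collapses: precisely in the troublesome case where $\kappa_{X+1}$ is degenerate but $\lambda_{X+1}$ is not, the two constituents $\chi_{t,\lambda,0}\neq\chi_{t,\lambda,1}$ lie in the \emph{same} block $B_{s,\kappa}$ of Theorem~\ref{theorem:blockSp}, so block labels (and any induction on $n$ built on them) give no means of separating the pair. The paper closes this gap by quoting \cite[Theorem~3.1]{CS17C} (Cabanes--Sp\"ath's stabiliser result from the inductive McKay condition for type $C$), which asserts exactly that if $F_p^{j}$ stabilises $\tchi_{s,\lambda}$ then it stabilises both $\chi_{s,\lambda,0}$ and $\chi_{s,\lambda,1}$; without this (or an equivalent nontrivial input) your orbit-wise choice of the label $i$ is not well defined on orbits with nontrivial stabiliser, and the compatibility with Theorem~\ref{theorem:irrB} inherits the same problem.
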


\begin{proof}
Denote $\tchi_{s,\lambda}=\Res^{\tG}_G\chi_{\ts,\varphi_\lambda}$ for $s=i^*(\ts)$; this can be well defined by \cite[11.6]{Bon06} (similarly as in the proof of Theorem \ref{theorem:characterSp}).
Then by Theorem \ref{theorem:fieldonIrrCSp}, $\tchi_{s,\lambda}^\sigma= \Res^{\tG}_G \chi_{\sigma^*(\ts),\sigma^*(\varphi_\lambda)}$.
Since the unipotent characters of $C_{\tG^*}(\ts)$ and $C_{\bG^*}^\circ(s)^F$ are in bijection by restriction, we may assume $\varphi_\lambda\in\cE(C_{\bG^*}^\circ(s)^F,1)$.
By Corollary \ref{corollary:fieldonsemiclass}, we may compose $\sigma^*$ with an inner automorphism of $G^*$, denoted as $\tsigma^*$, such that $\tsigma^*$ stabilizes each component $C_\Gamma^\circ(s)$ of $C_{\tG^*}^\circ(s)^F\cong\prod\limits_\Gamma C_\Gamma^\circ(s)$.
For $\Gamma\in\cF_1\cup\cF_2$, an argument as in the proof of \cite[Proposition~3.4]{LZ18} shows that $\tsigma^*$ stabilizes $\varphi_{\lambda_\Gamma}$.
For $\Gamma\in\cF_0$, $\tsigma^*$ is a field automorphism modular some inner automorphism, thus stabilizes the unipotent characters of $\I_0(V^*_\Gamma(s))$.
Consequently, $\tsigma^*(\varphi_\lambda)=\varphi_\lambda$,
so viewed as unipotent characters of $C_{\tG^*}^\circ(\sigma^*(s))^F$,
$\tsigma^*(\varphi_\lambda)=\varphi_{\tsigma^*(\lambda)}$.
Thus, $\tchi_{s,\lambda}^\sigma= \Res^{\tG}_G \chi_{\sigma^*(\ts),\sigma^*(\varphi_\lambda)}=\tchi_{\tsigma^*(s),\tsigma^*(\lambda)}=\tchi_{\sigma^*(s),\sigma^*(\lambda)}$.

When $\lambda_{X+1}$ is degenerate, the above result is just the required.
So assume $\lambda_{X+1}$ is non degenerate.
In this case, $\tchi_{s,\lambda}=\chi_{s,\lambda,0}+\chi_{s,\lambda,1}$.
It suffices to consider $\sigma=F_p$.
Note that by \cite[Theorem~3.1]{CS17C}, if $F_p^j$ stabilizes $\tchi_{s,\lambda}$, $F_p^j$ stabilizes the both components $\chi_{s,\lambda,0},\chi_{s,\lambda,1}$.
Then taking the $F_p$-orbit of $\tchi_{s,\lambda}$ in $\Irr(G)$, we can choose the subscript $i$ for each one in the orbit of $\tchi_{s,\lambda}$ in $\Irr(G)$ such that $\chi_{s,\lambda,i}^{F_p}=\chi_{F_p^*(s),F_p^*(\lambda),i}$.
Do this for all $F_p$-orbits.

Since $\sigma$ maps all irreducible characters in a block to the set of irreducible characters in another block, the above choice can be made compatible with that in Theorem \ref{theorem:irrB}.
\end{proof}

\begin{proposition}\label{prop:actiononBrauer}
Let $\sigma$ be a field automorphism and $\tau$ be an element of $\tG$ generating $\tG$ modulo $G$.
If the sub-matrix of the decomposition matrix of $G$ with respect to the basic set $\cE(G,\ell')$ is unitriangular,
then we can choose the label for irreducible Brauer characters of $G$ compatible with that in Theorem \ref{theorem:BrauerSp} such that
\begin{compactenum}[(1)]
\item $\varphi_{s,\lambda,i}^\sigma=\varphi_{\sigma^*(s),\sigma^*(\lambda),i}$;
\item $\varphi_{s,\lambda,i}^\tau=\varphi_{s,\lambda,i+1}$.
	In particular, when $\lambda_{X+1}$ is degenerate, $\tau$ fixes $\varphi_{s,\lambda}=\varphi_{s,\lambda,0}=\varphi_{s,\lambda,1}$.
\end{compactenum}
\end{proposition}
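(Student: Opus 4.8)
The plan is to transfer the statements about ordinary characters (Proposition~\ref{prop:fieldonirr} and Proposition~\ref{prop:diagonIrr}) to Brauer characters by means of the unitriangularity hypothesis on the decomposition matrix with respect to the basic set $\cE(G,\ell')$. Recall from Theorem~\ref{theorem:BrauerSp} that, in each block $B_{s,\kappa,i}$, the set $\Irr(B_{s,\kappa,i})\cap\cE(G,s)$ is a basic set, and the irreducible Brauer characters $\varphi_{s,\lambda,j}$ are indexed by the same combinatorial data $(s,\lambda,j)$ as the ordinary characters $\chi_{s,\lambda,j}$ in that basic set. So the heart of the matter is that, under unitriangularity, the labelling of $\IBr(G)$ can be chosen to be ``the same'' as the labelling of the basic set, and any automorphism permutes $\IBr(G)$ exactly as it permutes the basic set.

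First I would set up the unitriangular transition. Order the basic set $\cE(G,\ell')$ (inside each block) compatibly with a fixed linear order refining the dominance-type order making the decomposition submatrix lower unitriangular; write $\chi_{s,\lambda,j}=\varphi_{s,\lambda,j}+\sum d_{\mu}\,\varphi_{\mu}$ where the sum runs over strictly smaller labels. This is exactly the statement that the square submatrix is unitriangular. Since both $\sigma$ and $\tau$ (acting on $G$ and hence on $G_{\ell'}$) permute $\IBr(G)$ and also permute $\Irr(G)$, and since the decomposition map is equivariant under any automorphism of $G$, the automorphism permutes the basic set $\cE(G,\ell')$ — this uses that $\cE(G,\ell')$ is $\Aut(G)$-stable, which is noted in the introduction and follows because $\cE(G,s)$ is sent to $\cE(G,\sigma^*(s))$ and similarly for $\tau$. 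A standard argument (e.g. \cite[Lemma]{CS13}-type, or the elementary observation that a permutation of rows of a unitriangular matrix that preserves unitriangularity up to the same row permutation forces the matching permutation of columns) then shows that the induced permutation of $\IBr(G)$ is the ``same'' permutation, once the labels $\varphi_{s,\lambda,j}$ are chosen to match $\chi_{s,\lambda,j}$.

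Concretely: the label on $\IBr(G)$ is only defined up to the ambiguity of the subscript $j$ when $\lambda_{X+1}$ is non-degenerate (Theorem~\ref{theorem:BrauerSp}(2)), precisely paralleling the ambiguity for ordinary characters (Theorem~\ref{theorem:characterSp}(3)). I would fix the subscript $j$ for $\varphi_{s,\lambda,j}$ so that $\varphi_{s,\lambda,j}$ is the head of the unitriangular expansion of $\chi_{s,\lambda,j}$; this is the choice ``compatible with Theorem~\ref{theorem:BrauerSp}''. Then for (1): since $\chi_{s,\lambda,i}^\sigma=\chi_{\sigma^*(s),\sigma^*(\lambda),i}$ by Proposition~\ref{prop:fieldonirr} and the whole unitriangular system is carried by $\sigma$ to the unitriangular system with labels transported by $\sigma^*$ (using that $\sigma$ respects the chosen order — here one must check the order can be taken $\sigma$-stable, which is automatic since the order is combinatorial in $\lambda$ and $\sigma^*$ acts combinatorially via Lemma~\ref{lemma:fieldonsemiclass}), applying $\sigma$ to the expansion of $\chi_{s,\lambda,i}$ and comparing heads forces $\varphi_{s,\lambda,i}^\sigma=\varphi_{\sigma^*(s),\sigma^*(\lambda),i}$. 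For (2): the same argument with $\tau$ in place of $\sigma$, using Proposition~\ref{prop:diagonIrr} ($\chi_{s,\lambda,i}^\tau=\chi_{s,\lambda,i+1}$), gives $\varphi_{s,\lambda,i}^\tau=\varphi_{s,\lambda,i+1}$; the degenerate case ($\lambda_{X+1}$ degenerate) is immediate since then there is only one character with that label and $\tau$ must fix it.

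The main obstacle I anticipate is the bookkeeping around the subscript $j\in\Z/2\Z$ and making the two choices — the one for $\IBr(G)$ in Theorem~\ref{theorem:BrauerSp} and the one for $\Irr(G)$ in Theorem~\ref{theorem:irrB}/Proposition~\ref{prop:fieldonirr} — simultaneously compatible with \emph{both} the $\sigma$-action and the $\tau$-action. One needs to verify there is no obstruction to choosing $j$ consistently across a whole $\langle\sigma,\tau\rangle$-orbit; this should follow from the compatibility already built into Proposition~\ref{prop:fieldonirr} (whose choice was made along $F_p$-orbits) together with the fact that $\tau$ commutes with $\sigma$ modulo $G$, but it requires care, and invoking \cite[Theorem~3.1]{CS17C}-type results (as in the proof of Proposition~\ref{prop:fieldonirr}) to guarantee that an automorphism stabilizing the ``large'' character $\tchi_{s,\lambda}$ stabilizes both constituents, so the labelling is genuinely well-defined rather than merely locally consistent.
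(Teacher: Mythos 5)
Your proposal is correct and follows essentially the same route as the paper: the paper's proof simply notes that $\cE(G,\ell')$ is $\Aut(G)$-stable and invokes \cite[Lemma~7.5]{CS13}, which packages exactly your unitriangular ``comparing heads'' transfer into an $\Aut(G)$-equivariant, block-preserving bijection between the basic set and $\IBr(G)$, after which Propositions \ref{prop:diagonIrr} and \ref{prop:fieldonirr} give the stated formulas. Your worry about choosing the subscript $j$ consistently for the $\sigma$- and $\tau$-actions is handled as you suspect (and as the paper remarks at the end): since $j$ ranges over $\Z/2\Z$ and $\tau$ necessarily swaps the two constituents, the choices made along $F_p$-orbits in Proposition \ref{prop:fieldonirr} are automatically compatible with the diagonal action.
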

\begin{proof}
Note that $\cE(G,\ell')$ is $\Aut(G)$-stable.
By \cite[Lemma~7.5]{CS13}, under the assumption about the decomposition matrix, there is an $\Aut(G)$-equivariant bijection between $\cE(G,\ell')$ and $\IBr(G)$ preserving blocks.
Labelling the irreducible Brauer characters of $G$ by this bijection, the assertions follows. 
\end{proof}

\begin{corollary}\label{corollary:actiononblocks}
Let $\sigma$ and $\tau$ be as above.
Then we can choose the label of blocks such that $B_{s,\kappa,i}^\sigma=B_{\sigma^*(s),\sigma^*(\kappa),i}$ and $B_{s,\kappa,i}^\tau=B_{s,\kappa,i+1}$,
where $\sigma^*(\kappa)_{\sigma^*(\Gamma)}=\kappa_\Gamma$.
\end{corollary}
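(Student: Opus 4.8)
The plan is to deduce the action of automorphisms on blocks from the already-established action on irreducible (ordinary or Brauer) characters, using the fact that an automorphism permutes blocks and that each block contains a distinguished character whose label transforms in a known way. Concretely, if $a\in\Aut(G)$ and $B$ is a block of $G$, then $B^a$ is again a block, and $\Irr(B^a)=\Irr(B)^a$, so it suffices to track what happens to a single character of $B$ under $a$ and read off the block label from Theorem~\ref{theorem:irrB} (or Theorem~\ref{theorem:BrauerSp}). I will do this separately for the field automorphism $\sigma=F_p^i$ and for the diagonal automorphism $\tau$.

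First, for $\tau$: pick any $\chi_{t,\lambda,j}\in\Irr(B_{s,\kappa,i})$; by Theorem~\ref{theorem:irrB} we have $t_{\ell'}\sim s$, $t_\ell$ in a dual defect group of $B_{s,\kappa,i}$, $\kappa$ the core of $\lambda$, and $j=i$ when $\kappa_{X+1}$ is non degenerate. By Proposition~\ref{prop:diagonIrr}, $\chi_{t,\lambda,j}^\tau=\chi_{t,\lambda,j+1}$, and the latter lies in $B_{s,\kappa,i+1}$ again by Theorem~\ref{theorem:irrB} (the conditions on $t$ and on the core of $\lambda$ are unchanged; only the $\Z/2\Z$-label flips, matching the flip $i\mapsto i+1$). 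Since $\tau$ permutes blocks, this forces $B_{s,\kappa,i}^\tau=B_{s,\kappa,i+1}$. When $\kappa_{X+1}$ is degenerate there is nothing to check, as $B_{s,\kappa,0}=B_{s,\kappa,1}$ and $\tau$ fixes the relevant characters by Proposition~\ref{prop:diagonIrr}.

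Next, for $\sigma=F_p^i$: I first need to make sense of $\sigma^*(\kappa)$, where $\sigma^*(\kappa)_{\sigma^*(\Gamma)}=\kappa_\Gamma$; this is well defined by Lemma~\ref{lemma:fieldonsemiclass} exactly as $\sigma^*(\lambda)$ was, since $e_{\sigma^*(\Gamma)}=e_\Gamma$ (the reduced degree $\delta_\Gamma$ and sign $\varepsilon_\Gamma$, hence the order of $\varepsilon_\Gamma q^{\delta_\Gamma}$, are invariant under $\sigma^*$, and $e_{X\pm1}=e$ is fixed) and the relevant rank conditions (C.1)--(C.3) are preserved. Then take $\chi_{t,\lambda,j}\in\Irr(B_{s,\kappa,i})$; by Proposition~\ref{prop:fieldonirr}, $\chi_{t,\lambda,j}^\sigma=\chi_{\sigma^*(t),\sigma^*(\lambda),j}$. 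I must verify this character lies in $B_{\sigma^*(s),\sigma^*(\kappa),i}$: by Corollary~\ref{corollary:fieldonsemiclass}, $(\sigma^*(t))_{\ell'}=\sigma^*(t_{\ell'})$ is $G^*$-conjugate to $\sigma^*(s)$; the $\ell$-part $(\sigma^*(t))_\ell=\sigma^*(t_\ell)$ lies in $\sigma^*$ of a dual defect group of $B_{s,\kappa,i}$, which one checks is a dual defect group of $B_{\sigma^*(s),\sigma^*(\kappa),i}$ since $\sigma^*$ commutes with the relevant centralizer/Sylow constructions of \S\ref{subsect:irrB}; and $\sigma^*(\kappa)$ is the core of $\sigma^*(\lambda)$ because taking $e_\Gamma$-cores commutes with relabelling $\Gamma\mapsto\sigma^*(\Gamma)$. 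Since $\sigma$ permutes blocks, $B_{s,\kappa,i}^\sigma=B_{\sigma^*(s),\sigma^*(\kappa),i}$, and these choices are compatible with those in Proposition~\ref{prop:fieldonirr} (hence with Theorem~\ref{theorem:irrB} and Theorem~\ref{theorem:BrauerSp}) by construction.

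The main obstacle I anticipate is the bookkeeping around the dual defect groups: one must check that $\sigma^*$ carries a dual defect group $R^*=I_0R^*_+$ of $B_{s,\kappa,i}$ (built in \S\ref{subsect:irrB} from a Sylow $\ell$-subgroup of $C_{Q^*_+}(s_+)$, with $Q^*$ a suitable Levi dual) to a group of the same shape relative to $\sigma^*(s)$, i.e.\ that the Levi $Q^*$, the decomposition $s=s_0\times s_+$, and the Sylow choice all transform equivariantly under the field automorphism. This is essentially Lemma~\ref{lemma:fieldonsemiclass} together with the fact that field automorphisms commute with the orthogonal decomposition $V^*=\bigoplus_\Gamma V^*_\Gamma(s)$ (up to relabelling $\Gamma$), but it requires a careful, if routine, tracking of the constructions in \cite[\S8,\S13]{FS89}; alternatively, since condition (iii) in Theorem~\ref{theorem:irrB} only constrains $t_\ell$ up to the block, one can sidestep the explicit defect group and argue purely via the fact that $\sigma$ permutes blocks and sends $\cE(G,s)$ to $\cE(G,\sigma^*(s))$, using Proposition~\ref{prop:fieldonirr} to pin down which block in the $\sigma$-orbit is hit.
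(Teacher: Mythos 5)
Your argument is correct and is essentially the paper's: the corollary is stated there without proof precisely because it is meant to follow, as you spell out, by tracking a single character of the block (most cleanly one from the basic set $\cE(G,s)\cap\Irr(B)$, which also sidesteps the dual-defect-group bookkeeping you flag) through Propositions~\ref{prop:diagonIrr} and~\ref{prop:fieldonirr} and reading off the new block label from Theorems~\ref{theorem:blockSp} and~\ref{theorem:irrB}, using that automorphisms permute blocks and that $e_{\sigma^*(\Gamma)}=e_\Gamma$ makes $\sigma^*(\kappa)$ a well-defined label. No gaps; your observation that only ordinary characters (not the unitriangularity hypothesis) are needed is consistent with the paper's setup.
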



\subsection{An equivariant Jordan decomposition for $\Sp_{2n}(q)$}\label{subsect:equiJordan}
Before continuing to consider the actions of automorphisms on weights, we digress to give an equivariant Jordan decomposition for $\Sp_{2n}(q)$, which is conjectured for any finite groups of Lie type in \cite[\S8]{CS17A}.
As in \cite[\S8]{CS17A},
denote $\cE(C_{G^*}(s),1)=\Irr(C_{G^*}(s)\mid\cE(C_{\bG^*}^\circ(s)^F,1))$ and 
$$\Jor(G):=\left(\bigcup_{s\in G^*_{ss}}\cE(C_{G^*}(s),1)\right)/G^*,$$
where the right side is the set of $G^*$-conjugacy classes of pairs of the form $(s,\varphi)$ with $\varphi\in\cE(C_{G^*}(s),1)$.
Then the field automorphism $\sigma$ acts on $\Jor(G)$ naturally via $\sigma^*$.
Recall from \S\ref{subsect:notationsBon06} that we have an injection
$\omega_s^0: A_{\bG^*}(s)^F \to H^1(F,\cZ(\bG))^\wedge$.
Since $H^1(F,\cZ(\bG)) \cong \tbG^F/\bG^FZ(\tbG)^F$, we have a surjection
$$\homega_s^0: \tbG^F/\bG^FZ(\tbG)^F \to (A_{\bG^*}(s)^F)^\wedge.$$
The diagonal automorphism $\tau$ acts on $\Jor(G)$ via $\homega_s^0$:
$\tau.(s,\varphi)=(s,\homega_s^0(\tau)\cdot\varphi)$.

\begin{theorem}\label{theorem:equivariantJD}
Under the above actions of field and diagonal automorphisms, there is an equivariant Jordan decomposition for $G$:
$$\Irr(G) \longleftrightarrow \Jor(G).$$
\end{theorem}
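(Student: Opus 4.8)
The plan is to reduce the equivariant Jordan decomposition for $G=\Sp_{2n}(q)$ to the results already established in Section~\ref{sect:AutCharacter}, by exhibiting an explicit bijection $\Irr(G)\longleftrightarrow\Jor(G)$ that matches up the parametrizations. First I would recall that Theorem~\ref{theorem:characterSp} and Theorem~\ref{theorem:JDnonconnectedcenter} already give, for each semisimple $s\in G^*_{ss}$, a bijection between $\cE(G,s)$ and the set $\cE(C_{\bG^*}^\circ(s)^F,1)$ with its multiplicities recorded by $A_{G^*}(s)^F$-orbits; unwinding this, a character $\chi_{s,\lambda,i}$ corresponds to the $G^*$-class of a pair $(s,\varphi)$ where $\varphi\in\cE(C_{G^*}(s),1)$ lies above the unipotent character of $C_{\bG^*}^\circ(s)^F$ labelled by $\lambda$, and the extra index $i\in\Z/2\Z$ is exactly the datum needed to pick out one of the (one or two) irreducible constituents of $\Res^{\tG}_G\chi_{\ts,\lambda}$, equivalently one of the (one or two) extensions of the $C_{\bG^*}^\circ(s)^F$-unipotent character to $C_{G^*}(s)$. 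So the map $\chi_{s,\lambda,i}\mapsto (s,\varphi_{\lambda,i})^{G^*}$ is well-defined and bijective onto $\Jor(G)$; this is just bookkeeping of the definitions in \S\ref{subsect:irrSp} together with the observation that $|A_{\bG^*}(s)|\le 2$, so there is no room for the combinatorial subtleties that arise for larger component groups.

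Next I would check equivariance under the diagonal automorphism $\tau$. On the character side Proposition~\ref{prop:diagonIrr} says $\chi_{s,\lambda,i}^\tau=\chi_{s,\lambda,i+1}$, and when $\lambda_{X+1}$ is degenerate $\tau$ fixes $\chi_{s,\lambda}$. On the $\Jor(G)$ side the action is $\tau.(s,\varphi)=(s,\homega_s^0(\tau)\cdot\varphi)$, so I must verify that tensoring the extension $\varphi_{\lambda,i}$ by the linear character $\homega_s^0(\tau)$ of $C_{G^*}(s)/C_{\bG^*}^\circ(s)^F$ produces $\varphi_{\lambda,i+1}$ when $A_{\bG^*}(s)^F\ne1$ (i.e.\ $s$ isolated, $\lambda_{X+1}$ non-degenerate) and fixes $\varphi_\lambda$ otherwise. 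The point is that $\homega_s^0$ is, by construction in \S\ref{subsect:notationsBon06}, dual to the injection $\omega_s^0$ identifying $A_{\bG^*}(s)^F$ with a subgroup of $H^1(F,\cZ(\bG))^\wedge$, and the action of $H^1(F,\cZ(\bG))\cong\tG/GZ(\tG)^F$ on $\cE(G,s)$ by the $\hz$ of \S\ref{subsect:notationsBon06} is exactly the Clifford-theoretic action that permutes the constituents of $\Res^{\tG}_G\chi_{\ts,\lambda}$ — this is precisely the content of the proof of \cite[Proposition~5.1]{Lu88} used to build $\bar\cJ$. So the index $i$ can be (and, by Theorem~\ref{theorem:irrB}, has been) normalized so that the $\hz$-action on the character side matches the tensoring action on the $\Jor(G)$ side; when $A_{\bG^*}(s)^F=1$ both actions are trivial on the relevant orbit and the degenerate case is handled automatically.

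Finally, equivariance under a field automorphism $\sigma=F_p^i$ follows from Proposition~\ref{prop:fieldonirr}: after the identification above, $\chi_{s,\lambda,i}^\sigma=\chi_{\sigma^*(s),\sigma^*(\lambda),i}$ translates into $(s,\varphi_{\lambda,i})^{G^*}\mapsto(\sigma^*(s),\varphi_{\sigma^*(\lambda),i})^{G^*}$, which is exactly the natural $\sigma^*$-action on $\Jor(G)$ once one knows that $\sigma^*$ carries $C_{G^*}(s)$ to $C_{G^*}(\sigma^*(s))$ compatibly with the labelling $\sigma(\lambda)_{\sigma(\Gamma)}=\lambda_\Gamma$ — and this compatibility is Lemma~\ref{lemma:fieldonsemiclass} together with the (modulo-inner) field-stability of unipotent characters of the factors $\I_0(V^*_\Gamma(s))$ and $\GL_{m_\Gamma}(\varepsilon_\Gamma q^{\delta_\Gamma})$ invoked in the proof of Proposition~\ref{prop:fieldonirr}. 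Since $\tau$ and $\sigma$ generate the relevant group of automorphisms acting on $\Irr(G)$ (up to $\Inn(G)$, which acts trivially on both sides), putting the three checks together gives the theorem. The main obstacle is the $\tau$-equivariance in the isolated case: one has to be careful that the two normalizations — the index $i$ as fixed in Theorem~\ref{theorem:irrB}/Proposition~\ref{prop:fieldonirr} and the index implicit in choosing which extension of the $C_{\bG^*}^\circ(s)^F$-unipotent character to $C_{G^*}(s)$ one calls $\varphi_{\lambda,i}$ — are made consistently, so that $\homega_s^0(\tau)\cdot\varphi_{\lambda,i}=\varphi_{\lambda,i+1}$ rather than merely $\homega_s^0(\tau)\cdot\{\varphi_{\lambda,0},\varphi_{\lambda,1}\}=\{\varphi_{\lambda,0},\varphi_{\lambda,1}\}$; this is purely a matter of pinning down the bijection $\bar\cJ$ on the nose, and it is forced because $\homega_s^0(\tau)$ generates $(A_{\bG^*}(s)^F)^\wedge\cong\Z/2\Z$ and hence must act as the non-trivial permutation of the two extensions.
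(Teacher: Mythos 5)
Your construction of the bijection and your handling of the diagonal automorphism are fine; in fact the paper dismisses the $\tau$-case as "clear" for essentially the Gallagher-type reason you give: when $A_{\bG^*}(s)^F\cong\Z/2\Z$, the character $\homega_s^0(\tau)$ is the nontrivial linear character of $C_{G^*}(s)/C^\circ_{\bG^*}(s)^F$ and therefore interchanges the two extensions of $\varphi^\circ$, matching $\chi_{s,\lambda,i}^\tau=\chi_{s,\lambda,i+1}$. The genuine gap is in the field-automorphism step, which you treat as an immediate translation of Proposition \ref{prop:fieldonirr}. What that proposition (together with Lemma \ref{lemma:fieldonsemiclass}) controls is the $\sigma^*$-action on the pair $(s,\varphi^\circ)$ with $\varphi^\circ\in\cE(C^\circ_{\bG^*}(s)^F,1)$; it says nothing about how $\sigma^*$ permutes the two extensions $\varphi_0,\varphi_1$ of $\varphi^\circ$ to the \emph{disconnected} centralizer $C_{G^*}(s)$, and this is exactly the point on which equivariance hinges. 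On the character side, \cite[Theorem~3.1]{CS17C} forces any power of $F_p$ that stabilizes $\tchi_{s,\lambda}$ to fix \emph{both} constituents $\chi_{s,\lambda,0}$ and $\chi_{s,\lambda,1}$; so if on the $\Jor(G)$ side some field automorphism fixing $(s,\varphi^\circ)$ were to swap $\varphi_0$ and $\varphi_1$, no choice of labels could make your map equivariant. You therefore must prove that a field automorphism stabilizing $(s,\varphi^\circ)$ (with $A_{\bG^*}(s)^F_{\varphi^\circ}=A_{\bG^*}(s)^F$) stabilizes each of the two extensions, and your proposal never addresses this.

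That claim is precisely where the paper's proof spends its effort: using $C_{G^*}(s)\cong C_{X\pm1}(s)\prod_{\Gamma\in\cF_1\cup\cF_2}C_\Gamma^\circ(s)$, it produces a $\sigma^*$-stable extension of $\varphi^\circ_{X-1}$ to $\I(V^*_{X-1}(s))$ from the splitting $\I(V^*_{X-1}(s))=\I_0(V^*_{X-1}(s))\times\{\pm 1_{V^*_{X-1}(s)}\}$ (available because $m_{X-1}(s)$ is odd), and a $\sigma^*$-stable extension of $\varphi^\circ_{X+1}$ to $\I(V^*_{X+1}(s))$ via Malle's extendibility theorem \cite[Theorem~2.4]{Ma08} for unipotent characters under the graph and field automorphisms. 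This claim is also what makes the $\tau$- and $\sigma$-actions on $\Jor(G)$ assemble into a well-defined $\Out(G)$-action before one fixes labels $F_p$-orbit by orbit. Your proposal instead locates the ``main obstacle'' at the $\tau$-normalization, which is the easy half; without the stability-of-extensions claim the field-automorphism half of your argument does not close.
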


\begin{proof}
By \cite{Lu88} (see Theorem \ref{theorem:JDnonconnectedcenter}), Clifford theory, the actions of automorphisms on $\Irr(G)$ and the actions of automorphisms on $\Jor(G)$ defined above, it suffices to consider $\chi_{s,\lambda,i}$ with $\lambda_{X+1}$ non degenerate on the left side which correspond to some $\varphi^\circ\in\cE(C_{\bG^*}^\circ(s)^F,1)$.
Let $\varphi_0$ and $\varphi_1$ be the two extensions of $\varphi^\circ$ to $C_{G^*}(s)$.
We need to show that the field automorphisms and diagonal automorphisms act on the pairs $(s,\varphi_i)$ on the right side in a similar way as on the left side.
The actions of diagonal automorphisms is clear.

To consider the field automorphisms, we first claim that if a field automorphism $\sigma^*$ of $G^*$ stabilizes $(s,\varphi^\circ)$ with $\varphi^\circ\in\cE(C_{\bG^*}^\circ(s)^F,1)$ and $A_{\bG^*}(s)^F_{\varphi^\circ}=A_{\bG^*}(s)^F$, then $\sigma^*$ stabilizes the two extensions of $\varphi^\circ$ to $C_{G^*}(s)$.
Recall that $C_{\bG^*}^\circ(s)^F=\prod\limits_\Gamma C_\Gamma^\circ(s)$ 
and $C_{G^*}(s)\cong C_{X\pm1}(s) \prod\limits_{\Gamma\in\cF_1\cup\cF_2} C_\Gamma^\circ(s)$ with
$$C_{X\pm1}(s)=\{ g_{X-1}g_{X+1}\in\I_0(V^*_{X-1}(s)\bot V^*_{X+1}(s)) \mid g_{X-\epsilon}\in\I(V^*_{X-\epsilon}(s)),\epsilon=\pm1 \}.$$
Since $m_{X-1}(s)$ is odd, $\I(V^*_{X-1}(s))=\I_0(V^*_{X-1}(s))\times \{\pm 1_{V^*_{X-1}(s)}\}$, thus $\varphi^\circ_{X-1}$ can be extended to a character of $\I(V^*_{X-1}(s))$ stabilized under $\sigma^*$.
Since $\I(V^*_{X+1}(s))$ provides the graph automorphism of $\I_0(V^*_{X+1}(s))$,
$\varphi^\circ_{X+1}$ can be extended to $\I(V^*_{X+1}(s))\rtimes\langle\sigma^*\rangle$ by \cite[Theorem~2.4]{Ma08},
thus \emph{a fortiori}, $\varphi^\circ_{X+1}$ can be extended to a character of $\I(V^*_{X+1}(s))$ stabilized under $\sigma^*$.
Then $\sigma^*$ stabilizes the two extensions of $\varphi^\circ$ to $C_{G^*}(s)$.

By the above claim, the actions of $\tbG^F/\bG^FZ(\tbG)^F$ and field automorphisms commute.
Thus we obtain a well-defined action of $\Out(G)$ on $\Jor(G)$.
Also, by the above claim, for an $F_p$-orbit of pairs $(s,\varphi^\circ)$'s, we can choose the label $i$ such that any field automorphism $\sigma^*$ acts as $\sigma(s,\varphi_i)=(\sigma^*(s),\sigma^*(\varphi)_i)$.
Do this for every $F_p$-orbit.
Then it is easy to construct an $\Aut(G)$-equivariant Jordan decomposition.
\end{proof}

\section{Actions of automorphisms on weights}\label{sect:AutWeight}

In this section, the actions of automorphisms on weights of $\Sp_{2n}(q)$ will be considered using similar arguments as \cite[\S\S4,5]{LZ18} with some improvements using the twisting process, which is introduced in \cite[\S5]{CS17A} and used in \cite{LZ19}.
This twisting process should be useful when one wants to verify the whole iBAW condition for $\PSp_{2n}(q)$.


\subsection{Twisted basic subgroups}\label{subsect:twistedbasicsubgp}

We first give a construction of radical subgroups similar as in \cite{LZ18} and \cite{LZ19}, which are slightly different from but conjugate to those in \cite{An94}.
For convenience, we give the complete construction.

Recall that $e$ is the multiplicity order of $q^2$ in $\Z/\ell\Z$.
$\ell$ is said to be linear or unitary if $\ell$ divides $q^e-1$ or $q^e+1$ respectively.
Denote by $\varepsilon$ a sign which is $1$ or $-1$ according to $\ell$ is linear or unitary.
The notation $\F_{\varepsilon{q}}$ will denote the field $\F_q$ or $\F_{q^2}$ according to $\varepsilon=1$ or $-1$.
The notation $\GL(m,-q)$ will denote the general unitary group $\GU(m,q)=\{A\in\GL(m,q^2) \mid F_q(A^t)A=I_m\}$, where $A^t$ denotes the transpose of $A$.

Let $a=v_\ell(q^{2e}-1)$, where $v_\ell$ is the $\ell$-valuation such that $v_\ell(\ell)=1$.
Let $\alpha,\gamma$ be natural numbers.
Denote by $Z_\alpha$ the cyclic group of order $\ell^{a+\alpha}$ and $E_\gamma$ the extraspecial group of order $\ell^{2\gamma+1}$ and exponent $\ell$.
$Z_\alpha E_\gamma$ denotes the central product of $Z_\alpha$ and $E_\gamma$ over $\Omega_1(Z_\alpha)=Z(E_\gamma)$.
Take a set of generators $\{z,x_i,y_i \mid i=1,\dots,\gamma\}$ of $Z_\alpha E_\gamma$ such that $o(z)=\ell^{a+\alpha}$, $o(x_i)=o(y_i)=\ell$ and $[x_i,y_i]=z^{\ell^{a+\alpha-1}}$.

Let $\zeta_\alpha\in\F_{\varepsilon{q}^{e\ell^\alpha}}$ be such that $o(\zeta_\alpha)=\ell^{a+\alpha}$ and $\zeta=\zeta_\alpha^{\ell^{a+\alpha-1}}$.\label{notation:zeta}
Set $Z_\alpha^0=\zeta_\alpha I_\gamma$ with $I_\gamma$ the identity matrix of degree $\ell^\gamma$ and
$$X^0=\diag\{1,\zeta,\cdots,\zeta^{\ell-1}\},\quad
Y^0=\begin{bmatrix} \zero & 1\\ I_{\ell-1} & \zero \end{bmatrix}.$$
Then denote $X_j^0=I_\ell\otimes\cdots\otimes X^0\otimes\cdots\otimes I_\ell$ and $Y_j^0=I_\ell\otimes\cdots\otimes Y^0\otimes\cdots\otimes I_\ell$ with $X_0$ and $Y_0$ appearing as the $j$-th components.
For a positive integer $m$, set $Z_{m,\alpha}^0=I_m\otimes Z_\alpha^0$, $X_{m,j}^0=I_m\otimes X_j^0$ and $Y_{m,j}^0=I_m\otimes Y_j^0$.
Define
\begin{eqnarray*}
\rho_{m,\alpha,\gamma}^0:\quad Z_\alpha E_\gamma 
&\to& \GL(m\ell^\gamma,\varepsilon{q}^{e\ell^\alpha})\\
z &\mapsto& Z_{m,\alpha}^0\\
x_j &\mapsto& X_{m,j}^0\\
y_j &\mapsto& Y_{m,j}^0
\end{eqnarray*}
Set $R_{m,\alpha,\gamma}^0=\rho_{m,\alpha,\gamma}^0(Z_\alpha E_\gamma)$.
Then $R_{m,\alpha,\gamma}^0$ is a subgroup of $\GL(m\ell^\gamma,\varepsilon{q}^{e\ell^\alpha})$ isomorphic to $Z_\alpha E_\gamma$.

Let $M(k)=I_k\otimes\begin{bmatrix} 0 & 1\\ -1 & 0 \end{bmatrix}$, then for any symplectic space $W$ over $\F_q$ of dimension $2k$, there is a basis of $W$ such that the corresponding metric matrix is $M(k)$ and $\Sp(W)\cong\Sp_{M(k)}(2k,q):=\{A\in\GL(2k,q) \mid A^tM(k)A=M(k)\}$.
Similarly, for any symplectic space $\bW$ over $\barF_q$ of dimension $2k$,
we have $\Sp(\bW)\cong\Sp_{M(k)}(2k,\barF_q):=\{A\in\GL(2k,\barF_q) \mid A^tM(k)A=M(k)\}$.
In the sequel, for the symplectic spaces we will encounter, the metric matrices would be of the form $\diag\{M(k_1),\cdots,M(k_r)\}$, which we will just write as $M$ whenever it can be understood from contexts.

Set $\bG_{m,\alpha,\gamma}=\Sp_M(2me\ell^{\alpha+\gamma},\barF_q)$.
An embedding, which can be called a hyperbolic embedding, $\hbar:\GL(m\ell^\gamma,\varepsilon{q}^{e\ell^\alpha}) \to \bG_{m,\alpha,\gamma}$ is defined as:
$$\hbar: A \mapsto \diag\{A,F_q(A),\dots,F_q^{e\ell^\alpha-1}(A),A^{-t},F_q(A^{-t}),\dots,F_q^{e\ell^\alpha-1}(A^{-t})\},$$
where $A^{-t}$ denotes the inverse of the transpose of $A$.
\emph{In the sequel, we will always use $\hbar$ to denote these embeddings of the above type from general linear or unitary groups to symplectic groups of any dimensions.}\label{hyperbolicembedding}
Denote $R_{m,\alpha,\gamma}^{tw}=\hbar(R_{m,\alpha,\gamma}^0)$.
For a natural number, denote by $A_c$ the elementary abelian group of order $\ell^c$.
For any sequence $\bc=(c_1,\dots,c_r)$ of natural numbers, set $A_{\bc}=A_{c_1}\wr\cdots\wr A_{c_r}$ and $|\bc|=c_1+\cdots+c_r$.
Set $R_{m,\alpha,\gamma,\bc}^{tw}=R_{m,\alpha,\gamma}^{tw}\wr A_{\bc}$.

Set $$v_{m,\alpha,\gamma}^0= I_{m\ell^\gamma} \otimes \begin{bmatrix} \zero & 1\\ I_{e\ell^\alpha-1} & \zero \end{bmatrix},\quad
v_{m,\alpha,\gamma}^{(1)}=\diag\{v_{m,\alpha,\gamma}^0,v_{m,\alpha,\gamma}^0\}$$
and 
$$v_{m,\alpha,\gamma}^{(-1)}= I_{m\ell^\gamma} \otimes \begin{bmatrix} \zero & -1\\ I_{2e\ell^\alpha-1} & \zero \end{bmatrix}.$$
Then $v_{m,\alpha,\gamma}^0\in\GL(m\ell^\gamma,\varepsilon{q})$ and $(v_{m,\alpha,\gamma}^0)^{-t}=v_{m,\alpha,\gamma}^0$, thus $v_{m,\alpha,\gamma}^{(1)}\in\bG_{m,\alpha,\gamma}$.
An easy calculation also shows that $v_{m,\alpha,\gamma}^{(-1)}\in\bG_{m,\alpha,\gamma}$.
Set $$v_{m,\alpha,\gamma}=\left\{\begin{array}{ll} v_{m,\alpha,\gamma}^{(1)}, & \textrm{if $\ell$ is linear}\\ v_{m,\alpha,\gamma}^{(-1)}, & \textrm{if $\ell$ is unitary} \end{array}\right.$$
and $v_{m,\alpha,\gamma,\bc}=v_{m,\alpha,\gamma} \otimes I_{\bc}$, where $I_{\bc}$ is the identity matrix of degree $\ell^{|\bc|}$.
Then $v_{m,\alpha,\gamma,\bc}\in \bG_{m,\alpha,\gamma,\bc} :=\Sp_M(2me\ell^{\alpha+\gamma+|\bc|},\barF_q)$.
Thus $R_{m,\alpha,\gamma,\bc}^{tw}$ is a subgroup of $\bG_{m,\alpha,\gamma,\bc}^{v_{m,\alpha,\gamma,\bc}F}$.

By Lang-Steinberg theorem (see for example \cite[Theorem~21.7]{MT11}), there exists an element $g_{m,\alpha,\gamma,\bc}\in\bG_{m,\alpha,\gamma,\bc}$ such that $g_{m,\alpha,\gamma,\bc}^{-1}F(g_{m,\alpha,\gamma,\bc})=v_{m,\alpha,\gamma,\bc}$.
Denote $\iota$ the map $x\mapsto g_{m,\alpha,\gamma,\bc}xg_{m,\alpha,\gamma,\bc}^{-1}$, then $\iota$ induces an isomorphism
$$\iota:\quad \bG_{m,\alpha,\gamma,\bc}^{v_{m,\alpha,\gamma,\bc}F} \to \bG_{m,\alpha,\gamma,\bc}^F,$$
where $\bG_{m,\alpha,\gamma,\bc}^F=\Sp_M(2me\ell^{\alpha+\gamma+|\bc|},q)$ which is often denoted as $G_{m,\alpha,\gamma,\bc}$.
\emph{For the symplectic groups of any dimension, we will always denote the corresponding twisting homomorphisms by the same symbol $\iota$.}\label{twistinghom}
Then $\iota(R_{m,\alpha,\gamma,\bc}^{tw})$ is conjugate to the basic subgroup defined in \cite{An94}.
In this paper, we denote $R_{m,\alpha,\gamma,\bc}:=\iota(R_{m,\alpha,\gamma,\bc}^{tw})$.
When $\bc=(1,\dots,1)$ with $\beta$ one's, $R_{m,\alpha,0,\bc}$ is conjugate to $R_{m,\alpha,\beta}$ in \cite{FS89}.
Then by \cite{An94}, any radical subgroup of $G$ is conjugate to $R_0\times R_1\times\cdots\times R_u$, where $R_0$ is a trivial group and $R_i$ ($i\geqslant1$) is a basic subgroup $R_{m_i,\alpha_i,\gamma_i,\bc_i}$.

Groups such as $\bG_{m,\alpha,\gamma,\bc}^{v_{m,\alpha,\gamma,\bc}F}$ will be called twisted groups.
The constructions in $\bG_{m,\alpha,\gamma,\bc}^{v_{m,\alpha,\gamma,\bc}F}$ corresponding to those in $\bG_{m,\alpha,\gamma,\bc}^F$ via $\iota$ will be called the twisted ones.
For example,  $R_{m,\alpha,\gamma,\bc}^{tw}$ is called a \emph{twisted basic subgroup}.

We want to consider the actions of field and diagonal automorphisms on conjugacy classes of radical subgroups by transferring to twisted groups.
For any positive integer $k$, recall that $\CSp_M(2k,\barF_q):=\{A\in\GL(2k,\barF_q) \mid A^tMA=\lambda M,\lambda\in\barF_q^\times\}$.
Set $\tbG_{m,\alpha,\gamma,\bc}=\CSp_M(2me\ell^{\alpha+\gamma+|\bc|},\barF_q)$.
Let $E=\langle\hF_p\rangle$ be the group of field automorphisms on $\tbG_{m,\alpha,\gamma,\bc}^{v_{m,\alpha,\gamma,\bc}F}$.
As before, denote $\tG_{m,\alpha,\gamma,\bc}:=\tbG_{m,\alpha,\gamma,\bc}^F=\CSp_M(2me\ell^{\alpha+\gamma+|\bc|},q)$.

\begin{lemma}\label{lemma:twist}
$\iota$ can be extended to
$$\iota:\quad \tbG_{m,\alpha,\gamma,\bc}^{v_{m,\alpha,\gamma,\bc}F} \rtimes E \to \tbG_{m,\alpha,\gamma,\bc}^F \rtimes E.$$
Modulo inner automorphisms and via $\iota$, diagonal automorphisms on $\bG_{m,\alpha,\gamma,\bc}^F$ corresponds to diagonal automorphisms on $\bG_{m,\alpha,\gamma,\bc}^{v_{m,\alpha,\gamma,\bc}F}$; field automorphism $\hF_p$ acting on $\bG_{m,\alpha,\gamma,\bc}^F$ corresponds to $\hF_p$ acting on $\bG_{m,\alpha,\gamma,\bc}^{v_{m,\alpha,\gamma,\bc}F}$.
\end{lemma}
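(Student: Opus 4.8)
The plan is to exhibit the extension of $\iota$ explicitly using the Lang--Steinberg element $g := g_{m,\alpha,\gamma,\bc}$, and then trace through what conjugation by $g$ does to the two kinds of outer automorphisms. First I would recall that $\iota$ is by construction $x \mapsto gxg^{-1}$, where $g \in \bG_{m,\alpha,\gamma,\bc}$ satisfies $g^{-1}F(g) = v := v_{m,\alpha,\gamma,\bc}$. Since $\tbG_{m,\alpha,\gamma,\bc}$ is connected and $g$ also lies in it, the \emph{same} element $g$ realizes an isomorphism $\tbG_{m,\alpha,\gamma,\bc}^{vF} \to \tbG_{m,\alpha,\gamma,\bc}^F$: indeed $h \in \tbG^{vF}$ means $(vF)(h) = h$, i.e. $v F(h) v^{-1} = h$; using $v = g^{-1}F(g)$ one checks $g^{-1}F(g)\,F(h)\,F(g)^{-1}g = h$, hence $F(ghg^{-1}) = ghg^{-1}$, so $ghg^{-1} \in \tbG^F$. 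This gives the extension of $\iota$ at the level of groups, and it restricts to the $\iota$ already defined on $\bG^{vF}$ because $[\bG,\bG] = \bG$ here (symplectic) and $\bG = \tbG \cap \ldots$; more simply, $g\bG_{m,\alpha,\gamma,\bc}^{vF}g^{-1} = \bG_{m,\alpha,\gamma,\bc}^F$ since $\bG$ is $\tbG$-stable and $F$-stable. So the group isomorphism $\iota$ on $\tbG^{vF}$ is well defined and compatible.

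Next I would extend $\iota$ over the field automorphism group $E = \langle \hat F_p\rangle$. The point is that $v$ has entries in $\F_p$ (it is a signed permutation matrix up to the entries $\pm 1$), so $\hat F_p(v) = v$, which means $\hat F_p$ commutes with $vF$ and hence genuinely acts on $\tbG^{vF}$. To extend $\iota$ to the semidirect product, I need to compare $\iota \circ \hat F_p$ with $\hat F_p \circ \iota$ as maps $\tbG^{vF} \to \tbG^F$. For $h \in \tbG^{vF}$ we have $\iota(\hat F_p(h)) = g\,\hat F_p(h)\,g^{-1}$ while $\hat F_p(\iota(h)) = \hat F_p(g)\,\hat F_p(h)\,\hat F_p(g)^{-1}$; these differ by conjugation by $g\,\hat F_p(g)^{-1} \in \tbG^F$ (it is $F$-fixed since $F$ and $\hat F_p$ commute as abstract maps and both fix $v$ appropriately — this needs a short check). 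Thus $\iota$ conjugates the $\hat F_p$-action on $\tbG^{vF}$ to the $\hat F_p$-action on $\tbG^F$ \emph{up to an inner automorphism of $\tbG^F$}, which is precisely the assertion ``$\hat F_p$ corresponds to $\hat F_p$ modulo inner automorphisms''. This defines $\iota$ on $\tbG^{vF} \rtimes E$ by sending $\hat F_p \mapsto \mathrm{inn}(g\hat F_p(g)^{-1}) \circ \hat F_p$, and one verifies this is a homomorphism.

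For the diagonal automorphisms: these are realized by conjugation by elements of $\tbG^F$ (resp. $\tbG^{vF}$) modulo $\bG^F Z(\tbG)^F$. Since $\iota = \mathrm{inn}(g)$ carries $\tbG^{vF}$ isomorphically onto $\tbG^F$ and $\bG^{vF}$ onto $\bG^F$ and $Z(\tbG)^{vF}$ onto $Z(\tbG)^F$ (the center being connected and pointwise computed), it induces an isomorphism on the quotients $\tbG^{vF}/\bG^{vF}Z(\tbG)^{vF} \cong \tbG^F/\bG^FZ(\tbG)^F$, and conjugation by a diagonal element $t \in \tbG^{vF}$ transports under $\iota$ to conjugation by $gtg^{-1} \in \tbG^F$. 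Hence diagonal automorphisms correspond to diagonal automorphisms, again modulo the inner automorphisms already absorbed. Assembling the three pieces — the group isomorphism, the $E$-equivariance up to inner automorphisms, and the behavior of diagonal elements — gives the stated extension $\iota\colon \tbG_{m,\alpha,\gamma,\bc}^{v_{m,\alpha,\gamma,\bc}F}\rtimes E \to \tbG_{m,\alpha,\gamma,\bc}^F \rtimes E$ with the claimed properties.

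The main obstacle I anticipate is the bookkeeping in the second paragraph: verifying that the correction element $g\,\hat F_p(g)^{-1}$ (and its analogue for powers $\hat F_p^{\,j}$) really lies in $\tbG^F$ and that the assignment $\hat F_p \mapsto \mathrm{inn}(g\hat F_p(g)^{-1})\circ\hat F_p$ is multiplicative, i.e. that it defines an honest action of $E$ rather than just a set map on generators. This is the kind of cocycle computation that is routine but easy to get signs or orders wrong in; everything else is formal manipulation with the Lang--Steinberg element.
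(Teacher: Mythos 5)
Your proposal is correct and takes essentially the same route as the paper: the paper extends $\iota$ as in Cabanes--Sp\"ath (conjugation by $g$ in the ambient group, which makes your multiplicativity worry automatic) and uses the same observation $\hF_p(v)=v$ to see that the correction element $g^{-1}\hF_p(g)$ (equivalently your $g\,\hF_p(g)^{-1}\in\bG^F$) is fixed by the relevant Frobenius, so field automorphisms correspond modulo inner automorphisms. The ``short check'' you flagged does go through, namely $F\bigl(g\,\hF_p(g)^{-1}\bigr)=gv\,\hF_p(gv)^{-1}=g\,\hF_p(g)^{-1}$, and the diagonal part is treated even more briefly in the paper (the diagonal outer automorphism group has order $2$).
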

\begin{proof}
The extension of $\iota$ can be proved as in \cite[Proposition~5.3]{CS17A}.
The assertion for diagonal automorphisms is clear since the subgroup of diagonal outer automorphisms is of order $2$.
It is routine to see that via $\iota$, $\hF_p$ acting on $\bG_{m,\alpha,\gamma,\bc}^F$ corresponds to $g_{m,\alpha,\gamma,\bc}^{-1}\hF_p(g_{m,\alpha,\gamma,\bc})\hF_p$ ($\hF_p$ composed with the inner automorphism by $g_{m,\alpha,\gamma,\bc}^{-1}\hF_p(g_{m,\alpha,\gamma,\bc})$) acting on $\bG_{m,\alpha,\gamma,\bc}^{v_{m,\alpha,\gamma,\bc}F}$.
Since $\hF_p(v_{m,\alpha,\gamma,\bc})=v_{m,\alpha,\gamma,\bc}$, it follows that $g_{m,\alpha,\gamma,\bc}^{-1}\hF_p(g_{m,\alpha,\gamma,\bc})\in\bG_{m,\alpha,\gamma,\bc}^{v_{m,\alpha,\gamma,\bc}F}$, then the assertion for field automorphisms follows.
\end{proof}

Note that in general $E$ does not act faithfully on $\tbG_{m,\alpha,\gamma,\bc}^F$.
Denote by $D=\langle F_p\rangle$ the group of field automorphisms on $\tbG_{m,\alpha,\gamma,\bc}^F$.
Then $D$ is a quotient group of $E$.

\begin{remark}\label{remark:semidirectRtw}
The twisting process above is similar to that by M. Cabanes and B. Sp\"ath in \cite[\S5]{CS17A}.
Roughly speaking, the twisted constructions are easier to handle than the original ones.
For example, the twisted basic subgroup $R_{m,\alpha,\gamma,\bc}^{tw}$ is of the form $R_t\rtimes R_w$ with $R_t$ a group of diagonal matrices and $R_w$ a group of symmetric matrices.
\end{remark}

Let $\xi$ be an element of $\F_{q^{2e\ell^\alpha}}^\times$ with $o(\xi)=(q-1)(q^{e\ell^\alpha}+1)$.
Then $\xi_0=\xi^{q^{e\ell^\alpha}+1}$ is a generator of $\F_q^\times$.
Set 
\begin{equation}\label{equation:twisteddiag}
\tau_{m,\alpha,\gamma}^{tw}=\left\{ \begin{array}{ll}
I_{me\ell^{\alpha+\gamma}}\otimes\diag\{1,\xi_0\}, & \textrm{if $\varepsilon=1$};\\
I_{m\ell^\gamma}\otimes\diag\{\xi,\xi^q,\cdots,\xi^{q^{e\ell^\alpha-1}};\xi_0\xi^{-1},\xi_0\xi^{-q},\cdots,\xi_0\xi^{-q^{e\ell^\alpha-1}}\}, & \textrm{if $\varepsilon=-1$}.
\end{array}\right.
\addtocounter{proposition}{1}\tag{\theproposition}
\end{equation}
Then $\tau_{m,\alpha,\gamma,\bc}^{tw}=\tau_{m,\alpha,\gamma}^{tw}\otimes I_{\bc}$ generates $\tbG_{m,\alpha,\gamma,\bc}^{v_{m,\alpha,\gamma,\bc}F}$ modulo $\bG_{m,\alpha,\gamma,\bc}^{v_{m,\alpha,\gamma,\bc}F}$, thus provides the unique diagonal outer automorphism.
Let $\tau_{m,\alpha,\gamma,\bc}=\iota(\tau_{m,\alpha,\gamma,\bc}^{tw})$.
Then $\tau_{m,\alpha,\gamma,\bc}$ plays the same roles as $\tau$ in \cite[(1A)]{FS89} and $\tau_{m,\alpha,\beta}$ in \cite[(5D)]{FS89}, etc.

We fix some notations for the centralizers and normalizers of the radical subgroups.
First, denote by $C_{m,\alpha,\gamma}^0$ and $N_{m,\alpha,\gamma}^0$ for the centralizer and normalizer of $R_{m,\alpha,\gamma}^0$ in $\GL(m\ell^\gamma,\varepsilon{q}^{e\ell^\alpha})$.
We denote by $C_{m,\alpha,\gamma,\bc}$, $N_{m,\alpha,\gamma,\bc}$, $\tC_{m,\alpha,\gamma,\bc}$, $\tN_{m,\alpha,\gamma,\bc}$ the centralizers and normalizes of $R_{m,\alpha,\gamma,\bc}$ in $G_{m,\alpha,\gamma,\bc}$ and $\tG_{m,\alpha,\gamma,\bc}$ respectively.
Denote the corresponding twisted constructions by $C_{m,\alpha,\gamma,\bc}^{tw}$, $N_{m,\alpha,\gamma,\bc}^{tw}$, $\tC_{m,\alpha,\gamma,\bc}^{tw}$, $\tN_{m,\alpha,\gamma,\bc}^{tw}$.
When $\bc=\zero$ or $\gamma=0$, we will use obvious abbreviations such as $C_{m,\alpha,\gamma}$, $C_{m,\alpha}$, etc.

\begin{lemma}\label{lemma:twC}
With the above notations, 
\begin{compactenum}[(1)]
\item $C_{m,\alpha}^0=\GL(m,\varepsilon{q}^{e\ell^\alpha})$,
	$C_{m,\alpha,\gamma}^0= C_{m,\alpha}^0 \otimes I_\gamma$,
	$C_{m,\alpha,\gamma,\bc}^{tw} = \hbar(C_{m,\alpha,\gamma}^0) \otimes I_{\bc}$.
\item $\tC_{m,\alpha,\gamma,\bc}^{tw} = \langle C_{m,\alpha,\gamma,\bc}^{tw},\tau_{m,\alpha,\gamma,\bc}^{tw}\rangle$,
	$[\tau_{m,\alpha,\gamma,\bc}^{tw},R_{m,\alpha,\gamma,\bc}^{tw}C_{m,\alpha,\gamma,\bc}^{tw}]=1$,
	$(\tau_{m,\alpha,\gamma,\bc}^{tw})^{q-1} \in Z(C_{m,\alpha,\gamma,\bc}^{tw})$.
\end{compactenum}
\end{lemma}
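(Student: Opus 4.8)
The plan is to work entirely inside the twisted group $\bG_{m,\alpha,\gamma,\bc}^{v_{m,\alpha,\gamma,\bc}F}$, where the basic subgroup $R_{m,\alpha,\gamma,\bc}^{tw}=\hbar(R_{m,\alpha,\gamma}^0)\otimes I_{\bc}$ is given by explicit matrices, and to compute centralizers by a direct block-matrix analysis. For part~(1), I would first treat the case $\gamma>0$, $\bc=\zero$ and $\alpha$ arbitrary: the group $R_{m,\alpha,\gamma}^0=\rho_{m,\alpha,\gamma}^0(Z_\alpha E_\gamma)$ acts on the space underlying $\GL(m\ell^\gamma,\varepsilon q^{e\ell^\alpha})$, and since $E_\gamma$ acts (via the tensor factor $I_\ell\otimes\cdots\otimes X^0\otimes\cdots$ and $Y^0$) as an absolutely irreducible projective representation of dimension $\ell^\gamma$ on each of the $m$ blocks while $Z_\alpha^0=\zeta_\alpha I_\gamma$ acts as a scalar, Schur's lemma over $\F_{\varepsilon q^{e\ell^\alpha}}$ gives that the centralizer of $R_{m,\alpha,\gamma}^0$ in $\GL(m\ell^\gamma,\varepsilon q^{e\ell^\alpha})$ is $\GL(m,\varepsilon q^{e\ell^\alpha})\otimes I_{\ell^\gamma}$, i.e. $C_{m,\alpha,\gamma}^0=C_{m,\alpha}^0\otimes I_\gamma$ with $C_{m,\alpha}^0=\GL(m,\varepsilon q^{e\ell^\alpha})$. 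This is essentially the standard computation for the normalizer structure of extraspecial defect groups as in \cite[\S4,\S5]{FS89} and \cite{An94}, transported verbatim to the twisted setting since $\iota$ is an isomorphism of the ambient algebraic group commuting with nothing we need here. The formula $C_{m,\alpha,\gamma,\bc}^{tw}=\hbar(C_{m,\alpha,\gamma}^0)\otimes I_{\bc}$ then follows because the wreathing factor $A_{\bc}$ acts by permuting the $\ell^{|\bc|}$ tensor slots transitively in the relevant sense, so an element of $\bG_{m,\alpha,\gamma,\bc}$ centralizing $R_{m,\alpha,\gamma}^{tw}\wr A_{\bc}$ must in particular commute with the base group (forcing it into $\hbar(C_{m,\alpha,\gamma}^0)^{\ell^{|\bc|}}$, block-diagonally) and with $A_{\bc}$ (forcing the blocks to coincide), hence lies in $\hbar(C_{m,\alpha,\gamma}^0)\otimes I_{\bc}$; the reverse inclusion is clear from the construction. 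One must also check that $\hbar(C_{m,\alpha,\gamma}^0)\otimes I_{\bc}$ genuinely lands inside $\bG_{m,\alpha,\gamma,\bc}^{v_{m,\alpha,\gamma,\bc}F}$, which is immediate since $\hbar$ has image in $\bG_{m,\alpha,\gamma}$ and the $v$-twist and $\bc$-tensoring are compatible with $\hbar$ by design.

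For part~(2), the claim $\tC_{m,\alpha,\gamma,\bc}^{tw}=\langle C_{m,\alpha,\gamma,\bc}^{tw},\tau_{m,\alpha,\gamma,\bc}^{tw}\rangle$ reduces, since $[\tbG:\bG]$ has index-$2$ image in the outer automorphism group (only one nontrivial diagonal automorphism), to verifying two things: that $\tau_{m,\alpha,\gamma,\bc}^{tw}$ actually normalizes $R_{m,\alpha,\gamma,\bc}^{tw}$ (so it lies in $\tN$, and in fact in $\tC$ by the next point), and that it is not in $\bG_{m,\alpha,\gamma,\bc}^{v_{m,\alpha,\gamma,\bc}F}$ — the latter was already recorded in the paragraph preceding the lemma, where $\tau_{m,\alpha,\gamma,\bc}^{tw}$ is stated to generate $\tbG^{vF}$ modulo $\bG^{vF}$ (this is where the condition $o(\xi)=(q-1)(q^{e\ell^\alpha}+1)$ enters, making the similitude factor $\xi_0=\xi^{q^{e\ell^\alpha}+1}$ a generator of $\F_q^\times$). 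For $[\tau_{m,\alpha,\gamma,\bc}^{tw},R_{m,\alpha,\gamma,\bc}^{tw}C_{m,\alpha,\gamma,\bc}^{tw}]=1$: here I would just multiply matrices. In the linear case $\tau_{m,\alpha,\gamma}^{tw}=I_{me\ell^{\alpha+\gamma}}\otimes\diag\{1,\xi_0\}$ is a scalar on each of the two hyperbolic halves, hence commutes with everything of the form $\hbar(A)$ and with $R_{m,\alpha,\gamma}^{tw}=\hbar(R_{m,\alpha,\gamma}^0)$; in the unitary case $\tau_{m,\alpha,\gamma}^{tw}=I_{m\ell^\gamma}\otimes\diag\{\xi,\xi^q,\dots;\xi_0\xi^{-1},\xi_0\xi^{-q},\dots\}$ is a diagonal matrix whose $j$-th entry on the first block and $j$-th entry on the second block are $\xi^{q^{j}}$ and $\xi_0\xi^{-q^{j}}$, and one checks that conjugation by it permutes the $F_q$-twisted components of $\hbar(A)$ trivially because $A\in\GL(m\ell^\gamma,\varepsilon q^{e\ell^\alpha})$ has entries fixed by $F_q^{e\ell^\alpha}$, so the conjugate of $\diag\{A,F_q(A),\dots\}$ by this diagonal matrix is again $\diag\{A,F_q(A),\dots\}$ — the scalar ratios $\xi^{q^{j}}/\xi^{q^{j'}}$ land in $\F_{\varepsilon q^{e\ell^\alpha}}$ and are absorbed. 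Tensoring with $I_{\bc}$ and $I_\gamma$ changes nothing. Finally $(\tau_{m,\alpha,\gamma,\bc}^{tw})^{q-1}\in Z(C_{m,\alpha,\gamma,\bc}^{tw})$: raising $\tau_{m,\alpha,\gamma}^{tw}$ to the $(q-1)$-th power makes the similitude factor $\xi_0^{q-1}=1$, so $(\tau_{m,\alpha,\gamma}^{tw})^{q-1}\in\bG_{m,\alpha,\gamma}$; being a scalar (linear case) or a diagonal matrix with entries in $\F_{\varepsilon q^{e\ell^\alpha}}^\times$ that is $F_q^{e\ell^\alpha}$-fixed (unitary case), it commutes with all of $\hbar(\GL(m,\varepsilon q^{e\ell^\alpha})\otimes I_\gamma)$, hence lies in the center of $C_{m,\alpha,\gamma,\bc}^{tw}=\hbar(C_{m,\alpha,\gamma}^0)\otimes I_{\bc}\cong\GL(m,\varepsilon q^{e\ell^\alpha})$, whose center is the scalars.

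The main obstacle I expect is bookkeeping rather than conceptual: keeping straight the three independent ``sizes'' $\alpha$ (the $F_q$-twisting length $e\ell^\alpha$ inside $\hbar$), $\gamma$ (the extraspecial tensor dimension $\ell^\gamma$), and $\bc$ (the wreath tower $A_{\bc}$), and checking at each step that the asserted centralizer sits inside the correct twisted symplectic or conformal symplectic group $\bG_{m,\alpha,\gamma,\bc}^{v_{m,\alpha,\gamma,\bc}F}$ or $\tbG_{m,\alpha,\gamma,\bc}^{v_{m,\alpha,\gamma,\bc}F}$. The Schur's-lemma step for $C_{m,\alpha}^0=\GL(m,\varepsilon q^{e\ell^\alpha})$ is the only genuinely representation-theoretic input, and it is classical; everything else is explicit linear algebra with the matrices $X^0,Y^0,v_{m,\alpha,\gamma}^{(\pm1)},\tau_{m,\alpha,\gamma}^{tw}$ written down above, so I would present the argument for $\gamma>0$, $\bc\neq\zero$ in full once and remark that the abbreviated cases ($\bc=\zero$, $\gamma=0$) are obtained by setting the relevant tensor factor to $I_1$.
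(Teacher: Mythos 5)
Your proof is correct and takes essentially the same route as the paper, whose entire proof consists of citing \cite[(1B)]{An94} for the centralizer of $R^0_{m,\alpha,\gamma}$ in $\GL(m\ell^\gamma,\varepsilon q^{e\ell^\alpha})$ and \cite[(5D)]{FS89} for part (2) --- exactly the Schur's-lemma and block-matrix computations you carry out by hand (including the wreath/base-group reduction for the $\bc$-part and the similitude-factor argument giving $\tC^{tw}=\langle C^{tw},\tau^{tw}\rangle$ from the fact, recorded before the lemma, that $\tau^{tw}_{m,\alpha,\gamma,\bc}$ generates $\tbG^{vF}$ modulo $\bG^{vF}$). The one step you gloss --- passing from the $\GL$-centralizer to the centralizer inside the twisted symplectic group, i.e. that $C_{\bG^{v F}_{m,\alpha,\gamma}}(R^{tw}_{m,\alpha,\gamma})=\hbar(C^0_{m,\alpha,\gamma})$ via the eigenspace decomposition of $Z(R^{tw}_{m,\alpha,\gamma})$ --- is precisely the content of the cited \cite[(1B)]{An94}, so your level of detail matches the paper's.
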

\begin{proof}
The centralizer of $R_{m,\alpha,\gamma}^0$ in $\GL(m\ell^\gamma,\varepsilon{q}^{e\ell^\alpha})$ follows form \cite[(1B)]{An94}.
The remains then follow from the above constructions; for (2), see also \cite[(5D)]{FS89}.
\end{proof}

\begin{remark}
In \cite{An94}, the author has proved that $C_{m,\alpha,\gamma,\bc} \cong \GL(m,\varepsilon{q}^{e\ell^\alpha}) \otimes I_\gamma\otimes I_{\bc}$.
Here, the meaning of the above lemma is that by transferring to twisted groups, we can have an equality instead of ``$\cong$'', which is useful when we consider the actions of automorphisms and the extensions of weight characters.
\end{remark}

\begin{lemma}\label{lemma:twN}
With the above notations, we have the following.
\begin{enumerate}[(1)]
\item When $\ell$ is linear, $N_{m,\alpha,\gamma}^0=L_{m,\alpha,\gamma}^0R_{m,\alpha,\gamma}^0C_{m,\alpha,\gamma}^0$,
	$[L_{m,\alpha,\gamma}^0,C_{m,\alpha,\gamma}^0R_{m,\alpha,\gamma}^0]=1$,
	$L_{m,\alpha,\gamma}^0 \cap C_{m,\alpha,\gamma}^0R_{m,\alpha,\gamma}^0 = 1$,
	$L_{m,\alpha,\gamma}^0\cong\Sp(2\gamma,\ell)$.
	When $\ell$ is unitary, $N_{m,\alpha,\gamma}^0=L_{m,\alpha,\gamma}^0C_{m,\alpha,\gamma}^0$,
	$[L_{m,\alpha,\gamma}^0,C_{m,\alpha,\gamma}^0]=1$,
	$L_{m,\alpha,\gamma}^0 \cap C_{m,\alpha,\gamma}^0 = Z(L_{m,\alpha,\gamma}^0) = Z(C_{m,\alpha,\gamma}^0)$,
	$L_{m,\alpha,\gamma}^0/R_{m,\alpha,\gamma}^0Z(L_{m,\alpha,\gamma}^0)\cong\Sp(2\gamma,\ell)$.
\item $N_{m,\alpha,\gamma}^{tw}/\hbar(N_{m,\alpha,\gamma}^0)$ is a cyclic group of order $2e\ell^\alpha$.
	Precisely, $N_{m,\alpha,\gamma}^{tw} = \hbar(N_{m,\alpha,\gamma}^0)V_{m,\alpha,\gamma}^{tw}$ with
	$V_{m,\alpha,\gamma}^{tw} = \langle v_{m,\alpha,\gamma}^{(\varepsilon)},M(me\ell^{\alpha+\gamma}) \rangle$.
	When $\ell$ is linear, $V_{m,\alpha,\gamma}^{tw}$ is generated by $v_{m,\alpha,\gamma}^{(1)}M(me\ell^{\alpha+\gamma})$;
	while when $\ell$ is unitary, $V_{m,\alpha,\gamma}^{tw}$ is generated by $v_{m,\alpha,\gamma}^{(-1)}$.
\item $N_{m,\alpha,\gamma,\bc}^{tw} = N_{m,\alpha,\gamma}^{tw}/R_{m,\alpha,\gamma}^{tw} \otimes N_{\fS(\bc)}(A_{\bc})$,
	where $\fS(\bc)$ is the symmetric group of $\ell^{|\bc|}$ symbols and
	$N_{m,\alpha,\gamma}^{tw}/R_{m,\alpha,\gamma}^{tw} \otimes N_{\fS(\bc)}(A_{\bc})$ is defined as \cite[(1.5)]{AF90}.
	Furthermore, $N_{m,\alpha,\gamma,\bc}^{tw}/R_{m,\alpha,\gamma,\bc}^{tw} \cong
	N_{m,\alpha,\gamma,\bc}^{tw}/R_{m,\alpha,\gamma,\bc}^{tw} \times \GL(c_1,\ell)\times\cdots\times\GL(c_r,\ell)$.
\item $\tN_{m,\alpha,\gamma,\bc}^{tw} = \langle N_{m,\alpha,\gamma,\bc}^{tw},\tau_{m,\alpha,\gamma,\bc}^{tw} \rangle$.
\end{enumerate}
\end{lemma}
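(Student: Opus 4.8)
The plan is to prove each of the four assertions by explicitly working in the twisted group $\tbG_{m,\alpha,\gamma,\bc}^{v_{m,\alpha,\gamma,\bc}F}$ and transferring known structural results from the literature. For (1), the subgroup $R_{m,\alpha,\gamma}^0\cong Z_\alpha E_\gamma$ sits inside $\GL(m\ell^\gamma,\varepsilon q^{e\ell^\alpha})$, and its normalizer there is computed in \cite[(1B),(1C)]{An94}; the point is that $E_\gamma$ is extraspecial of order $\ell^{2\gamma+1}$ acting irreducibly on an $\ell^\gamma$-dimensional space over a field containing the relevant roots of unity, so $\Out(E_\gamma)$ modulo the central part is $\Sp(2\gamma,\ell)$, and one lifts this symplectic group into $N_{m,\alpha,\gamma}^0$ using the standard construction. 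The linear/unitary dichotomy arises from whether $Z_\alpha$ itself splits off or whether its involution is tied to the form; I would just cite \cite{An94} for the precise statements and check that the stated complements $L_{m,\alpha,\gamma}^0$ are the ones produced there, taking care with the intersection $L^0\cap C^0R^0$ in the two cases.

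For (2), the embedding $\hbar$ stacks $e\ell^\alpha$ Galois twists of $\GL(m\ell^\gamma,\varepsilon q^{e\ell^\alpha})$ together with their transpose-inverses inside $\Sp_M(2me\ell^{\alpha+\gamma},\barF_q)$. The normalizer of the twisted basic subgroup $R_{m,\alpha,\gamma}^{tw}=\hbar(R_{m,\alpha,\gamma}^0)$ in the twisted symplectic group is generated by $\hbar(N_{m,\alpha,\gamma}^0)$ together with the cyclic permutation element $v_{m,\alpha,\gamma}^{(\varepsilon)}$ (which realizes the field automorphism $F_q$ on the $\GL$-factor, i.e. permutes the Galois components cyclically) and the form-swap $M(me\ell^{\alpha+\gamma})$ (which interchanges the $A$-block with the $A^{-t}$-block). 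I would verify directly that these elements normalize $R_{m,\alpha,\gamma}^{tw}$, that together with $\hbar(N_{m,\alpha,\gamma}^0)$ they exhaust the full normalizer (by a counting argument or by reducing modulo $\hbar(C^0)$), and that the quotient $V_{m,\alpha,\gamma}^{tw}$ is cyclic of order $2e\ell^\alpha$ with the stated generator — in the linear case $v^{(1)}M(\cdot)$ has the right order, while in the unitary case $v^{(-1)}$ already incorporates the sign and is a single generator of order $2e\ell^\alpha$.

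For (3), this is the wreath-product bookkeeping: $R_{m,\alpha,\gamma,\bc}^{tw}=R_{m,\alpha,\gamma}^{tw}\wr A_{\bc}$ and its normalizer decomposes, following the pattern of \cite[(1.5)]{AF90}, as the ``twisted wreath'' $N_{m,\alpha,\gamma}^{tw}/R_{m,\alpha,\gamma}^{tw}\otimes N_{\fS(\bc)}(A_{\bc})$, and since $N_{\fS(\bc)}(A_{\bc})/A_{\bc}\cong\GL(c_1,\ell)\times\cdots\times\GL(c_r,\ell)$ one gets the displayed isomorphism of quotients. For (4) one simply observes that $\tau_{m,\alpha,\gamma,\bc}^{tw}$ normalizes $R_{m,\alpha,\gamma,\bc}^{tw}$ (it is $\tau_{m,\alpha,\gamma}^{tw}\otimes I_{\bc}$, commuting with the wreathing action and, by Lemma~\ref{lemma:twC}(2), centralizing $R_{m,\alpha,\gamma,\bc}^{tw}C_{m,\alpha,\gamma,\bc}^{tw}$), hence lies in $\tN_{m,\alpha,\gamma,\bc}^{tw}$, and since it generates the conformal group modulo the symplectic group it generates $\tN^{tw}$ over $N^{tw}$.

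The main obstacle I anticipate is part (2): pinning down the exact generator of $V_{m,\alpha,\gamma}^{tw}$ and its order in the linear versus unitary cases, because one must track carefully how $F_q$ acts on the $\GL$-factor through conjugation by $v_{m,\alpha,\gamma}^0$, how the transpose-inverse block is permuted by $M(me\ell^{\alpha+\gamma})$, and how these two interact — the subtlety being that in the unitary case the Frobenius already involves the transpose, so the form-swap and the cyclic shift are not independent and collapse into a single cyclic generator of doubled order. Getting the orders and the precise form of the generators right (rather than just up to conjugacy) is exactly the improvement the twisting process buys, so this is where the real care is needed; the rest is cross-referencing \cite{An94}, \cite{FS89}, \cite{AF90} and routine verification.
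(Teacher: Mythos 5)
Your proposal is correct and follows essentially the same route as the paper: the paper also proves (1)--(3) by declaring them the twisted versions of the untwisted statements in \cite[(3C)(1), (1.5)]{AF90}, \cite[(1C), (2.5)]{An94} and \cite[(5B)]{FS89}, and treats (4) as obvious, exactly the cross-referencing plus routine verification you describe. Your extra care on part (2) (checking the precise generators $v_{m,\alpha,\gamma}^{(1)}M(me\ell^{\alpha+\gamma})$ resp.\ $v_{m,\alpha,\gamma}^{(-1)}$ and their orders modulo $\hbar(N_{m,\alpha,\gamma}^0)$) just makes explicit what the paper leaves to the cited untwisted results.
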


\begin{proof}
Since $Z(R_{m,\alpha,\gamma}^0)\leqslant Z(\GL(m\ell^\gamma,\varepsilon{q}^{e\ell^\alpha}))$, (1) is just the twisted version of special cases of \cite[(3C)(1)]{AF90} and \cite[(1C)(1)]{An94}.
(2) is just the twisted version of \cite[(1C)(2)]{An94} and \cite[(5B)]{FS89}.
(3) is just the twisted version of \cite[(2.5)]{An94} and (4) is obvious.
\end{proof}

Here, $\hbar(N_{m,\alpha,\gamma}^0)$ is just the twisted version of $N_{m,\alpha,\gamma}^0$ in \cite{An94}.

\begin{lemma}\label{lemma:ActionRadical}
The set $E$ of field automorphisms stabilizes $R_{m,\alpha,\gamma,\bc}^{tw}$ and thus $C_{m,\alpha,\gamma,\bc}^{tw}$, then $N_{\tbG_{m,\alpha,\gamma,\bc}^{vF}\rtimes{E}}(R_{m,\alpha,\gamma,\bc}^{tw}) = \tN_{m,\alpha,\gamma,\bc}^{tw}\rtimes{E} = \langle N_{m,\alpha,\gamma,\bc}^{tw},\tau_{m,\alpha,\gamma,\bc}^{tw} \rangle \rtimes E$.
Consequently, $\Aut(G)$ acts trivially on the conjugacy classes of radical subgroups.
\end{lemma}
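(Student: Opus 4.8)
The plan is to prove the normalizer identity inside the twisted group $\tbG_{m,\alpha,\gamma,\bc}^{v_{m,\alpha,\gamma,\bc}F}$, where every relevant matrix is written out explicitly, and then to transfer the consequence about $G$ through $\iota$ by means of Lemma~\ref{lemma:twist}.

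First I would check that the generator $\hF_p$ of $E$ stabilizes $R_{m,\alpha,\gamma,\bc}^{tw}$ setwise. Applying $\hF_p$ entrywise commutes with the embedding $\hbar$, since $\hF_p$ commutes with $F_q=\hF_p^f$ and with $A\mapsto A^{-t}$, so it is enough to note that $\hF_p$ stabilizes $R_{m,\alpha,\gamma}^0$ and fixes $A_{\bc}$ pointwise: on the chosen generators one has $\hF_p(Z_{m,\alpha}^0)=(Z_{m,\alpha}^0)^p$, $\hF_p(X_{m,j}^0)=(X_{m,j}^0)^p$ and $\hF_p(Y_{m,j}^0)=Y_{m,j}^0$, the last equalities holding because $Y^0$ and the generators of $A_{\bc}$ are permutation matrices; likewise $\hF_p(v_{m,\alpha,\gamma,\bc})=v_{m,\alpha,\gamma,\bc}$, so $E$ indeed acts on $\tbG_{m,\alpha,\gamma,\bc}^{v_{m,\alpha,\gamma,\bc}F}$. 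It follows that $E$ also stabilizes the centralizer $C_{m,\alpha,\gamma,\bc}^{tw}$ and the normalizer $N_{\tbG_{m,\alpha,\gamma,\bc}^{v_{m,\alpha,\gamma,\bc}F}}(R_{m,\alpha,\gamma,\bc}^{tw})=\tN_{m,\alpha,\gamma,\bc}^{tw}$, this last equality being valid because $\iota$ identifies $R_{m,\alpha,\gamma,\bc}^{tw}$ with $R_{m,\alpha,\gamma,\bc}$ and $\tbG_{m,\alpha,\gamma,\bc}^{v_{m,\alpha,\gamma,\bc}F}$ with $\tG_{m,\alpha,\gamma,\bc}$.

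Next I would compute the normalizer in the semidirect product. Writing an element of $\tbG_{m,\alpha,\gamma,\bc}^{v_{m,\alpha,\gamma,\bc}F}\rtimes E$ uniquely as $g\sigma$ with $g\in\tbG_{m,\alpha,\gamma,\bc}^{v_{m,\alpha,\gamma,\bc}F}$ and $\sigma\in E$, the element $g\sigma$ normalizes $R_{m,\alpha,\gamma,\bc}^{tw}$ if and only if $g$ does (because $\sigma$ fixes it), whence $N_{\tbG_{m,\alpha,\gamma,\bc}^{v_{m,\alpha,\gamma,\bc}F}\rtimes E}(R_{m,\alpha,\gamma,\bc}^{tw})=\tN_{m,\alpha,\gamma,\bc}^{tw}\rtimes E$; the final description then follows from $\tN_{m,\alpha,\gamma,\bc}^{tw}=\langle N_{m,\alpha,\gamma,\bc}^{tw},\tau_{m,\alpha,\gamma,\bc}^{tw}\rangle$ in Lemma~\ref{lemma:twN}(4). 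For the consequence, since $\Sp_{2n}(q)$ is its own universal covering group, $\Aut(G)$ is generated by inner automorphisms, the diagonal automorphism afforded by $\tau$, and the field automorphisms $D=\langle F_p\rangle$; as inner automorphisms fix every $G$-class, only $F_p$ and $\tau$ need to be examined. By \cite{An94} an arbitrary radical subgroup of $G$ is $G$-conjugate to $R_0\times R_1\times\cdots\times R_u$ acting on an orthogonal decomposition $V=V_0\bot V_1\bot\cdots\bot V_u$, with $R_0$ trivial on $V_0$ and each $R_i$ a basic subgroup $R_{m_i,\alpha_i,\gamma_i,\bc_i}=\iota(R_{m_i,\alpha_i,\gamma_i,\bc_i}^{tw})$. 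Choosing the underlying basis adapted to this decomposition, $F_p$ preserves it and restricts on each $\I_0(V_i)$ to a field automorphism which, by Lemma~\ref{lemma:twist}, agrees with $\hF_p$ on the corresponding twisted group up to an inner automorphism; since $\hF_p$ fixes $R_{m_i,\alpha_i,\gamma_i,\bc_i}^{tw}$ by the first step, $F_p(R_i)$ is $\I_0(V_i)$-conjugate to $R_i$, so $F_p(R)$ is $G$-conjugate to $R$. For $\tau$, Lemma~\ref{lemma:twC}(2) gives $[\tau_{m_i,\alpha_i,\gamma_i,\bc_i}^{tw},R_{m_i,\alpha_i,\gamma_i,\bc_i}^{tw}]=1$, so the product of the $\tau_{m_i,\alpha_i,\gamma_i,\bc_i}$, extended by any similitude of $V_0$ with the same multiplier, is an element of $\tG$ generating $\tG$ modulo $G$ that centralizes $R$; hence the diagonal automorphism fixes the class of $R$ as well.

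The hard part will be the reduction in the previous paragraph, namely arranging that the abstract field automorphism $F_p$ of $G$ acts block-diagonally with respect to the orthogonal decomposition attached to a prescribed radical subgroup, so that Lemma~\ref{lemma:twist} can be applied factor by factor. Once the basis is chosen compatibly this is immediate, and all remaining verifications are bookkeeping already carried out in Lemmas~\ref{lemma:twist}, \ref{lemma:twC} and \ref{lemma:twN}.
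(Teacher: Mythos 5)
Your proposal is correct and follows essentially the same route as the paper: the explicit generator check (diagonal matrices raised to $p$-th powers, permutation matrices fixed, $\hF_p(v)=v$) is exactly the content of Remark~\ref{remark:semidirectRtw}, and the consequence is obtained, as in the paper, by writing a radical subgroup as a product of basic subgroups and invoking Lemma~\ref{lemma:twist} — the only cosmetic difference being that you apply the twist factor by factor while the paper twists once by the product element $v=1\times v_1\times\cdots\times v_u$. The only point to tidy is the diagonal case: to assemble the $\tau_{m_i,\alpha_i,\gamma_i,\bc_i}$ into an element of $\tG$ you should choose the $\xi_0$'s in (\ref{equation:twisteddiag}) to be one fixed generator of $\F_q^\times$ (or adjust each block by a central scalar), so that all blocks share the same multiplier — an implicit choice the paper also makes.
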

\begin{proof}
The first assertion follows from Remark~\ref{remark:semidirectRtw}.
For the second one, first note that $\Aut(G)=\tG/Z(\tG)\rtimes{D}$, where $D$ is the set of field automorphisms.
Let $R=R_0\times R_1\times\cdots\times R_u$ be a radical subgroup of $G$, where $R_0$ is a trivial group and $R_i$ ($i\geqslant1$) is a basic subgroup $R_{m_i,\alpha_i,\gamma_i,\bc_i}$.
Let $v_i=v_{m_i,\alpha_i,\gamma_i,\bc_i}$, $g_i=g_{m_i,\alpha_i,\gamma_i,\bc_i}$ for $i>0$ and $v=1\times v_1\times\cdots\times v_u$, $g=1\times g_1\times\cdots\times g_u$.
Twisted by $v$ and $g$ and consider the twisted groups $\tbG^{vF}\rtimes{E}$.
Set $\tau_i^{tw}=\tau_{m_i,\alpha_i,\gamma_i,\bc_i}^{tw}$ for $i>0$ and $\tau^{tw}=1\times\tau_1^{tw}\times\cdots\times\tau_u^{tw}$.
Assume $R^{tw}$ be the corresponding twisted version of $R$, then by the first assertion, $\tau^{tw}$ and $\hF_p$ fix $R^{tw}$.
Then the lemma follows from Lemma~\ref{lemma:twist}.
\end{proof}

\begin{remark}\label{remark:FieldCGamma}
\begin{compactenum}[(1)]
\item The action of $E$ on $C_{m,\alpha,\gamma,\bc}^{tw}$ induces an action on $C_{m,\alpha}^0$; see Lemma \ref{lemma:twC} and the definition of $\hbar$.
\item The statements in Lemma \ref{lemma:twN} and Lemma \ref{lemma:ActionRadical} demonstrate another advantage of transferring to twisted groups.
\end{compactenum}
\end{remark}

For convenience, we state the following corollary.
\begin{corollary}\label{corollary:twist}
Let $\iota$ be the map induced by conjugacy by $g$ in the proof of Lemma \ref{lemma:ActionRadical}, then it can be extended to
$$\iota:\quad \tbG^{vF} \rtimes E \to \tbG^F \rtimes E,$$
where $E$ is the set of field automorphisms on $\tbG^{vF}$.
Then modulo inner automorphisms, diagonal automorphisms on $\bG^F$ corresponds to diagonal automorphisms on $\bG^{vF}$; field automorphisms acting on $\bG^F$ corresponds to field automorphisms acting on $\bG^{vF}$.
\end{corollary}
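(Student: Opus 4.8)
The plan is to deduce the corollary from Lemma~\ref{lemma:twist} by working factorwise along the orthogonal decomposition $\bV = \bV_0 \perp \bV_1 \perp \cdots \perp \bV_u$ used in the proof of Lemma~\ref{lemma:ActionRadical}. Recall that there $v = 1 \times v_1 \times \cdots \times v_u$ and $g = 1 \times g_1 \times \cdots \times g_u$ with $g_i^{-1} F(g_i) = v_i$. Since $F$ acts componentwise with respect to this decomposition, $g^{-1}F(g) = v$, so by the Lang--Steinberg theorem conjugation by $g$ restricts to isomorphisms $\bG^{vF} \to \bG^F$ and $\tbG^{vF} \to \tbG^F$; this is the map $\iota$.

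First I would extend $\iota$ to the semidirect products exactly as in \cite[Proposition~5.3]{CS17A}, the argument already invoked for Lemma~\ref{lemma:twist}. The entries of each $v_i$, hence of $v$, lie in $\{0,\pm1\}\subseteq\F_p$, so $\hF_p(v) = v$; a short computation with the twisted Frobenius $x\mapsto vF(x)v^{-1}$ then gives $g^{-1}\hF_p(g)\in\bG^{vF}$, and this is precisely what is needed for $x \mapsto gxg^{-1}$ together with the identity on $E$ to define a group homomorphism $\tbG^{vF} \rtimes E \to \tbG^F \rtimes E$. For the assertion about diagonal automorphisms, note that $\iota$ carries $\bG$, $\tbG$ and $Z(\tbG)$ to themselves under the relevant identifications, hence sends an element of $\tbG^{vF}$ generating $\tbG^{vF}$ modulo $\bG^{vF}Z(\tbG)^{vF}$ to one generating $\tbG^F$ modulo $\bG^F Z(\tbG)^F$; since the group of outer diagonal automorphisms has order $2$, this gives the first correspondence modulo inner automorphisms. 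For the assertion about field automorphisms, one computes that under $\iota$ the field automorphism $\hF_p$ of $\bG^F$ pulls back to $\hF_p$ composed with the inner automorphism of $\bG^{vF}$ induced by $g^{-1}\hF_p(g)$, which by the previous step lies in $\bG^{vF}$; this is the second correspondence.

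I do not expect a genuine obstacle here: the content is the same as in Lemma~\ref{lemma:twist}, and what remains is bookkeeping for the block-diagonal shape of $v$ and $g$ together with the factor $\Sp(\bV_0)$, on which $v$ and $g$ are trivial and $\iota$ is the identity. The one point worth keeping in mind is that $E$, the group of field automorphisms of $\tbG^{vF}$, need not act faithfully on $\tbG^F$ (it surjects onto $D = \langle F_p \rangle$ with possibly non-trivial kernel, as noted after Lemma~\ref{lemma:twist}); the statement is to be read with $E$ acting on both sides, so this causes no trouble.
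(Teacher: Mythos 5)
Your proposal is correct and follows essentially the same route as the paper: the corollary is stated there without a separate proof precisely because it is the factorwise application, to $v=1\times v_1\times\cdots\times v_u$ and $g=1\times g_1\times\cdots\times g_u$, of the argument proving Lemma~\ref{lemma:twist} (extension of $\iota$ via \cite[Proposition~5.3]{CS17A}, the observation $\hF_p(v)=v$ giving $g^{-1}\hF_p(g)\in\bG^{vF}$, and the order-$2$ group of diagonal outer automorphisms), which is exactly what you carry out.
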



\subsection{A parametrization of weights}\label{subsect:paraweight}

In this subsection, we restate the parametrization of weights of $\Sp_{2n}(q)$ in \cite{An94} in a similar way as \cite{LZ18}.

Let $\cF'$ be the subset of polynomials in $\cF$ whose roots have $\ell'$-order.
Given $\Gamma\in\cF'$, $G_\Gamma$, $R_\Gamma$, $C_\Gamma$, $s_\Gamma$ and $\theta_\Gamma$ are as on \cite[p.22]{An94}.
For convenience, we repeat the constructions for symplectic groups (our notations here are slightly different from those on \cite[p.22]{An94}).
Take integers $m_\Gamma,\alpha_\Gamma$ such that $(m_\Gamma,\ell)=1$ and $m_\Gamma e\ell^{\alpha_\Gamma} = e_\Gamma\delta_\Gamma$.
Here, recall that $\delta_\Gamma$ is the reduced degree as \cite[(1.8)]{FS89}.
Let $V_\Gamma$ be the symplectic space of dimension $2e_\Gamma\delta_\Gamma$ over $\F_q$ and $G_\Gamma=\I_0(V_\Gamma)$.
When a suitable basis for $V_\Gamma$ is chosen, we can identify $G_\Gamma$ with $G_{m_\Gamma,\alpha_\Gamma}$ in the previous subsection.
Then $R_\Gamma=R_{m_\Gamma,\alpha_\Gamma}$ is a basic subgroup of $G_\Gamma$.
Set $C_\Gamma=C_{G_\Gamma}(R_\Gamma)$.
So $C_\Gamma\cong\GL(m_\Gamma,\varepsilon{q}^{e\ell^{\alpha_\Gamma}})$ and has a Coxeter torus $T_\Gamma$ of order $q^{m_\Gamma e\ell^{\alpha_\Gamma}}-\varepsilon^{m_\Gamma}$.
Let $s_\Gamma^*$ be an element of $T_\Gamma$ which is regular as an element of $C_\Gamma$ and as an element of $G_\Gamma$ has a unique elementary divisor $\Gamma$ with multiplicity $\beta_\Gamma e_\Gamma$.
Let $V_\Gamma^*$ be the orthogonal space of dimension $2e_\Gamma\delta_\Gamma+1$ with $\eta(V_\Gamma^*)=1$ and $G_\Gamma^*=\I_0(V_\Gamma^*)$.
Then we can embed the duals $T_\Gamma^*$ and $C_\Gamma^*$ of $T_\Gamma$ and $C_\Gamma$ respectively in $G_\Gamma^*$ such that $T_\Gamma^*\leqslant C_\Gamma^*\leqslant G_\Gamma^*$.
There is a semisimple element $s_\Gamma$ in $T_\Gamma^*$ satisfies: $s_\Gamma$ is a regular element of $C_\Gamma^*$; when viewed as an element of $G_\Gamma^*$, $s_\Gamma$ is dual to $s_\Gamma^*$ in the sense of \cite[(3E)]{An94}. 
We can decompose $V_\Gamma^*$ as follows:
$$V_\Gamma^*=(V_\Gamma^*)_0\bot (V_\Gamma^*)_+,$$
where $\dim(V_\Gamma^*)_0=1$.
The restriction of $s_\Gamma$ to $(V_\Gamma^*)_+$ is primary with the unique elementary divisor $\Gamma$ and multiplicity $\beta_\Gamma e_\Gamma$, while $s_\Gamma$ acts on $(V_\Gamma^*)_0$ trivially.
$\eta((V_\Gamma^*)_+)=\varepsilon_\Gamma^{e_\Gamma}$ or $\varepsilon$ according to $\Gamma\in\cF_1\cup\cF_2$ or $\Gamma\in\cF_0$, while $\eta((V_\Gamma^*)_0)$ is determined by $\eta(V_\Gamma^*)=1$.
Thus $s_\Gamma$ is determined uniquely up to conjugacy in $G_\Gamma^*$.
We may identify $s_\Gamma$ with its restriction to $(V_\Gamma^*)_+$.
Let $\phi_\Gamma$ be the character of $T_\Gamma$ corresponding to $s_\Gamma$ by duality and $\theta_\Gamma = \pm R_{T_\Gamma}^{C_\Gamma}\phi_\Gamma$, where the sign is chosen that $\theta_\Gamma$ is a character of $C_\Gamma$.
Conversely, canonical characters of $\ell$-blocks of $C_{m,\alpha}$ with defect group $R_{m,\alpha}$ are of the form $\theta_\Gamma$ for some $\Gamma$ with $m=m_\Gamma$ and $\alpha_\Gamma$.

Let $R_{\Gamma,\gamma,\bc}= R_{m_\Gamma,\alpha_\Gamma,\gamma,\bc}$ be a basic subgroup and $G_{\Gamma,\gamma,\bc}$,  $C_{\Gamma,\gamma,\bc}$,  $N_{\Gamma,\gamma,\bc}$ be defined similarly.
Then $C_{\Gamma,\gamma,\bc}\cong C_{\Gamma,\gamma}\otimes I_\gamma\otimes I_{\bc}$
and canonical characters of $C_{\Gamma,\gamma,\bc}R_{\Gamma,\gamma,\bc}$ with defect group $R_{\Gamma,\gamma,\bc}$ are all of the form $\theta_{\Gamma,\gamma,\bc}=\theta_\Gamma\otimes I_\gamma\otimes I_{\bc}$.
Then the symplectic space $V_{\Gamma,\gamma,\bc}$ on which $G_{\Gamma,\gamma,\bc}$ acts can be decomposed as
$$V_{\Gamma,\gamma,\bc}=V_\Gamma \bot\cdots\bot V_\Gamma$$
with $\ell^{\gamma+|\bc|}$ terms.
Dually, we have a decomposition
$$V_{\Gamma,\gamma,\bc}^*=(V_{\Gamma,\gamma,\bc}^*)_0\bot (V_{\Gamma,\gamma,\bc}^*)_+,\quad
(V_{\Gamma,\gamma,\bc}^*)_+=(V_\Gamma^*)_+ \bot\cdots\bot (V_\Gamma^*)_+,$$
where $\dim(V_{\Gamma,\gamma,\bc}^*)_0=1$ and $\eta((V_{\Gamma,\gamma,\bc}^*)_0)$ is determined by $\eta(V_{\Gamma,\gamma,\bc}^*)=1$.
\vspace{1em}

Let $\cR_{\Gamma,\delta}$ be the set of all the basic subgroups of the form $R_{\Gamma,\gamma,\bc}$ with $\gamma+|\bc|=\delta$ and we denote $I_\delta=I_\gamma\otimes I_\bc$.
Label the basic subgroups in $\cR_{\Gamma,\delta}$ as $R_{\Gamma,\delta,1}$, $R_{\Gamma,\delta,2}$, $\cdots$ and denote the canonical character associated to $R_{\Gamma,\gamma,i}$ by $\theta_{\Gamma,\gamma,i}$.
It is possible that $m_\Gamma=m_{\Gamma'}=:m$ and $\alpha_\Gamma=\alpha_{\Gamma'}=:\alpha$ for two different $\Gamma,\Gamma'\in\cF'$ and thus $\cR_{\Gamma,\delta}=\cR_{\Gamma',\delta}$.
In this case, as in \cite[\S5]{LZ18}, we make the following convention.

\begin{notation}\label{notation:basicsubgp}
We label the basic subgroups in $\cR_{\Gamma,\delta}$ and $\cR_{\Gamma',\delta}$ such that $R_{\Gamma,\delta,i}=R_{\Gamma',\delta,i}$,
and denote $R_{m,\alpha,\gamma,\bc}$ as $R_{\Gamma,\delta,i}$ or $R_{\Gamma',\delta,i}$ depending on that the related canonical character is $\theta_{\Gamma,\delta,i}$ or $\theta_{\Gamma',\delta,i}$.
\end{notation}

Set $\dz(N_{\Gamma,\delta,i}/R_{\Gamma,\delta,i}\mid\theta_{\Gamma,\delta,i})=
\Irr(N_{\Gamma,\delta,i}\mid\theta_{\Gamma,\delta,i})\cap\dz(N_{\Gamma,\delta,i}/R_{\Gamma,\delta,i})$,
where the meaning of $\dz(N_{\Gamma,\delta,i}/R_{\Gamma,\delta,i})$ is as in \S\ref{sect:intro}.
Set $\sC_{\Gamma,\delta} = \bigcup_i\dz(N_{\Gamma,\delta,i}/R_{\Gamma,\delta,i}\mid\theta_{\Gamma,\delta,i})$.
Assume $\sC_{\Gamma,\delta}=\{\psi_{\Gamma,\delta,i,j}\}$ with $\psi_{\Gamma,\delta,i,j}$ a character of $N_{\Gamma,\delta,i}$.
Then by [An94, (4A)], $|\sC_{\Gamma,\delta}|=\beta_\Gamma e_\Gamma\ell^\delta$.

\paragraph{}
Let $i\cW_\ell(G)$ be the set of $G^*$-conjugacy classes of $(s,\kappa,i,K)$ such that
\begin{compactenum}[(1)]\label{iW(G)}
\item $s$ is an $\ell'$ semisimple element of $G^*$;
\item $\kappa=\prod_\Gamma\kappa_\Gamma$ with $\kappa_\Gamma$ satisfying condition (C) on p.\pageref{kappa};
\item $i\in\Z/2\Z$;
\item $K=\prod_\Gamma K_\Gamma$ with $K_\Gamma:\bigcup_\delta\sC_{\Gamma,\delta} \to \{ \ell\textrm{-cores} \}$ satisfying $\sum_{\delta,i,j}\ell^\delta |K_\Gamma(\psi_{\Gamma,\delta,i,j})|=w_\Gamma$, where $w_\Gamma$ is determined by
\begin{compactenum}[(a)]
\item $m_\Gamma(s)=|\kappa_\Gamma|+e_\Gamma w_\Gamma$ if $\Gamma\in\cF_1\cup\cF_2$;
\item $m_{X+1}(s)=2\rk\kappa_{X+1}+2ew_{X+1}$;
\item $m_{X-1}(s)=(2\rk\kappa_{X-1}+1)+2ew_{X-1}$.
\end{compactenum}
\end{compactenum}
Given a weight $(R,\varphi)$ of $G$, we associate a label $(s,\kappa,i,K)^{G^*}\in i\cW_\ell(G)$ as follows.

 There are corresponding decompositions
 $$ R = R_0 \times R_+,\quad V = V_0 \bot V_+,\quad V^*= (V^*)_0 \bot (V^*)_+,$$
where $R_0$ is the identity group on $V_0$ and $R_+$ is a product of basic subgroups; $(V^*)_0=V_0^*$ is the dual space of $V_0$.
Then $C:=C_G(R)$ and $N:=N_G(R)$ have corresponding decompositions: $C=C_0\times C_+$, $N=N_0\times N_+$ with $C_0=N_0=\I_0(V_0)$.
$\varphi$ lies over a canonical character $\theta$ of $CR$.
Thus $\theta$ can be decomposed as $\theta=\theta_0\times\theta_+$.
Then $\varphi=\varphi_0\times\varphi_+$ with $\varphi_0=\theta_0$ and $\varphi_+\in\Irr(N_+\mid\theta_+)$.

So $\varphi_0$ is a character of defect zero of $\I_0(V_0)$.
Then $\varphi_0=\chi_{s_0,\kappa,i}$, where $s_0$ a semisimple $\ell'$-element of $\I_0(V_0^*)$; $\kappa_\Gamma$ is a partition of $m_\Gamma(s_0)$ or a Lusztig symbol of rank $[\frac{m_\Gamma(s_0)}{2}]$ without hook according to $\Gamma\in\cF_1\cup\cF_2$ or $\Gamma\in\cF_0$; $i\in\Z/2\Z$.
Let $s_0^*\in\I_0(V_0)$ be the dual of $s_0$ in the sense of \cite[p.18]{An94}.

By Notation~\ref{notation:basicsubgp}, $\theta_+$ and $R_+$ can be decomposed as follows:
$$\theta_+=\prod_{\Gamma,\delta,i}\theta_{\Gamma,\delta,i}^{t_{\Gamma,\delta,i}},\quad
R_+=\prod_{\Gamma,\delta,i}R_{\Gamma,\delta,i}^{t_{\Gamma,\delta,i}}.$$
Accordingly, there are decompositions of spaces:
$$V_+=\bot_{\Gamma,\delta,i}V_{\Gamma,\delta,i}^{t_{\Gamma,\delta,i}},\quad
(V^*)_+=\bot_{\Gamma,\delta,i}((V_{\Gamma,\delta,i}^*)_+)^{t_{\Gamma,\delta,i}}.$$
Let $s=s_0\prod_{\Gamma,\delta,i}s_\Gamma\otimes I_\delta\otimes t_{\Gamma,\delta,i}$, where $s_\Gamma$ is a primary semisimple element on $(V_\Gamma^*)_+$ with $m_\Gamma(s_\Gamma)=\beta_\Gamma e_\Gamma$ and $\eta_\Gamma(s_\Gamma)=\varepsilon_\Gamma^{e_\Gamma}$ or $\varepsilon$ according to $\Gamma\in\cF_1\cup\cF_2$ or $\Gamma\in\cF_0$.
Let $s^*=s_0^*\prod_{\Gamma,\delta,i}s_\Gamma^*\otimes I_\delta\otimes t_{\Gamma,\delta,i}$, where $s_\Gamma^*$ is a primary semisimple element on $V_\Gamma$ with $m_\Gamma(s_\Gamma^*)=\beta_\Gamma e_\Gamma$.
Then $s^*\in\I_0(V)$ is the dual of $s$ in the sense of \cite[p.18]{An94}.

Let $\hN_+(\theta_+)=\prod\limits_{\Gamma,\delta,i}N_{\Gamma,\delta,i}\wr\fS(t_{\Gamma,\delta,i})$,
then $N_+(\theta_+)\leqslant\hN_+(\theta_+)$.
Thus $\varphi_+=\Ind_{\hN_+(\theta_+)}^{N_+}\psi_+$ with $\psi_+\in\dz(\hN_+(\theta_+)/R_+\mid\theta_+)$.
Then $\psi_+=\prod\limits_{\Gamma,\delta,i}\psi_{\Gamma,\delta,i}$, where $\psi_{\Gamma,\delta,i}$ is a character of $N_{\Gamma,\delta,i}\wr\fS(t_{\Gamma,\delta,i})$.
By Clifford theory, $\psi_{\Gamma,\delta,i}$ is of the form
\begin{equation}\label{equation:psi}
\Ind_{N_{\Gamma,\delta,i}\wr\prod_j\fS(t_{\Gamma,\delta,i,j})}^{N_{\Gamma,\delta,i}\wr\fS(t_{\Gamma,\delta,i})}
\overline{\prod_j\psi_{\Gamma,\delta,i,j}^{t_{\Gamma,\delta,i,j}}} \cdot \prod_j\phi_{\kappa_{\Gamma,\delta,i,j}},
\addtocounter{proposition}{1}\tag{\theproposition} 
\end{equation}
where $t_{\Gamma,\delta,i}=\sum_jt_{\Gamma,\delta,i,j}$, $\overline{\prod_j\psi_{\Gamma,\delta,i,j}^{t_{\Gamma,\delta,i,j}}}$ is the canonical extension of $\prod_j\psi_{\Gamma,\delta,i,j}^{t_{\Gamma,\delta,i,j}}\in\Irr(N_{\Gamma,\delta,i}^{t_{\Gamma,\delta,i}})$ to $N_{\Gamma,\delta,i}\wr\prod_j\fS(t_{\Gamma,\delta,i,j})$ as in the proof of \cite[Proposition~2.3.1]{Bon99b}, $\kappa_{\Gamma,\delta,i,j}\vdash t_{\Gamma,\delta,i,j}$ without $\ell$-hook and $\phi_{\kappa_{\Gamma,\delta,i,j}}$ the character of $\fS(t_{\Gamma,\delta,i,j})$ corresponding to $\kappa_{\Gamma,\delta,i,j}$.
Note that $\psi_+$, $\psi_{\Gamma,\delta,i}$ and (\ref{equation:psi}) here are slightly different from those on \cite[p.145]{LZ18}.
Now, define $K_\Gamma:\cup_\delta\sC_{\Gamma,\delta}\to\{\ell\textrm{-cores}\}$, $\psi_{\Gamma,\delta,i,j}\mapsto\kappa_{\Gamma,\delta,i,j}$.
Then we can associate the weight $(R,\varphi)$ the label $(s,\kappa,i,K)$.

The below result is implicitly contained in [An94].
\begin{proposition}\label{prop:labelweights}
There is a surjective map:
$$i\cW_\ell(G) \longrightarrow \cW_\ell(G),\quad
(s,\kappa,i,K)^{G^*} \mapsto w_{s,\kappa,i,K}$$
satisfying
\begin{compactenum}[(1)]
\item $w_{s,\kappa,i,K}$ belongs to the block $B_{s,\kappa,i}$;
\item if $(s_1,\kappa_1,K_1)$ and $(s_2,\kappa_2,K_2)$ are not conjugate under $G^*$, $w_{s_1,\kappa_1,i,K_1}\neq w_{s_2,\kappa_2,j,K_2}$;
\item $w_{s,\kappa,0,K}=w_{s,\kappa,1,K}$ (denoted also as $w_{s,\kappa,K}$) if and only if $\kappa_{X+1}$ is degenerate.
\end{compactenum}
\end{proposition}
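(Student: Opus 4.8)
The plan is to read the statement off from the construction that precedes it, combined with An's classification of weights (\cite[(4F)]{An94}, restated here as Theorem~\ref{theorem:paraweights}) and the block bookkeeping of Section~\ref{sect:block}. First I would check that the tuple $(s,\kappa,i,K)$ attached to a weight $(R,\varphi)$ really lies in $i\cW_\ell(G)$. Condition~(1) is immediate, since $s$ is assembled from the semisimple $\ell'$-element $s_0$ of $\I_0(V_0^*)$ dual to the defect-zero character $\varphi_0=\chi_{s_0,\kappa,i}$ and from the regular $\ell'$-elements $s_\Gamma$ of \S\ref{subsect:paraweight}. Condition~(2) holds because $\kappa$ is exactly the combinatorial label of the defect-zero character $\varphi_0$ of $\I_0(V_0)$, and Theorem~\ref{theorem:characterSp} applied to $\I_0(V_0)$ forces such a label to satisfy~(C) (this uses that a defect-zero character has ``core'' label with no $\ell$-hook, so $|\kappa_\Gamma|=m_\Gamma(s_0)$, resp.\ $\rk\kappa_\Gamma=[m_\Gamma(s_0)/2]$ for $\Gamma\in\cF_0$). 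Condition~(3) is vacuous, and condition~(4) follows from the dimension identity $\dim V=\dim V_0+\dim V_+$ together with $\dim V_{\Gamma,\delta,i}=2e_\Gamma\delta_\Gamma\ell^\delta$ from \S\ref{subsect:paraweight}: these give $m_\Gamma(s)-m_\Gamma(s_0)=\beta_\Gamma e_\Gamma\sum_{\delta,i,j}\ell^\delta|\kappa_{\Gamma,\delta,i,j}|$, and since $|\kappa_{\Gamma,\delta,i,j}|=|K_\Gamma(\psi_{\Gamma,\delta,i,j})|$ the right-hand side is $\beta_\Gamma e_\Gamma w_\Gamma$, which is exactly~(4)(a)--(c).

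Next I would run the construction backwards to obtain surjectivity and to see that the map factors through $G^*$-conjugacy. Given $(s,\kappa,i,K)^{G^*}$, Theorem~\ref{theorem:characterSp} produces a defect-zero character $\varphi_0=\chi_{s_0,\kappa,i}$ of an appropriate $\I_0(V_0)$; then one assembles the basic subgroups $R_{\Gamma,\delta,i}$ with the multiplicities prescribed by $K$, builds $\psi_+$ through formula~(\ref{equation:psi}) with $\kappa_{\Gamma,\delta,i,j}=K_\Gamma(\psi_{\Gamma,\delta,i,j})$, and induces up inside $N_G(R)$. That this recovers every $G$-conjugacy class of weights, and that the outcome depends only on the $G^*$-class of the tuple, is the content of \cite[\S4]{An94}; the one point that deserves attention is that the ``regular'' pieces $s_\Gamma$ are determined up to $C_\Gamma^*$-conjugacy by $\Gamma$ alone (see \S\ref{subsect:paraweight} and \cite[(3E)]{An94}), so the whole ambiguity in $s$ is the ambiguity in the orthogonal embeddings of the subspaces $(V_\Gamma^*)_+$ into $V^*$, i.e.\ $G^*$-conjugacy.

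Property~(1) I would get from the Brauer correspondence: $\bl(\varphi)^G$ is governed by the canonical character $\theta=\theta_0\times\theta_+$ of $CR$, whose first component $\theta_0=\varphi_0=\chi_{s_0,\kappa,i}$ pins down the $\I_0(V_0)$-block carrying the datum $(\kappa,i)$ and whose components $\theta_\Gamma$ contribute the regular semisimple elements $s_\Gamma$; comparing with the block labels of Theorem~\ref{theorem:blockSp} and the characters-in-blocks statement Theorem~\ref{theorem:irrB} (both resting on \cite[(13C)]{FS89}) identifies $\bl(\varphi)^G$ with $B_{s,\kappa,i}$. Restricting everything to a single block $B_{s,\kappa,i}$, the $K$-parametrization used here is then just a repackaging, via iterated $\ell$-core and $\ell$-quotient, of the $Q$-parametrization of Theorem~\ref{theorem:paraweights}, and its injectivity yields~(2). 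Property~(3) is inherited from Theorem~\ref{theorem:characterSp}(3): when $\kappa_{X+1}$ is degenerate, $\chi_{s_0,\kappa,0}=\chi_{s_0,\kappa,1}$, so $\varphi_0$ --- and hence the weight --- does not see the index $i$, whereas when $\kappa_{X+1}$ is non-degenerate the two values of $i$ give genuinely different $\varphi_0$, hence different weights.

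The step I expect to be the main obstacle is precisely the bookkeeping of the $\Z/2\Z$-index $i$. An's classification in \cite{An94} is stated intrinsically for $\Sp_{2n}(q)$ and does not display this datum in the present form, so one has to verify that the dichotomy ``$\kappa_{X+1}$ degenerate or not'' governing the splitting of the defect-zero character $\varphi_0$ of $\I_0(V_0)$ into $\chi_{s_0,\kappa,0}$ and $\chi_{s_0,\kappa,1}$ is the same one that governs the number of weights of $\Sp_{2n}(q)$ lying above a given weight of $\CSp_{2n}(q)$, and that it is compatible with the block labelling of Theorem~\ref{theorem:blockSp}. This is where the disconnectedness of $C_{\bG^*}(s_0)$ --- which occurs exactly when $-1$ is an eigenvalue of $s_0$ --- and Lusztig's Theorem~\ref{theorem:JDnonconnectedcenter} must be threaded through An's purely combinatorial construction; the rest is routine unwinding of the definitions.
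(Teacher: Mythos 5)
Your argument follows the paper's proof in all essentials: property (1) via the canonical character of $CR$ compared with the Fong--Srinivasan block labels from \S\ref{subsect:blockSp}, well-definedness, surjectivity and (2) via the core--quotient bijection of \cite[(1A)]{AF90} combined with An's classification as restated in Theorem~\ref{theorem:paraweights}, and (3) from the degeneracy dichotomy for $\kappa_{X+1}$ coming from Theorem~\ref{theorem:characterSp}. The only real difference is one of precision: where you appeal loosely to ``comparing with the block labels,'' the paper makes the block-membership step concrete by exhibiting the inclusion of Brauer pairs $(R',\theta')\leqslant(R,\theta)$, with $R'$ obtained by replacing each wreathed basic subgroup by its base group, so that the constructions of \S\ref{subsect:blockSp} identify the block of $(R',\theta')$, and hence of the weight, as $B_{s,\kappa,i}$.
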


\begin{proof}
Let $(R,\varphi)$ be a weight with label $(s,\kappa,i,K)$.
We first show that $(R,\varphi)$ belongs to the block with label $(s,\kappa,i)$.
Keep all the above notations.
Let $R'_{\Gamma,\delta,i}= R_\Gamma \times\cdots\times R_\Gamma$ and $\theta'_{\Gamma,\delta,i}= \theta_\Gamma \times\cdots\times \theta_\Gamma$ with $\ell^\delta$ terms.
Set $R'=R_0 \times \prod_{\Gamma,\delta,i}(R'_{\Gamma,\delta,i})^{t_{\Gamma,\delta,i}}$ and $\theta'= \theta_0 \times \prod_{\Gamma,\delta,i}(\theta'_{\Gamma,\delta,i})^{t_{\Gamma,\delta,i}}$.
View $(R,\theta)$ and $(R',\theta')$ as two Brauer pairs.
Then by definition, $(R',\theta')\leqslant(R,\theta)$.
By the results in \S\ref{subsect:blockSp}, $(R',\theta')$ belongs to the block with label $(s,\kappa,i)$, then so do $(R,\theta)$ and the weight $(R,\varphi)$.
By \cite[(1A)]{AF90}, there is a bijection between $\{K\mid(s,\kappa,i,K)^{G^*}\in i\cW_\ell(G)\}$ and $i\cW(B_{s,\kappa,i})$ in equation (\ref{equation:iW(B)withQ}).
Then the proposition follows from \cite[(4F)]{An94} (see Theorem \ref{theorem:paraweights}).
\end{proof}

By the above Proposition, we identify $i\cW(B_{s,\kappa,i})$ in (\ref{equation:iW(B)withQ}) with the set $\{K\mid(s,\kappa,i,K)^{G^*}\in i\cW_\ell(G)\}$.
Of course, we can easily write down the twisted version of the weight characters in this subsection, which we will use in the next two subsections to calculate the actions of automorphisms, and should be useful when one wants to consider the extension problem of weight characters.


\subsection{Actions of field automorphisms on weights}\label{subsect:fieldonweight}
The aim is to describe the actions of automorphisms on weights using the parametrization in the subsection \S\ref{subsect:paraweight}.
To do this, we transfer to the twisted groups as in \S\ref{subsect:twistedbasicsubgp}.
This is possible by Corollary \ref{corollary:twist}.
We consider the field automorphisms in this subsection and then the diagonal automorphisms in the next.

Given any $\Gamma\in\cF'$, $m_\Gamma,\alpha_\Gamma$ are as before.
Set $R_{\Gamma,\gamma}^0=R_{m_\Gamma,\alpha_\Gamma,\gamma}^0$ be a basic subgroup in $\GL(m_\Gamma\ell^\gamma,\varepsilon{q}^{e\ell^{\alpha_\Gamma}})$, $R_{\Gamma,\gamma,\bc}^{tw}=R_{m_\Gamma,\alpha_\Gamma,\gamma,\bc}^{tw}$ and all the related notations, such that $C_{\Gamma,\gamma}^0,N_{\Gamma,\gamma}^0$, $C_{\Gamma,\gamma,\bc}^{tw},N_{\Gamma,\gamma,\bc}^{tw}$, etc. are defined similarly.
All the results in \S\ref{subsect:twistedbasicsubgp} can be formulated in these notations, such as Lemma \ref{lemma:twC}, Lemma \ref{lemma:twN}, etc.

Recall that $\hbar$ is a hyperbolic embedding as on p.\pageref{hyperbolicembedding}.
Let $T_\Gamma^0$ be a Coxeter torus of $C_\Gamma^0$.
Take $s_\Gamma^0$ to be a regular semisimple element of $T_\Gamma^0$ such that $\hbar(s_\Gamma^0)=\iota^{-1}(s_\Gamma^*)$, where $s_\Gamma^*$ is as in \S\ref{subsect:paraweight} and $\iota$ is the twisting homomorphism as on page \pageref{twistinghom}, i.e. when viewed as an element of $G_\Gamma$, $s_\Gamma^0$ is just $s_\Gamma^*$.
Set $\theta_\Gamma^0=\pm R_{T_\Gamma^0}^{C_\Gamma^0}\hat{s_\Gamma^0}$ and $\theta_{\Gamma,\gamma}^0=\theta_\Gamma^0\otimes I_\gamma$.
Denote by $\theta_{\Gamma,\gamma}^{tw}$ the character of $C_{\Gamma,\gamma}^{tw}$ induced by $\theta_{\Gamma,\gamma}^0$ via $\hbar$ (we sometimes denote as $\theta_{\Gamma,\gamma}^{tw}=\hbar(\theta_{\Gamma,\gamma}^0)$) and $\theta_{\Gamma,\gamma,\bc}^{tw}=\theta_{\Gamma,\gamma}^{tw}\otimes I_\bc$.
Then $\theta_{\Gamma,\gamma,\bc}^{tw}$ is just the twisted version of $\theta_{\Gamma,\gamma,\bc}$ in \S\ref{subsect:paraweight}.

Let $\alpha$ be any natural number.
For any monic polynomial $\Delta$ in $\F_{q^{2e\ell^\alpha}}[X]$, let $\tDelta$ be the monic polynomial in $\F_{q^{2e\ell^\alpha}}[X]$ such that if all roots of $\Delta$ are $\omega_1,\omega_2,\cdots$, then the roots of $\tDelta$ are exactly $\omega_1^{-q^{e\ell^\alpha}},\omega_2^{-q^{e\ell^\alpha}},\cdots$.
As \cite[p.160]{FS89}, we set
\begin{align*}
\cE^\alpha_0 &= \{\Delta \mid \Delta\in\Irr(\F_{q^{e\ell^\alpha}}[X]), \Delta\neq X\},\\
\cE^\alpha_1 &= \{\Delta \mid \Delta\in\Irr(\F_{q^{2e\ell^\alpha}}[X]),\Delta\neq X, \tDelta=\Delta\},\\
\cE^\alpha_2 &= \{\Delta\tDelta \mid \Delta\in\Irr(\F_{q^{2e\ell^\alpha}}[X]),\Delta\neq X, \tDelta\neq\Delta\}
\end{align*}
and
$$\cE^\alpha=\left\{\begin{array}{ll}
\cE^\alpha_0, & \textrm{if $\varepsilon=1$},\\
\cE^\alpha_1\cup\cE^\alpha_2, & \textrm{if $\varepsilon=-1$}.
\end{array}\right.$$

\begin{lemma}\label{lemma:fieldthetaGamma}
Let $\sigma=\hF_p^i$ be a filed automorphism on twisted groups, then $(\theta_{\Gamma,\gamma,\bc}^{tw})^\sigma\equiv\theta_{\sigma(\Gamma),\gamma,\bc}^{tw}$ modulo the conjugacy of $N_{\Gamma,\gamma,\bc}^{tw}=N_{\sigma(\Gamma),\gamma,\bc}^{tw}$.
Furthermore, $(\theta_{\Gamma,\gamma,\bc}^{tw})^\sigma$ is conjugate to $\theta_{\sigma(\Gamma),\gamma,\bc}^{tw}$ by an element of $V_{\Gamma,\gamma,\bc}^{tw}$ (see Lemma \ref{lemma:twN}).
\end{lemma}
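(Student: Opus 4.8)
The statement is a compatibility between the action of a field automorphism $\sigma=\hF_p^i$ on the twisted Levi/centralizer data and the combinatorial operation $\Gamma\mapsto\sigma(\Gamma)$ on polynomials. The plan is to reduce everything to a statement about the element $s_\Gamma^0$ and its image under $\sigma$ inside the (general linear or unitary) group $C_{\Gamma,\gamma,\bc}^{tw}$, and then identify the conjugating element explicitly inside $V_{\Gamma,\gamma,\bc}^{tw}$.

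\emph{First}, I would unwind the definitions. Since $\theta_{\Gamma,\gamma,\bc}^{tw}=\theta_\Gamma^{0}\otimes I_\gamma\otimes I_{\bc}$ is obtained from $\theta_\Gamma^0=\pm R_{T_\Gamma^0}^{C_\Gamma^0}\widehat{s_\Gamma^0}$ by the hyperbolic embedding $\hbar$ and tensoring with identities, and $\sigma$ acts on $C_{\Gamma,\gamma,\bc}^{tw}$ through the induced action on $C_{m_\Gamma,\alpha_\Gamma}^0=\GL(m_\Gamma,\varepsilon q^{e\ell^{\alpha_\Gamma}})$ described in Remark~\ref{remark:FieldCGamma}(1), it suffices to track what $\sigma$ does to $\widehat{s_\Gamma^0}$. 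Because Deligne--Lusztig induction commutes with field automorphisms, $(\theta_\Gamma^0)^\sigma=\pm R_{\sigma(T_\Gamma^0)}^{C_\Gamma^0}\widehat{\sigma(s_\Gamma^0)}$, so the whole question becomes: how is the element $\sigma(s_\Gamma^0)=(s_\Gamma^0)^{p^i}$ placed, as a regular semisimple element of a Coxeter torus, relative to $s_{\sigma(\Gamma)}^0$? Here one uses that $s_\Gamma^0$, viewed in $G_\Gamma$, is precisely $s_\Gamma^*$, which by construction in \S\ref{subsect:paraweight} is primary with unique elementary divisor determined by $\Gamma$; applying $\sigma$ replaces each eigenvalue by its $p^i$-th power, hence changes the elementary divisor from $\Gamma$ to $\sigma(\Gamma)$ (this is the analogue, inside $\GL/\GU$, of Lemma~\ref{lemma:fieldonsemiclass}). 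Thus $\sigma(s_\Gamma^0)$ and $s_{\sigma(\Gamma)}^0$ are both regular semisimple elements of $C_{\sigma(\Gamma),\gamma,\bc}^{tw}$ with the same characteristic polynomial, hence conjugate in that group, giving $(\theta_{\Gamma,\gamma,\bc}^{tw})^\sigma\equiv\theta_{\sigma(\Gamma),\gamma,\bc}^{tw}$ up to $C_{\Gamma,\gamma,\bc}^{tw}$-conjugacy, and a fortiori up to $N_{\Gamma,\gamma,\bc}^{tw}$-conjugacy. Note $N_{\Gamma,\gamma,\bc}^{tw}=N_{\sigma(\Gamma),\gamma,\bc}^{tw}$ since $m_\Gamma=m_{\sigma(\Gamma)}$, $\alpha_\Gamma=\alpha_{\sigma(\Gamma)}$, $e_\Gamma=e_{\sigma(\Gamma)}$, so $R_{\Gamma,\gamma,\bc}^{tw}=R_{\sigma(\Gamma),\gamma,\bc}^{tw}$ and the normalizer descriptions of Lemma~\ref{lemma:twN} coincide.

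\emph{Second}, for the sharper claim I must upgrade "conjugate in $C^{tw}$" to "conjugate by an element of $V_{\Gamma,\gamma,\bc}^{tw}$." By Lemma~\ref{lemma:twN}(2), $N_{m,\alpha,\gamma}^{tw}=\hbar(N_{m,\alpha,\gamma}^0)V_{m,\alpha,\gamma}^{tw}$, and $V_{m,\alpha,\gamma}^{tw}$ realizes the quotient $N_{m,\alpha,\gamma}^{tw}/\hbar(N_{m,\alpha,\gamma}^0)\cong C_{2e\ell^\alpha}$; concretely it contains the cyclic permutation $v_{m,\alpha,\gamma}^{(\varepsilon)}$ of the $\hbar$-blocks. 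The Frobenius twists inside the definition of $\hbar$ mean that $\sigma=\hF_p^i$ acting on $\hbar(A)$ cyclically permutes the blocks $A, F_q(A),\dots$ by exactly such a shift, so $\sigma$ itself, restricted to $\hbar(C_{\Gamma,\gamma}^0)$, agrees with conjugation by a suitable power of the generator of $V_{m,\alpha,\gamma}^{tw}$ composed with the "diagonal" field action on a single block; one tracks the combinatorics of the exponents $e\ell^\alpha$ and the sign $\varepsilon$ exactly as in \cite[p.160]{FS89} using the sets $\cE^\alpha_0,\cE^\alpha_1,\cE^\alpha_2$ recalled just before the lemma. Comparing, the discrepancy between $(\theta_{\Gamma,\gamma,\bc}^{tw})^\sigma$ and $\theta_{\sigma(\Gamma),\gamma,\bc}^{tw}$ is absorbed by an element of $V_{\Gamma,\gamma,\bc}^{tw}$, as desired. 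The main obstacle is precisely this bookkeeping in the second step: making the identification of the "shift part" of $\sigma$ with an explicit element of $V_{m,\alpha,\gamma}^{tw}$, and checking that in the unitary case ($\varepsilon=-1$) the extra sign in $v_{m,\alpha,\gamma}^{(-1)}$ and the doubling of the block length $2e\ell^\alpha$ are handled correctly; the rest is routine once one has the clean twisted-group descriptions of Lemma~\ref{lemma:twC} and Lemma~\ref{lemma:twN} in hand.
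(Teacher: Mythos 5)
Your reduction to the action of $\sigma$ on $s_\Gamma^0$, via Remark \ref{remark:FieldCGamma} and the compatibility of Deligne--Lusztig induction with field automorphisms (the paper quotes \cite[Proposition 3.4]{LZ18} for exactly this), matches the paper. The gap is in the step where you claim that $\sigma(s_\Gamma^0)$ and $s_{\sigma(\Gamma)}^0$ have ``the same characteristic polynomial'' and are therefore conjugate inside $C_{\sigma(\Gamma),\gamma,\bc}^{tw}$. As elements of $C_\Gamma^0\cong\GL(m_\Gamma,\varepsilon q^{e\ell^{\alpha_\Gamma}})$ the relevant invariant is the elementary divisor over the extension field: $s_\Gamma^0$ has a single elementary divisor $\Delta\in\cE^{\alpha_\Gamma}$, so $\sigma(s_\Gamma^0)$ has elementary divisor $\sigma(\Delta)$, while $s_{\sigma(\Gamma)}^0$ has as its elementary divisor \emph{some} irreducible factor $\Delta'$ of $\sigma(\Gamma)$ over $\F_{\varepsilon q^{e\ell^{\alpha_\Gamma}}}$, depending on the choice made in \S\ref{subsect:paraweight}. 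The polynomials $\sigma(\Delta)$ and $\Delta'$ are Galois-conjugate factors of $\sigma(\Gamma)$ but need not be equal, so the two elements are conjugate in $G_\Gamma$ (where both have unique elementary divisor $\sigma(\Gamma)$) but in general not in $C^{tw}$. Note also the internal inconsistency: since characters are class functions, $C^{tw}$-conjugacy would give $(\theta_{\Gamma,\gamma,\bc}^{tw})^\sigma=\theta_{\sigma(\Gamma),\gamma,\bc}^{tw}$ on the nose and your second step would be vacuous. The ambiguity in $\Delta'$ is precisely why the lemma is stated modulo $N^{tw}$-conjugacy with a conjugator in $V_{\Gamma,\gamma,\bc}^{tw}$.

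Your second step rests on a false description of the $\sigma$-action: since $F_p$ commutes with $F_q$, $\hF_p$ sends $\hbar(A)=\diag\{A,F_q(A),\dots\}$ to $\hbar(F_p(A))$, i.e.\ it preserves each block and induces the field automorphism $F_p$ on $C_\Gamma^0$ (this is the content of Remark \ref{remark:FieldCGamma}(1)); it does not cyclically shift the blocks, so there is no ``shift part'' of $\sigma$ to identify with an element of $V^{tw}$. The genuine source of the $V^{tw}$-conjugator is different: by Lemma \ref{lemma:twN}(2), conjugation by the generator of $V_{m,\alpha,\gamma}^{tw}$ induces on $C_\Gamma^0$ the automorphism $F_q$ (composed with transpose-inverse in the unitary case), and the cyclic group it generates permutes transitively the irreducible factors of $\sigma(\Gamma)$ over the extension field, hence carries $\sigma(\Delta)$ to $\Delta'$. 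The paper's proof sidesteps your first step altogether: it only records that $\sigma(\hbar(s_\Gamma^0))$ has unique elementary divisor $\sigma(\Gamma)$ as an element of $G_\Gamma$, uses the fact (end of \S\ref{subsect:paraweight}, from \cite{An94}) that the canonical characters with defect group $R_{m,\alpha}$ are parametrized up to $N$-conjugacy by the polynomial $\Gamma$, and then deduces the ``furthermore'' from the decomposition $N_{\Gamma,\gamma,\bc}^{tw}=\hbar(N_{\Gamma,\gamma,\bc}^0)V_{\Gamma,\gamma,\bc}^{tw}$. To repair your argument you would need to replace the $C^{tw}$-conjugacy claim and the block-shift picture by exactly these two points.
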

\begin{proof}
It suffices to consider $\theta_\Gamma^{tw}=\hbar(\theta_\Gamma^0)$.
By Remark \ref{remark:FieldCGamma}, the action of $\sigma$ on $C_\Gamma^{tw}$ induces an action on $C_\Gamma^0$.
Then by \cite[Proposition 3.4]{LZ18}, $(\theta_\Gamma^0)^\sigma= \pm R_{T_\Gamma^0}^{C_\Gamma^0}\widehat{\sigma(s_\Gamma^0)}$.
As an element of $C_\Gamma^0=\GL(m_\Gamma,\varepsilon{q}^{e\ell^{\alpha_\Gamma}})$, $s_\Gamma^0$ has only one elementary divisor $\Delta\in\cE^{\alpha_\Gamma}$ with multiplicity one.
Thus as an element of $C_\Gamma^0$, $\sigma(s_\Gamma^0)$ has only one elementary divisor $\sigma(\Delta)$ with multiplicity one.
Since all roots of $\Delta$ are roots of $\Gamma$, all roots of $\sigma(\Delta)$ are roots of $\sigma(\Gamma)$.
So as an element of $G_\Gamma$, $\sigma(\hbar(s_\Gamma^0))$ has $\sigma(\Gamma)$ as its unique elementary divosor.
Thus $(\theta_{\Gamma}^{tw})^\sigma\equiv\theta_{\sigma(\Gamma)}^{tw}$.
The last assertion follows from the structure of $N_{\Gamma,\gamma,\bc}^{tw}$.
\end{proof}

\begin{remark}\label{remark:normalizedtheta}
By the above lemma, for each $\Gamma\in\cF$ and a filed automorphism $\sigma=\hF_p^i$ fixing $\Gamma$,
we can compose $\sigma$ with the conjugate by an element of $V_{\Gamma,\gamma,\bc}^{tw}$, which we denoted as $\tsigma$, such that $(\theta_{\Gamma,\gamma,\bc}^{tw})^{\tsigma}=\theta_{\Gamma,\gamma,\bc}^{tw}$.
\end{remark}

Now, we can give the description of $\dz(N_{\Gamma,\gamma}^{tw}/R_{\Gamma,\gamma}^{tw}\mid\theta_{\Gamma,\gamma}^{tw})$ whose untwisted version is given in \cite{An94}.
When $\ell$ is linear, by (1) of Lemma \ref{lemma:twN}, $\vartheta_{\Gamma,\gamma}^0:=\St_\gamma\theta_{\Gamma,\gamma}^0$ is the unique character of $\dz(N_{\Gamma,\gamma}^0/R_{\Gamma,\gamma}^0\mid\theta_{\Gamma,\gamma}^0)$.
Assume $\ell$ is unitary.
Recall from (1) of Lemma \ref{lemma:twN} that $N_{\Gamma,\gamma}^0$ is a central product of $L_{\Gamma,\gamma}^0$ and $C_{\Gamma,\gamma}^0$ over $Z(L_{\Gamma,\gamma}^0)=Z(C_{\Gamma,\gamma}^0)$.
$\theta_{\Gamma,\gamma}^0$ induces a linear character of $Z(L_{\Gamma,\gamma}^0)$, which by \cite[(1B)]{An94} can be extended to a linear character $\xi_{\Gamma,\gamma}^0$ of $L_{\Gamma,\gamma}^0$ trivial on $R_{\Gamma,\gamma}^0$.
Let $\St_\gamma$ be the Steinberg character of $L_{m,\alpha,\gamma}^0/R_{m,\alpha,\gamma}^0Z(L_{m,\alpha,\gamma}^0)\cong\Sp(2\gamma,\ell)$.
Then $\vartheta_{\Gamma,\gamma}^0:=\St_\gamma\xi_{\Gamma,\gamma}^0\theta_{\Gamma,\gamma}^0$ is the unique character of $\dz(N_{\Gamma,\gamma}^0/R_{\Gamma,\gamma}^0\mid\theta_{\Gamma,\gamma}^0)$ when $\ell$ is unitary; see the proof of \cite[(3A)]{An94}.
Denote by $\vartheta_{\Gamma,\gamma}^{tw}:=\hbar(\vartheta_{\Gamma,\gamma}^0)$ the character of $\hbar(N_{\Gamma,\gamma}^0)$ induced by $\vartheta_{\Gamma,\gamma}^0$ via $\hbar$.\label{vartheta}
Then $\vartheta_{\Gamma,\gamma}^{tw}$ is the unique character of $\dz\left(\hbar(N_{\Gamma,\gamma}^0)/R_{\Gamma,\gamma}^{tw}\mid\theta_{\Gamma,\gamma}^{tw}\right)$.
Thus $N_{\Gamma,\gamma}^{tw}(\theta_{\Gamma,\gamma}^{tw}) =N_{\Gamma,\gamma}^{tw}(\vartheta_{\Gamma,\gamma}^{tw}) =\hbar(N_{\Gamma,\gamma}^0)(V_{\Gamma,\gamma}^{tw})_{\theta_{\Gamma,\gamma}^{tw}}$, which is abbreviated as $N(\theta_{\Gamma,\gamma}^{tw})=N(\vartheta_{\Gamma,\gamma}^{tw})$, where $V_{\Gamma,\gamma}^{tw}$ is as in (2) of Lemma \ref{lemma:twN}.
By \cite[(3I)]{An94}, $N(\theta_{\Gamma,\gamma}^{tw})/\hbar(N_{\Gamma,\gamma}^0)$ is cyclic of order $\beta_\Gamma e_\Gamma$ and $\dz\left(N_{\Gamma,\gamma}^{tw}/R_{\Gamma,\gamma}^{tw}\mid\theta_{\Gamma,\gamma}^{tw}\right)$ has exactly $\beta_\Gamma e_\Gamma$ characters all of which lie over $\vartheta_{\Gamma,\gamma}^{tw}$ (note that $(\beta_\Gamma e_\Gamma, \ell)=1$). 

\begin{lemma}\label{lemma:extensiontheta}
There are exactly $\beta_\Gamma e_\Gamma$ extension of $\theta_{\Gamma,\gamma}^{tw}$ to $C_{\Gamma,\gamma}^{tw}(V_{\Gamma,\gamma}^{tw})_{\theta_{\Gamma,\gamma}^{tw}}$.
There is a bijection between $\dz\left(N_{\Gamma,\gamma}^{tw}/R_{\Gamma,\gamma}^{tw}\mid\theta_{\Gamma,\gamma}^{tw}\right)$ and $\dz\left(C_{\Gamma,\gamma}^{tw}(V_{\Gamma,\gamma}^{tw})_{\theta_{\Gamma,\gamma}^{tw}}/Z(R_{\Gamma,\gamma}^{tw})\mid\theta_{\Gamma,\gamma}^{tw}\right)$, which is equivariant under $\tau_{\Gamma,\gamma}^{tw}$ and the $\tsigma$ as in Remark \ref{remark:normalizedtheta}.
\end{lemma}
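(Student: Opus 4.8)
The plan is to reduce the whole statement to the behaviour of extensions of the two canonical characters $\theta_{\Gamma,\gamma}^{tw}$ and $\vartheta_{\Gamma,\gamma}^{tw}$ across the (cyclic) group $(V_{\Gamma,\gamma}^{tw})_{\theta_{\Gamma,\gamma}^{tw}}$, using the structural facts already recorded in Lemmas~\ref{lemma:twC} and \ref{lemma:twN} and in \cite[(3I)]{An94} (restated above), plus one short $\ell$-defect computation. First I would prove the counting statement. Since $V_{\Gamma,\gamma}^{tw}$ is cyclic by Lemma~\ref{lemma:twN}(2), $(V_{\Gamma,\gamma}^{tw})_{\theta_{\Gamma,\gamma}^{tw}}$ is cyclic and $C_{\Gamma,\gamma}^{tw}(V_{\Gamma,\gamma}^{tw})_{\theta_{\Gamma,\gamma}^{tw}}/C_{\Gamma,\gamma}^{tw}$ is cyclic; it surjects onto $N(\theta_{\Gamma,\gamma}^{tw})/\hbar(N_{\Gamma,\gamma}^0)$, which by \cite[(3I)]{An94} is cyclic of order $\beta_\Gamma e_\Gamma$, and one checks this surjection is an isomorphism, so the quotient has order $\beta_\Gamma e_\Gamma$. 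As the quotient is cyclic, the invariant character $\theta_{\Gamma,\gamma}^{tw}$ admits exactly $\beta_\Gamma e_\Gamma$ extensions to $C_{\Gamma,\gamma}^{tw}(V_{\Gamma,\gamma}^{tw})_{\theta_{\Gamma,\gamma}^{tw}}$, differing by the linear characters of that cyclic quotient.

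Next I would show all these extensions lie in the asserted defect-zero set, which amounts to checking that $\theta_{\Gamma,\gamma}^{tw}$ is itself of $\ell$-defect zero as a character of $C_{\Gamma,\gamma}^{tw}/Z(R_{\Gamma,\gamma}^{tw})$. With $C_\Gamma^0=\GL(m_\Gamma,\varepsilon q^{e\ell^{\alpha_\Gamma}})$ and $s_\Gamma^0$ regular in a Coxeter torus $T_\Gamma^0$, $\theta_\Gamma^0(1)$ equals the $p'$-part of $|C_\Gamma^0|/|T_\Gamma^0|$, and since $Z(R_\Gamma^0)$ is precisely the cyclic group of $\ell$-power scalars of $C_\Gamma^0$, a lifting-the-exponent computation shows that the $\ell$-part of $|C_\Gamma^0|$ not accounted for by $\theta_\Gamma^0(1)$ equals $|Z(R_\Gamma^0)|$, the point being $\ell\nmid m_\Gamma$; transporting by $\hbar$ and tensoring with $I_\gamma$ gives the claim for $\theta_{\Gamma,\gamma}^{tw}$. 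Since $(\beta_\Gamma e_\Gamma,\ell)=1$, each of the $\beta_\Gamma e_\Gamma$ extensions remains of $\ell$-defect zero modulo $Z(R_{\Gamma,\gamma}^{tw})$, so $\dz\bigl(C_{\Gamma,\gamma}^{tw}(V_{\Gamma,\gamma}^{tw})_{\theta_{\Gamma,\gamma}^{tw}}/Z(R_{\Gamma,\gamma}^{tw})\mid\theta_{\Gamma,\gamma}^{tw}\bigr)$ is exactly the set of these extensions, which proves the first sentence of the lemma.

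For the bijection I would use Clifford theory. By \cite[(3I)]{An94}, $\dz(N_{\Gamma,\gamma}^{tw}/R_{\Gamma,\gamma}^{tw}\mid\theta_{\Gamma,\gamma}^{tw})$ consists of $\beta_\Gamma e_\Gamma$ characters, all lying over $\vartheta_{\Gamma,\gamma}^{tw}$, and $N(\vartheta_{\Gamma,\gamma}^{tw})=N(\theta_{\Gamma,\gamma}^{tw})=\hbar(N_{\Gamma,\gamma}^0)(V_{\Gamma,\gamma}^{tw})_{\theta_{\Gamma,\gamma}^{tw}}$ with cyclic quotient over $\hbar(N_{\Gamma,\gamma}^0)$; hence Clifford correspondence over $\vartheta_{\Gamma,\gamma}^{tw}$ gives a bijection $\psi\mapsto\tilde\psi$ from that set onto the extensions of $\vartheta_{\Gamma,\gamma}^{tw}$ to $N(\vartheta_{\Gamma,\gamma}^{tw})$, with $\psi=\Ind_{N(\vartheta_{\Gamma,\gamma}^{tw})}^{N_{\Gamma,\gamma}^{tw}}\tilde\psi$. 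Now $\vartheta_{\Gamma,\gamma}^{tw}=\hbar(\St_\gamma\theta_{\Gamma,\gamma}^0)$ in the linear case and $\hbar(\St_\gamma\xi_{\Gamma,\gamma}^0\theta_{\Gamma,\gamma}^0)$ in the unitary case, where $\St_\gamma$ and $\xi_{\Gamma,\gamma}^0$ are supported on $L_{\Gamma,\gamma}^0$ and $(V_{\Gamma,\gamma}^{tw})_{\theta_{\Gamma,\gamma}^{tw}}$ acts on $L_{\Gamma,\gamma}^0$ by automorphisms fixing the Steinberg character $\St_\gamma$ and the linear character $\xi_{\Gamma,\gamma}^0$; dividing off this fixed factor identifies the extensions of $\vartheta_{\Gamma,\gamma}^{tw}$ with the extensions of $\theta_{\Gamma,\gamma}^{tw}$ to $C_{\Gamma,\gamma}^{tw}(V_{\Gamma,\gamma}^{tw})_{\theta_{\Gamma,\gamma}^{tw}}$. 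Composing the two identifications with the description of the target from the previous paragraph yields the required bijection.

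Equivariance is then read off from the constructions. By Lemma~\ref{lemma:twC}(2) the element $\tau_{\Gamma,\gamma}^{tw}$ centralises $R_{\Gamma,\gamma}^{tw}C_{\Gamma,\gamma}^{tw}$, hence fixes $\theta_{\Gamma,\gamma}^{tw}$ and $Z(R_{\Gamma,\gamma}^{tw})$, and by Lemma~\ref{lemma:twN}(4) it normalises $N_{\Gamma,\gamma}^{tw}$, $\hbar(N_{\Gamma,\gamma}^0)$ and $V_{\Gamma,\gamma}^{tw}$; the $\tsigma$ of Remark~\ref{remark:normalizedtheta} fixes $\theta_{\Gamma,\gamma}^{tw}$ by construction and, being $\hF_p^i$ composed with conjugation by an element of $V_{\Gamma,\gamma}^{tw}$, normalises $R_{\Gamma,\gamma}^{tw}$ (Lemma~\ref{lemma:ActionRadical}), $C_{\Gamma,\gamma}^{tw}$, $\hbar(N_{\Gamma,\gamma}^0)$ and $V_{\Gamma,\gamma}^{tw}$. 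Both therefore also fix $\vartheta_{\Gamma,\gamma}^{tw}$ (built from $\theta_{\Gamma,\gamma}^{tw}$ and the automorphism-invariant factors $\St_\gamma$, $\xi_{\Gamma,\gamma}^0$), so they act on both sides of the bijection; since that bijection is defined purely through induction along $N(\vartheta_{\Gamma,\gamma}^{tw})=N(\theta_{\Gamma,\gamma}^{tw})$ and the canonical removal of the fixed $\St_\gamma$-factor — operations that commute with any automorphism normalising these groups and fixing $\theta_{\Gamma,\gamma}^{tw}$ and $\vartheta_{\Gamma,\gamma}^{tw}$ — it intertwines the two actions. The step I expect to cost the most work is the Clifford-theoretic identification of $\dz(N_{\Gamma,\gamma}^{tw}/R_{\Gamma,\gamma}^{tw}\mid\theta_{\Gamma,\gamma}^{tw})$ with extensions of $\vartheta_{\Gamma,\gamma}^{tw}$ (irreducibility of the inductions, preservation of defect zero, exhaustion), but this is essentially \cite[(3I)]{An94} reread in the twisted groups, so I would lean on that reference; the only genuinely new ingredient is the elementary $\ell$-defect computation for $\theta_\Gamma^0$ modulo $Z(R_\Gamma^0)$.
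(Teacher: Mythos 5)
Your overall route is the same as the paper's: count extensions via the cyclic quotient $C_{\Gamma,\gamma}^{tw}(V_{\Gamma,\gamma}^{tw})_{\theta_{\Gamma,\gamma}^{tw}}/C_{\Gamma,\gamma}^{tw}$ of order $\beta_\Gamma e_\Gamma$, pass by Clifford correspondence from $\dz(N_{\Gamma,\gamma}^{tw}/R_{\Gamma,\gamma}^{tw}\mid\theta_{\Gamma,\gamma}^{tw})$ to characters of the stabilizer $N(\theta_{\Gamma,\gamma}^{tw})=N(\vartheta_{\Gamma,\gamma}^{tw})=LC(V_{\Gamma,\gamma}^{tw})_{\theta_{\Gamma,\gamma}^{tw}}$ lying over $\vartheta_{\Gamma,\gamma}^{tw}$, and then split off the $L$-part to land in $\dz\bigl(C_{\Gamma,\gamma}^{tw}(V_{\Gamma,\gamma}^{tw})_{\theta_{\Gamma,\gamma}^{tw}}/Z(R_{\Gamma,\gamma}^{tw})\mid\theta_{\Gamma,\gamma}^{tw}\bigr)$; your explicit defect-zero computation for $\theta_\Gamma^0$ modulo $Z(R_\Gamma^0)$ is a harmless supplement to what the paper leaves implicit.

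The gap is in the splitting-off step and, consequently, in the equivariance claim. First, you assert that $(V_{\Gamma,\gamma}^{tw})_{\theta_{\Gamma,\gamma}^{tw}}$ fixes both $\St_\gamma$ and the linear character $\xi_{\Gamma,\gamma}^0$ individually; in the unitary case this is unjustified and in fact the paper points out that $\hbar(\xi_{\Gamma,\gamma}^0)$ need \emph{not} be fixed — what is fixed is only the product $\theta_1=\hbar(\St_\gamma\xi_{\Gamma,\gamma}^0)$, and this follows not from your stated reason but from the fact that $\vartheta_{\Gamma,\gamma}^{tw}=\theta_1\theta_{\Gamma,\gamma}^{tw}$ is the unique element of $\dz(LC/R_{\Gamma,\gamma}^{tw}\mid\theta_{\Gamma,\gamma}^{tw})$ and $LC$ is a central product. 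Second, and more seriously, there is no ``canonical removal of the fixed $\St_\gamma$-factor'': to identify extensions of $\vartheta_{\Gamma,\gamma}^{tw}$ to $LC(V_{\Gamma,\gamma}^{tw})_{\theta_{\Gamma,\gamma}^{tw}}$ with extensions of $\theta_{\Gamma,\gamma}^{tw}$ to $C_{\Gamma,\gamma}^{tw}(V_{\Gamma,\gamma}^{tw})_{\theta_{\Gamma,\gamma}^{tw}}$ one must \emph{choose} an extension $\ttheta_1$ of $\theta_1$ to $L(V_{\Gamma,\gamma}^{tw})_{\theta_{\Gamma,\gamma}^{tw}}$ and map $\ttheta\mapsto\Res\,\ttheta_1\ttheta$; invariance plus a cyclic quotient gives existence of such extensions, but there are several, the resulting bijection depends on the choice, and the asserted equivariance under $\tau_{\Gamma,\gamma}^{tw}$ and $\tsigma$ only holds if $\ttheta_1$ can be chosen stable under both. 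Establishing that stable choice is the genuinely nontrivial point (the paper argues it separately in the linear case, via the direct-product structure of $N_{\Gamma,\gamma}^0$, and in the unitary case, via cyclicity of the relevant group of graph-field automorphisms), and your proposal simply presupposes it; as written, the equivariance statement of the lemma is not proved.
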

\begin{proof}
For simplicity of notations, we set in this proof that $R=R_{\Gamma,\gamma}^{tw}$, $L=\hbar(L_{\Gamma,\gamma}^0)$, $C=C_{\Gamma,\gamma}^{tw}=\hbar(C_{\Gamma,\gamma}^0)$, $V=V_{\Gamma,\gamma}^{tw}$, $N=N_{\Gamma,\gamma}^{tw}=LCV$; $\theta=\theta_{\Gamma,\gamma}^{tw}$, $\theta_1=\hbar(\St_\gamma)$ or $\hbar(\St_\gamma\xi_{\Gamma,\gamma}^0)$ according to $\ell$ is linear or unitary, $\vartheta=\vartheta_{\Gamma,\gamma}^{tw}=\theta_1\theta$; $N(\theta)=N(\vartheta)=LCV_\theta$.

The first assertion follows from that $CV_\theta/C$ is a cyclic group of order $\beta_\Gamma e_\Gamma$.
For the second assertion, we first note that by Clifford theory, there is a bijection between $\dz(N/R\mid\theta)$ and $\dz(N(\theta)/R\mid\theta)$.
By Lemma \ref{lemma:twN}, $V$ also normalizes $L$.
Since $LC$ is a central product and $\vartheta=\theta_1\theta$ is the unique character in $\dz(LC/R\mid\theta)$, $V_\theta$ also fixes $\theta_1$ (but may not fix $\hbar(\xi_{\Gamma,\gamma}^0)$).
View $N(\theta)=LCV_\theta$ as a subgroup of $(LV_\theta)(CV_\theta)$.
By the definition of $\theta_1$, we can fix an extension $\ttheta_1$ of $\theta_1$ to $LV_\theta$ which is stable under $\tau_{\Gamma,\gamma}^{tw}$ and $\tsigma$ (when $\ell$ is linear, note the structure of $N_{\Gamma,\gamma}^0$ from (1) of Lemma \ref{lemma:twN}; when $\ell$ is unitary, note that the set of graph automorphism and field automorphisms is cyclic).
Then for any extension $\ttheta$ of $\theta$ to $CV_\theta$,
$\Res^{(LV_\theta)(CV_\theta)}_{LCV_\theta}\ttheta_1\ttheta$ is an extension of $\vartheta$.
The map $\ttheta\mapsto\Res^{(LV_\theta)(CV_\theta)}_{LCV_\theta}\ttheta_1\ttheta$ is a bijection between $\dz(N(\theta)/R\mid\theta)$ and $\dz(CV_\theta/Z(R)\mid\theta)$.
Combining the above two bijiections, we have a bijection between $\dz(N/R\mid\theta)$ and $\dz(CV_\theta/Z(R)\mid\theta)$.
The equivariance follows from the construction of the bijection.
\end{proof}

\begin{lemma}\label{lemma:fieldonextensiontheta}
Let $\tsigma$ be as in Remark \ref{remark:normalizedtheta}.
Then $\theta_{\Gamma,\gamma}^{tw}$ extends to $C_{\Gamma,\gamma}^{tw}\langle(V_{\Gamma,\gamma}^{tw})_{\theta_{\Gamma,\gamma}^{tw}}, \tsigma\rangle$.
In particular, the $\beta_\Gamma e_\Gamma$ extensions of $\theta_{\Gamma,\gamma}^{tw}$ to $C_{\Gamma,\gamma}^{tw}(V_{\Gamma,\gamma}^{tw})_{\theta_{\Gamma,\gamma}^{tw}}$ is stabilized under $\tsigma$.
\end{lemma}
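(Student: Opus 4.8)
The plan is to show that $\theta^{tw}_{\Gamma,\gamma}$ admits an extension to the larger group $C^{tw}_{\Gamma,\gamma}\langle(V^{tw}_{\Gamma,\gamma})_{\theta^{tw}_{\Gamma,\gamma}},\tsigma\rangle$, and then deduce that $\tsigma$ must permute the $\beta_\Gamma e_\Gamma$ extensions of $\theta^{tw}_{\Gamma,\gamma}$ to $C^{tw}_{\Gamma,\gamma}(V^{tw}_{\Gamma,\gamma})_{\theta^{tw}_{\Gamma,\gamma}}$ among themselves, and in fact fix each of them. The key structural input is Lemma~\ref{lemma:twC}: the centralizer $C^{tw}_{\Gamma,\gamma}=\hbar(C^0_{\Gamma,\gamma})$ is (a hyperbolic image of) a general linear or unitary group $\GL(m_\Gamma,\varepsilon q^{e\ell^{\alpha_\Gamma}})$, and $(V^{tw}_{\Gamma,\gamma})_{\theta^{tw}_{\Gamma,\gamma}}$ together with $\tau^{tw}_{\Gamma,\gamma}$ normalizes it; moreover by Remark~\ref{remark:normalizedtheta} the modified field automorphism $\tsigma$ fixes $\theta^{tw}_{\Gamma,\gamma}$.

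First I would reduce to the generic case $\ell$ linear versus unitary by the same device used in the proof of Lemma~\ref{lemma:extensiontheta}: write $C=C^{tw}_{\Gamma,\gamma}$, $V_\theta=(V^{tw}_{\Gamma,\gamma})_{\theta^{tw}_{\Gamma,\gamma}}$, $\theta=\theta^{tw}_{\Gamma,\gamma}$. Since $\tsigma$ normalizes $C$, normalizes $V_\theta$ (a field automorphism commutes with $\hbar$ and with the explicit Coxeter-type elements generating $V^{tw}_{\Gamma,\gamma}$, up to the conjugation already absorbed into $\tsigma$), and fixes $\theta$, the group $H:=C V_\theta\langle\tsigma\rangle$ is well defined and $\theta$ is $H$-invariant. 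The quotient $H/C$ is generated by the images of $V_\theta$ (cyclic of order $\beta_\Gamma e_\Gamma$, coprime to $\ell$) and of $\tsigma$ (of order a power of $p$, modulo its action on $\tbG^F_{m,\alpha,\gamma,\bc}$), so $H/C$ is a cyclic group: indeed $\langle\tsigma\rangle$ and $V_\theta/\Omega$ are both cyclic, they centralize one another modulo $C$ (field automorphisms commute with the Coxeter elements), and their orders are coprime, so their product is cyclic. An invariant character of a normal subgroup extends to the whole group whenever the quotient is cyclic; this gives the required extension of $\theta$ to $CV_\theta\langle\tsigma\rangle$.

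For the ``in particular'' clause, since $\theta$ extends to $CV_\theta\langle\tsigma\rangle$, in particular it extends to $CV_\theta$ (recovering the $\beta_\Gamma e_\Gamma$ extensions, which differ by the $\beta_\Gamma e_\Gamma$ linear characters of $CV_\theta/C$), and the chosen extension to $CV_\theta\langle\tsigma\rangle$ restricts to an extension to $CV_\theta$ that is visibly $\tsigma$-stable. Then $\tsigma$ permutes the full set of $\beta_\Gamma e_\Gamma$ extensions of $\theta$ to $CV_\theta$ by permuting the linear characters of the cyclic group $CV_\theta/C$; but $\tsigma$ acts trivially on this cyclic group (it centralizes $V_\theta$ modulo $C$, as noted above), hence fixes each linear character and therefore each extension. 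This is the conclusion of the lemma.

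The main obstacle I anticipate is the bookkeeping needed to justify that $\tsigma$ genuinely centralizes $(V^{tw}_{\Gamma,\gamma})_{\theta^{tw}_{\Gamma,\gamma}}$ modulo $C^{tw}_{\Gamma,\gamma}$ and normalizes all the relevant subgroups simultaneously, given that $\tsigma$ is $\hF_p^i$ pre-composed with a conjugation by an element of $V^{tw}_{\Gamma,\gamma}$ (Remark~\ref{remark:normalizedtheta}); one must check that this conjugating element can be chosen in $(V^{tw}_{\Gamma,\gamma})_{\theta^{tw}_{\Gamma,\gamma}}$ itself, or else track how it interacts with the generator of $V^{tw}_{\Gamma,\gamma}/\hbar(N^0_{\Gamma,\gamma})$. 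In the unitary case there is the additional subtlety, already visible in Lemma~\ref{lemma:extensiontheta}, that $V_\theta$ need not fix the auxiliary linear character $\hbar(\xi^0_{\Gamma,\gamma})$; but since the statement only concerns $\theta^{tw}_{\Gamma,\gamma}$ and not $\vartheta^{tw}_{\Gamma,\gamma}$, this does not obstruct the argument, and the cyclicity of the set of graph-plus-field automorphisms (used already in Lemma~\ref{lemma:extensiontheta}) handles whatever residual interaction arises.
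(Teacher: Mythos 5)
Your reduction of the ``in particular'' clause to the first assertion is sound (the $\beta_\Gamma e_\Gamma$ extensions differ by the linear characters of the cyclic quotient $C_{\Gamma,\gamma}^{tw}(V_{\Gamma,\gamma}^{tw})_{\theta_{\Gamma,\gamma}^{tw}}/C_{\Gamma,\gamma}^{tw}$, on which $\tsigma$ acts trivially because it commutes with $v_{\Gamma,\gamma}^{(\varepsilon)}$ and $M$), and this matches the intended deduction. The gap is in your proof of the first assertion. You argue that $H/C$ with $H=C(V_{\Gamma,\gamma}^{tw})_{\theta}\langle\tsigma\rangle$, $C=C_{\Gamma,\gamma}^{tw}$, is cyclic because it is generated by two commuting cyclic subgroups of coprime orders, the image of $\tsigma$ allegedly having order a power of $p$. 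That is not true: $\tsigma$ is $\hF_p^i$ modified by conjugation by an element of $V_{\Gamma,\gamma}^{tw}$, and its image modulo $C$ has order governed by $f$ (where $q=p^f$) together with the field--graph data of $C\cong\GL(m_\Gamma\ell^\gamma,\varepsilon q^{e\ell^{\alpha_\Gamma}})$, not by a power of $p$; it can easily share factors with $\beta_\Gamma e_\Gamma$ (for instance both may be even, since $\beta_\Gamma=2$ for $\Gamma\in\cF_0$, or both divisible by $e$). More structurally, $V_{\Gamma,\gamma}^{tw}$ and $\tsigma$ act on $C$ through the graph--field automorphism group, which is of the form $C_2\times C_{fe\ell^{\alpha_\Gamma}}$ (up to the unitary modification) and is not cyclic in general, so the subgroup they generate modulo $C$ need not be cyclic and the elementary ``invariant character over a cyclic quotient extends'' argument does not apply.

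This is exactly why the paper does not argue by cyclicity: after observing that $\tsigma$ commutes with $v$ and $M$ and that these act on $C\cong\GL(m_\Gamma\ell^\gamma,\varepsilon q^{e\ell^{\alpha_\Gamma}})$ as field--graph automorphisms, it invokes the extendibility results for characters of general linear and unitary groups under such (generally non-cyclic) automorphism groups, namely \cite[4.3.2]{Bon99a} (Shintani descent) together with \cite[Remark~9.3(a)]{Sp09}, following the pattern of \cite[4.17]{Feng18}. To repair your argument you would need either to verify cyclicity of the relevant quotient in each case (which fails in general) or to import such an extendibility theorem; as written, the central step is unjustified.
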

\begin{proof}
It suffices to prove the first assertion.
Denote in this proof that $v=v_{\Gamma,\gamma}^{(\varepsilon)}$ and $M=M(m_\Gamma e\ell^{\alpha_\Gamma})$.
$\tsigma$ commutes with $v$ and $M$.
Recall that $C_{\Gamma,\gamma}^{tw}\cong\GL(m_\Gamma\ell^\gamma,\varepsilon{q}^{e\ell^{\alpha_\Gamma}})$, on which $\tsigma,v,M$ act as field-graph automorphisms.
Then the assertion follows from \cite[4.3.2]{Bon99a} and \cite[Remark~9.3(a)]{Sp09};
the details are similar to the proof of \cite[4.17]{Feng18}.
\end{proof}

\begin{lemma}\label{lemma:fieldondz}
Let $\tsigma$ be as in Remark \ref{remark:normalizedtheta}.
Then $\tsigma$ acts trivially on the set $\dz(N_{\Gamma,\gamma,\bc}^{tw}/R_{\Gamma,\gamma,\bc}^{tw}\mid\theta_{\Gamma,\gamma,\bc}^{tw})$.
\end{lemma}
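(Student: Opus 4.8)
The plan is to work inside the twisted group $N_{\Gamma,\gamma,\bc}^{tw}$ and to split the argument according to the product decomposition $N_{\Gamma,\gamma,\bc}^{tw}/R_{\Gamma,\gamma,\bc}^{tw}\cong N_{\Gamma,\gamma}^{tw}/R_{\Gamma,\gamma}^{tw}\times\GL(c_1,\ell)\times\cdots\times\GL(c_r,\ell)$ supplied by Lemma~\ref{lemma:twN}(3). First I would record that $\tsigma$ genuinely acts on the set in question: it fixes $\theta_{\Gamma,\gamma,\bc}^{tw}=\theta_{\Gamma,\gamma}^{tw}\otimes I_\bc$ (it fixes $\theta_{\Gamma,\gamma}^{tw}$ by the normalisation of Remark~\ref{remark:normalizedtheta} and acts trivially on the $I_\bc$-factor) and it normalises $R_{\Gamma,\gamma,\bc}^{tw}$ and $N_{\Gamma,\gamma,\bc}^{tw}$ by Lemma~\ref{lemma:twist} and Lemma~\ref{lemma:ActionRadical}. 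Next, via Clifford theory along the iterated wreath group $A_\bc$ --- exactly as in the analysis preceding (\ref{equation:psi}) and in \cite[\S2]{An94} --- every character in $\dz(N_{\Gamma,\gamma,\bc}^{tw}/R_{\Gamma,\gamma,\bc}^{tw}\mid\theta_{\Gamma,\gamma,\bc}^{tw})$ is determined by a character $\psi\in\dz(N_{\Gamma,\gamma}^{tw}/R_{\Gamma,\gamma}^{tw}\mid\theta_{\Gamma,\gamma}^{tw})$, a canonical extension of a suitable power of $\psi$ to a subgroup of the wreath product of $N_{\Gamma,\gamma}^{tw}$ with the symmetric group permuting the $\ell^{|\bc|}$ copies, and a character of each factor $\GL(c_i,\ell)$; since the canonical extension is uniquely characterised in the sense of \cite[Proposition~2.3.1]{Bon99b}, this parametrisation is $\tsigma$-equivariant. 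Hence it is enough to check that $\tsigma$ fixes $\psi$ and fixes the characters of the $\GL(c_i,\ell)$.

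For the factor $N_{\Gamma,\gamma}^{tw}/R_{\Gamma,\gamma}^{tw}$: by the $\tsigma$-equivariant bijection of Lemma~\ref{lemma:extensiontheta} it suffices to show that $\tsigma$ fixes every extension of $\theta_{\Gamma,\gamma}^{tw}$ to $C_{\Gamma,\gamma}^{tw}(V_{\Gamma,\gamma}^{tw})_{\theta_{\Gamma,\gamma}^{tw}}$. Now Lemma~\ref{lemma:fieldonextensiontheta} provides an extension of $\theta_{\Gamma,\gamma}^{tw}$ all the way to $C_{\Gamma,\gamma}^{tw}\langle(V_{\Gamma,\gamma}^{tw})_{\theta_{\Gamma,\gamma}^{tw}},\tsigma\rangle$, whose restriction to $C_{\Gamma,\gamma}^{tw}(V_{\Gamma,\gamma}^{tw})_{\theta_{\Gamma,\gamma}^{tw}}$ is a $\tsigma$-fixed extension of $\theta_{\Gamma,\gamma}^{tw}$; every other extension differs from it by a linear character of the quotient $C_{\Gamma,\gamma}^{tw}(V_{\Gamma,\gamma}^{tw})_{\theta_{\Gamma,\gamma}^{tw}}/C_{\Gamma,\gamma}^{tw}$, and since $\tsigma$ centralises the cyclic group $V_{\Gamma,\gamma}^{tw}$ (as used already in the proof of Lemma~\ref{lemma:fieldonextensiontheta}) it acts trivially on that quotient, hence on its linear characters, hence on all the extensions. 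This Gallagher-type step shows $\tsigma$ acts trivially on $\dz(N_{\Gamma,\gamma}^{tw}/R_{\Gamma,\gamma}^{tw}\mid\theta_{\Gamma,\gamma}^{tw})$.

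For the factors $\GL(c_i,\ell)$: these lift to subgroups of $N_{\fS(\bc)}(A_\bc)$, which by construction consists of block permutation matrices on the $\ell^{|\bc|}$ isometric copies occurring in $R_{\Gamma,\gamma,\bc}^{tw}=R_{\Gamma,\gamma}^{tw}\wr A_\bc$, while $V_{\Gamma,\gamma,\bc}^{tw}$ acts as the identity on the associated $I_\bc$-tensor factor and therefore centralises $N_{\fS(\bc)}(A_\bc)$. A field automorphism raises matrix entries to a $p$-power and so fixes permutation matrices entrywise; combined with the previous observation, the whole of $\tsigma$ centralises $N_{\fS(\bc)}(A_\bc)$, in particular each $\GL(c_i,\ell)$, and so $\tsigma$ fixes every character of every $\GL(c_i,\ell)$. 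Feeding the two preceding paragraphs into the $\tsigma$-equivariant parametrisation then yields the lemma.

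The step I expect to demand the most care is the very first reduction: making the Clifford-theoretic parametrisation of $\dz(N_{\Gamma,\gamma,\bc}^{tw}/R_{\Gamma,\gamma,\bc}^{tw}\mid\theta_{\Gamma,\gamma,\bc}^{tw})$ through $A_\bc$ fully precise and verifying that the canonical extension occurring in it --- and with it the whole parametrisation --- commutes with $\tsigma$. Once that bookkeeping is in place the remainder is formal, resting only on Lemma~\ref{lemma:extensiontheta}, Lemma~\ref{lemma:fieldonextensiontheta}, and the permutation-matrix description of $N_{\fS(\bc)}(A_\bc)$.
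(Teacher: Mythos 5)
Your proof is correct and follows essentially the same route as the paper: reduce to the $\bc=\zero$ case via the decomposition of $N_{\Gamma,\gamma,\bc}^{tw}/R_{\Gamma,\gamma,\bc}^{tw}$ from Lemma~\ref{lemma:twN}(3) together with the fact that $\tsigma$ centralises the $\GL(c_i,\ell)$ factors, and settle $\bc=\zero$ by combining the $\tsigma$-equivariant bijection of Lemma~\ref{lemma:extensiontheta} with Lemma~\ref{lemma:fieldonextensiontheta}. The only difference is presentational: you make explicit the Gallagher-type step (which the paper absorbs into the ``in particular'' of Lemma~\ref{lemma:fieldonextensiontheta}) and the Clifford-theoretic bookkeeping through $A_{\bc}$, which the paper's proof leaves implicit.
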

\begin{proof}
The assertion for $\bc=\zero$ follows from Lemma \ref{lemma:extensiontheta} and Lemma \ref{lemma:fieldonextensiontheta}.
Then assume $\bc\neq\zero$.
Noting that $N_{m,\alpha,\gamma,\bc}^{tw}/R_{m,\alpha,\gamma,\bc}^{tw} \cong N_{m,\alpha,\gamma,\bc}^{tw}/R_{m,\alpha,\gamma,\bc}^{tw} \times \GL(c_1,\ell)\times\cdots\times\GL(c_r,\ell)$ by (3) of Lemma \ref{lemma:twN} and field automorphisms commute with each $\GL(c_i,\ell)$, then the general case follows from the case $\bc=\zero$.
\end{proof}

By Remark \ref{remark:normalizedtheta} and Lemma \ref{lemma:fieldondz}, we can make the following convention on notations.
\begin{notation}\label{notation:field}
Let $\cR_{\Gamma,\delta}^{tw}=\{R_{\Gamma,\delta,i}^{tw}\}$ and $\sC_{\Gamma,\delta}^{tw}=\{\psi_{\Gamma,\delta,i,j}^{tw}\}$ be the twisted version of $\cR_{\Gamma,\delta}$ and $\sC_{\Gamma,\delta}$ respectively.
Let $\sigma=\hF_p^i$ be a field automorphism on the twisted groups.
Then we can choose the notations such that $(R_{\Gamma,\delta,i}^{tw})^\sigma=R_{\sigma(\Gamma),\delta,i}^{tw}$ and $(\psi_{\Gamma,\delta,i,j}^{tw})^\sigma=\psi_{\sigma(\Gamma),\delta,i,j}^{tw}$.
\end{notation}

\begin{remark}
In \cite[Convention 5.2]{LZ18}, we make the similar convention on notations as in Notation \ref{notation:field} without further explanation.
The convention in \cite{LZ18} is well-defined by a similar result as Lemma \ref{lemma:fieldondz}.
The proof of such a result in the case of \cite{LZ18} is similar to the case here.
\end{remark}

\begin{proposition}\label{prop:fieldonweight}
Let $(R,\varphi)$ be a weight of $G$ with the label $(s,\kappa,i,K)^{G^*}$ (or the corresponding label $(s,\kappa,i,Q)^{G^*}$) and $\sigma$ be a field automorphism,
then $(R,\varphi)^\sigma$ has label $(\sigma^*(s),\sigma^*(\kappa),i,\sigma^*(K))^{G^*}$ (or the corresponding label $(\sigma^*(s),\sigma^*(\kappa),i,\sigma^*(Q))^{G^*}$).
Here, $\sigma^*(\kappa)$ is as before and $\sigma^*(K)_{\sigma^*(\Gamma)}=K_\Gamma$, $\sigma^*(Q)_{\sigma^*(\Gamma)}=Q_\Gamma$.
\end{proposition}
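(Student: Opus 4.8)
The plan is to reduce the statement to the analogous computation for each elementary divisor $\Gamma$, carried out inside the twisted groups, and then reassemble. First I would recall from Corollary~\ref{corollary:twist} that, up to inner automorphisms, a field automorphism $\sigma=F_p^i$ on $G=\bG^F$ corresponds via the twisting homomorphism $\iota$ to the field automorphism $\hF_p^i$ on the twisted group $\bG^{vF}$, where $v$ is the product of the $v_{m_j,\alpha_j,\gamma_j,\bc_j}$ attached to the basic subgroups occurring in $R$. So it suffices to work entirely on the twisted side with the notations of \S\ref{subsect:twistedbasicsubgp} and \S\ref{subsect:paraweight}.

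Next I would trace the label through the construction of \S\ref{subsect:paraweight}. Writing $R=R_0\times R_+$, $V=V_0\perp V_+$, $\varphi=\varphi_0\times\varphi_+$ with $\varphi_0=\chi_{s_0,\kappa,i}$ a defect-zero character of $\I_0(V_0)$, the $\I_0(V_0)$-part is governed by the action of $\sigma$ on ordinary characters of $\Sp$-type groups, for which Proposition~\ref{prop:fieldonirr} gives $\chi_{s_0,\kappa,i}^\sigma=\chi_{\sigma^*(s_0),\sigma^*(\kappa),i}$; this accounts for $\sigma^*(\kappa)$ and for the semisimple part $s_0$, and leaves the subscript $i$ unchanged, which is exactly what is claimed. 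For the $R_+$-part, I would use the decomposition $\varphi_+=\Ind_{\hN_+(\theta_+)}^{N_+}\psi_+$ with $\psi_+=\prod_{\Gamma,\delta,i}\psi_{\Gamma,\delta,i}$ and the explicit form \eqref{equation:psi} of each $\psi_{\Gamma,\delta,i}$. By Lemma~\ref{lemma:fieldthetaGamma}, $(\theta_{\Gamma,\gamma,\bc}^{tw})^\sigma$ is $N_{\Gamma,\gamma,\bc}^{tw}$-conjugate (in fact $V_{\Gamma,\gamma,\bc}^{tw}$-conjugate) to $\theta_{\sigma(\Gamma),\gamma,\bc}^{tw}$, so $\sigma$ sends the basic subgroup $R_{\Gamma,\delta,i}^{tw}$ together with its canonical character to the corresponding data for $\sigma(\Gamma)$; with Notation~\ref{notation:field} one has $(R_{\Gamma,\delta,i}^{tw})^\sigma=R_{\sigma(\Gamma),\delta,i}^{tw}$ and $(\psi_{\Gamma,\delta,i,j}^{tw})^\sigma=\psi_{\sigma(\Gamma),\delta,i,j}^{tw}$. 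Then I would check that $\sigma$ commutes with induction from $\hN_+(\theta_+)$, with the canonical-extension operation $\overline{(-)}$ appearing in \eqref{equation:psi}, and with the symmetric-group characters $\phi_{\kappa_{\Gamma,\delta,i,j}}$ (these last are rational, hence $\sigma$-fixed), so that $\sigma$ carries the $\ell$-core $\kappa_{\Gamma,\delta,i,j}=K_\Gamma(\psi_{\Gamma,\delta,i,j})$ unchanged to the $\sigma(\Gamma)$-slot, i.e. $\sigma^*(K)_{\sigma^*(\Gamma)}=K_\Gamma$. Translating back through $\iota$ and re-collecting the parts over all $\Gamma$ then gives the label $(\sigma^*(s),\sigma^*(\kappa),i,\sigma^*(K))^{G^*}$, and the reformulation in terms of $Q$ follows by the dictionary $i\cW(B)\leftrightarrow\{K\}$ set up after Proposition~\ref{prop:labelweights} together with the definitions of ordered quotients in \S\ref{subsect:corequotient}.

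The main obstacle I expect is the bookkeeping of the subscript $i$ and, relatedly, the behaviour of $\sigma$ on the degenerate/non-degenerate dichotomy for the $X+1$ slot. The subtlety is that the two extensions $\varphi_{\cdot,\cdot,0}$ and $\varphi_{\cdot,\cdot,1}$ (arising from a non-degenerate ordered symbol or ordered quotient) must be matched up consistently under $\sigma$ with the two extensions on the image side; this is where one needs that $\sigma(X+1)=X+1$ (since $p$ is odd) and an argument — parallel to the use of \cite[Theorem~3.1]{CS17C} in the proof of Proposition~\ref{prop:fieldonirr} and to Lemma~\ref{lemma:fieldonextensiontheta} and Lemma~\ref{lemma:fieldondz} on the weight side — that a field automorphism fixing the relevant $\theta_{\Gamma,\gamma,\bc}^{tw}$ stabilizes each of the $\beta_\Gamma e_\Gamma$ characters in $\dz(N_{\Gamma,\gamma,\bc}^{tw}/R_{\Gamma,\gamma,\bc}^{tw}\mid\theta_{\Gamma,\gamma,\bc}^{tw})$ rather than permuting them. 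Once that is in place, the label $i$ is manifestly preserved and the choice of labels can be made compatible with Theorem~\ref{theorem:irrB} and with the block-level statement of Corollary~\ref{corollary:actiononblocks}, exactly as in \S\ref{sect:AutCharacter}. The remaining verifications — compatibility of $\sigma$ with $\hbar$, with $\iota$, and with the various $\Ind$/$\Res$ steps — are routine given Lemma~\ref{lemma:twist}, Lemma~\ref{lemma:ActionRadical} and Remark~\ref{remark:FieldCGamma}.
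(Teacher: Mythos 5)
Your proposal is correct and follows essentially the same route as the paper's proof: reduce to the twisted groups via Corollary~\ref{corollary:twist}, handle the $V_0$-part with Proposition~\ref{prop:fieldonirr}, and handle the $R_+$-part with Lemma~\ref{lemma:fieldthetaGamma}, Remark~\ref{remark:normalizedtheta}, Notation~\ref{notation:field} (resting on Lemmas~\ref{lemma:extensiontheta}, \ref{lemma:fieldonextensiontheta}, \ref{lemma:fieldondz}), tracing the label through the induction/canonical-extension construction exactly as in the proof of Proposition~\ref{prop:diagonweight}, with Proposition~\ref{prop:labelweights} giving the $K$/$Q$ dictionary. The subtleties you flag (preservation of the index $i$ and the fact that $\sigma$ fixes each character in $\dz(N^{tw}_{\Gamma,\gamma,\bc}/R^{tw}_{\Gamma,\gamma,\bc}\mid\theta^{tw}_{\Gamma,\gamma,\bc})$ rather than permuting them) are precisely the points the paper settles by those same lemmas, so no gap remains.
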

\begin{proof}
By Corollary \ref{corollary:twist}, it is equivalent to consider the twisted groups.
The twisted version of weights can be constructed in the same way as in \S\ref{subsect:paraweight}.
With Remark \ref{remark:normalizedtheta} and Notation \ref{notation:field}, the arguments are similar to those of \cite[Proposition 5.3]{LZ18} (see the proof of Proposition \ref{prop:diagonweight} in the next subsection), noting that $\sigma$ acting on $s_\Gamma^*$ corresponding to $\sigma^*$ acting on $s_\Gamma$ up to conjugacy.
For the equivalence of the two kinds of the labelling of the weights, see Proposition \ref{prop:labelweights}.
\end{proof}


\subsection{Actions of diagonal automorphisms on weights}\label{subsect:diagonweight}
In this subsection, we consider the action of the diagonal automorphism on weights.
As before, we transfer to the twisted groups.
Then the diagonal automorphism is provided up to inner automorphisms by $\tau^{tw}$ as in Corollary \ref{corollary:twist}.

Let $\tau_{\Gamma,\gamma}^{tw}=\tau_{m_\Gamma,\alpha_\Gamma,\gamma}^{tw}$ be as (\ref{equation:twisteddiag}).
Since $[\tau_{\Gamma,\gamma}^{tw},R_{\Gamma,\gamma}^{tw}C_{\Gamma,\gamma}^{tw}]=1$, $\tau_{\Gamma,\gamma}^{tw}$ stabilizes $\theta_{\Gamma,\gamma}^{tw}$ and the set $\dz(N_{\Gamma,\gamma}^{tw}/R_{\Gamma,\gamma}^{tw}\mid\theta_{\Gamma,\gamma}^{tw})$.
We now consider the actions of $\tau_{\Gamma,\gamma}^{tw}$ on the set $\dz(N_{\Gamma,\gamma}^{tw}/R_{\Gamma,\gamma}^{tw}\mid\theta_{\Gamma,\gamma}^{tw})$.
The result for the case $\Gamma\neq X+1$ is the same as the field automorphisms.

\begin{lemma}\label{lemma:diagonextensiontheta}
Assume $\Gamma=X-1$ or $\Gamma\in\cF_1\cup\cF_2$.
The $\beta_\Gamma e_\Gamma$ extensions of $\theta_{\Gamma,\gamma}^{tw}$ to $C_{\Gamma,\gamma}^{tw}(V_{\Gamma,\gamma}^{tw})_{\theta_{\Gamma,\gamma}^{tw}}$ is stabilizes under $\tau_{\Gamma,\gamma}^{tw}$.
\end{lemma}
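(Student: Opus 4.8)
The plan is to run the proof of Lemma~\ref{lemma:fieldonextensiontheta} with the diagonal automorphism $\tau_{\Gamma,\gamma}^{tw}$ in place of the field automorphism $\tsigma$. As there, it suffices to prove that $\theta_{\Gamma,\gamma}^{tw}$ extends to $C_{\Gamma,\gamma}^{tw}\langle(V_{\Gamma,\gamma}^{tw})_{\theta_{\Gamma,\gamma}^{tw}},\tau_{\Gamma,\gamma}^{tw}\rangle$. Indeed, once this is known, restricting such an extension to $C_{\Gamma,\gamma}^{tw}(V_{\Gamma,\gamma}^{tw})_{\theta_{\Gamma,\gamma}^{tw}}$ produces one $\tau_{\Gamma,\gamma}^{tw}$-fixed extension of $\theta_{\Gamma,\gamma}^{tw}$, and since (as I will check) the quotient of $C_{\Gamma,\gamma}^{tw}\langle(V_{\Gamma,\gamma}^{tw})_{\theta_{\Gamma,\gamma}^{tw}},\tau_{\Gamma,\gamma}^{tw}\rangle$ by $C_{\Gamma,\gamma}^{tw}$ is abelian, each of the remaining $\beta_\Gamma e_\Gamma-1$ extensions of $\theta_{\Gamma,\gamma}^{tw}$ to $C_{\Gamma,\gamma}^{tw}(V_{\Gamma,\gamma}^{tw})_{\theta_{\Gamma,\gamma}^{tw}}$ differs from it by a linear character inflated from $C_{\Gamma,\gamma}^{tw}(V_{\Gamma,\gamma}^{tw})_{\theta_{\Gamma,\gamma}^{tw}}/C_{\Gamma,\gamma}^{tw}$, hence is $\tau_{\Gamma,\gamma}^{tw}$-fixed as well; this is the same reduction used implicitly in Lemma~\ref{lemma:fieldonextensiontheta}. (Equivalently, via the $\tau_{\Gamma,\gamma}^{tw}$-equivariant bijection of Lemma~\ref{lemma:extensiontheta}, this says that $\tau_{\Gamma,\gamma}^{tw}$ fixes every character of $\dz(N_{\Gamma,\gamma}^{tw}/R_{\Gamma,\gamma}^{tw}\mid\theta_{\Gamma,\gamma}^{tw})$ when $\Gamma\neq X+1$.)

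To produce the extension, recall from Lemma~\ref{lemma:twC}(2) that $\tau_{\Gamma,\gamma}^{tw}$ centralizes $C_{\Gamma,\gamma}^{tw}$, so it fixes $\theta_{\Gamma,\gamma}^{tw}$, and $(\tau_{\Gamma,\gamma}^{tw})^{q-1}\in Z(C_{\Gamma,\gamma}^{tw})$. I would then compute, directly from formula (\ref{equation:twisteddiag}) and the generators $v_{m_\Gamma,\alpha_\Gamma,\gamma}^{(\varepsilon)}$ and $M(m_\Gamma e\ell^{\alpha_\Gamma+\gamma})$ of $V_{\Gamma,\gamma}^{tw}$ from Lemma~\ref{lemma:twN}(2), that $\tau_{\Gamma,\gamma}^{tw}$ normalizes $C_{\Gamma,\gamma}^{tw}V_{\Gamma,\gamma}^{tw}$ and acts trivially on the cyclic quotient $C_{\Gamma,\gamma}^{tw}V_{\Gamma,\gamma}^{tw}/C_{\Gamma,\gamma}^{tw}$: concretely, in the linear case $\tau_{\Gamma,\gamma}^{tw}$ commutes with $v_{m_\Gamma,\alpha_\Gamma,\gamma}^{(1)}$ and conjugates $M(m_\Gamma e\ell^{\alpha_\Gamma+\gamma})$ by the central element $\hbar(\xi_0^{-1}I)$ of $C_{\Gamma,\gamma}^{tw}$, and the unitary case is similar. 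Consequently $\tau_{\Gamma,\gamma}^{tw}$ normalizes $H:=C_{\Gamma,\gamma}^{tw}(V_{\Gamma,\gamma}^{tw})_{\theta_{\Gamma,\gamma}^{tw}}$, the quotient $H/C_{\Gamma,\gamma}^{tw}$ is abelian, and, identifying $C_{\Gamma,\gamma}^{tw}\cong\GL(m_\Gamma\ell^\gamma,\varepsilon q^{e\ell^{\alpha_\Gamma}})$, the subgroup $(V_{\Gamma,\gamma}^{tw})_{\theta_{\Gamma,\gamma}^{tw}}$ acts by field-graph automorphisms while $\tau_{\Gamma,\gamma}^{tw}$ induces the identity automorphism of $C_{\Gamma,\gamma}^{tw}$. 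Since $\theta_{\Gamma,\gamma}^{tw}$ is stable under all of $H\langle\tau_{\Gamma,\gamma}^{tw}\rangle/C_{\Gamma,\gamma}^{tw}$ --- under $(V_{\Gamma,\gamma}^{tw})_{\theta_{\Gamma,\gamma}^{tw}}$ by construction and under $\tau_{\Gamma,\gamma}^{tw}$ trivially --- the required extension to $H\langle\tau_{\Gamma,\gamma}^{tw}\rangle$ follows exactly as in the proof of Lemma~\ref{lemma:fieldonextensiontheta}, using \cite[4.3.2]{Bon99a} and \cite[Remark~9.3(a)]{Sp09} (cf. \cite[4.17]{Feng18}) on the extendibility of characters of general linear and unitary groups compatibly with field-graph and inner (diagonal) automorphisms.

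The work, and the main obstacle, lies in two places. The routine but delicate part is the explicit conjugation computation above: it requires unwinding the hyperbolic embedding $\hbar$, the several tensor-product conventions in \S\ref{subsect:twistedbasicsubgp}, and the block shapes of $\tau_{\Gamma,\gamma}^{tw}$, $v_{m_\Gamma,\alpha_\Gamma,\gamma}^{(\varepsilon)}$ and $M(\cdot)$ in both the linear ($\varepsilon=1$) and unitary ($\varepsilon=-1$) cases, the unitary case being fiddlier because there $\tau_{\Gamma,\gamma}^{tw}$ and $v_{m_\Gamma,\alpha_\Gamma,\gamma}^{(-1)}$ need only agree modulo $C_{\Gamma,\gamma}^{tw}$. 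The genuinely substantive point is to see where the hypothesis $\Gamma=X-1$ or $\Gamma\in\cF_1\cup\cF_2$ enters: it is precisely this hypothesis that keeps $\tau_{\Gamma,\gamma}^{tw}$ from permuting the $\beta_\Gamma e_\Gamma$ extensions of $\theta_{\Gamma,\gamma}^{tw}$ nontrivially, the excluded case $\Gamma=X+1$ being the one in which the diagonal automorphism of $\CSp_{2n}(q)$ carries a genuine $\Z/2\Z$-ambiguity --- the same one responsible for the index $i$ attached to the $(X+1)$-part in Theorems~\ref{theorem:characterSp} and \ref{theorem:BrauerSp} and for counting degenerate Lusztig symbols twice --- and that case must be treated by a separate argument.
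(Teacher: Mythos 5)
There is a genuine gap at the decisive step. Your reduction to producing a single $\tau_{\Gamma,\gamma}^{tw}$-fixed extension is fine, but the way you propose to produce it --- ``$\theta_{\Gamma,\gamma}^{tw}$ is stable under $(V_{\Gamma,\gamma}^{tw})_{\theta_{\Gamma,\gamma}^{tw}}$ and under $\tau_{\Gamma,\gamma}^{tw}$, so the extension to $C_{\Gamma,\gamma}^{tw}\langle(V_{\Gamma,\gamma}^{tw})_{\theta_{\Gamma,\gamma}^{tw}},\tau_{\Gamma,\gamma}^{tw}\rangle$ follows as in Lemma~\ref{lemma:fieldonextensiontheta} from \cite[4.3.2]{Bon99a} and \cite[Remark~9.3(a)]{Sp09}'' --- does not work. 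Those results concern extending characters of $\GL(m,\varepsilon q^{e\ell^\alpha})$ along field-graph automorphisms; here $\tau_{\Gamma,\gamma}^{tw}$ induces the \emph{trivial} automorphism of $C_{\Gamma,\gamma}^{tw}$, so they say nothing about the only obstruction that is actually present, namely the interaction of $\tau_{\Gamma,\gamma}^{tw}$ with $(V_{\Gamma,\gamma}^{tw})_{\theta_{\Gamma,\gamma}^{tw}}$ through central commutators. Concretely, writing $H=C_{\Gamma,\gamma}^{tw}(V_{\Gamma,\gamma}^{tw})_{\theta_{\Gamma,\gamma}^{tw}}$ and using $[\tau_{\Gamma,\gamma}^{tw},(V_{\Gamma,\gamma}^{tw})_{\theta_{\Gamma,\gamma}^{tw}}]\leqslant Z(C_{\Gamma,\gamma}^{tw})$, conjugation by $\tau_{\Gamma,\gamma}^{tw}$ multiplies every extension of $\theta_{\Gamma,\gamma}^{tw}$ to $H$ by the linear character of $H/C_{\Gamma,\gamma}^{tw}$ obtained by evaluating the central character of $\theta_{\Gamma,\gamma}^{tw}$ on these commutators; so either all extensions are fixed or none is, and everything hinges on whether that central character kills $[\tau_{\Gamma,\gamma}^{tw},(V_{\Gamma,\gamma}^{tw})_{\theta_{\Gamma,\gamma}^{tw}}]$. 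Your sketch never checks this, and indeed your steps (stability of $\theta$, triviality of $\tau$ on $C$ and on $H/C$) hold verbatim for $\Gamma=X+1$ as well, where the conclusion is false (Lemma~\ref{lemma:diagonextensiontheta0} shows $\tau$ there swaps the extensions in pairs). So the argument as written proves too much and the hypothesis $\Gamma=X-1$ or $\Gamma\in\cF_1\cup\cF_2$ never actually enters, contrary to your closing remark that it must.

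For comparison, the paper's proof is a short direct computation exactly at this point: $[\tau_{\Gamma,\gamma}^{tw},(V_{\Gamma,\gamma}^{tw})_{\theta_{\Gamma,\gamma}^{tw}}]\leqslant Z(C_{\Gamma,\gamma}^{tw})$ by \cite[(5B)]{FS89}, the restriction of $\theta_{\Gamma,\gamma}^{tw}$ to $Z(C_{\Gamma,\gamma}^{tw})$ is (a multiple of) $\phi_\Gamma\otimes I_\gamma$, and by \cite[(6A)]{FS89} this character is trivial on the commutators precisely when $\Gamma\neq X+1$ (for $\Gamma=X-1$ the character $\theta_\Gamma^{tw}$ is even trivial). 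Hence $\tau_{\Gamma,\gamma}^{tw}$ acts trivially on $H/\Ker\theta_{\Gamma,\gamma}^{tw}$, and since every extension of $\theta_{\Gamma,\gamma}^{tw}$ to $H$ factors through this quotient, all $\beta_\Gamma e_\Gamma$ extensions are fixed --- no extendibility theorem and no case-by-case matrix computation with $\hbar$, $v_{m,\alpha,\gamma}^{(\varepsilon)}$ and $M$ is needed beyond the commutator being central. To repair your proof you would have to insert exactly this central-character verification (i.e.\ the appeal to \cite[(6A)]{FS89} or an equivalent computation of $\phi_\Gamma$ on $[\tau_{\Gamma,\gamma}^{tw},(V_{\Gamma,\gamma}^{tw})_{\theta_{\Gamma,\gamma}^{tw}}]$), at which point the detour through an extension to $H\langle\tau_{\Gamma,\gamma}^{tw}\rangle$ becomes superfluous.
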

\begin{proof}
Recall that $C_{\Gamma,\gamma}^{tw}=C_\Gamma\otimes I_\gamma$ and $\theta_{\Gamma,\gamma}^{tw}=\theta_\Gamma^{tw}\otimes I_\gamma$, where $\theta_\Gamma^{tw}=\pm R_{T_\Gamma^{tw}}^{C_\Gamma^{tw}}\phi_\Gamma$.
When $\Gamma=X-1$, $\theta_\Gamma^{tw}$ is the trivial character.
By \cite[(5B)]{FS89} or direct calculation, $[\tau_{\Gamma,\gamma}^{tw},(V_{\Gamma,\gamma}^{tw})_{\theta_{\Gamma,\gamma}^{tw}}] \leqslant Z(C_{\Gamma,\gamma}^{tw})$.
Note that the irreducible linear character on $Z(C_{\Gamma,\gamma}^{tw})$ induced by $\theta_{\Gamma,\gamma}^{tw}$ is just the restriction of $\phi_\Gamma\otimes I_\gamma$.
By \cite[(6A)]{FS89}, $(\phi_\Gamma\otimes I_\gamma) ([\tau_{\Gamma,\gamma}^{tw},(V_{\Gamma,\gamma}^{tw})_{\theta_{\Gamma,\gamma}^{tw}}]) =1$.
Thus $\tau_{\Gamma,\gamma}^{tw}$ acts trivially on $C_{\Gamma,\gamma}^{tw}(V_{\Gamma,\gamma}^{tw})_{\theta_{\Gamma,\gamma}^{tw}}/\Ker\theta_{\Gamma,\gamma}^{tw}$ and the assertion follows.
\end{proof}

\begin{lemma}\label{lemma:diagondz}
Assume $\Gamma=X-1$ or $\Gamma\in\cF_1\cup\cF_2$.
Then $\tau_{\Gamma,\gamma}^{tw}$ acts trivially on the set $\dz(N_{\Gamma,\gamma,\bc}^{tw}/R_{\Gamma,\gamma,\bc}^{tw}\mid\theta_{\Gamma,\gamma,\bc}^{tw})$.
\end{lemma}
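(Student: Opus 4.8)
The plan is to run the argument of the proof of Lemma~\ref{lemma:fieldondz} essentially word for word, feeding in as input the diagonal analogues of the auxiliary facts that have already been recorded for $\Gamma = X-1$ and $\Gamma\in\cF_1\cup\cF_2$, namely Lemma~\ref{lemma:diagonextensiontheta} in place of Lemma~\ref{lemma:fieldonextensiontheta}.

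First I would treat the case $\bc = \zero$. By Lemma~\ref{lemma:extensiontheta} there is a bijection
$$\dz(N_{\Gamma,\gamma}^{tw}/R_{\Gamma,\gamma}^{tw}\mid\theta_{\Gamma,\gamma}^{tw}) \longleftrightarrow \dz(C_{\Gamma,\gamma}^{tw}(V_{\Gamma,\gamma}^{tw})_{\theta_{\Gamma,\gamma}^{tw}}/Z(R_{\Gamma,\gamma}^{tw})\mid\theta_{\Gamma,\gamma}^{tw})$$
which, by the statement of that lemma, is equivariant for $\tau_{\Gamma,\gamma}^{tw}$; hence it suffices to check that $\tau_{\Gamma,\gamma}^{tw}$ fixes each character in the right-hand set. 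Every such character restricts to $\theta_{\Gamma,\gamma}^{tw}$ on $C_{\Gamma,\gamma}^{tw}$, so it is one of the $\beta_\Gamma e_\Gamma$ extensions of $\theta_{\Gamma,\gamma}^{tw}$ to $C_{\Gamma,\gamma}^{tw}(V_{\Gamma,\gamma}^{tw})_{\theta_{\Gamma,\gamma}^{tw}}$, and Lemma~\ref{lemma:diagonextensiontheta} asserts precisely that this family of extensions is stabilized under $\tau_{\Gamma,\gamma}^{tw}$ — indeed fixed pointwise, since the proof of that lemma shows $\tau_{\Gamma,\gamma}^{tw}$ acts trivially modulo $\Ker\theta_{\Gamma,\gamma}^{tw}$ using $[\tau_{\Gamma,\gamma}^{tw},(V_{\Gamma,\gamma}^{tw})_{\theta_{\Gamma,\gamma}^{tw}}]\leqslant Z(C_{\Gamma,\gamma}^{tw})$ together with \cite[(6A)]{FS89}. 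This settles $\bc = \zero$.

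For $\bc\neq\zero$ I would invoke part~(3) of Lemma~\ref{lemma:twN}, which presents $N_{\Gamma,\gamma,\bc}^{tw}/R_{\Gamma,\gamma,\bc}^{tw}$ as a direct product of $N_{\Gamma,\gamma}^{tw}/R_{\Gamma,\gamma}^{tw}$ with $\GL(c_1,\ell)\times\cdots\times\GL(c_r,\ell)$. Since $\tau_{\Gamma,\gamma,\bc}^{tw} = \tau_{\Gamma,\gamma}^{tw}\otimes I_{\bc}$ centralizes $N_{\fS(\bc)}(A_{\bc})$, it acts trivially on every $\GL(c_i,\ell)$-factor, so its action on $\dz(N_{\Gamma,\gamma,\bc}^{tw}/R_{\Gamma,\gamma,\bc}^{tw}\mid\theta_{\Gamma,\gamma,\bc}^{tw})$ is governed entirely by its action on the $\bc = \zero$ component, which has just been shown to be trivial. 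This is the same reduction used in the proof of Lemma~\ref{lemma:fieldondz}.

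I do not expect a genuine obstacle here: once Lemmas~\ref{lemma:extensiontheta}, \ref{lemma:diagonextensiontheta} and~\ref{lemma:twN} are in place, the statement is a short formal consequence, structurally identical to Lemma~\ref{lemma:fieldondz}. The one point that deserves a moment's care is to be sure the bijection of Lemma~\ref{lemma:extensiontheta} was set up so as to be $\tau_{\Gamma,\gamma}^{tw}$-equivariant, which comes down to choosing the fixed extension $\ttheta_1$ of $\theta_1$ to $\hbar(L_{\Gamma,\gamma}^0)(V_{\Gamma,\gamma}^{tw})_{\theta_{\Gamma,\gamma}^{tw}}$ to be $\tau_{\Gamma,\gamma}^{tw}$-stable; but that equivariance is part of the conclusion of Lemma~\ref{lemma:extensiontheta}, so it is not a burden for the present lemma.
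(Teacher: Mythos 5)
Your proposal is correct and follows essentially the same route as the paper: the paper's proof of this lemma is exactly "as in Lemma~\ref{lemma:fieldondz}, using Lemma~\ref{lemma:extensiontheta} and Lemma~\ref{lemma:diagonextensiontheta}", i.e.\ the $\tau_{\Gamma,\gamma}^{tw}$-equivariant bijection of Lemma~\ref{lemma:extensiontheta} plus the fixedness of the extensions for the case $\bc=\zero$, and the direct-product decomposition in part (3) of Lemma~\ref{lemma:twN} to reduce the general case to $\bc=\zero$. Your added observation that the extensions are fixed pointwise (not merely permuted), coming from the triviality of the $\tau_{\Gamma,\gamma}^{tw}$-action modulo $\Ker\theta_{\Gamma,\gamma}^{tw}$, is exactly the intended reading of Lemma~\ref{lemma:diagonextensiontheta}.
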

\begin{proof}
Similar as Lemma \ref{lemma:fieldondz} from Lemma \ref{lemma:extensiontheta} and Lemma \ref{lemma:diagonextensiontheta}.
\end{proof}

Now, we consider the actions of $\tau_{X+1,\gamma}^{tw}$ on the set $\dz(N_{X+1,\gamma}^{tw}/R_{X+1,\gamma}^{tw}\mid\theta_{X+1,\gamma}^{tw})$.
Note that $m_{X+1}=1$, $\alpha_{X+1}=0$, $e_{X+1}=e$.
$C_{X+1}^{tw}=T_{X+1}^{tw}\cong C_{q^e-\varepsilon}$ and $\theta_{X+1}^{tw}=\phi_{X+1}$ is the unique linear character of $T_{X+1}^{tw}$ of order $2$.
Since $C_{X+1,\gamma}^{tw}\cong C_{X+1}^{tw}\otimes I_\gamma$ and $\theta_{X+1,\gamma}^{tw}=\theta_{X+1}^{tw}\otimes I_\gamma$, $(V_{X+1,\gamma})_{\theta_{X+1,\gamma}^{tw}}=V_{X+1,\gamma}$ and $C_{X+1,\gamma}^{tw}V_{X+1,\gamma}/C_{X+1,\gamma}^{tw}$ is cyclic of order $2e$.

\begin{lemma}\label{lemma:diagonextensiontheta0}
The $2e$ extensions of $\theta_{X+1,\gamma}^{tw}$ to $C_{X+1,\gamma}^{tw}V_{X+1,\gamma}^{tw}$ can be partitioned into $e$ pairs and $\tau_{X+1,\gamma}^{tw}$ transposes each pair.
\end{lemma}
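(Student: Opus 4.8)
The plan is to show that $\tau := \tau_{X+1,\gamma}^{tw}$ acts on the set of the $2e$ extensions as a fixed-point-free involution; since a fixed-point-free involution of a $2e$-element set is exactly a product of $e$ transpositions, this yields the assertion. Throughout write $C := C_{X+1,\gamma}^{tw}$ and $V := V_{X+1,\gamma}^{tw}$, and recall that $C$ is cyclic, that $\theta := \theta_{X+1,\gamma}^{tw}=\phi_{X+1}\otimes I_\gamma$ is the unique linear character of $C$ of order $2$ (so $\Ker\theta$ is the unique subgroup of $C$ of index $2$), that $V$ fixes $\theta$, and that $CV/C$ is cyclic of order $2e$. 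As $CV/C$ is cyclic and $\theta$ is $V$-invariant, $\theta$ extends to $CV$ and has exactly $|CV/C|=2e$ extensions there; this set of extensions is a torsor under $\Irr(CV/C)$, i.e. after fixing one extension $\widetilde\theta_0$, every extension is uniquely of the form $\widetilde\theta_0\lambda$ with $\lambda\in\Irr(CV/C)$.

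First I would collect the structural input. By Lemma \ref{lemma:twC}(2) we have $[\tau,C]=1$; and, as in the proof of Lemma \ref{lemma:diagonextensiontheta} (using \cite[(5B)]{FS89} or a direct matrix computation), $[\tau,V]\leqslant C$. Hence ${}^{\tau}(CV)=CV$, so $\tau$ permutes the extensions, and $\tau$ centralises $CV/C$, so $\lambda^{\tau}=\lambda$ for every $\lambda\in\Irr(CV/C)$. For any extension $\widetilde\theta$ of $\theta$ put $\mu_{\widetilde\theta}:=\widetilde\theta^{\tau}\widetilde\theta^{-1}$; since $\tau$ centralises $C$, this is a linear character of $CV$ trivial on $C$, and for $v\in V$ we have $\mu_{\widetilde\theta}(v)=\widetilde\theta({}^{\tau}v\cdot v^{-1})=\theta({}^{\tau}v\cdot v^{-1})$ because ${}^{\tau}v\cdot v^{-1}\in[\tau,V]\leqslant C$. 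Replacing $\widetilde\theta$ by $\widetilde\theta\lambda$ multiplies $\mu_{\widetilde\theta}$ by $\lambda^{\tau}\lambda^{-1}=1$, so $\mu_{\widetilde\theta}$ is independent of $\widetilde\theta$; call it $\mu\in\Irr(CV/C)$. Then $\tau$ acts on the torsor of extensions by translation by $\mu$, namely $(\widetilde\theta_0\lambda)^{\tau}=\widetilde\theta_0\,\mu\lambda$.

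It then remains to see that $\mu$ has order exactly $2$. Since $\theta$ has order $2$, $\mu(v)^2=\theta({}^{\tau}v\cdot v^{-1})^2=1$, so $\mu^2=1$. That $\mu\neq 1$ — equivalently, that $\phi_{X+1}$ is \emph{non}trivial on $[\tau,V]$ — is the point where the special status of the elementary divisor $X+1$ (the eigenvalue $-1$) intervenes: it is the analogue, with a nonzero outcome, of the vanishing $\phi_\Gamma([\tau,V_\theta])=1$ used for $\Gamma\neq X+1$ in Lemma \ref{lemma:diagonextensiontheta}, and here \cite[(6A)]{FS89} yields a nontrivial value. Granting $\mu\neq 1$, translation by $\mu$ is a fixed-point-free permutation of order $2$ of the $2e$ extensions; hence $\tau$ partitions them into $e$ pairs and transposes each.

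The step I expect to be the main obstacle is precisely this last one: extracting from \cite[(5B),(6A)]{FS89} the two exact facts $[\tau,V]\leqslant C$ and $\phi_{X+1}|_{[\tau,V]}\neq 1$. Concretely, one must track how $\tau_{X+1,\gamma}^{tw}$ — built from the scalar $\xi_0$ when $\varepsilon=1$, resp.\ from $\xi$ when $\varepsilon=-1$, see (\ref{equation:twisteddiag}) — acts by conjugation on the generators $v_{1,0,\gamma}^{(\varepsilon)}$ and $M(e\ell^{\gamma})$ of $V$, verify that the resulting commutators lie in the cyclic group $C\cong C_{q^e-\varepsilon}$, and evaluate the order-$2$ character $\phi_{X+1}$ on them, obtaining the value $-1$ in at least one instance. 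Everything else is formal torsor bookkeeping.
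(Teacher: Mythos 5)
Your torsor reduction is correct and is a genuinely different route from the paper's. You identify the $2e$ extensions with a torsor under $\Irr(C_{X+1,\gamma}^{tw}V_{X+1,\gamma}^{tw}/C_{X+1,\gamma}^{tw})$, note that $\tau$ acts on it by translation by the character $\mu=\widetilde\theta^{\tau}\widetilde\theta^{-1}$ (well defined since $[\tau,C_{X+1,\gamma}^{tw}]=1$ and $[\tau,V_{X+1,\gamma}^{tw}]\leqslant C_{X+1,\gamma}^{tw}$, so $\tau$ is trivial on the quotient), observe $\mu^{2}=1$ because $\theta_{X+1,\gamma}^{tw}$ has order $2$, and conclude that translation by $\mu$ is a fixed-point-free involution once $\mu\neq 1$. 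This bypasses entirely the paper's proof, which enumerates $TV/T_0$ explicitly and splits into four subcases (linear/unitary, and $4\mid(q-1)$ or not, resp.\ $4\mid(q^{e}+1)$ or not) to list the $2e$ extensions and compute the action by hand; your framing makes those subcases unnecessary.

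However, the decisive step is missing. The whole content of the lemma sits in the nonvanishing $\mu\neq 1$, i.e.\ that $\theta_{X+1,\gamma}^{tw}$ is nontrivial on $[\tau,V_{X+1,\gamma}^{tw}]$, and you only assert it ("granting $\mu\neq1$"), flagging it as the expected obstacle. Your pointer to \cite[(6A)]{FS89} is also misdirected: in Lemma \ref{lemma:diagonextensiontheta} that result is precisely what gives the \emph{triviality} $\phi_\Gamma([\tau,V_\theta])=1$ for $\Gamma\neq X+1$; it does not produce the nontrivial value here. What is actually needed, and what the paper obtains by direct computation from (\ref{equation:twisteddiag}), is: in the linear case ${}^{\tau}v\cdot v^{-1}=1$ while ${}^{\tau}M\cdot M^{-1}=I_{e\ell^{\gamma}}\otimes\diag\{\xi_0^{-1},\xi_0\}$ lies in $T\cong C_{q^{e}-1}$ but outside $\Ker\theta$, because $e$ is odd, so $(q^{e}-1)_2=(q-1)_2$ and $\xi_0$ generates $\F_q^{\times}$; in the unitary case ${}^{\tau}v\cdot v^{-1}$ is a generator of $T\cong C_{q^{e}+1}$, a group of even order, hence again outside the index-$2$ kernel of $\theta$. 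With these two explicit commutator evaluations inserted, your argument closes and is in fact shorter than the paper's; without them it does not prove the lemma.
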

\begin{proof}
In this proof, we set $T=C_{X+1,\gamma}^{tw}\cong C_{q^e-\varepsilon}$, $\theta=\theta_{X+1,\gamma}^{tw}$, $v=v_{X+1,\gamma}^{(\varepsilon)}$, $M=M(e\ell^\gamma)$, $V=V_{X+1,\gamma}^{tw}=\langle v,M\rangle$, $\tau=\tau_{X+1,\gamma}^{tw}$.
We also set $T_0=\Ker\theta$, then $\bar{T}=T/T_0\cong C_2$ and $TV/T_0=\bar{T}\langle\bar{v},\bar{M}\rangle$ is an abelian group.
Recall that $[\tau,T]=1$.

(1) Assume $\ell$ is linear, then $\varepsilon=1$, $T\cong C_{q^e-1}$, $e$ is odd and $(q^e-1)_2=(q-1)_2$.
In this case, $o(\bar{v})=e$.
From (\ref{equation:twisteddiag}), $\tau=I_{e\ell^\gamma}\otimes\diag\{1,\xi_0\}$ with $\langle\xi_0\rangle=\F_q^\times$.
Thus  $[\tau,v]=1$ and $[\tau,M]=I_{e\ell^\gamma}\otimes\begin{bmatrix} \xi_0^{-1} & 0\\ 0 & \xi_0 \end{bmatrix}=:t\in T$.
Since $(q^e-1)_2=(q-1)_2$, $\langle\bar{t}\rangle=\bar{T}$.
Note that $M^2=-I_{2e\ell^\gamma}\in T$.

(1.1) Assume $4\mid(q-1)$.
Thus $M^2\in T_0$ and $TV/T_0=\bar{T}\times\langle\bar{v}\rangle\times\langle\bar{M}\rangle \cong C_2\times C_e\times C_2$.
Then the $2e$ extensions of $\theta$ are of the form $\ttheta(i,j)=\theta\times\mu_e^i\times\mu_2^j$, where $\langle\mu_e\rangle=\Irr(\langle\bar{v}\rangle)$, $\langle\mu_2\rangle=\Irr(\langle\bar{M}\rangle)$, $i\in\Z/e\Z$ and $j\in\Z/2\Z$.
Then by direct calculation, $\ttheta(i,j)^\tau=\ttheta(i,j+1)$.

(1.2) Assume $4\nmid(q-1)$.
Thus $\langle\bar{M}^2\rangle=\bar{T}$ and $TV/T_0=\langle\bar{v}\rangle\times\langle\bar{M}\rangle \cong C_e\times C_4$.
Fix any extension $\htheta$ of $\theta$ to $\langle\bar{M}\rangle$.
Then the $2e$ extensions of $\theta$ are of the form $\ttheta(i,j)=\mu_e^i\times\htheta^j$, where $\mu_e,i$ are as before and $j=\pm1$.
Thus, by direct calculation, $\ttheta(i,j)^\tau=\ttheta(i,-j)$.

(2) Assume $\ell$ is unitary, then $\varepsilon=-1$ and $T\cong C_{q^e+1}$.
From (\ref{equation:twisteddiag}), $$\tau=I_{\ell^\gamma}\otimes\diag\{\xi,\xi^q,\cdots,\xi^{q^{e-1}};\xi_0\xi^{-1},\xi_0\xi^{-q},\cdots,\xi_0\xi^{-q^{e-1}}\},$$
where $o(\xi)=(q-1)(q^{e}+1)$ and $\xi_0=\xi^{q^e+1}$.
In this case, $V=\langle v\rangle$ and $[\tau,v]$ is a generator of $T$.
Note that $v^{2e}=-I_{2e\ell^\gamma}$.

(2.1) Assume $4\mid(q^e+1)$.
Thus $v^{2e}\in T_0$ and $TV/T_0=\bar{T}\times\langle\bar{v}\rangle \cong C_2\times C_{2e}$.
Then the $2e$ extensions of $\theta$ are of the form $\ttheta(i)=\theta\times\mu_{2e}^i$, where $\langle\mu_{2e}\rangle=\Irr(\langle\bar{v}\rangle)$, $i\in\Z/2e\Z$.
Then by direct calculation, $\ttheta(i)^\tau=\ttheta(i+e)$.

(2.2) Assume $4\nmid(q^e+1)$.
Thus $\bar{T}\leqslant\langle\bar{v}\rangle$ and $TV/T_0=\langle\bar{v}\rangle\cong C_{4e}$.
Fix a generator $\htheta$ of $\Irr(\langle\bar{v}\rangle)$, then $\htheta$ is an extension of $\theta$ to $\langle\bar{v}\rangle$.
Then the $2e$ extensions of $\theta$ are of the form $\ttheta(j)=\htheta^j$, where $j\in\{\pm1,\pm3,\cdots,\pm(2e-1)\}$.
Thus, by direct calculation, $\ttheta(j)^\tau=\ttheta(-j)$.
\end{proof}

\begin{lemma}\label{lemma:diagondz0}
The characters of $\dz(N_{X+1,\gamma,\bc}^{tw}/R_{X+1,\gamma,\bc}^{tw}\mid\theta_{X+1,\gamma,\bc}^{tw})$ can be partitioned into pairs and $\tau_{X+1,\gamma}^{tw}$ transposes each pair.
Consequently, the $2e\ell^\delta$ characters in $\sC_{X+1,\delta}^{tw}$ can be relabelled as $\psi_{X+1,\delta,i,j}^{tw},\psi_{X+1,\delta,i,j'}^{tw}$ such that $(\psi_{X+1,\delta,i,j}^{tw})^{\tau_{X+1,\gamma}^{tw}}=\psi_{X+1,\delta,i,j'}^{tw}$.
\end{lemma}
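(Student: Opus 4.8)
The plan is to reduce to the already-established Lemma~\ref{lemma:diagonextensiontheta0}, handling the case $\bc=\zero$ first and then propagating through the wreath-product structure, exactly as Lemma~\ref{lemma:diagondz} was deduced from Lemma~\ref{lemma:diagonextensiontheta}. For $\bc=\zero$ I would apply Lemma~\ref{lemma:extensiontheta} with $\Gamma=X+1$: since $\beta_{X+1}e_{X+1}=2e$ and $(V_{X+1,\gamma}^{tw})_{\theta_{X+1,\gamma}^{tw}}=V_{X+1,\gamma}^{tw}$, it furnishes a bijection, equivariant under $\tau_{X+1,\gamma}^{tw}$, between $\dz(N_{X+1,\gamma}^{tw}/R_{X+1,\gamma}^{tw}\mid\theta_{X+1,\gamma}^{tw})$ and the set of the $2e$ extensions of $\theta_{X+1,\gamma}^{tw}$ to $C_{X+1,\gamma}^{tw}V_{X+1,\gamma}^{tw}$ (regarded modulo $Z(R_{X+1,\gamma}^{tw})$) that occurs in Lemma~\ref{lemma:diagonextensiontheta0}. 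By that lemma these $2e$ extensions split into $e$ pairs, each transposed by $\tau_{X+1,\gamma}^{tw}$ and without fixed points; transporting this partition back along the equivariant bijection yields the same conclusion for $\dz(N_{X+1,\gamma}^{tw}/R_{X+1,\gamma}^{tw}\mid\theta_{X+1,\gamma}^{tw})$. In particular $\psi\neq\psi^{\tau_{X+1,\gamma}^{tw}}$ for every $\psi$ in this set.

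For general $\bc$ I would argue as in the proofs of Lemmas~\ref{lemma:fieldondz} and~\ref{lemma:diagondz}. By Lemma~\ref{lemma:twN}(3) the quotient $N_{X+1,\gamma,\bc}^{tw}/R_{X+1,\gamma,\bc}^{tw}$ carries $N_{X+1,\gamma}^{tw}/R_{X+1,\gamma}^{tw}$ and the groups $\GL(c_1,\ell),\dots,\GL(c_r,\ell)$ as direct factors, while $\tau_{X+1,\gamma,\bc}^{tw}=\tau_{X+1,\gamma}^{tw}\otimes I_{\bc}$ centralizes $A_{\bc}$ and hence acts trivially on $N_{\fS(\bc)}(A_{\bc})$ and on each $\GL(c_i,\ell)$. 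Therefore every character of $\dz(N_{X+1,\gamma,\bc}^{tw}/R_{X+1,\gamma,\bc}^{tw}\mid\theta_{X+1,\gamma,\bc}^{tw})$ factors as a tensor product of some $\psi\in\dz(N_{X+1,\gamma}^{tw}/R_{X+1,\gamma}^{tw}\mid\theta_{X+1,\gamma}^{tw})$ with characters of the factors $\GL(c_i,\ell)$ that are fixed by $\tau_{X+1,\gamma,\bc}^{tw}$, so that $\tau_{X+1,\gamma,\bc}^{tw}$ acts on this character only through $\tau_{X+1,\gamma}^{tw}$ on the factor $\psi$. Replacing $\psi$ by its partner from the case $\bc=\zero$ then produces a fixed-point-free involution on $\dz(N_{X+1,\gamma,\bc}^{tw}/R_{X+1,\gamma,\bc}^{tw}\mid\theta_{X+1,\gamma,\bc}^{tw})$ realized by $\tau_{X+1,\gamma,\bc}^{tw}$, which is the first assertion.

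Finally, for the consequence: by definition $\sC_{X+1,\delta}^{tw}$ is the union, over the basic subgroups $R_{X+1,\delta,i}^{tw}$ (that is, the $R_{X+1,\gamma,\bc}^{tw}$ with $\gamma+|\bc|=\delta$), of the sets $\dz(N_{X+1,\delta,i}^{tw}/R_{X+1,\delta,i}^{tw}\mid\theta_{X+1,\delta,i}^{tw})$, and it has $2e\ell^\delta$ elements by \cite[(4A)]{An94}. Since the relevant $\tau_{X+1,\gamma}^{tw}$ transposes a fixed-point-free pairing within each summand, I can index the two members of each pair by $j$ and $j'$, giving $(\psi_{X+1,\delta,i,j}^{tw})^{\tau_{X+1,\gamma}^{tw}}=\psi_{X+1,\delta,i,j'}^{tw}$ as required. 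I expect the only delicate point to be the middle paragraph, namely verifying, via Lemma~\ref{lemma:twN}(3) and \cite[(1.5)]{AF90}, that $\tau_{X+1,\gamma,\bc}^{tw}$ decomposes compatibly with the twisted wreath structure and acts trivially on the $\GL(c_i,\ell)$-factors, so that the fixed-point-free pairing on the base characters lifts without collisions; the remainder is a direct transcription of the arguments already used for Lemmas~\ref{lemma:fieldondz} and~\ref{lemma:diagondz}.
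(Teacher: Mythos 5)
Your proposal is correct and follows essentially the same route as the paper, whose proof is exactly the one-line reduction you spell out: for $\bc=\zero$ combine the $\tau_{X+1,\gamma}^{tw}$-equivariant bijection of Lemma \ref{lemma:extensiontheta} with the fixed-point-free pairing of the $2e$ extensions in Lemma \ref{lemma:diagonextensiontheta0}, and for general $\bc$ repeat the argument of Lemma \ref{lemma:fieldondz} using the decomposition in Lemma \ref{lemma:twN}(3) and the triviality of the $\tau$-action on the $\GL(c_i,\ell)$-factors. Your explicit remark that the pairing has no fixed points (so the relabelling into pairs $\psi_{X+1,\delta,i,j}^{tw},\psi_{X+1,\delta,i,j'}^{tw}$ is well defined) is a useful point the paper leaves implicit.
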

\begin{proof}
Similar as Lemma \ref{lemma:fieldondz} from Lemma \ref{lemma:extensiontheta} and Lemma \ref{lemma:diagonextensiontheta0}.
\end{proof}

Let $(s,\kappa,i,K)^{G^*}\in i\cW(G)$; see p.\pageref{iW(G)}.
Define $K'_{X+1}: \bigcup_\delta\sC_{\Gamma,\delta} \to \{ \ell\textrm{-cores} \}$ by $K'_{X+1}(\psi_{X+1,\delta,i,j}^{tw})=K_{X+1}(\psi_{X+1,\delta,i,j'}^{tw})$ and $K'_{X+1}(\psi_{X+1,\delta,i,j'}^{tw})=K_{X+1}(\psi_{X+1,\delta,i,j}^{tw})$.
Set $K'=K'_{X+1}\prod_{\Gamma\neq X+1}K_\Gamma$.
We list the characters of $\sC_{X+1,\delta}^{tw}$ in the following way:
list characters $\psi_{X+1,\delta,i,j}^{tw}$ first; then list characters $\psi_{X+1,\delta,i,j'}^{tw}$ in the corresponding order.
Let $(s,\kappa,i,Q)$ be the label corresponding to the label $(s,\kappa,i,K)$;
see Proposition \ref{prop:labelweights} and \cite[(1A)]{AF90}.
For an $e$-quotient $\lambda^{(e)}$ of some Lusztig symbol $\lambda$, the associated ordered quotients are denoted as $(\lambda^{(e)},0),(\lambda^{(e)},1)$, we define $(\lambda^{(e)},i)'=(\lambda^{(e)},i+1)$.
Set $Q'=Q'_{X+1}\prod_{\Gamma\neq X+1}Q_\Gamma$.
Then $K'$ corresponds to $Q'$.

\begin{proposition}\label{prop:diagonweight}
Let $(R,\varphi)$ be a weight of $G$ with the label $(s,\kappa,i,K)^{G^*}$ (or the corresponding label $(s,\kappa,i,Q)^{G^*}$),
then $(R,\varphi)^\tau$ has label $(s,\kappa,i+1,K')^{G^*}$ (or the corresponding label $(s,\kappa,i+1,Q')^{G^*}$).
\end{proposition}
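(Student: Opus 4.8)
The plan is to transfer everything to the twisted groups via Corollary~\ref{corollary:twist}, so that the diagonal automorphism $\tau$ is provided, up to inner automorphisms, by the element $\tau^{tw}=1\times\tau_1^{tw}\times\cdots\times\tau_u^{tw}$ acting on $\tbG^{vF}$; this is legitimate because $\tau^{tw}$ normalizes the twisted radical subgroup $R^{tw}$ (Lemma~\ref{lemma:ActionRadical}) and the twisted centralizers and normalizers. Then I would run through the construction of the label $(s,\kappa,i,K)$ from the data $(R^{tw},\varphi^{tw})$ exactly as in \S\ref{subsect:paraweight}, tracking at each stage what $\tau^{tw}$ does. The key structural input is the decomposition of $\varphi$ over a canonical character $\theta=\theta_0\times\theta_+$, with $\varphi=\varphi_0\times\varphi_+$, $\varphi_0=\theta_0=\chi_{s_0,\kappa,i}$ a defect-zero character of $\I_0(V_0)$, and $\varphi_+$ built from the characters $\psi_{\Gamma,\delta,i,j}^{tw}\in\sC_{\Gamma,\delta}^{tw}$ and the symmetric-group characters $\phi_{\kappa_{\Gamma,\delta,i,j}}$ via the Clifford-theoretic formula (\ref{equation:psi}).

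First I would treat the $\varphi_0$-part: since $\tau^{tw}$ restricts on the $V_0$-block to the diagonal automorphism $\tau_0$ of $\I_0(V_0)$ and $\varphi_0=\chi_{s_0,\kappa,i}$, Proposition~\ref{prop:diagonIrr} gives $\varphi_0^{\tau}=\chi_{s_0,\kappa,i+1}$; this is precisely where the index $i$ flips to $i+1$ and the semisimple part $s$ and the core $\kappa$ are unchanged (note that when $\kappa_{X+1}$ is degenerate this flip is trivial, which is consistent with (3) of Proposition~\ref{prop:labelweights}). Next I would treat the $\varphi_+$-part $\Gamma$ by $\Gamma$. For $\Gamma=X-1$ and $\Gamma\in\cF_1\cup\cF_2$, Lemma~\ref{lemma:diagondz} says $\tau_{\Gamma,\gamma}^{tw}$ acts trivially on each $\dz(N_{\Gamma,\delta,i}^{tw}/R_{\Gamma,\delta,i}^{tw}\mid\theta_{\Gamma,\delta,i}^{tw})$, hence fixes every $\psi_{\Gamma,\delta,i,j}^{tw}$; since $\tau^{tw}$ centralizes $R_+C_+$ it also fixes the underlying radical subgroups and the wreath-product/symmetric-group data in (\ref{equation:psi}), so $K_\Gamma$ and $Q_\Gamma$ are unchanged for these $\Gamma$. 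The only nontrivial case is $\Gamma=X+1$: here Lemma~\ref{lemma:diagondz0} shows the characters in $\sC_{X+1,\delta}^{tw}$ fall into pairs $\psi_{X+1,\delta,i,j}^{tw}$, $\psi_{X+1,\delta,i,j'}^{tw}$ interchanged by $\tau_{X+1,\gamma}^{tw}$, so when $\tau$ acts on $\varphi_+$ it permutes the constituents of $\psi_+$ accordingly; chasing this through the definition $K_{X+1}(\psi_{X+1,\delta,i,j})=\kappa_{\Gamma,\delta,i,j}$ shows that the resulting function is exactly $K'_{X+1}$ as defined just before the proposition, and translating via \cite[(1A)]{AF90} (as in Proposition~\ref{prop:labelweights}) shows the corresponding ordered-quotient datum becomes $Q'_{X+1}$.

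Assembling the three parts gives that $(R,\varphi)^\tau$ has label $(s,\kappa,i+1,K')^{G^*}$, and the parenthetical statement about $(s,\kappa,i+1,Q')^{G^*}$ follows from the identification of the two labellings in Proposition~\ref{prop:labelweights} together with the explicit translation $K\leftrightarrow Q$ and the definition $(\lambda^{(e)},i)'=(\lambda^{(e)},i+1)$. The main obstacle I expect is bookkeeping: one must verify that the pairing of characters in $\sC_{X+1,\delta}^{tw}$ supplied by Lemma~\ref{lemma:diagondz0} is compatible, after inducing up through the wreath products $N_{\Gamma,\delta,i}\wr\fS(t_{\Gamma,\delta,i})$ and forming $\varphi_+=\Ind_{\hN_+(\theta_+)}^{N_+}\psi_+$, with the combinatorial swap $K_{X+1}\mapsto K'_{X+1}$ and hence with $Q_{X+1}\mapsto Q'_{X+1}$ on the level of $i\cW_\ell(G)$; this is the same type of argument as in \cite[Proposition~5.3]{LZ18} but with the extra index $i$ coming from the disconnected center, and care is needed to see that the index flip from the $\varphi_0$-part and the pair-swap from the $\varphi_{X+1}$-part are exactly the two features distinguishing $(s,\kappa,i,K)$ from $(s,\kappa,i+1,K')$, with no further interaction.
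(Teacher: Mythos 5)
Your proposal is correct and follows essentially the same route as the paper's proof: transfer to the twisted groups via Corollary~\ref{corollary:twist}, split $\tau^{tw}$ into $\tau_0^{tw}$ and $\tau_+^{tw}$, apply Proposition~\ref{prop:diagonIrr} to $\varphi_0=\chi_{s_0,\kappa,i}$ for the index flip, and use Lemma~\ref{lemma:diagondz} and Lemma~\ref{lemma:diagondz0} on the $\Gamma$-components of $\varphi_+$ to obtain $K'$ (hence $Q'$). The ``bookkeeping obstacle'' you flag is exactly what the paper settles by noting that $\tau^{tw}$ acts trivially on the symmetric-group factors and that the canonical extension in the sense of \cite[Proposition~2.3.1]{Bon99b} commutes with the action, so the pair-swap of Lemma~\ref{lemma:diagondz0} passes through the wreath-product induction as you anticipated.
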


\begin{proof}
By Corollary \ref{corollary:twist}, it suffices to consider the twisted groups.
The proof is similar to that of \cite[Proposition 5.3]{LZ18}, but for convenience, we repeat the arguments using the twisted groups.
The construction of $\varphi^{tw}$ is the same as that of $\varphi$ before Proposition \ref{prop:labelweights}, with all constructions replaced by the twisted versions.

The diagonal automorphism in twisted groups are represented by $\tau^{tw}=\tau_0^{tw}\times\prod_{\Gamma,\delta,i}(\tau_{\Gamma,\delta,i}^{tw})^{t_{\Gamma,\delta,i}}$, where $\tau_0^{tw}$ induces the diagonal automorphism on $\I_0(V_0)$ and $\tau_{\Gamma,\delta,i}^{tw}=\tau_{m_\Gamma,\alpha_\Gamma,\gamma}^{tw}\otimes I_{\bc}$ with $\delta=\gamma+|\bc|$ and $\tau_{m,\alpha,\gamma}^{tw}$ as (\ref{equation:twisteddiag}).
Recall that $[\tau^{tw},R^{tw}C^{tw}]=1$ and $\tau^{tw}$ normalizes $N^{tw}$ and each component of $\tau^{tw}$ fixes each corresponding component of $N^{tw}$.
Since $\varphi_0^{tw}=\theta_0^{tw}=\chi_{s_0,\kappa,i}$ is an irreducible character of $\I_0(V_0)$, $(\varphi_0^{tw})^{\tau_0^{tw}}=\chi_{s_0,\kappa,i+1}$ by Proposition \ref{prop:diagonIrr}.

Set $\tau_+^{tw}=\prod_{\Gamma,\delta,i}(\tau_{\Gamma,\delta,i}^{tw})^{t_{\Gamma,\delta,i}}$.
Then $(\varphi_+^{tw})^{\tau_+^{tw}}=\Ind_{\hN_+^{tw}(\theta_+^{tw})}^{N_+^{tw}}(\psi_+^{tw})^{\tau_+^{tw}}$
and $(\psi_+^{tw})^{\tau_+^{tw}}=\prod\limits_{\Gamma,\delta,i}(\psi_{\Gamma,\delta,i}^{tw})^{(\tau_{\Gamma,\delta,i}^{tw})^{t_{\Gamma,\delta,i}}}$,
where $(\psi_{\Gamma,\delta,i}^{tw})^{(\tau_{\Gamma,\delta,i}^{tw})^{t_{\Gamma,\delta,i}}}$ is the following character
$$
\Ind_{N_{\Gamma,\delta,i}^{tw}\wr\prod_j\fS(t_{\Gamma,\delta,i,j})}^{N_{\Gamma,\delta,i}^{tw}\wr\fS(t_{\Gamma,\delta,i})}
\left(\overline{\prod_j(\psi_{\Gamma,\delta,i,j}^{tw})^{t_{\Gamma,\delta,i,j}}}\right)^{(\tau_{\Gamma,\delta,i}^{tw})^{t_{\Gamma,\delta,i,j}}}
\cdot\prod_j\phi_{\kappa_{\Gamma,\delta,i,j}}.
$$
Here, we note that $(\tau_{\Gamma,\delta,i}^{tw})^{t_{\Gamma,\delta,i}}$ acts trivially on $\fS(t_{\Gamma,\delta,i})$ and $\fS(t_{\Gamma,\delta,i,j})$.
Since we choose the extension of $\prod_j(\psi_{\Gamma,\delta,i,j}^{tw})^{t_{\Gamma,\delta,i,j}}$ to be the canonical one as in the proof of \cite[Proposition~2.3.1]{Bon99b}, we have
$$\left(\overline{\prod_j(\psi_{\Gamma,\delta,i,j}^{tw})^{t_{\Gamma,\delta,i,j}}}\right)^{(\tau_{\Gamma,\delta,i}^{tw})^{t_{\Gamma,\delta,i,j}}}
=\overline{\prod_j((\psi_{\Gamma,\delta,i,j}^{tw})^{\tau_{\Gamma,\delta,i}^{tw}})^{t_{\Gamma,\delta,i,j}}}.$$
Then the assertion follows from Lemma \ref{lemma:diagondz}, Lemma \ref{lemma:diagondz0} and the definition of the notations before the proposition.
\end{proof}

\section{Equivariance of the bijections}\label{sect:equivariance}
Finally, we can state and prove the main theorem as follows.
Recall that when $\lambda_{X+1}$ is degenerate, the irreducible Brauer characters $\varphi_{s,\lambda,0}=\varphi_{s,\lambda,1}$ which is denoted as $\varphi_{s,\lambda}$, while when $\kappa_{X+1}$ is degenerate, the weights $w_{s,\kappa,0,Q}=w_{s,\kappa,Q}$ which is denoted as $w_{s,\kappa,Q}$.
\begin{theorem}\label{theorem:equivariance}
Assume the sub-matrix of the decomposition matrix of $\Sp_{2n}(q)$ with respect to the basic set $\cE(\Sp_{2n}(q),\ell')$ is unitriangular.
The labels for irreducible Brauer characters and weights can be chosen such that the bijections established in Proposition \ref{prop:bijection} are equivariant.
\end{theorem}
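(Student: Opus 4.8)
The plan is to check equivariance separately for the two types of automorphisms that generate $\Aut(\Sp_{2n}(q))$ modulo inner automorphisms, namely the field automorphisms $\sigma=F_p^i$ and the unique diagonal automorphism $\tau$ afforded by an element of $\tG$; since inner automorphisms act trivially on irreducible Brauer characters and on conjugacy classes of weights, these two cases suffice. All of the needed action formulas are already available. On the Brauer side, Proposition~\ref{prop:actiononBrauer} gives $\varphi_{s,\lambda,i}^\sigma=\varphi_{\sigma^*(s),\sigma^*(\lambda),i}$ and $\varphi_{s,\lambda,i}^\tau=\varphi_{s,\lambda,i+1}$, with $\tau$ fixing $\varphi_{s,\lambda}$ when $\lambda_{X+1}$ is degenerate. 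On the weight side, Proposition~\ref{prop:fieldonweight} gives $w_{s,\kappa,i,Q}^\sigma=w_{\sigma^*(s),\sigma^*(\kappa),i,\sigma^*(Q)}$, and Proposition~\ref{prop:diagonweight} gives $w_{s,\kappa,i,Q}^\tau=w_{s,\kappa,i+1,Q'}$, where $Q'$ differs from $Q$ only in its $X+1$-component, via $(\lambda_{X+1}^{(e)},k)\mapsto(\lambda_{X+1}^{(e)},k+1)$. With the labellings of Brauer characters and of weights already fixed compatibly with Theorems~\ref{theorem:BrauerSp} and \ref{theorem:paraweights} through Propositions~\ref{prop:actiononBrauer} and \ref{prop:labelweights}, what remains is purely combinatorial: I would show that the assignment $\lambda\mapsto Q(\lambda,i)$ of Proposition~\ref{prop:bijection} intertwines the two actions, i.e. $Q(\sigma^*(\lambda),i)=\sigma^*(Q(\lambda,i))$ and $Q(\lambda,i+1)=Q(\lambda,i)'$.

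For $\sigma$ I would check this component by component. For $\Gamma\in\cF_1\cup\cF_2$ one has $e_{\sigma(\Gamma)}=e_\Gamma$ (the reduced degree and the sign being $\sigma$-invariant), and forming the $e_\Gamma$-quotient of a partition commutes with the relabelling $\Gamma\mapsto\sigma(\Gamma)$, so $Q(\sigma^*(\lambda),i)_{\sigma(\Gamma)}=\lambda_\Gamma^{(e_\Gamma)}=Q(\lambda,i)_\Gamma$. For $\Gamma=X\pm1$ we have $\sigma(\Gamma)=\Gamma$ since $p$ is odd, and the components $\lambda_{X\pm1}$ and $\kappa_{X\pm1}$ are unchanged under $\sigma^*$: on the corresponding symplectic factor of the centralizer, $\sigma^*$ is a field automorphism modulo an inner one, so it fixes unipotent characters and their cores and quotients (this is exactly the observation used in the proofs of Proposition~\ref{prop:fieldonirr} and Lemma~\ref{lemma:fieldonsemiclass}). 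Hence the indices $j$, $k$ selected in parts (ii), (iii) of Proposition~\ref{prop:bijection}(1) are the same for $\lambda$ and for $\sigma^*(\lambda)$, and $Q(\sigma^*(\lambda),i)_{X\pm1}=Q(\lambda,i)_{X\pm1}$. Combining with the action formulas above gives $\sigma$-equivariance.

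For $\tau$ only the $X+1$-component of $Q(\lambda,i)$ can change when $i$ is replaced by $i+1$, and I would show it changes exactly as $Q\mapsto Q'$ does. When $\kappa_{X+1}$ is non-degenerate, $Q(\lambda,i)_{X+1}=(\lambda_{X+1}^{(e)},k)$ with $k$ determined by $\lambda_{X+1}=(\kappa_{X+1},i)*(\lambda_{X+1}^{(e)},k)$; the identification $(\kappa_{X+1},i)*(Q,k)=(\kappa_{X+1},i+1)*(Q,k+1)$ of \S\ref{sect:BAWconjecture} then gives $\lambda_{X+1}=(\kappa_{X+1},i+1)*(\lambda_{X+1}^{(e)},k+1)$, whence $Q(\lambda,i+1)_{X+1}=(\lambda_{X+1}^{(e)},k+1)=Q(\lambda,i)'_{X+1}$; this is precisely the reason for the ``different form'' chosen in part (iii) of Proposition~\ref{prop:bijection}(1). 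When $\kappa_{X+1}$ is degenerate the same computation works with the index $j$ in place of $i$, and if in addition $\lambda_{X+1}$ (and hence $\lambda_{X+1}^{(e)}$) is degenerate, then $(\lambda_{X+1}^{(e)},0)=(\lambda_{X+1}^{(e)},1)$, so $Q(\lambda,j)'=Q(\lambda,j)$, matching the fact that $\tau$ then fixes both $\varphi_{s,\lambda}$ and its image. Thus $Q(\lambda,i+1)=Q(\lambda,i)'$ in all cases, and with $\varphi_{s,\lambda,i}^\tau=\varphi_{s,\lambda,i+1}$ and $w_{s,\kappa,i,Q}^\tau=w_{s,\kappa,i+1,Q'}$ we obtain $\tau$-equivariance.

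The main obstacle I anticipate is the bookkeeping for the isolated polynomial $\Gamma=X+1$: one has to reconcile the conventions for ordered symbols $(\kappa_{X+1},i)$, ordered quotients $(Q,j)$ and the product $*$, together with the identifications $(\kappa,i)*(Q,j)=(\kappa,i+1)*(Q,j+1)$ and $(Q,0)=(Q,1)$ in the degenerate cases, so that the index shift $i\mapsto i+1$ distinguishing the two Brauer characters $\varphi_{s,\lambda,0},\varphi_{s,\lambda,1}$ corresponds exactly to the transposition $Q\mapsto Q'$ produced by the action of $\tau$ on $\dz(N_{X+1,\gamma,\bc}^{tw}/R_{X+1,\gamma,\bc}^{tw}\mid\theta_{X+1,\gamma,\bc}^{tw})$ in Lemma~\ref{lemma:diagondz0}. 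Away from that component the argument is a routine matching of cores, quotients and relabellings already carried out in the preceding sections.
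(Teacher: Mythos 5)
Your proposal is correct and follows essentially the same route as the paper's own proof: equivariance for field automorphisms is read off from Propositions \ref{prop:actiononBrauer} and \ref{prop:fieldonweight} (you merely make explicit the componentwise check $Q(\sigma^*(\lambda),i)=\sigma^*(Q(\lambda,i))$ that the paper leaves implicit), and for the diagonal automorphism you use exactly the paper's key identity $\lambda_{X+1}=(\kappa_{X+1},i)*(\lambda_{X+1}^{(e)},k)=(\kappa_{X+1},i+1)*(\lambda_{X+1}^{(e)},k+1)$ together with the degenerate-case analysis to show $Q(\lambda,i+1)=Q(\lambda,i)'$. No gaps; the case distinctions ($\kappa_{X+1}$ degenerate or not, $\lambda_{X+1}$ degenerate or not) match those in the paper.
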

\begin{proof}
The assertion for field automorphisms follows from Proposition \ref{prop:actiononBrauer} and Proposition \ref{prop:fieldonweight}.
Now, consider the diagonal automorphism; let $\tau$ generate $\tG$ modulo $G$.
Let $\phi$ be an irreducible Brauer character in a block $B$ of $G$.

(1) Assume $B=B_{s,\kappa,i}$ with $\kappa_{X+1}$ non degenerate and $\phi=\varphi_{s,\lambda,i}$.
Then the weight $(R,\varphi)$ corresponding to $\varphi_{s,\lambda,i}$ has label $(s,\kappa,i,Q(\lambda,i))$ as in Proposition \ref{prop:bijection}.
By Proposition \ref{prop:actiononBrauer}, $\varphi_{s,\lambda,i}^\tau=\varphi_{s,\lambda,i+1}\in\IBr(B_{s,\kappa,i+1})$.
By Proposition \ref{prop:diagonweight}, $(R,\varphi)^\tau$ has label $(s,\kappa,i+1,Q(\lambda,i)')$.
By the definition of $Q(\lambda,i)'$, $(Q(\lambda,i)')_\Gamma=Q(\lambda,i)_\Gamma$ for $\Gamma\neq X+1$, $(Q(\lambda,i)')_{X+1}=(Q(\lambda,i)_{X+1})'$.
Since $Q(\lambda,i)_{X+1}=(\lambda_{X+1}^{(e)},k)$ for some $k$ as in Proposition \ref{prop:bijection}, we have $(Q(\lambda,i)')_{X+1}=(\lambda_{X+1}^{(e)},k+1)$.
Since $\lambda_{X+1}=(\kappa_{X+1},i)*(\lambda_{X+1}^{(e)},k)=(\kappa_{X+1},i+1)*(\lambda_{X+1}^{(e)},k+1)$, $\varphi_{s,\lambda,i}^\tau$ corresponds to $(R,\varphi)^\tau$ by Proposition \ref{prop:bijection} and the assertion in this case follows.

(2) Assume $B=B_{s,\kappa}$ with $\kappa_{X+1}$ degenerate and $\phi=\varphi_{s,\lambda,j}$.
Then the weight $(R,\varphi)$ corresponding to $\varphi_{s,\lambda,j}$ has label $(s,\kappa,Q(\lambda,j))$ as in Proposition \ref{prop:bijection}.
Then $\varphi_{s,\lambda,j}^\tau=\varphi_{s,\lambda,j+1}$ and $(R,\varphi)^\tau$ has label $(s,\kappa,Q(\lambda,j)')$.
Again $(Q(\lambda,j)')_\Gamma=Q(\lambda,j)_\Gamma$ for $\Gamma\neq X+1$.
If $\lambda_{X+1}$ is degenerate, then $\tau$ fixes $\varphi_{s,\lambda}:=\varphi_{s,\lambda,0}=\varphi_{s,\lambda,1}$.
In this case, $\lambda_{X+1}^{(e)}$ is degenerate, thus $(Q(\lambda,j)')_{X+1}=Q(\lambda,j)_{X+1}$ and $Q(\lambda,j)'=Q(\lambda,j)$.
So, $\tau$ also fixes the weight $(R,\varphi)^G$.
Finally, assume $\lambda_{X+1}$ is non degenerate, and thus so is $\lambda_{X+1}^{(e)}$.
Then $(Q(\lambda,j)')_{X+1}=(Q(\lambda,j)_{X+1})'=(\lambda_{X+1}^{(e)},j+1)$.
Thus $\varphi_{s,\lambda,j}^\tau$ corresponds to $(R,\varphi)^\tau$ by Proposition \ref{prop:bijection} and the proof completes.
\end{proof}

\begin{remark}
We have made many choices of notations for the parameter $i\in\Z/2\Z$ for irreducible Brauer characters and weights.
This can be done compatibly since there are only two element in $\Z/2\Z$ and the diagonal automorphism and field automorphisms commute modulo inner automorphisms.
When we consider, for example, finite simple groups $\PSL_n(\varepsilon{q})$, such parameter $i$ runs through $i\in\Z/(n,q-\varepsilon)\Z$.
Then it is not so easy to choose the parameter $i$ compactibly and we may have to find some ``canonical'' ones to be labelled by $0\in\Z/(n,q-\varepsilon)\Z$.
\end{remark}



\begin{thebibliography}{99}\setlength{\itemsep}{-3pt}

\bibitem{Al87}
J.L. Alperin,
Weights for finite groups,
in \textit{The Arcata Conference on Representations of Finite Groups.}
\textit{Proc. Sympos. Pure Math.}
\textbf{47}~(1987), 369-379.

\bibitem{AF90}
J.L. Alperin, P. Fong, Weights for symmetric and general linear groups.
\textit{J. Algebra}
\textbf{131}~(1990), 2-22.

\bibitem{An94}
J. An, Weights for classical groups.
\textit{Trans. Amer. Math. Soc.}
\textbf{342}~(1994), 1-42.

\bibitem{Bon99a}
C. Bonnaf\'e, On a theorem of Shintani. \textit{J. Algebra} \textbf{218}~(1999), 229-245.

\bibitem{Bon99b}
C. Bonnaf\'e, Produits en couronne de groupes lin\'eaires.
\textit{J. Algebra}
\textbf{211}~(1999), 57-98.

\bibitem{Bon06}
C. Bonnaf\'e,
Sur les caract\`eres des groupes r\'eductifs finis:
applications aux groupessp\'eciaux lin\'eaires et unitaires.
\textit{Ast\'erisque}
\textbf{306}, 2006.

\bibitem{Breuer}
T. Breuer, Computations for some simple groups.
\url{http://www.math.rwth-aachen.de/~Thomas.Breuer/ctblocks/doc/overview.html}

\bibitem{Car85}
R.W. Carter, 
\textit{Finite Groups of Lie Type---Conjugacy Classes and Complex Characters}.
John Wiley \& Sons, New York, 1985.

\bibitem{CE99}
M. Cabanes, M. Enguehard,
On blocks of finite reductive groups and twisted induction.
\emph{Adv. Math. \bf 145} (1999), 189--229.

\bibitem{CS13}
M. Cabanes, B. Spath,
Equivariance and extendibility in finite reductive groups with connected center.
\textit{Math. Z.} \textbf{275}~(2013), 689-713.

\bibitem{CS17A}
M. Cabanes, B. Sp\"ath, Equivariant character correspondences and inductive McKay condition for type A.
\textit{J. Reine Angew. Math.} \textbf(728)~(2017), 153-194.

\bibitem{CS17C}
M. Cabanes, B. Sp\"ath, Inductive McKay condition for finite simple groups of type C.
\textit{Represent. Theory} \textbf{21}~(2017), 61-81.

\bibitem{Feng18}
Z. Feng, The blocks and weights of finite special linear and unitary groups.
ArXiv: 1805.06633v1

\bibitem{FLZ19}
Z. Feng, Z. Li, J. Zhang,
On the inductive blockwise Alperin weight condition for classical groups.
arXiv:1901.02803v1

\bibitem{FS89}
P. Fong, B. Srinivasan, The blocks of finite classical groups.
\textit{J. Reine Angew. Math.}
\textbf{396}~(1989), 122-191.

\bibitem{Ge93}
M. Geck,
Basic sets of Brauer characters of finite groups of Lie type II.
\textit{J. London Math. Soc.} (2)~\textbf{47}~(1993), 255-268.

\bibitem{GH97}
J. Gruber, G. Hiss, Decomposition numbers of fintie classical groups for linear primes.
\textit{J. Reine Angew. Math.}
\textbf{485}~(1997), 55-91.

\bibitem{KM15}
R. Kessar, G. Malle,
Lusztig induction and $\ell$-blocks of finite reductive groups.
\emph{Pacific J. Math. \bf 279} (2015), 269--298.

\bibitem{KS16}
S. Koshitani, B. Sp\"ath,
The inductive Alperin-McKay and blockwise Alperin weight conditions for blocks with cyclic defect groups and odd primes.
\textit{J. Group Theory}
\textbf{19}:5~(2016), 777-813.

\bibitem{Lu88}
G. Lusztig,
On the representations of reductive groups with disconnected center.
\textit{Asterisque} \textbf{168}~(1988), 157-166.

\bibitem{LZ18}
C. Li, J. Zhang, The inductive blockwise Alperin weight condition for $\mathrm{PSL}_n(q)$ and $\mathrm{PSU}_n(q)$ with cyclic outer automorphism groups. \textit{J. Algebra} \textbf{495}~(2018), 130-149.

\bibitem{LZ19}
C. Li, J. Zhang, The inductive blockwise Alperin weight condition for $\PSL_n(q)$ with $(n,q-1)=1$.
Submitted to \textit{J. Algebra}.

\bibitem{Ma08}
G. Malle, Extensions of unipotent characters and the inductive Mckay condition.
\textit{J. Algebra} \textbf{320}~(2008), 2963-2980.

\bibitem{Ma14}
G. Malle, On the inductive Alperin-McKay and Alperin weight conjecture for groups with abelian Sylow subgroups.
\textit{J. Algebra} \textbf{397}~(2014), 190-208.

\bibitem{MT11}
G. Malle and D. Testerman, \textit{Linear Algebraic Groups and Finite Groups of Lie Type}. Cambridge University Press, Cambridge, 2011.

\bibitem{NT89}
H. Nagao, Y. Tsushima, \textit{Representations of finite groups}. Academic Press, 1989.

\bibitem{Ol93}
J.B. Olsson, \textit{Combinatorics and representations of finite groups.}
Universit\"at Essen, Fachbereich Mathematik, Essen, 1993.

\bibitem{Sch15}
E. Schulte,
The inductive blockwise Alperin weight condition for the finite groups
$\mathrm{SL}_3(q)~(3\nmid(q-1))$, $G_2(q)$ and $^3D_4(q)$.
PhD thesis of Technische Universit\"at Kaiserslautern, 2015.

\bibitem{Sch16}
E. Schulte,
The inductive blockwise Alperin weight condition for $G_2(q)$ and $^3D_4(q)$.
\textit{J. Algebra} \textbf{466}~(2016), 314-369.

\bibitem{Sp09}
B. Sp\"ath, The McKay conjecture for exceptional groups and odd primes.
\textit{Math. Z.}
\textbf{261}~(2009), 571-595.

\bibitem{Sp13}
B. Sp\"ath, A reduction theorem for the blockwise Alperin weight conjecture. 
\textit{J. Group Theory} \textbf{16}~(2013), 159-220.

\bibitem{Tay18}
J. Taylor, Actions of automorphisms on irreducible characters of symplectic groups.
\textit{J. Algebra} \textbf{505}~(2018), 211-246.

\bibitem{Wh90}
D.L. White, The $2$-decomposition numbers of $\Sp(4,q)$, $q$ odd.
\textit{J. Algebra.} \textbf{131}~(1990), 703-725.

\end{thebibliography}
\end{document}